\newcommand{\indentalign}{\hspace{0.3in}&\hspace{-0.3in}}
\newcommand{\la}{\langle}
\newcommand{\ra}{\rangle}
\def\bbbone{{\mathchoice {1\mskip-4mu {\rm{l}}} {1\mskip-4mu {\rm{l}}}
{ 1\mskip-4.5mu {\rm{l}}} { 1\mskip-5mu {\rm{l}}}}}
\newcommand{\CC}{{\mathbb C}}
\newcommand{\RR}{{\mathbb R}}
\newcommand{\sgn}{\operatorname{sgn}}
\newcommand{\sech}{\operatorname{sech}}
\newcommand{\defeq}{\stackrel{\rm{def}}{=}}
\def\squarebox#1{\hbox to #1{\hfill\vbox to #1{\vfill}}}
\newcommand{\spn}{\operatorname{span}}
\newtheorem{theorem}{Theorem}[section]
\newtheorem{definition}[theorem]{Definition}
\newtheorem{proposition}{Proposition}[section]
\newtheorem{lemma}[proposition]{Lemma}
\newtheorem{corollary}[proposition]{Corollary}
\theoremstyle{remark}
\newtheorem{remark}[proposition]{Remark}
\numberwithin{equation}{section}
\title{Effective dynamics of double solitons for perturbed mKdV}
\author{Justin Holmer}
\address{Brown University}
\author{Galina Perelman}
\address{Ecole Polytechnique}
\author{Maciej Zworski}
\address{University of California, Berkeley}
\begin{document}

\begin{abstract}
We consider 
the perturbed mKdV equation 
$\partial_t u = -\partial_x (\partial_x^2u + u^3- b(x,t)u)$ where 
the potential $b(x,t)=b_0(hx,ht)$, $ 0 < h \ll 1 $, is slowly varying
with a double soliton initial data.
On a dynamically interesting time scale the solution is $ {\mathcal O}(h^2) $
close in $H^2$ to a double soliton 
whose position and scale parameters 
follow an effective dynamics, 
a simple system of ordinary differential equations.  
These equations are formally 
obtained as Hamilton's equations for the restriction of the mKdV Hamiltonian to the submanifold of solitons.  
The interplay between algebraic aspects of 
complete integrability of the unperturbed equation
and the analytic ideas related to soliton stability is central in the proof. 

\end{abstract}

\maketitle

\section{Introduction}
\label{S:intro}

We consider $2$-soliton solutions of the modified 
KdV equation with a slowly varying external potential \eqref{E:pmKdV}.
The purpose of 
the paper is to find 
minimal {\em exact} effective dynamics valid for a {
long} time in the semiclassical sense and describing
non-perturbative 2-soliton interaction. 
In standard quantum 
mechanics the natural long  time 
for which the semiclassical approximation is 
valid is the {Ehrenfest time}, $ \log(1/h)/h $ -- see
for instance \cite{BR}.
The semiclassical 
parameter, $ h $,  quantifies the slowly varying nature of
the potential. 

\renewcommand\thefootnote{\dag}%

Unlike in the case of single-particle semiclassical dynamics,
that is, for the linear Schr\"odinger equation with 
a slowly varying potential, the exact effective dynamics
valid for such a long time requires $ h^2 $-size corrections\footnote{A compensation for that comes however at having
the semiclassical propagation accurate for larger values of
$ h$.}.
Those corrections appeared as unspecified $ {\mathcal O} (h^2) $ additions
to Newton's equations (which give the usual semiclassical approximation)
in the work of Fr\"ohlich-Gustafson-Jonsson-Sigal \cite{FrSi}
on $1$-soliton propagation. That paper and its 
symplectic point of view were the starting point for \cite{HZ1, HZ2}.

\begin{figure}
\includegraphics[width=6in]{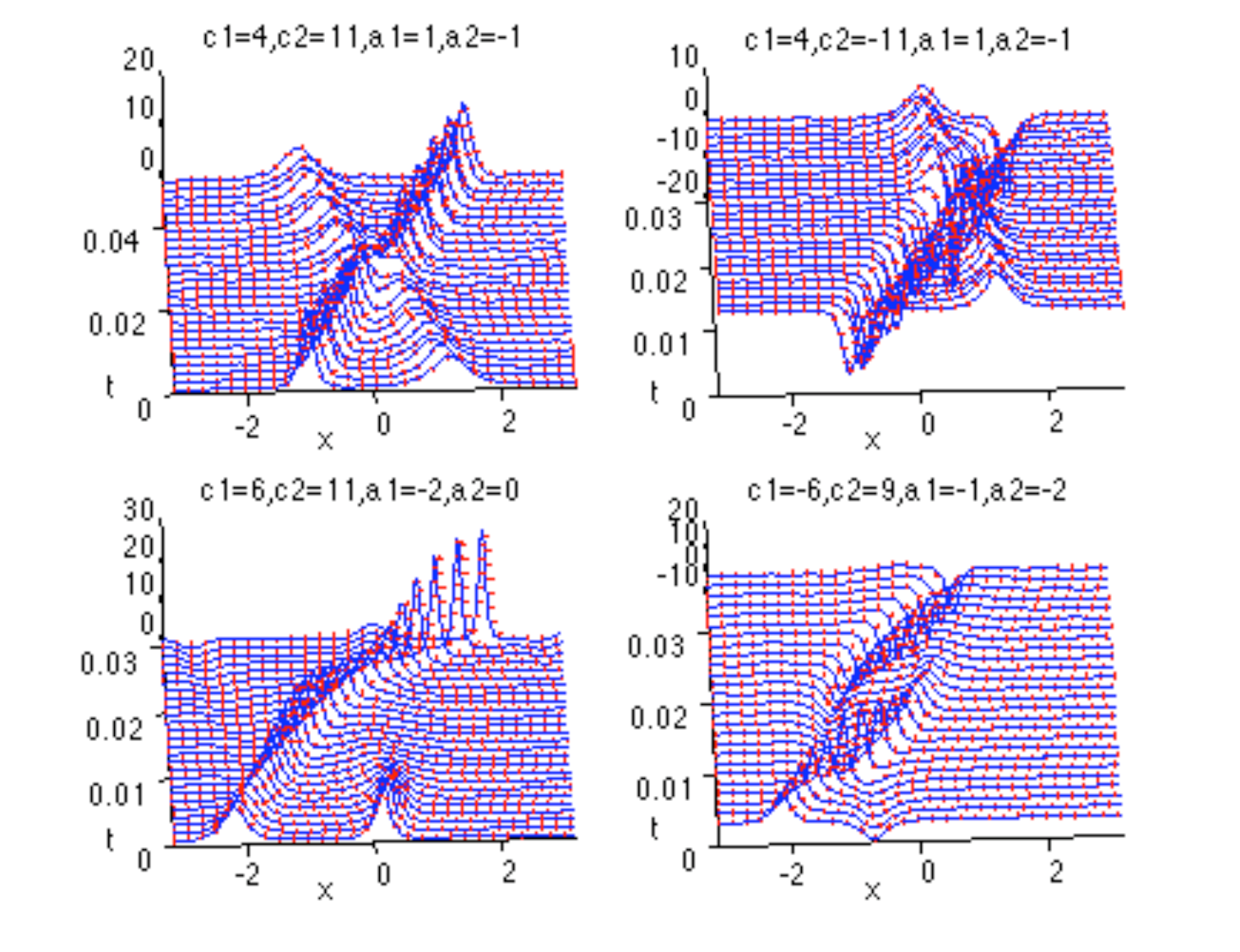}
\caption{A gallery of numerical experiments showing
agreement with the results of the main theorem 
(clockwise from
the left hand corner) for the external fields listed in
\eqref{eq:listex} with the indicated initial data. The continuous
lines are the numerically computed solutions and the dotted lines
follow the evolution given by \eqref{E:eom}. The main theorem 
does not apply to the bottom two figures on the whole interval
of time due to the crossing of $ c_j$'s -- see Fig.\ref{f:cros}.
In the first figure in the second line, \eqref{E:eom} still apply
directy, but in the second one further modification is needed
to account for the signs.}
\label{f:sol1}
\end{figure}

Following the $1$-soliton analysis of \cite{HZ1,HZ2}
the semiclassical dynamics for $2$-solitons considered here is obtained 
by restricting the
Hamiltonian to the symplectic manifold of $2$-solitons and
considering the finite dimensional dynamics there. 
The numerical experiments \cite{Codes} show a remarkable
agreement with the theorem below. However, they also 
reveal an interesting scenario not covered by our theorem:
the velocities of the solitons can almost cross within
exponentially small width in $ h $ and the effective
dynamics remains valid. Any long time analysis involving
multiple interactions of solitons has to explain this
avoided crossing which perhaps could be replaced by 
a direct crossing in a different parametrization. This
seems the most immediate open problem of phenomenological
interest.

The effective dynamics follows a long tradition 
of the use of modulation parameters in soliton propagation
-- see for instance \cite{BP},\cite{MM},\cite{MMT},\cite{P},\cite{RSS}
and the numerous references given there. For 
non-linear dispersive equations with non-constant coefficients
one can consult, in addition to \cite{FrSi}, \cite{AFS},\cite{ZS},\cite{ZW},\cite{KMR},
and references given there.

Here we avoid generality 
and, as described above, the aims are more modest: 
for the physically relevant cubic non-linearity we benefit
from the completely integrable structure and using 
classical methods we can give
a remarkably accurate and phenomenologically 
relevant description of $2$-soliton interaction.
The equation \eqref{E:pmKdV}
shares many features
with the dynamical Gross-Pitaevskii equation,
\[ i \partial_t u = - \partial_x^2 u - |u|^2 u + V ( x) u \,,\]
but is easier to study, mathematically and numerically. 
In a recent numerical study Potter \cite{Po} showed that
the same effective dynamics applies very well to 
$N$-soliton trains in the case of perturbed mKdV and 
NLS. The soliton matter-wave trains created for Bose-Einstein
condensates \cite{phys-paper} were a good testing ground
and our effective dynamics gives an alternative
explanation of the observed phenomena. At the moment it
is not clear how to obtain {\em exact} effective dynamics
for the perturbed NLS.

To state the exact result we recall the perturbed 
mKdV equation \cite{DJ},\cite{DS}:
\begin{gather}
\label{E:pmKdV}
\begin{gathered}
\partial_t u =  - \partial_x( \partial_x^2u - b(x,t)u + 2u^3) \,, \\ 
b( x, t ) =  
b_0(hx,ht)\,, \ \  0 < h \ll 1\,, \ \ \partial^\alpha b_0 \in L^\infty 
( \RR^2 ) \,.
\end{gathered}
\end{gather}
For $ b \equiv 0 $
the equation is completely integrable and has a special class of 
$N$-soliton solutions, $ q_N ( x , a , c ) $, $ a \in \RR^N $, $ c \in 
\RR^N $ -- see \S \ref{dsm} and \S \ref{S:q-properties} below.  For 
$ N = 2 $ we obtain 

\medskip

\noindent
{\bf Theorem.}
{\em Let $ \delta_0 > 0 $ and $ \bar a , \bar c \in 
\RR^n $.   Suppose that $ u ( x , t ) $ solves
 \eqref{E:pmKdV} with 
\begin{equation}
\label{eq:Tin}
u( x , 0 ) = q_2 ( x , \bar a, \bar c ) \,, \ \ \ 
|\bar c_1  
\pm \bar c_2 | > 2\delta_0 > 0 \,, \ \ 2\delta_0< | \bar c_j| < (2\delta_0)^{-1} \,. 
\end{equation}
Then, for $ t < T(h)/h $,
\begin{equation}
\label{eq:Th}
\| u(\cdot,t) - q_2(\cdot,a(t),c(t))\|_{H^2} \leq C h^2 e^{Cht}\,, 
\quad C=C(\delta_0,b_0)>0  \,, 
\end{equation}
where $a(t)$ and $c(t)$ evolve according to the {\em effective} equations
of motion,
\begin{gather}
\label{E:eom}
\begin{gathered}
\dot a_j = c_j^2 - \sgn(c_j )  \partial_{c_j}B(a,c,t) \, \ \ 
\dot c_j = \sgn ( c_j )  \partial_{a_j}B(a,c,t) \\
B(a,c,t) \defeq \frac12 \int b(x,t) q_2(x,a,c)^2 \, dx \,.
\end{gathered}
\end{gather}
The upper bound $ T(h)/h $ for the validity of \eqref{eq:Th} is given in terms of
\begin{equation}
\label{eq:T0h} T ( h ) = \min ( \delta \log( 1/h ) , T_0 ( h ) ) \, , \qquad \delta=\delta(\delta_0,b_0)>0 
\end{equation}
where for $ t < T_0 ( h ) / h $, $ | c_1 ( t ) \pm c_2 ( t ) | > \delta_0 > 0$  and $\delta_0 < | c_j ( t ) | < \delta_0^{-1}$. 
Under the assumption \eqref{eq:Tin} on $ \bar c$, 
$ T_0( h ) > \delta_2  $,
where $ \delta_2 =\delta_2(\delta_0,b_0)>0$ is independent of $ h $ -- see \eqref{eq:ODE}.}

\noindent
{\bf Remarks.} 
{\bf 1.} We expect the same result to be true for all $ N $ with 
$ H^2 $ replaced by $ H^N $. For $ N = 1 $ it follows directly from 
the arguments of \cite{HZ2}. That case is also implicit in this
paper: single soliton dynamics 
describes the propagation away from the interaction 
region. 

\noindent
{\bf 2.} The Ehrenfest time bound, $ T ( h) \leq \delta \log ( 1/ h ) $, 
is probably optimal if we insist on the agreement with 
classical equations of motion \eqref{E:eom}. We expect
that the solution is close to a soliton profile $ q_2 ( x , a , c ) $
for much longer times ($ h^{-\infty} $?) but with a modified
evolution for the parameters. One difficulty is the lack of 
a good description of the long time behaviour of
time dependent linearized evolution with $ b $ present -- see \S \ref{S:correction}. However, the modified equations would lack the transparency of
\eqref{E:eom} and would be harder to implement. The numerical 
study \cite{Po} suggests that for the minimal exact dynamics 
the error bound $ {\mathcal O} (h^2) $ in \eqref{eq:Th} is optimal.

\noindent
{\bf 3.} As shown by the top two plots in Fig.\ref{f:sol1} the agreement
of the approximations given by \eqref{E:eom} and numerical solutions
of \eqref{E:pmKdV} is remarkable. The codes are available at 
\cite{Codes}, see also \S \ref{nume}. Experiments support
the preceding remark.

\noindent
{\bf 4.} The condition that $ | c_1 ( t) \pm c_2 ( t ) | > \delta_1 $,
that is, that the perturbed effective dynamics avoids the 
lines shown in Fig.\ref{f:col}, could most likely 
be relaxed. Allowing that provides more interesting dynamics
as then the solitons can interact multiple times.
As discussed in \S \ref{dyass} and Appendix \ref{A:ode},
we expect avoided crossing after $ \pm c_j ( t )$'s get within
$ \exp ( - c/h)$ of each other -- see Fig.\ref{f:cros}. 
Examples of such evolution, and the comparisons with effective
dynamics, are shown in the lower two plots in Fig.\ref{f:sol1}. 
On closer inspection the agreement between the 
solutions and solitons moving according to effective 
dynamics is not as dramatic as in the
case when $ \pm c_j $'s stay away from each other but for smaller
values of $ h $ the result should still hold. We concentrated on
the simpler case at this early stage. 

\noindent
{\bf 5.} The equation \eqref{E:pmKdV} 
is globally well-posed in $H^k$, $k\geq 1$ 
under even milder 
regularity hypotheses on $b$. This can be shown by modifying the techniques of Kenig-Ponce-Vega \cite{KPV} -- see Appendix \ref{A:gwp}.  Although
for $ k \geq 2 $ more classical methods are available, we opt for 
a self-contained treatment dealing with all $ H^k $'s at once.

\noindent
{\bf 6.} 
Studies of single solitons for 
perturbed KdV, mKdV, and their generalizations
were conducted by Dejak-Jonsson \cite{DJ} and Dejak-Sigal \cite{DS}. 
The perturbative terms, $ b ( x, t ) $, 
were assumed to be not only slow varying but also small in size. 
The mKdV results of \cite{DJ} are improved by following \cite{HZ2}.
For KdV one does not expect the same behaviour as for mKdV and 
the $ {\mathcal O} ( h^2 ) $-approximation similar to \eqref{eq:Th}
is not valid -- see \cite{Munoz, HP} for finer analysis of that case.

\noindent
{\bf 7.} The conditions that $ u(x,0) = q_2 ( x , \bar a , \bar c ) $
can be relaxed by allowing a small perturbation in $ H^2 $ 
-- see \cite{DaVe} for the
adaptation of \cite{HZ2} to that case. Similar statements
are possible here but we prefer the simpler formulation
both in the statement of the theorem and in the proofs.

\medskip

In the remainder of the introduction we will explain the
origins of the effective dynamics \eqref{E:eom}, outline the proof, and 
comment on numerical experiments.

\begin{figure}
\includegraphics[width=6in]{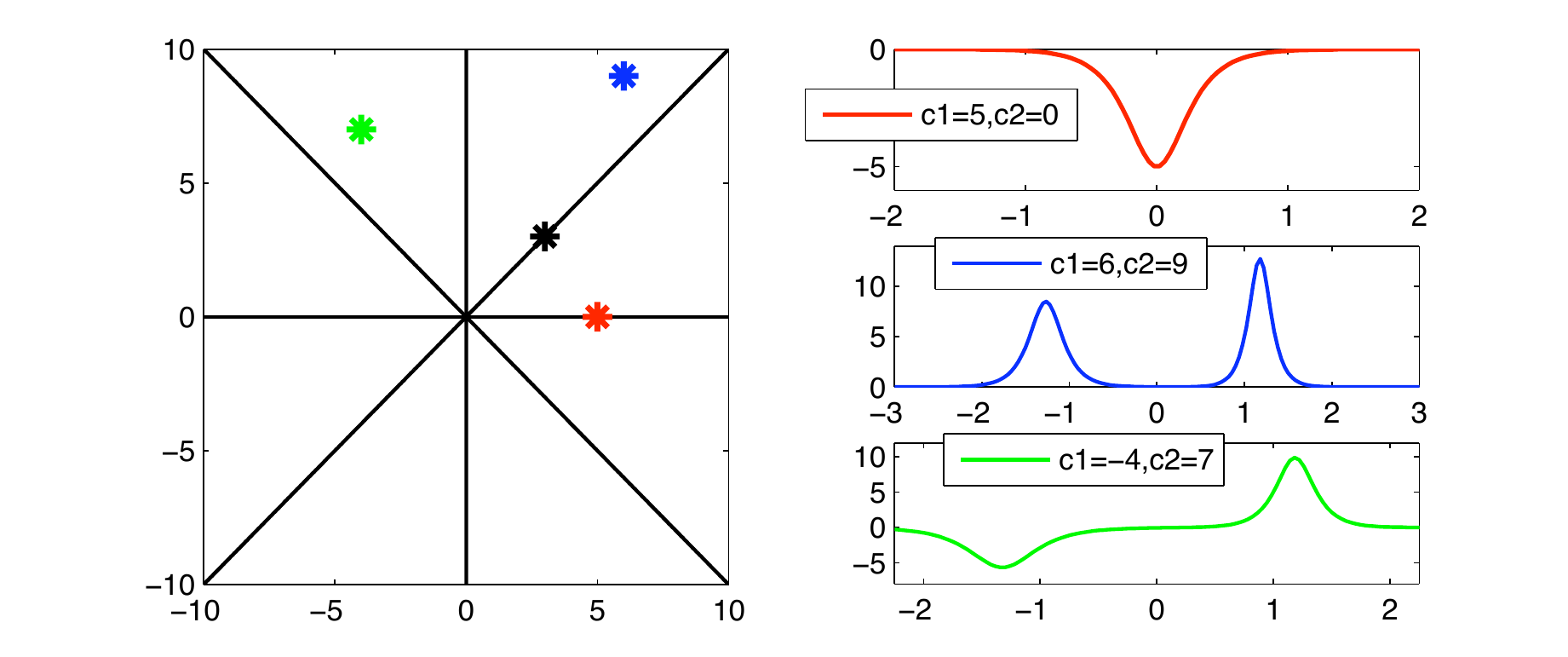}
\caption{On the left we show $ \RR^2 \setminus {\mathcal C} $ and
on the right examples of double solitons corresponding to $ (c_1, c_2 ) $
indicated on the left (with $ a_1 = a_2 = 0 $ in the 
first figure and $ a_2 = -a_1 = 1 $, in the other two).
At the coordinate axes the double soliton 
degenerates into a single soliton. As one approaches the 
lines $ c_1 = \pm c_2 $ the solitons escape to infinities
in opposite direction.}
\label{f:col}
\end{figure}

\subsection{Double solitons for mKdV}
\label{dsm}
The \emph{single soliton} solutions to mKdV, \eqref{E:pmKdV} with 
$ b \equiv 0 $, are described in terms of 
the profile $\eta(x,a,c)$ as follows.  Let $\eta(x) = \sech x$ 
so that $-\eta + \eta'' + 2 \eta^3=0$, and 
let $\eta(x,c,a) = c\eta(c(x-a))$ for $a\in \mathbb{R}$, $c \in 
\RR \setminus 0$.  Then a single soliton defined by 
$$u(x,t) = \eta(x,a+c^2t,c) \,$$
is easily verified to be an exact solution to mKdV.  
Such solitary wave solutions are available for many 
nonlinear evolution equations.  
However, mKdV has richer structure -- it is completely 
integrable and can be studied using the inverse scattering method 
(Miura \cite{Miura}, Wadati \cite{W}).  One of the consequences is
the availability of larger families of 
explicit solutions. In the case of mKdV, 
we have \emph{$N$-solitons} and \emph{breathers}.  
In this paper we confine our attention to the $2$-soliton 
(or double soliton), which is described by the profile $ q_2(x,a,c)$ 
defined in \eqref{E:q-def} below.  The four real parameters,
$ a \in \RR^2 $, and $ c \in \RR^2 \setminus {\mathcal C} $,
\[ {\mathcal C} \defeq  \{ (c_1, c_2)  \; : \; c_1 = \pm c_2 
\} \cup \RR \times \{ 0 \} \cup \{ 0 \} \times \RR \,, \]
describe the position ($a$) and scale ($c$) of the double soliton.
At the diagonal lines the parametrization 
degenerates: for $ c_1 = \pm c_2 $, $ q_2 \equiv 0 $. At 
the coordinate axes in the $c$ space, we recover single solitons:
\[  q_2 ( x, a, ( c_1 , 0 ) ) = - c_1 \eta ( x , a_1 , c_1 ) \,, \ \ 
q_2 ( x, a , ( 0 , c_2 )) = c_2 \eta ( x, a_2 , c_2 ) \,. \]
Fig.\ref{f:col} shows a few examples.

Solving mKdV with $ u ( x, 0 ) = q_2 ( x, a , c ) $ gives the 
solution
\[  u ( x ,t ) = q_2 ( x , a_1 + t c_1^2, a_2 + t c_2^2 , c) \,,\]
that is, the double soliton solution. 

If, say,  $0 < c_1 < c_2$, then for $|a_1-a_2|$ large,
$$q(x,a,c) \approx \eta(x,a_1+\alpha_1,c_1) + \eta(x,a_2+\alpha_2,c_2)$$
where $\alpha_j$ are shifts defined in terms of $c$,
see Lemma \ref{L:q-asymp} for the precise statement.
This means that for large positive and negative times the evolving
double soliton is effectively a sum of single solitons. The decomposition
can be made exact preserving the particle-like nature of single 
solitons even during the interaction -- see \eqref{eq:Qdec} and
Fig.\ref{F:2sol-interact}.

We consider the set of $2$-solitons as a submanifold of $ H^2 ( \RR ; \RR ) $
with $ 8 $ open components corresponding to the components of 
$ \RR^2 \setminus {\mathcal C} $:
\begin{equation}
\label{eq:Msol}
M = \{ \, q(\cdot, a,c ) \, | \, a=(a_1,a_2)\in \mathbb{R}^2 \,, c=(c_1,c_2)\in \mathbb{R}^2 \setminus {\mathcal C}  \, \} \,. 
\end{equation}
As in the case of single solitons this submanifold is symplectic
with respect to the natural structure recalled in the
next subsection.

\subsection{Dynamical structure and effective equations of motion}
\label{dyass}

The equation \eqref{E:pmKdV} is a
Hamiltonian equation of evolution for 
\begin{equation}
\label{eq:Ham}
H_b ( u ) = \frac 12 \int ( u_x^2 - u^4 + bu^2 ) dx \,, 
\end{equation}
on the Schwartz space, $ {\mathcal S} ( \RR ; \RR ) $ equipped with 
the symplectic form
\begin{equation}
\label{eq:omega}
 \omega ( u , v ) = \frac12 \int_{-\infty}^{+\infty} \int_{-\infty}
^x ( u ( x ) v ( y ) - u ( y ) v ( x ) ) dy dx \,. 
\end{equation}
In other words, \eqref{E:pmKdV} is equivalent to 
\begin{equation}
\label{eq:Hamp}
u_t = \partial_x H'_b ( u ) \,,    \ \ \langle H_b'( u ) , \varphi\rangle 
\defeq \frac d {ds} H_b ( u + s \varphi ) |_{s=0 } \,,
\end{equation}
and $ \partial_x H_b' ( u ) $ is the Hamilton vector field of $ H_b $, 
$ \Xi_{H_b} $,  with respect to $ \omega $: 
$$ \omega ( \varphi,  \Xi_{ H_b} ( u ) ) = 
\langle H_b'( u ) , \varphi \rangle \,. $$

For $ b= 0 $, $ \Xi_{H_0}  $ is tangent to the 
manifold of solitons \eqref{eq:Msol}. Also, $ M $ is
symplectic with respect to $ \omega $, that is,
$ \omega $ is nondegenerate on $ T_u M $, $ u \in M $. Using the stability 
theory for $2$-solitons based on the work of Maddocks-Sachs \cite{MS},
and energy methods (enhanced and simplified using 
algebraic identities coming from complete integrability of
mKdV) we will show that the solution to \eqref{E:pmKdV} with 
initial data on $ M $ stays close to 
$ M $ for $ t \leq \log ( 1/h ) / h $. 

A basic intuition coming from symplectic geometry then indicates
that $ u ( t ) $ stays close to an integral curve on $ M $ of 
the Hamilton vector field (defined using 
$ \omega | _M $) of $ H_b $ {\em restricted} to $ M $:
\begin{gather}
\label{eq:HbM} 
\begin{gathered}
H_{\rm{eff}} ( a , c ) \defeq H_b |_M ( a, c ) = H_0 |_M ( a , c ) + \frac 12 \int b ( x ) 
q_2 ( x, a , c ) ^2dx \,, \\ H_0 |_M ( a, c ) = - \frac 1 3 ( |c_1|^3 + |c_2|^3 ) \,, \\ 
\omega |_M = d a_1 \wedge d|c_1| + d a_2 \wedge d |c_2| \,, \\
\Xi_{ H_{\rm{eff} } } = \sum_{ j =1}^2  \sgn (c_j) ( \partial_{a_j } H_{\rm{eff}} \,
\partial_{c_j} 
 - \partial_{c_j} H_{\rm{eff}} \, \partial_{a_j}  ) \,. 
\end{gathered}
\end{gather}
The effective equations of motion \eqref{E:eom} follow.
This simple but crucial observation was made in \cite{HZ1},\cite{HZ2}
and it did not seem to be present in earlier mathematical work on
solitons in external fields \cite{FrSi}. 

The condition made in the theorem,
that $ | c_1 ( t ) \pm c_2 ( t ) | $ and $ |c_j ( t ) | $ 
are bounded away from zero for $ t < T_0 ( h ) / h $ (where 
$ T_0 ( h ) $ could be $ \infty $),  follows from a 
condition involving a simpler system of decoupled
$ h$-independent ODEs -- see Appendix \ref{A:ode}. Here 
we state a condition which gives an $ h$-independent $ T_0 $ 
appearing in \eqref{eq:T0h}.

Suppose we are given $ b ( x, t) = b_0(hx,ht)$ in 
\eqref{E:pmKdV} and the initial condition is 
given by $ q_2 ( x , \bar a , \bar c )$, $ \bar a =
(\bar a_{1},\bar a_{2})$, $\bar c =(\bar c_{1}, \bar c_{2})$, 
$ | \bar c_1 \pm \bar c_2 | > \delta_0 $, $ | \bar c_j | > \delta_0 $,
We  consider an $h$-independent system of two 
decoupled differential equations for 
$$A(T)=(A_1(T),A_2(T)) \,, \ \ \ C(T)=(C_1(T),C_2(T)) \,, $$
given by 
\begin{equation}
\label{eq:ODEsh} \left\{
\begin{aligned}
& \partial_T A_j = C_j^2 - b_0(A_j,T) \\
& \partial_T C_j = C_j \partial_x b_0(A_j, T)
\end{aligned}
\right. \,, 
\qquad A(0) = \bar a h \,, \quad C(0) = \bar c \,, \ \ \ j=1,2\,.
\end{equation}
Then, for a given $ \delta_1 < \delta_0 $, 
$ T_0 ( h ) $ in \eqref{eq:T0h} can be replaced by 
\begin{equation}
\label{eq:ODE} T_0 \defeq \sup \{ T \; : \; 
| C_1 ( T ) \pm C_2 ( T) | > \delta_1 \,, \ |C_j ( T ) | > \delta_1\,, \ \ 
j=1,2 \} \,. 
\end{equation}

\begin{figure}
\includegraphics[width=6in]{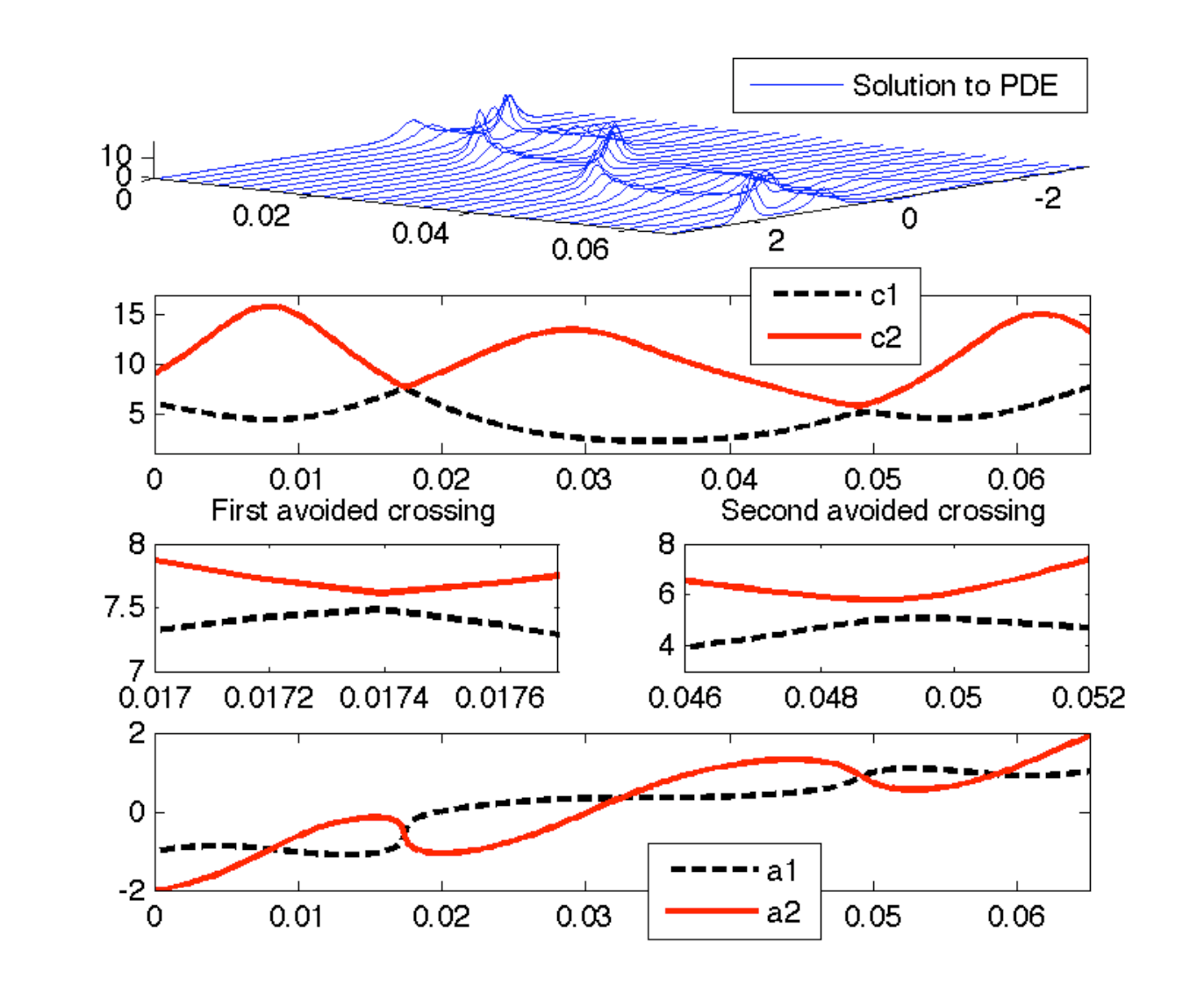}
\caption{The plots of $ c $ and $ a $ for the external 
potential given by the last $ b ( x, t ) $ in \eqref{eq:listex},
and $ \bar c = (6,10) $, $ \bar a = ( -1, -2 )$.
We see the avoided crossings near times at which the
decoupled dynamics \eqref{eq:ODEsh} would give 
a crossing of $ c_j$'s (see also Fig.\ref{f:A1}). 
The crossings are avoided with $ \exp ( - 1/Ch ) $ width
and $ a_1 = a_2 $ at the crossings.
These cases are not yet covered by our theory.
Of the five crossings of $ a_j $'s in the bottom figure,
three do not involve crossings of $c_j$'s are hence the 
description by effective dynamics there is
covered by our theorem. However, in the absence of 
avoided crossing of $ c_j$'s the solitons can interact only 
once.}
\label{f:cros} 
\end{figure}

\subsection{Outline of the proof}
\label{oop}
To obtain the effective dynamics we follow a long tradition
(see \cite{FrSi} and references given there) and 
define the {\em modulation parameters}
$$a(t)=(a_1(t),a_2(t)) \,, \ \ c(t)=(c_1(t),c_2(t))\,, $$ 
be demanding that 
$$v(x,t)  = u(x,t) - q(x,a(t),c(t))\,,  \ \ q= q_2 \,, $$
satisfies { symplectic orthogonality} conditions:  
\begin{align*}
&\omega(v,\partial_{a_1}q) = 0 && \omega(v,\partial_{a_2}q)=0 \\
&\omega(v,\partial_{c_1}q) = 0 && \omega(v,\partial_{c_2}q) =0 
\end{align*}
These can be arranged by the implicit function theorem thanks to 
the nondegeneracy of $\omega|_M$.
This makes $q$ the symplectic orthogonal projection of $u$ onto the manifold of solitons $M$.

Since $u=q+v$ and $u$ solves mKdV, we have
\begin{equation}
\label{eq:vv}\partial_t v = \partial_x( \mathcal{L}_{c,a}v - 6qv^2-2v^3 
+  bv) - F_0 \,, 
\end{equation}
where 
\[ {\mathcal L}_{c,a} = - \partial_x^2 - 6 q ( x, a , c)^2 v \,,  \]
and $F_0$ results from the perturbation and $\partial_t$ landing on the parameters:
$$
F_0 = \sum_{j=1}^2 (\dot a_j - c_j^2)\partial_{a_j}q + \sum_{j=1}^2 \dot c_j\partial_{c_j}q - \partial_x(bq)  \,. 
$$
We decompose $ F_0=F_\| +F_\perp $, where  
$F_\|$ is symplectic projection of $F_0$ onto $T_q M$,  
and $F_\perp$ is the symplectic projection onto its symplectic
orthogonal $(T_qM) ^\perp$.
As seen in \eqref{E:F-par}, $ F_\| \equiv 0 $ is equivalent to 
the equations of motion \eqref{E:eom} (we assume in the proof 
that $ c_2 > c_1 > 0 $).

Using the properties of $ q$, we show that $F_\perp$ is $ {\mathcal O}(h^2)$. 
In fact it is important to 
obtain a specific form for the ${\mathcal O} (h^2)$ term 
so that it is amenable to finding a certain correction term later
-- see \S \ref{S:F-perp}.

The estimates for $ F_\| $ are obtained using 
the symplectic orthogonality properties of $v$.  For example,
$0=\la v, \partial_x^{-1}\partial_{a_j}q\ra$
implies  
\[ 0= \partial_t \la v, \partial_x^{-1}\partial_{a_j}q\ra = \la 
\underbrace{\partial_t v }_{\substack{\uparrow\\\text{substitute equation 
\eqref{eq:vv}} }}
, \partial_x^{-1}\partial_{a_j}q\ra + \la v, \partial_t \partial_x^{-1}\partial_{a_j}q\ra \,,\]
which can be used to show that 
\begin{equation} 
\label{eq:Fp} |F_\| | \leq Ch^2\|v\|_{H^2} + \|v\|_{H^2}^2 \,,\ 
\end{equation}
see \S \ref{S:parameters}.  

The next step is to estimate $v$ satisfying \eqref{eq:vv} with $ v(0 ) = 
{\mathcal O} ( h^2) $ (in the theorem $ v( 0 ) = 0 $, but we need this
relaxed assumption for the bootstrap argument).
We want to show that on a time interval of length $h^{-1}$, 
that $v$ at most doubles.  
The Lyapunov functional $\mathcal{E}(t)$ that we use to achieve this 
comes from the variational characterization of the double soliton
(see \cite[\S 2]{Lax} and Lemma \ref{l:E7} below): if
$$H_c(u) = I_5(u) + (c_1^2+c_2^2)I_3(u) + c_1^2c_2^2I_1(u) \,, $$
then
$$H_c'(q(\cdot, a,c)) = 0 \,, \qquad \forall \; a\in \mathbb{R}^2\,, $$
and 
$$H_c''(q(\cdot,a,c)) = \mathcal{K}_{c,a} \,, $$ 
where $ {\mathcal K}_{c,a} $ is a fourth order operator 
given in \eqref{E:8} below. 
Hence 
$$\mathcal{E}(t) \defeq  H_{c(t)}(q( \bullet , a (t) , c ( t ) )+v( t ) ) - 
H_{c(t)}(q( \bullet , a (t) , c ( t ) ) ) \,, $$
satisfies 
$$\mathcal{E}(t) \approx \la \mathcal{K}_{c,a}v,v\ra \,,$$ 
and, as in Maddocks-Sachs \cite{MS} for KdV, 
$\mathcal{K}_{c,a}$ has a two dimensional kernel and one negative eigenvalue.  
However, the symplectic orthogonality conditions on $v$ imply that we project far enough away from these eigenspaces and hence we have the coercivity
$$\delta \|v\|_{H^2}^2 \leq \mathcal{E}(t) \,. $$

To get the upper bound on $\mathcal{E}(t)$, we compute 
 \[ \frac{d} {dt} {\mathcal E} ( t ) = 
{\mathcal O} (h) \| v ( t) \|_{H^2}^2 + 
\la \mathcal{K}_{c,a}v, \; F_\| \ra
+ \la \mathcal{K}_{c,a}v, \; F_\perp \ra \,,
\]
see \S \ref{S:energy}.
Using \eqref{eq:Fp} we can estimate the second term on the right-hand side
but $|F_\perp| = {\mathcal O} ( h^2) $ only. 
We improve this to $h^3$ using a { correction term to $v$} -- 
see \S \ref{S:correction}, and the comment at the end of this section.
 
All of this combined gives, on $[0,T]$,
\begin{gather*}
\|v\|_{H^2}^2 \lesssim \|v(0)\|_{H^2}^2 + T (|F_\| |\|v\|_{H^2}+ h^2\|v\|_{H^2}+\|v\|_{H^2}^2)\,, \\ 
|F_\| |  \leq Ch^2\|v\|_{H^2} + \|v\|_{H^2}^2 \,,
\end{gather*}
which implies
$$\|v\|_{H^2} \lesssim h^2\,,\qquad |F_ \| |\lesssim h^4\,,  \qquad \text{on } \ 
[0,h^{-1}] \,. $$
Iterating the argument $\delta\log(1/h) $ times
gives a slightly weaker bound for longer times.
The $O(h^4)$ errors in the ODEs can be removed without affecting the bound 
on $v$, proving the theorem. 
 
In the proofs various facts due to complete integrability 
(such as the miraculous Lemma \ref{L:near-conserved}) simplify
the arguments, in particular in the above energy estimate.

We conclude with the remark about the correction term 
added to $v$ in order 
to improve the bound on $\|F_\perp \|$ from $h^2$ to $h^3$.
A similar correction term was used in \cite{HZ2} for NLS $1$-solitons. 
Together with the symplectic projection interpretation, it was the 
key to sharpening the results in earlier works.
Implementing the same idea in the setting of $2$-solitons  is more subtle.  
The $2$-soliton is treated as if it were the sum of two decoupled $1$-solitons, 
the corrections are introduced for each piece, and the result is that 
$F_\perp$ is corrected so that
$$\|F_\perp\|_{H^2} \lesssim h^3 + h^2e^{-\gamma|a_1-a_2|}$$
That is, when $|a_1-a_2| = O(1)$, there is no improvement. 
However, this happens only on an $O(1)$ time scale and hence does
not spoil the long time estimate.

\subsection{Numerical experiments}
\label{nume}

Unlike NLS, KdV is a very friendly equation from the
numerical point of view and {\tt MATLAB} is sufficient for producing
good results. 

We first describe the simple codes on which our experiments
are based. Instead of considering \eqref{E:pmKdV} on the line,
we consider it on the circle identified with $ 
[ - \pi , \pi ) $. To solve it numerically we adapt
the code given in \cite[Chapter 10]{Tr}
which is based on the Fast Fourier Transform in $ x $, the method
of integrating factor for the $ - u_{xxx} \mapsto - i k^3 \hat u ( k ) $
term, and the fourth-order Runge-Kutta formula for the resulting
ODE in time. Unless the amplitude of the solution gets large
(which results in large terms in the equation due to the $ u^3 $
term) it suffices to take $ 2^N $, $ N = 8 $, discretization points
in $ x $.

For $ X \in [ -\pi, \pi ) $ we consider $ B ( X , T) $
periodic in $ X $, and compute $ U ( X , T ) $ satisfying
\[
\partial_T U= - \partial_X ( \partial_X^2 U + 2U^3 - B(X,T) U ) \,, \ \ 
U ( \pi , T ) = U ( -\pi, T ) \,.
\]
A simple rescaling, 
\[  u ( x , t ) = \alpha U ( \alpha x , \alpha^3 t ) \,,  \ \ 
b ( x , t ) = \alpha^2 B ( \alpha x , \alpha^3 t)  \,, \]
gives a solution of  \eqref{E:pmKdV} 
on $ [ - \pi/\alpha , \pi/ \alpha  ] $ with 
periodic boundary conditions. When $ \alpha $ is small this is a
good approximation of the equation on the line. If we use $ U ( X , T ) $
in our numerical calculations with the initial data $ q_2 ( X , A , C ) $,
$ A \in \RR^2 $, $ C \in \RR^2 \setminus {\mathcal C} $, the initial 
condition on for $ u ( x , t ) $ is given by 
\[ u ( x, 0 ) = q_2 ( x , A/\alpha , \alpha C ) \,. \]
If we want $ \bar c = \alpha C $ to  satisfy
the assumptions \eqref{eq:Tin}, the effective small constant 
$ h $ becomes $ h = \alpha $ and $ b_0 $ in \eqref{E:pmKdV} becomes
\[  b_0 ( x, t ) = h^2 B ( x, h^2 t ) \,. \]

In principle we have three scales: size of $ B $, 
size of $ \partial_x B $, and size of $ \partial_t B $,
which should correspond to three small parameters $ h$.
For simplicity we just use one scale $ h $ in the Theorem.

Figure \ref{f:sol1} shows four examples of evolution and
comparison with effective dynamics computed using 
the {\tt MATLAB} codes available at \cite{Codes}. 
The external potentials used are given by 
\begin{equation}
\label{eq:listex}
\begin{split}
& B ( x, t) = 100 \cos^2(x-10^3t)-50\sin(2x+10^3t)\,,
\\
& B ( x , t) = 100 \cos^2(x-10^3t)+50\sin(2x+10^3t)\,, \\
& B ( x, t ) = 
60 \cos^2(x+1-10^2t)+ 40 \sin (2x+2+10^2t)\,, 
\\
& 
B ( x, t ) = 40 \cos(2x+3-10^2t)+30 \sin(x+1+10^2t)\,.
\end{split}
\end{equation}
The rescaling the fixed size potential used in the
theorem, $ b_0 ( x, t ) = h^2 B ( x , h^2 t ) $, means
that our $ h$ satisfies $ h \simeq 1/5  $ in the last two 
examples. In the first two examples the scales in $ x $
are different than the ones in $ t$: the potential is 
not slowly varying in $t$ if $ h \simeq 1/10 $. The agreement
with the main theorem is very good in all cases. However,
the theorem in the current version does not apply to the two bottom
figures since the condition in \eqref{eq:ODE} is not satisfied
for the full time of the experiment. See also Fig. \ref{f:cros}  
and Appendix \ref{A:ode}.

We have not exploited numerical experiments in a fully
systematic way but the following conclusions can be deduced:

\begin{itemize}

\item For the case covered by our theorem the agreement with 
the numerical solution is remarkably close; the same thing
is true for times longer than $ T_0/h $, with $ T_0 $ defined
by \eqref{eq:ODE} despite the crossings of $ C_j$'s (resulting
in the avoided crossing of $ c_j$'s)
The agreement is weaker but the experiments involve only 
relatively large value of $ h$.

\item The soliton profile persists for long times but
we see a deviation from the effective dynamics. This suggest
the optimality of the bound $ \log(1/h)/ h $ in
\eqref{eq:Th}. 

\item The slow variation in $t$ required in the theorem 
can probably be relaxed. For instance, in the top plots
in Fig.\ref{f:sol1} $ \max|\partial_t b_0|/\max|\partial_x b_0| 
\sim 10 $, while the agreement with the effective dynamics
is excellent. For longer times it does break down as 
can be seen using the {\tt Bmovie.m} code 
presented in \cite[\S 3]{Codes}. An indication that
slow variation in time might be removable also comes
from \cite{AW}.

\item When the decoupled equations \eqref{eq:ODEsh}
predict crossing of $ C_j$'s, we observe an avoided 
crossing of $ c_j$'s -- see Fig.\ref{f:cros} and Fig.\ref{f:A1} --
with exponentially small width, $ \exp ( -1/Ch) $. 
At such times we also see the crossing of $ a_j$'s, 
though it really corresponds to solitons changing their
scale constants -- see Fig.\ref{f:A2}.
To have multiple interactions of a pair of solitons, this
type of crossing has to occur, and it needs to be investigated
further. 

  
\end{itemize}

\subsection{Acknowledgments} 
The authors gratefully acknowledge the following sources of 
funding: J.H. was supported in part by a Sloan fellowship
and the NSF grant DMS-0901582, G.P's visit to Berkeley in November 
of 2008 was supported in part by the France-Berkeley Fund, and 
M.Z. was supported in part by the NSF grant DMS-0654436.

\section{Hamiltonian structure and conserved quantities}
\label{S:Hamiltonian}

The symplectic form, at first defined on $ {\mathcal S} ( \RR; \RR) $
is given by 
\begin{equation}
\label{eq:sympl}  \omega(u,v) \defeq 
\la u, \partial_x^{-1}v\ra \,,  \ \ 
\la f,g\ra = \int fg \,, 
\end{equation}
where 
$$\partial^{-1}f(x) \defeq 
\frac12 \left(\int_{-\infty}^x - \int_x^{+\infty}\right) f(y) \, dy$$
  Then the mKdV (equation \eqref{E:pmKdV} with $ b \equiv 0 $) 
is the Hamiltonian flow 
$\partial_t u = \partial_x H_0'(u)$ and \eqref{E:pmKdV} is the Hamiltonian flow $\partial_t u = \partial_x H_b'(u)$, where
$$H_0 = \frac12 \int (u_x^2 - u^4) \, \qquad H_b = \frac12 \int (u_x^2 - u^4 + bu^2)$$

Solutions to mKdV have infinitely many conserved
integrals and the first four are given by 
\[
\begin{split}
I_0(u) & = \int u \, dx \,,  \\
I_1(u)  & = \int u^2 \, dx \,, \\
I_3(u) &  = \int (u_x^2 - u^4) \, dx \,,\\
I_5(u) & =  \int (u_{xx}^2 - 10 u_x^2u^2 + 2u^6) \, dx \,,
\end{split}
\]
which are the mass, momentum, energy, and second energy, respectively. 
In this paper we will only use these particular conserved quantities.

We write $I_j(u) = \int A_j(u)$,  which means that 
$A_j(u)$ denotes the $j$-th Hamiltonian \emph{density}.  

For future reference, we record the expressions appearing in the
Taylor expansions of these densities,
\begin{equation}
\label{E:A-def}
A_j(q+v) = A_j(q) + A_j'(q)(v) + \frac12 A''(q)(v,v) + {\mathcal O}(v^3)\,, 
\end{equation}
\begin{align*}
&A_1'(q)(v) = 2qv \,, \\
&A_3'(q)(v) = 2q_xv_x - 4q^3v \,,  \\
&A_5'(q)(v) = 2q_{xx}v_{xx} - 20 q_xq^2v_x - 20 q_x^2qv + 12q^5v \,, 
\end{align*}
and
\begin{align*}
&A_1''(q)(v,v) = 2v^2 \,, \\
&A_3''(q)(v,v) = 2v_x^2 - 12q^2v^2 \,,  \\
&A_5''(q)(v,v) = 2v_{xx}^2 - 20 q^2v_x^2 -20 q_x^2v^2 - 80qq_xvv_x + 60q^4v^2 \,. 
\end{align*}
The differentials, $I_j'(q)$, are identified with functions
by writing:
$$\la I_j'(q),v\ra = \int A_j'(q)(v) \,.$$
It is useful to record a formal expression for $ I_j'(q)$'s valid
when $ A_j ( q) $'s are polynomials in $ \partial_x^\ell q $:
\begin{equation}
\label{eq:formal}
I'_j ( q ) = \sum_{ \ell \geq 0 } (- \partial_x )^\ell \frac{ \partial
A_j ( q ) } {\partial q_x^{(\ell)} } \,, \ \ 
q_x ^{(\ell)} = \partial_x^\ell q \,. 
\end{equation}
The Hessians, $I_j''(q)$, are the (self-adjoint) operators given 
by 
$$\la I_j''(q)v,v\ra = \int A_j''(q)(v,v) \,. $$

One way to generate the mKdV energies is as follows (see Olver \cite{Olver}).
Let us put
$$\Lambda(u) = -\partial_x^2 - 4u^2 - 4u_x \partial_x^{-1} u \,,$$
and recall that 
$\Lambda(u) \partial_x$ is skew-adjoint:
\[ \begin{split} \Lambda(u) \partial_x & = 
-\partial_x^3 - 4u^2 \partial_x - 4u_x \partial_x^{-1} u 
\partial_x \\
& = -\partial_x^3 - 4u^2 \partial_x - 4 u_x u 
+ 4u_x \partial_x^{-1} u_x \,, 
\end{split} \]
where we used the formal integration by parts $ \partial^{-1}_x 
( u f_x) = - \partial^{-1}_x ( u_x f ) + u f $.

With this notation we have the fundamental recursive identity:
\begin{equation}
\label{E:hierarchy}
\partial_x I_{2k+1}'(u) = \Lambda(u) \partial_x I_{2k-1}'(u) \,, 
\end{equation}
which together with skew-adjointness of $ \Lambda ( u ) \partial_x $
shows that 
\[ \la I_j'(u), \partial_xI_k'(u)\ra = 
\la I_{j-2}'(u) , \partial_x I_{k+2}' ( u ) \ra \,, \]
for $ j $ and  $ k $ odd (if we use 
\eqref{E:hierarchy} with $ m $ even the choice $ I_{2m} ( u ) = 0 $, for $ m > 0 $ is consistent). 
By iteration this shows that 
\begin{equation}
\label{E:conserved}
\la I_j'(u), \partial_xI_k'(u)\ra =0 \,, \ \  \forall \, j \,, k \,. 
\end{equation}
In fact, since $ j $ and $ k $ are odd 
we can iterate all the way down to $ j = 1$ and apply \eqref{eq:formal}:
\[ \begin{split} \la  I_1' ( u ) , \partial_x I_{k + j -1 }' ( u ) \ra & = 
- \la \partial_x u_x^{(\ell)}  ,  \sum_{ \ell \geq 0 } 
\partial A_{j+k-1} ( u )  / {\partial u_x^{(\ell)} }  \ra \\
& = - \int \partial_x ( A_{j+k-1} ( u ) ) dx = 0 \
\,. 
\end{split} \]

If $u$ solves mKdV, then $\partial_t u = \frac12 \partial_x I_3'(u)$ and hence by \eqref{E:conserved} we obtain
$$\partial_t I_j(u) = \la I_j'(u), \partial_t u \ra = \frac12 
\la I_j'(u), \partial_x I_3'(u)\ra = 0 \,.$$

The following identities related to the conservation laws will be needed in \S \ref{S:energy}.  Recalling the definition \eqref{E:A-def} of $A_j$, we have:
\begin{lemma}
\label{L:near-conserved}
For any function $u \in {\mathcal S} $, and for $ b \in C^\infty \cap 
{\mathcal S}' $, we have
\begin{align*}
&\la I_1'(u), (bu)_x \ra = \la b_x, A_1(u) \ra \\
&\la I_3'(u), (bu)_x \ra = 3\la b_x, A_3(u) \ra - \la b_{xxx}, A_1(u) \ra \\
&\la I_5'(u), (bu)_x \ra = 5 \la b_x, A_5(u) \ra - 5\la b_{xxx}, A_3(u) \ra 
+\la b_{xxxxx}, A_1(u) \ra 
\end{align*}
\end{lemma}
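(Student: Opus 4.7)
The plan is to prove each identity by direct integration by parts, starting from the explicit formulas
$$I_1'(u) = 2u, \qquad I_3'(u) = -2 u_{xx} - 4 u^3, \qquad I_5'(u) = 2 u_{xxxx} + 20 u^2 u_{xx} + 20 u u_x^2 + 12 u^5,$$
all obtained via \eqref{eq:formal}. I would split $(bu)_x = b u_x + b_x u$, so that
$$\la I_j'(u), (bu)_x \ra = \int b\, I_j'(u)\, u_x \, dx \, + \, \int b_x u\, I_j'(u) \, dx,$$
and handle the two pieces separately.

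For the first piece, the key pointwise observation is the chain-rule identity $A_j'(u)(u_x) = \partial_x A_j(u)$, which can be checked on each monomial of $A_j$; meanwhile $I_j'(u) u_x$ differs from $A_j'(u)(u_x)$ by a total $x$-derivative arising from the integration-by-parts terms in passing between the two. Writing $I_j'(u) u_x = \partial_x[A_j(u) + R_j(u)]$ for a suitable polynomial $R_j$ in $u$ and its derivatives, and then integrating by parts once, transfers the derivative onto $b$. The second piece, $\int b_x u I_j'(u) dx$, is handled by iterated integration by parts, which moves all $x$-derivatives off $u$ and onto $b_x$, producing terms of the form $\int b^{(2k+1)} P_k(u, u_x, \ldots) \, dx$. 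Collecting these contributions and matching them against the densities $A_1, A_3, A_5$ then yields the claimed right-hand side.

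For $j = 1$ the identity reduces to $\la 2u, b u_x + b_x u \ra = \la b, (u^2)_x\ra + 2\la b_x, u^2\ra = \la b_x, A_1(u)\ra$, a single integration by parts. For $j = 3$ two integrations by parts on the $-2 u_{xx}$ piece produce the combination $3\la b_x, u_x^2 - u^4\ra - \la b_{xxx}, u^2 \ra$, which is the claim. The main obstacle is the case $j = 5$: the fourth-order term $2 u_{xxxx}$ requires four integrations by parts and is responsible for the $\la b_{xxxxx}, A_1(u) \ra$ contribution, while the nonlinear terms $20 u^2 u_{xx} + 20 u u_x^2 + 12 u^5$ generate numerous intermediate monomials that must recombine precisely into $5\la b_x, A_5(u)\ra - 5\la b_{xxx}, A_3(u)\ra$ with no leftover terms. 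That such a clean collapse occurs is not obvious from the algebra alone; it reflects the fact that $I_5$ belongs to the mKdV hierarchy, and in particular the identity $\la I_j'(u), \partial_x I_k'(u)\ra = 0$ underpinning the recursion \eqref{E:hierarchy}. As an alternative to the brute-force computation, one could deduce the $j=5$ identity from the $j=3$ one by applying \eqref{E:hierarchy} to the operator $\Lambda(u)\partial_x$, but the direct calculation, once carefully organized, remains elementary.
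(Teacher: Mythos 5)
Your argument is correct and is essentially the paper's: both establish the identities by elementary integration by parts and direct verification with the polynomial densities $A_j$. The paper packages the same computation slightly more compactly, observing that since $b$ is arbitrary the three claims are equivalent to the pointwise identities $u\,\partial_x I_1'(u)=\partial_x A_1(u)$, $u\,\partial_x I_3'(u)=3\partial_x A_3(u)-\partial_x^3 A_1(u)$, and $u\,\partial_x I_5'(u)=5\partial_x A_5(u)-5\partial_x^3 A_3(u)+\partial_x^5 A_1(u)$ in $u$ alone, which are then checked by the same brute-force algebra you outline after the split $(bu)_x = bu_x + b_x u$ and the chain-rule observation $A_j'(u)(u_x)=\partial_x A_j(u)$.
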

\begin{proof}
By taking arbitrary  $b \in {\mathcal S} $, we see
 that the claimed formulae are equivalent to
\begin{align*}
& u \partial_x I_1'(u) = \partial_x A_1(u) \,, \\
& u \partial_x I_3'(u) = 3\partial_x A_3(u) - \partial_x^3 A_1(u) \,, \\
& u \partial_x I_5'(u) = 5\partial_x A_5(u) - 5\partial_x^3 A_3(u) + 
\partial_x^5 A_1(u)\,,
\end{align*}
and these can be checked by direct computation.
\end{proof}
\begin{lemma}  
\label{L:near-conserved2}
For any function $u , q \in {\mathcal S} $, and for $ b \in C^\infty \cap 
{\mathcal S}' $, we have
\begin{align*}
 \la I_1''(q)v, (bq)_x\ra - \la \partial_x I_1'(q), bv \ra &= \la b_x, A_1'(q)(v) \ra \\
 \la I_3''(q)v, (bq)_x\ra - \la \partial_x I_3'(q), bv \ra &= 3\la b_x, A_3'(q)(v) \ra - \la b_{xxx}, A_1'(q)(v) \ra \\
 \la I_5''(q)v, (bq)_x\ra - \la \partial_xI_5'(q), bv \ra &= 5 \la b_x, A_5'(q)(v) \ra - 5\la b_{xxx}, A_3'(q)(v) \ra  \\
& \qquad + \la b_{xxxxx}, A_1'(q)(v) \ra
\end{align*}
\end{lemma}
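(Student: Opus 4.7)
The plan is to obtain Lemma \ref{L:near-conserved2} as the first-order variation of Lemma \ref{L:near-conserved} at $u=q$ in the direction $v$. Concretely, for each of the three identities of the previous lemma, substitute $u \mapsto q + sv$ and differentiate in $s$ at $s=0$.

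On the left hand side, this yields
$$\frac{d}{ds}\bigg|_{s=0} \la I_j'(q+sv), (b(q+sv))_x \ra = \la I_j''(q) v, (bq)_x \ra + \la I_j'(q), (bv)_x \ra,$$
since $I_j''$ is the Hessian in the sense of \eqref{eq:formal} -- \eqref{E:A-def}. An integration by parts in the second term rewrites it as $-\la \partial_x I_j'(q), bv\ra$, producing exactly the left hand sides appearing in Lemma \ref{L:near-conserved2}. On the right hand side, the Taylor expansion \eqref{E:A-def} gives $\frac{d}{ds}|_{s=0} A_j(q+sv) = A_j'(q)(v)$, so each term of the form $\la b^{(k)}, A_j(u)\ra$ in Lemma \ref{L:near-conserved} becomes $\la b^{(k)}, A_j'(q)(v)\ra$, matching the stated right hand sides coefficient-by-coefficient for $j=1,3,5$.

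The main (and only) subtlety is justifying the interchange of differentiation and integration implicit in the computation, and the integration by parts $\la I_j'(q), (bv)_x\ra = -\la \partial_x I_j'(q), bv\ra$. For $q,v\in\mathcal{S}$ and $b\in C^\infty \cap \mathcal{S}'$, both the Hamiltonian densities $A_j$ and their linearizations $A_j'(q)(v)$ are Schwartz functions (being polynomials in $q,v$ and their derivatives, with $q,v$ decaying), so the pairings with $b$ and its derivatives are well-defined tempered distributional pairings, the boundary terms in the integration by parts vanish, and the $s$-derivative commutes with the integrals. With these routine justifications in place, the three identities of Lemma \ref{L:near-conserved2} follow immediately from the corresponding three identities of Lemma \ref{L:near-conserved} by this single differentiation step -- no separate computation is needed.
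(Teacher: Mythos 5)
Your proof is correct and follows the same approach as the paper, which simply states that the identities are obtained by differentiating Lemma~\ref{L:near-conserved} with respect to $u$ at $q$ in the direction $v$. You have merely spelled out the details of this differentiation (the product rule giving $\la I_j''(q)v,(bq)_x\ra + \la I_j'(q),(bv)_x\ra$, the integration by parts, and the linearization of $A_j$), which is exactly what the paper's one-line proof implicitly invokes.
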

\begin{proof}
Differentiate the formul{\ae} in Lemma \ref{L:near-conserved} with 
respect to $ u $ at $ q $ in the direction of $ v$.
\end{proof}

\section{Double soliton profile and properties}
\label{S:q-properties}

Here we record some properties of mKdV and its double soliton solutions. 
The parametrization of the family of double solitons follows 
the presentation for NLS in Faddeev--Takhtajan \cite{FT}.

The double-soliton is defined in terms of the profile $q(x,a,c)$,
where 
\begin{gather}
\label{eq:ac}
\begin{gathered}
 a=(a_1,a_2)\in \mathbb{R}^2 \,, \ \ 
c=(c_1,c_2) \in \mathbb{R}^2 \setminus {\mathcal C} \,, \\ 
{\mathcal C} \defeq \{ (c_1, c_2)  \; : \; c_1 = \pm c_2 \} 
 \cup \RR \times \{ 0 \} \cup \{ 0 \} \times \RR 
\,. 
\end{gathered}
\end{gather}

The profile $q = q_2 $ (from now on we drop the subscript $ 2 $) 
is defined by
\begin{equation}
\label{E:q-def}
q(x,a,c) = \frac{\det M_1}{\det M}
\end{equation}
where
$$M = [ M_{ij} ]_{1\leq i,j \leq 2} \,,  \qquad
M_{ij} = \frac{1+\gamma_i\gamma_j}{c_i+c_j} 
\,, \qquad 
M_1=
\left[
\begin{array}{c|c}
M & \begin{array}{c} \gamma_1 \\ \gamma_2 \end{array} \\
\hline
\begin{array}{cc} 1 & 1 \end{array} & 0
\end{array}
\right]
$$
and 
$$\gamma_j = (-1)^{j-1}  \exp({-c_j(x-a_j)}), \quad j=1,2\,.$$
For conveninece 
we will consider the 
\[ 0 < c_1 < c_2 \]  
connected component of 
$ \RR^2 \setminus {\mathcal C} $ throughout the paper.
Since 
\[
\begin{split} 
& q ( x, a_1, a_2 , c_1, c_2 ) = - q ( x , a_2 , a_1, c_2, c_1 ) \,, \\ 
& q ( x, a_1, a_2 , - c_1 , -c_2 ) = - q ( -x , - a_1 , -a_2,  c_1, c_2) \,,
\end{split}
\]
the only other component to consider would be, say, $ 0 < - c_1 < c_2 $
(see Fig.\ref{f:col}), and the analysis is similar.

We should however mention that in numerical experiments 
it is more useful to introduce a phase parameter $ \epsilon = 
( \epsilon_1 , \epsilon_2 ) $, $ \epsilon_j = \pm 1 $, and define
$ \tilde q ( x , a , c , \epsilon ) $ by \eqref{E:q-def} but with 
$ \gamma_j$'s replaced by 
$$\tilde \gamma_j = (-1)^{j-1} \epsilon_j \exp({-c_j(x-a_j)}) , \quad j=1,2\,. $$
We can then check that 
\[ \tilde q ( x, a, c , \epsilon ) = q ( x , a , ( \epsilon_1 c_1 , 
\epsilon_2 c_2)  ) \,, \]
but $ \tilde q $ seems more stable in numerical calculations.

\begin{figure}
\includegraphics[width=6in]{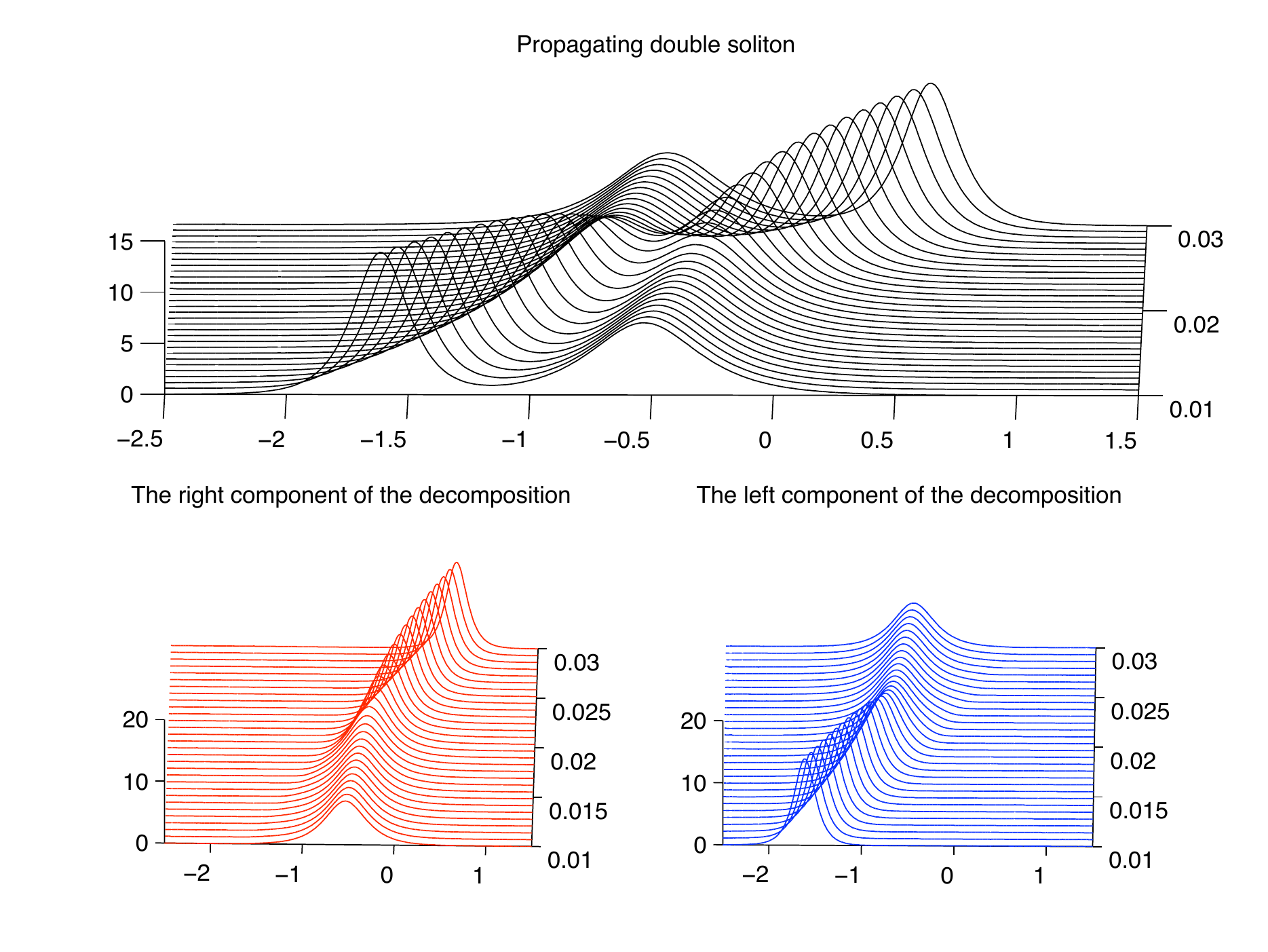}
\caption{A depiction of the double soliton solution given by \eqref{E:ds}.
The top figure shows the evolution of a double soliton.
The bottom two figures show the evolution of its two components 
defined using \eqref{eq:Qdec}. One possible
``particle-like'' interpretation of the two soliton interaction \cite{Kas}
is that the slower soliton, shown in the left bottom plot is hit
by the fast soliton shown in the right bottom plot. Just like 
billiard balls, the slower one picks up speed, and the fast one
slows down. But unlike billiard balls, 
the solitons simply switch velocities.}
\label{F:2sol-interact}
\end{figure}

The corresponding \emph{double-soliton}
\begin{equation}
\label{E:ds}
u(x,t) = q(x, a_1+c_1^2t,a_2+c_2^2t,c_1,c_2)
\end{equation}
is an exact solution to {mKdV}. For the double soliton
this  can be checked by 
an explicit calculation but it is a consequence of the inverse
scattering method.  This is the only place in this paper where we appeal 
directly to the 
inverse scattering method.
Fig.~\ref{F:2sol-interact} illustrates some aspects of
this evolution.

The scaling properties of {mKdV} imply that
\begin{gather}
\label{eq:scaleq}
\begin{gathered} 
 q ( x + t, a + ( t, t ) , c ) = q ( x , a, c) \,, \\
 q ( t x, t a, c/t ) =  q ( x , a , c) /t \,. 
\end{gathered}
\end{gather}
Both properties also follow from the formula for $ q $,
with the second one being slightly less obvious:
\[  \begin{split} q ( t x , t a, c /t ) & = \frac 1 { \det t M } \det
\left[
\begin{array}{c|c}
 t M & \begin{array}{c} \gamma_1 \\ \gamma_2 \end{array} \\
\hline
\begin{array}{cc} 1 & 1 \end{array} & 0 
\end{array}
\right]  \\
& = \frac 1 { \det t M } \det \left( \left[ \begin{array}{c|c} \begin{array}{ll} t & 0 \\
0 & t \end{array}  & \begin{array}{c} 0 \\ 0 \end{array} \\
\hline
\begin{array}{cc} 0 & 0 \end{array} & 1 
\end{array} \right] M_1
\left[ \begin{array}{c|c}   \begin{array}{ll} 1 & 0 \\
0 & 1 \end{array}  & \begin{array}{c} 0 \\ 0 \end{array} \\
\hline
\begin{array}{cc} 0 & 0 \end{array} & 1/t  
\end{array} \right] \right) \\
& =  q ( x , a, c ) /t \,. 
\end{split} 
\]

Now we discuss in more detail the properties of the profile $q$.  
Recalling that we suppose that $c_2 >c_1 > 0 $, let
\begin{equation}
\label{E:6}
\alpha_1
\defeq 
\frac{1}{c_1} \log \left( \frac{c_1+c_2}{c_2-c_1} \right) \,, 
\qquad \alpha_2
\defeq 
\frac{1}{c_2} \log \left( \frac{c_2-c_1}{c_1+c_2} \right) \,,
\end{equation}
noting that for $ c_2 > c_1 > 0 $, $ \alpha_1 > 0 $ and $ \alpha_2 < 0 $.
Fix a smooth function, $\theta \in C^\infty (\RR , [ 0 , 1 ] ) $, such that 
\begin{equation}
\label{eq:the} 
\theta (s) = \left\{ \begin{array}{ll}  \  \ 1  & \text{for } \ s\leq -1 \,, \\
- 1 & \text{for} \ \ s\geq 1 \,. \end{array}
\right. 
\end{equation}
   Define the shifted positions as
\begin{equation}
\label{eq:hata}
\hat a_j \defeq a_j + \alpha_j  \theta(a_2-a_1) 
\end{equation}
that is,
\[ \hat a_j = \left\{ \begin{array}{ll} a_j + \alpha_j  \,, & 
a_2 \ll a_1  \,,\\
a_j - \alpha_j \,, & a_2 \gg a_1 \,. \end{array} \right. 
\]
see Fig.~\ref{F:shift}.
We note that $\hat a_j= \hat a_j(a_j,c_1,c_2)$.

\begin{figure}
\includegraphics[width=6in]{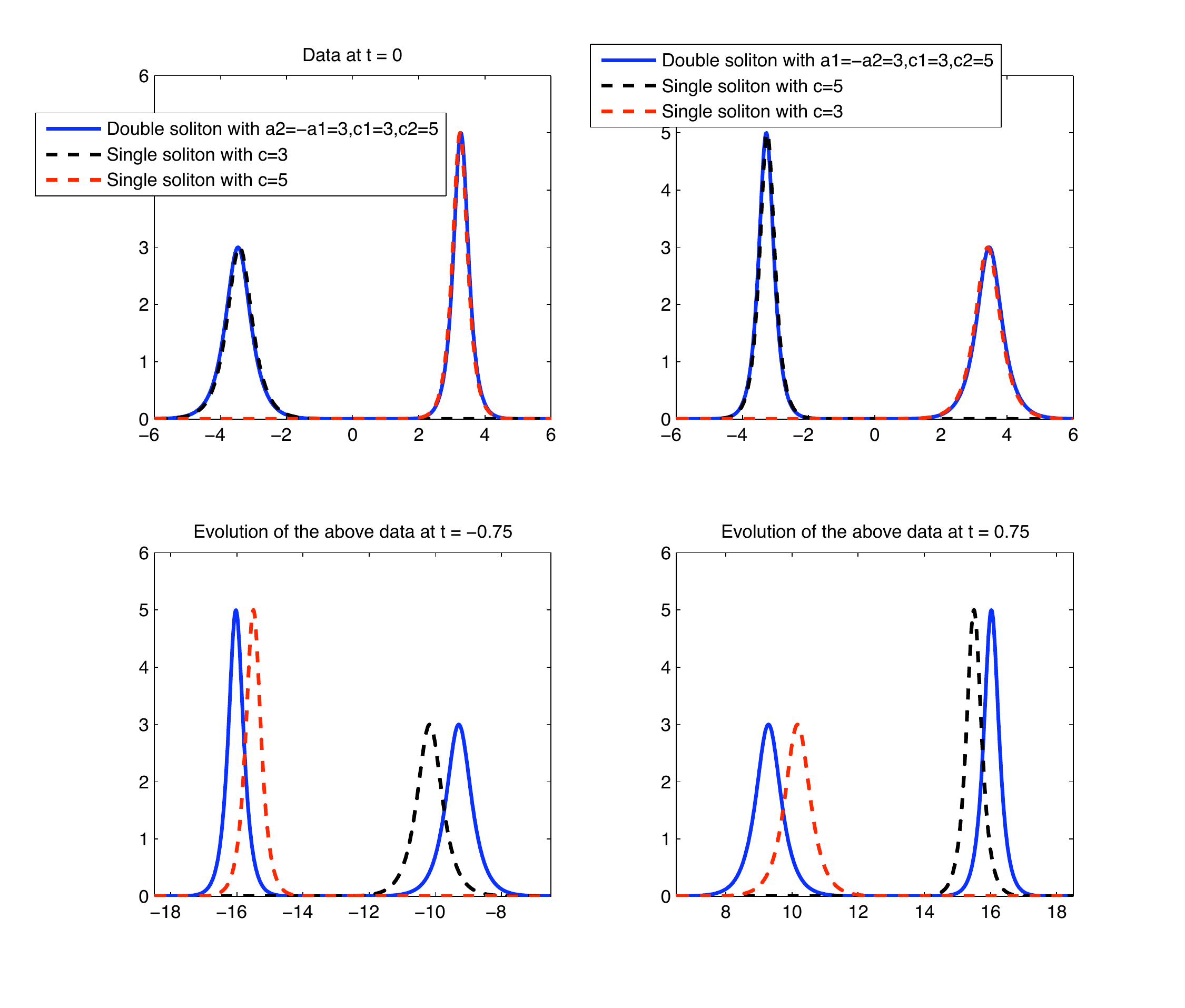}
\caption{The top plots show show $ q ( x , 3,5 , \mp 3 , \pm 3 ) $,
the corresponding $ \eta ( x , \hat a_j , c_j ) $ given by 
Lemma \ref{L:q-asymp}. The bottom plots show the post-interaction 
pictures at times $ \mp 0.75$. Since the sign of $ a_2 - a_1 $
changes after the interaction we see the shift compared to 
the evotion of $ \eta ( x , \hat a_j , c_j )$'s.}
\label{F:shift}
\end{figure}

\newcommand{\Ssol}{\mathcal{S}_\textnormal{sol}}
\newcommand{\Serr}{\mathcal{S}_\textnormal{err}}

Let $\mathcal{S}$ denote the Schwartz space.  We will next introduce function classes $\Ssol$ and $\Serr$, and then show that $q\in \Ssol$ and give an approximate expression for $q$ with error in $\Serr$.  

\begin{definition}
Let $\Serr$ denote the class of functions,  $ \varphi = \varphi(x,a,c)$, 
$ x \in \RR $, $ a \in \RR^2 $,  $ 0 < \delta < c_1 < c_2 - \delta < 
1/ \delta $ (for any fixed $ \delta $) satisfying
\[  \left|  \partial_x^\ell \partial_c^k \partial_a^{p} \varphi 
 \right|  \leq 
C_2 \exp ( - (|x-a_1|+|x-a_2|)/C_1 )  \,, \]
where $ C_j $ depend on $ \delta $, $ \ell$, $ k $, and $ p $ only.

Let $\Ssol$ denote the  class of functions of $(x,a,c)$ of the form
$$p_1(c_1,c_2) \varphi_1(c_1(x-\hat a_1)) + p_2(c_1,c_2) \varphi_2(c_2(x-\hat a_2))  + \varphi ( x, a , c ) $$
where
\begin{enumerate}
\item $ |\partial_k^\ell \varphi_j(k) | \leq C_\ell \exp ( - |k|/ C ) $, for some $ C$, 
\item $p_j\in C^\infty ( \RR^2 \setminus {\mathcal C} ) $.
\item $ \varphi \in \Serr $.
\end{enumerate}
\end{definition}

Some elementary properties of $\Ssol$ and $\Serr$ are given in 
the following.

\begin{lemma}[properties of $\Serr$]
\label{L:S-prop}
\quad
\begin{enumerate}
\item $\partial_x \Serr \subset \Serr$, $\partial_{a_j} \Serr \subset \Serr$, $\partial_{c_j} \Serr \subset \Serr$.  
\item $(x-a_j)\Serr\subset \Serr$ and $(x-\hat a_j)\Serr\subset \Serr$ \,.
\item 
\label{I:inverse-S}
If $f\in \Serr$ and $\int_{-\infty}^{+\infty}f =0$, 
then $\partial_x^{-1}f \in \Serr$. 
\end{enumerate}
\end{lemma}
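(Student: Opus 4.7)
My plan is to dispose of (1) and (2) as nearly immediate consequences of the pointwise bounds defining $\Serr$, then focus the argument on (3), where the mean-zero hypothesis is genuinely used.

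For (1), each of $\partial_x$, $\partial_{a_j}$, $\partial_{c_j}$ simply increments one of the multi-indices $\ell$, $p$, $k$ in the definition of $\Serr$, so the required bound persists with new constants $C_1, C_2$ depending on the new multi-index. For (2), the factor $(x-a_j)$ is absorbed into the exponential via the elementary inequality $|s|e^{-s/C_1} \leq C(\epsilon)\,e^{-s/(C_1+\epsilon)}$; derivatives in $x,a,c$ of $(x-a_j)\varphi$ produce $\pm\varphi$ (or $0$) plus $(x-a_j)$ times a derivative of $\varphi$, and induction on the total derivative order closes the argument. For $(x-\hat a_j)\varphi$ I will write $\hat a_j = a_j + \alpha_j(c_1,c_2)\,\theta(a_2-a_1)$: on the parameter region $0<\delta<c_1<c_2<1/\delta$ with $c_2-c_1>\delta$, the function $\alpha_j$ is smooth with bounded derivatives, and $\theta$ and all of its derivatives are bounded functions of $a_2-a_1$. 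Hence $(x-\hat a_j)\varphi = (x-a_j)\varphi - \alpha_j\,\theta(a_2-a_1)\,\varphi$, and both terms lie in $\Serr$ by the case already treated together with closure of $\Serr$ under multiplication by bounded smooth functions of $(a,c)$.

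For (3), set $g(x) \defeq \partial_x^{-1}f(x)$. The mean-zero hypothesis gives the two equivalent expressions $g(x) = \int_{-\infty}^x f(y)\,dy = -\int_x^{+\infty} f(y)\,dy$. Assume first $a_1\leq a_2$ (the opposite case is symmetric). I will split $\RR$ into the three regions $x\leq a_1$, $a_1\leq x\leq a_2$, $x\geq a_2$. On $x\leq a_1$ I use the left representation: the identity $|y-a_1|+|y-a_2|=a_1+a_2-2y$ for $y\leq a_1$ reduces the bound to an elementary integral equal to $\tfrac{C_1}{2}\exp(-(a_1+a_2-2x)/C_1)$, which matches $\exp(-(|x-a_1|+|x-a_2|)/C_1)$ exactly in this region. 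The region $x\geq a_2$ is handled symmetrically via the right representation. For the middle region I split $\int_{-\infty}^x = \int_{-\infty}^{a_1} + \int_{a_1}^x$: the first piece inherits the left-region bound, while the second is controlled crudely by $(x-a_1)\exp(-(a_2-a_1)/C_1) \leq (a_2-a_1)\exp(-(a_2-a_1)/C_1)$, which is absorbed into $\exp(-(a_2-a_1)/C_1')$ for any $C_1'>C_1$ by the same polynomial-into-exponential trick. Since $|x-a_1|+|x-a_2| = a_2-a_1$ throughout this region, this gives the desired estimate.

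For the higher derivatives, $\partial_x g = f \in \Serr$ is immediate; and since $\partial_{a_j}$ and $\partial_{c_j}$ commute with $\partial_x^{-1}$, it suffices to check that $\partial_{a_j}f$ and $\partial_{c_j}f$ still have vanishing integral over $\RR$. This is differentiation under the integral sign, justified by the uniform exponential decay of all derivatives of $f$, and the pointwise argument above then applies verbatim to each partial derivative in $(a,c)$. The only genuinely technical point is the middle-region accounting, which forces a small enlargement of the exponential constant; this is costless because $\Serr$ allows constants depending freely on the multi-indices.
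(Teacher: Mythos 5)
The paper states Lemma~\ref{L:S-prop} without proof, treating these as elementary consequences of the definition of $\Serr$; so there is nothing to compare against. Your proof correctly fills in those details. Items (1) and (2) are handled exactly as one would expect: (1) is bookkeeping on the multi-indices, and (2) combines the absorption $|s|e^{-s/C_1}\lesssim e^{-s/C_1'}$ (for $C_1'>C_1$) with the observation that $\hat a_j - a_j = \alpha_j(c_1,c_2)\,\theta(a_2-a_1)$ is, on the stated parameter region $\delta<c_1<c_2-\delta<1/\delta$, smooth with all $(a,c)$-derivatives bounded, so that $\Serr$ is stable under multiplication by it. For (3), the three-region split is the right way to organize the pointwise estimate: on $x\le a_1$ you use $g=\int_{-\infty}^x f$ and $|y-a_1|+|y-a_2|=a_1+a_2-2y$; on $x\ge a_2$ the symmetric representation $g=-\int_x^{+\infty}f$ (this is where the mean-zero hypothesis enters); and on $a_1\le x\le a_2$ the constant value $|x-a_1|+|x-a_2|=a_2-a_1$ lets the crude bound $(x-a_1)e^{-(a_2-a_1)/C_1}$ be absorbed into $e^{-(a_2-a_1)/C_1'}$. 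Passing to $\partial_a^p\partial_c^k$ derivatives by differentiating under the integral and noting that $\int\partial_{a_j}f=\partial_{a_j}\!\int f=0$ (likewise for $c_j$) preserves the mean-zero condition, and $\partial_x$ derivatives reduce to $f$ itself; so the bound propagates to all $\partial_x^\ell\partial_c^k\partial_a^p\,\partial_x^{-1}f$. This is a complete and correct argument.
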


The class $ \Serr $ allows to formulate the following


\begin{lemma}[asymptotics for $q$]
\label{L:q-asymp}
Suppose that $  0 < c_1 < c_2 < c_1/\epsilon < 1/\epsilon^2 $, 
for $ \epsilon > 0 $. 
Then for $ |a_2 - a_1 | \geq C_0 / (c_1 + c_2 ) $, 
\begin{equation}
\label{eq:decoq}
 \left|  \partial_x^\ell \partial_c^k \partial_a^{p} \left( 
q(x,a,c) - \sum_{j=1}^2 \eta(x , \hat a_j, c_j) \right) \right|  \leq 
C_2 \exp ( - (|x-a_1|+|x-a_2|)/C_1 )  \,, 
\end{equation}
where $ C_2 $ depends on $ k, \ell , p $ and $ \epsilon $, and $ C_0 $,
$ C_1 $ on $ \epsilon $ only.
In other words,
$$ q(x,a,c) - \sum_{j=1}^2 \eta(x , \hat a_j, c_j) \in \Serr \,. $$

\end{lemma}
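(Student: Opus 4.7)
Our plan is to prove Lemma~\ref{L:q-asymp} by explicit computation from the closed-form expression $q = \det M_1 / \det M$. Using the symmetry $q(x, a_1, a_2, c_1, c_2) = -q(x, a_2, a_1, c_2, c_1)$ (which also permutes the indices in $\hat a_j$), it suffices to treat the case $a_2 > a_1$. Enlarging $C_0$ if necessary (depending on $\epsilon$), we may assume $|a_2 - a_1| \geq 1$, so $\theta(a_2 - a_1) = -1$ and all derivatives of $\theta$ vanish identically in the regime of interest. Then $\hat a_j = a_j - \alpha_j$, and writing $\rho_j \defeq c_j(x - a_j)$ and $\hat\rho_j \defeq c_j(x - \hat a_j) = \rho_j + c_j \alpha_j$, we set
\[ \beta \defeq c_1 \alpha_1 = -c_2 \alpha_2 = \log\left(\frac{c_1+c_2}{c_2-c_1}\right) > 0, \]
so that $\hat\rho_1 = \rho_1 + \beta$, $\hat\rho_2 = \rho_2 - \beta$, $e^\beta = (c_1+c_2)/(c_2-c_1)$, and $e^{-\beta} = (c_2-c_1)/(c_1+c_2)$.

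Next, we expand $\det M$ and $\det M_1$ as polynomials in $\gamma_1 = e^{-\rho_1}$ and $\gamma_2 = -e^{-\rho_2}$ with coefficients in $c_1, c_2$. Rewriting $\eta(x,\hat a_1,c_1) + \eta(x,\hat a_2,c_2) = c_1 \sech \hat\rho_1 + c_2 \sech \hat\rho_2$ over the common denominator $(1 + e^{-2\hat\rho_1})(1 + e^{-2\hat\rho_2})$ and forming $q - \eta_1 - \eta_2$ on a further common denominator, the specific value of $\beta$ forces the resulting numerator to factor as $e^{-\hat\rho_1} \cdot e^{\hat\rho_2} \cdot \tilde N$, where $\tilde N$ is a polynomial in $e^{\pm \hat\rho_1}, e^{\pm \hat\rho_2}$ and in $c_1, c_2$, and the denominator $D$ is bounded away from zero (by a constant depending on $\epsilon$) throughout the range $|a_2 - a_1| \geq C_0/(c_1+c_2)$. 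In physical variables, $e^{-\hat\rho_1} \cdot e^{\hat\rho_2} = e^{-c_1(x - \hat a_1) + c_2(x - \hat a_2)}$, and a short case analysis on the sign of $x - m$ with $m \defeq (a_1+a_2)/2$ (using that $c_1, c_2, |\alpha_j|$ are bounded above and below in terms of $\epsilon$) shows this factor is bounded by $C \exp(-(|x - a_1| + |x - a_2|)/C_1)$ for some $C_1 = C_1(\epsilon) > 0$. This yields the pointwise bound in the lemma with $\ell = k = p = 0$.

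For the derivatives, differentiation in $x, a, c$ acts on the rational expression without destroying its structure: $\partial_x$ and $\partial_{a_j}$ produce bounded multipliers $\pm c_j$ from the $\hat\rho_j$-dependence; $\partial_{c_j}$ additionally differentiates $\beta$ and the polynomial coefficients in $c_1, c_2$, contributing factors of order $1/(c_2 - c_1) = O(1/\epsilon)$, together with polynomial multipliers $(x - a_j)$ coming from $\partial_{c_j} \rho_j = x - a_j$. The latter factors are absorbed into $\Serr$ by Lemma~\ref{L:S-prop}(2). The case analysis of the previous paragraph then applies uniformly to yield $|\partial_x^\ell \partial_c^k \partial_a^p(q - \eta_1 - \eta_2)| \leq C_2 \exp(-(|x-a_1|+|x-a_2|)/C_1)$.

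The main obstacle is the algebraic factorization asserted in the second paragraph, namely that the numerator of $q - \eta_1 - \eta_2$ factors cleanly by $e^{-\hat\rho_1} \cdot e^{\hat\rho_2}$. This matching is forced by the specific value of $\beta$ in \eqref{E:6}: the shifts $\alpha_j$ encode the classical two-soliton phase shift, and the identity expresses that phenomenon algebraically. The verification itself is a straightforward but bookkeeping-heavy expansion of the $2\times 2$ determinant $\det M$ and the $3\times 3$ determinant $\det M_1$, best organized by collecting terms according to their dependence on $\gamma_1, \gamma_2$ and substituting the values of $e^{\pm\beta}$ above to rewrite the result as a function of $\hat\rho_1, \hat\rho_2$.
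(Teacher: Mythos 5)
Your approach — expanding $\det M$ and $\det M_1$ directly and forming $q-\eta_1-\eta_2$ over a common denominator — differs structurally from the paper's proof, which first uses the scaling \eqref{eq:scaleq} to reduce to the symmetric $Q(x,\alpha,\delta)=q(x,-\alpha,\alpha,1-\delta,1+\delta)$, then invokes the Kasman-type decomposition $Q(x,\alpha,\delta)=\tau(x,\alpha,\delta)+\tau(-x,-\alpha,\delta)$ with the explicit formula \eqref{eq:deftau}, and analyzes the single piece $\tau$ separately on the two half-lines. That two-piece decomposition is what lets the paper avoid any global factorization of the error.

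There is a concrete gap in the step where you estimate the prefactor. You assert that $e^{-\hat\rho_1}\cdot e^{\hat\rho_2}=e^{-c_1(x-\hat a_1)+c_2(x-\hat a_2)}$ is bounded by $C\exp(-(|x-a_1|+|x-a_2|)/C_1)$ after a case analysis on $\sgn(x-m)$. This is false: the exponent has $x$-coefficient $c_2-c_1>0$, so the factor \emph{grows} without bound as $x\to+\infty$, while the claimed bound decays like $e^{-2x/C_1}$. More fundamentally, no single monomial $e^{\alpha\hat\rho_1+\beta\hat\rho_2}$ can satisfy the two-sided requirement (decay as $x\to+\infty$ needs $\alpha c_1+\beta c_2<0$; decay as $x\to-\infty$ needs $\alpha c_1+\beta c_2>0$), so the decay cannot be extracted from a universal prefactor with the remaining ratio $\tilde N/D$ merely bounded. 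Either $\tilde N$ or the denominator must themselves carry the compensating exponential decay in the two tails, which undoes the point of the factorization claim; you would need to track the full rational expression in each of the three ranges $x\lesssim a_1$, $a_1\lesssim x\lesssim a_2$, $x\gtrsim a_2$. The paper circumvents exactly this difficulty by writing $Q=\tau(x,\alpha,\delta)+\tau(-x,-\alpha,\delta)$: the estimate \eqref{eq:tau1} shows $\tau(x,\alpha,\delta)$ is exponentially negligible for $x\leq 0$, while the expansions following \eqref{eq:tau11}--\eqref{eq:tau12} show it matches a shifted $\sech$ up to the $\Serr$ error for $x\geq 0$. If you want to carry out a direct-difference argument, you should reproduce that two-sided structure rather than rely on a single prefactor.
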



\begin{corollary}
\label{C:q-inverse}
$\partial_x^{-1} \partial_{a_j}q$, $\partial_x^{-1} \partial_{c_j}q \in \Ssol$.
\end{corollary}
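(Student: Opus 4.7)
The strategy is to invoke Lemma \ref{L:q-asymp} to write $q = \eta_1 + \eta_2 + \psi$ with $\eta_j(x) = \eta(x,\hat a_j,c_j) = c_j\sech(c_j(x-\hat a_j))$ and $\psi \in \Serr$, and then differentiate and apply $\partial_x^{-1}$ term by term. The key algebraic identity is $\partial_a\eta(x,a,c) = -\partial_x\eta(x,a,c)$, whence $\partial_x^{-1}\partial_a\eta = -\eta$. Whenever the chain rule produces an $x$-independent factor $\partial_\bullet\hat a_j$, applying $\partial_x^{-1}$ returns $-\eta_j\cdot\partial_\bullet\hat a_j$, and if that factor depends only on $c$ the result already has the form $p_j(c)\sech(c_j(x-\hat a_j))$ demanded by $\Ssol$. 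For $\partial_x^{-1}$ to produce decaying functions I need $\int \partial_{a_k}q\,dx = \int \partial_{c_k}q\,dx = 0$, which follows because $\int q\,dx \equiv 2\pi$ on the component $\{0<c_1<c_2\}$: conservation of $I_0$ under mKdV, the scaling and translation identities \eqref{eq:scaleq}, and the limit $|a_2-a_1|\to \infty$ pin down the common value.

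Chain-rule computations give $\partial_{a_k}\hat a_j = \delta_{jk} + (-1)^k\alpha_j(c)\theta'(a_2-a_1)$ and $\partial_{c_k}\hat a_j = (\partial_{c_k}\alpha_j)\theta(a_2-a_1)$. For $\partial_{a_k}q$ the $\delta_{jk}$-part yields the leading $\Ssol$ term $-\eta_k$; the $\theta'$-part is supported in $|a_2-a_1|\leq 1$, on which $|\eta_j| \lesssim e^{-(|x-a_1|+|x-a_2|)/C}$, placing it in $\Serr$; and $\partial_x^{-1}\partial_{a_k}\psi \in \Serr$ by Lemma \ref{L:S-prop}(\ref{I:inverse-S}). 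For $\partial_{c_k}q$ the same analysis applies to the $\theta$-part below, augmented by one new piece coming from $\partial_c$ hitting the explicit $c_k$ inside $\eta_k$: a direct integration by parts gives $\partial_x^{-1}\partial_c\eta(x,a,c) = (x-a)\sech(c(x-a))$, contributing $(1/c_k)\varphi(c_k(x-\hat a_k))$ with $\varphi(s) = s\sech(s)$, another admissible $\Ssol$ profile.

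The main obstacle is the factor $\theta(a_2-a_1)$ in $\partial_{c_k}\hat a_j$: the resulting term $-(\partial_{c_k}\alpha_j)\theta(a_2-a_1)\eta_j$ has an $a$-dependent coefficient and so does not literally fit the $\Ssol$ template, which demands $p_j$ depend on $c$ alone. I would resolve this by working separately on the two components $\{a_2 > a_1\}$ and $\{a_2 < a_1\}$: on each, $\theta(a_2-a_1) \equiv \mp 1$ is constant, so the coefficient becomes a function of $c$ alone and is absorbed into $p_j(c) = \pm c_j\partial_{c_k}\alpha_j$; the transition slab $|a_2-a_1| \leq 1$ contributes only bounded smooth corrections, which enter $\Serr$ via the $x$-decay of $\eta_j$ on that bounded set. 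Checking that all mixed $(a,c)$-derivatives preserve the $\Serr$-type bounds is routine since $\theta$, $\alpha_j$ and their derivatives are bounded and smooth throughout the admissible range.
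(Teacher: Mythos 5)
Your proposal follows the same route as the paper's proof: decompose $q$ via Lemma \ref{L:q-asymp}, differentiate, use the vanishing total integral (from the remark in Lemma \ref{L:Iq} together with an explicit computation for the $\eta$ terms) so that Lemma \ref{L:S-prop}\eqref{I:inverse-S} places $\partial_x^{-1}$ of the error in $\Serr$, and then compute $\partial_x^{-1}$ of the explicit $\eta$ contributions directly. Where the paper dispatches the last step with ``clearly in $\Ssol$,'' you carry out the chain-rule calculus explicitly, and in doing so you uncover a real subtlety: differentiating $\hat a_{3-j}$ in $c_j$ produces terms with a coefficient $\theta(a_2-a_1)$, which depends on $a$ and therefore does not literally match the template $p_j(c_1,c_2)\varphi_j(c_j(x-\hat a_j))$ with $p_j$ a function of $c$ alone. (The same $a$-dependent coefficients appear explicitly in the paper's formula \eqref{E:qc}, so this is not an artifact of your computation.) Your resolution --- absorb the $|a_2-a_1|\le 1$ transition slab into $\Serr$ and observe that $\theta$ is locally constant $\pm 1$ on the two remaining components --- captures exactly what is needed for every later estimate, since those only use decay and uniform derivative bounds; strictly speaking, though, the literal definition of $\Ssol$ does not allow a piecewise-constant sign in $p_j$, so the honest fix is to widen the class (e.g.\ allow $p_j$ to depend smoothly on $c$ and on $\theta(a_2-a_1)$) rather than to argue component by component inside an unchanged definition. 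With that understood, the argument is correct and more informative than the paper's one-line closing.
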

\begin{proof}
By Lemma \ref{L:q-asymp}, we have
$$\partial_{c_j}q = \partial_{c_j} \sum_{j=1}^2 \eta(\cdot, \hat a_j, c_j) + f$$
where $f\in \Serr$.  By direct computation with the $\eta$ terms, we find that 
$$\int_{-\infty}^{+\infty} \partial_{c_j} \sum_{j=1}^2 \eta(\cdot, \hat a_j, c_j) = 0 \,.$$
By the remark in Lemma \ref{L:Iq}, we have $\int_{-\infty}^{+\infty} \partial_{c_j}q =0$.  Hence $\int_{-\infty}^{+\infty} f =0$. By Lemma \ref{L:S-prop}\eqref{I:inverse-S}, we have $\partial_x^{-1}f \in \Serr$.  Hence
$$\partial_x^{-1} \partial_{c_j}q = \partial_x^{-1}\partial_{c_j} \sum_{j=1}^2 \eta(\cdot, \hat a_j, c_j) + \Serr$$
and the right side is clearly in $\Ssol$.
\end{proof}



\begin{proof}[Proof of Lemma \ref{L:q-asymp}]
We define 
\begin{equation}
\label{eq:defQ} 
Q ( x, \alpha , \delta ) \defeq q ( x, -\alpha , \alpha
 , 1 - \delta , 1 + \delta ) \,,
\end{equation}
so that, using \eqref{eq:scaleq},
\begin{gather}
\begin{gathered}
\label{eq:q2Q}
q ( x , a_1 , a_2 , c_1 , c_2 ) = 
\frac{ c_1 + c_2 } 2 Q  \left( \left(
\frac {c_1 + c
_2} 2  \right)  \left( x  - \frac{ a_1 + a_2} 2 \right)
, \alpha, \delta \right) \,, \\ 
\alpha =  \left(\frac {c_1 + c_2 } 2 \right) 
\left( \frac {a_2 - a_1 } 2 \right) \,, \ \ 
\delta =  \frac {c_2 - c_1 } { c_2 + c_1 }  \,. 
\end{gathered}
\end{gather}
Hence it
is enough to study the more symmetric expression \eqref{eq:defQ}.
We decompose it in the same spirit as the decomposition 
of double solitons for KdV  was performed in \cite{Kas}:
\begin{equation}
\label{eq:Qdec} Q ( x , \alpha, \delta ) =  \tau ( x , \alpha, \delta )  + 
 \tau ( - x , - \alpha, \delta ) \,, 
\end{equation}
where 
\begin{equation}
\label{eq:deftau}
 \tau ( x, \alpha , \delta ) = \frac12 
\frac{ ( 1 + \delta ) \exp({ ( 1 - \delta ) ( x + \alpha ) } )
+ ( 1 - \delta ) \exp({ ( 1 + \delta ) ( x - \alpha ) } )} 
{ \delta \sech^2({ x - \delta \alpha }) 
+ \delta^{-1} \cosh^2 ({ \delta x - \alpha  })  }
\,. 
\end{equation}
This follows from a straightforward but tedious
calculation which we omit.

Thus, to show \eqref{eq:decoq} we have to show that
\begin{equation}
\label{eq:tauas} 
\begin{split} 
& | \partial_x^\ell \partial_\alpha^p \partial_\delta^k ( 
\tau ( x , \alpha, \delta ) - \eta ( x - | \alpha | - \log(1/\delta) 
/ ( 1 \pm \delta ) , 1 \pm \delta ) ) | \\
& \ \ \ \leq C_2 \exp ( - ( |x| + |
\alpha | )/ C_1 ) \,, \ \ \pm \alpha \gg 1 \,,
\end{split} 
\end{equation}
uniformly for $ 0 <  \delta \leq 1 - \epsilon $.

To see this 
put 
$ \gamma = ( 1 - \delta )/( { 1 + \delta } ) $, and 
multiply 
the numerator and denominator of \eqref{eq:deftau} by 
$ e^{- ( 1 + \delta ) ( x - \alpha ) } $:
\begin{equation}
\label{eq:tau11}  \tau ( x , \alpha, \delta ) = 
 \frac{ 2 (1 - \delta) \left( 1 + \gamma^{-1} 
 e^{ 2 \alpha - 2 \delta x }\right) }
 { \delta e^{ ( 1 - \delta ) ( x + \alpha )} 
( 1 - e^{-2x + 2 \delta \alpha } )^2 + \delta^{-1} e^{-(1- \delta) ( x + \alpha) 
} ( 1 + e^{-2\delta x + 2 \alpha } )^2 }  \,. \end{equation}
Similarly, the multiplication by 
$ e^{- ( 1 + \delta ) ( x - \alpha ) } $ gives
\begin{equation}
\label{eq:tau12} \begin{split} 
\tau ( x , \alpha, \delta )  = & \frac{ 2 (1 + \delta) \left( 1 + 
\gamma e^{ - 2 \alpha + 2 \delta x }  \right) }
 { \delta e^{ ( 1 + \delta ) ( x - \alpha )} 
( 1 - e^{-2x + 2 \delta \alpha } )^2 + \delta^{-1} e^{-(1 +  \delta) ( x - \alpha) }
 ( 1 + e^{-2\delta x - 2 \alpha } )^2 } 
 \\
& = 
\frac{ 2 (1 + \delta) \left( 1 +  \gamma 
 e^{ - 2 \alpha + 2 \delta x } \right) ( 1 + e^{-2\delta x - 2 \alpha } )^{-2 }
} 
 { \delta e^{ ( 1 + \delta ) ( x - \alpha )} 
\left(({ 1 - e^{-2x + 2 \delta \alpha } })/
({  1 + e^{-2\delta x - 2 \alpha }  } ) \right)^2 
+ \delta^{-1} e^{-(1 +  \delta) ( x - \alpha) } } 
\,.
\end{split} 
\end{equation}

This shows that 
for negative values of $ x $, $ \tau $ is negligible:
multiplying the numerator and denominator by $ \delta $ and 
using \eqref{eq:tau11} for $ \alpha \leq 0 $ and \eqref{eq:tau12}
for $ \alpha \geq 0 $, gives
      %
\begin{equation}
\label{eq:tau1} 
\tau ( x , \alpha , \delta ) \leq 
\left\{ \begin{array}{ll} \delta ( 1 + \delta ) ( 1 + 
e^{-2 (|\alpha| +  \delta |x|) } ) e^{ - ( 1 + \delta ) ( |x | + | \alpha|) } \,, 
& \alpha \geq 0 \,, \\
\delta ( 1 + \delta ) 
( 1 + e^{2 \delta |x| - 2 | \alpha|  } )^{-1}  e^{- ( 1 - \delta ) ( |x | + | \alpha|)}\,,  & \alpha \leq 0 \,, \end{array} \right. 
\end{equation}
and in fact this is valid uniformly for $ 0 \leq \delta \leq 1 $. 
Similar estimates hold also for derivatives.

For $ x \geq 0 $,  $ 0 \leq \delta \leq 1 - \epsilon $, and 
for 
$ \alpha  \ll - 1 $, we use \eqref{eq:tau11} to obtain,
\[ \tau( x, \alpha, \delta ) 
= ( 1 - \delta ) \sech \left( ( 1 - \delta ) \left( x - |\alpha| - 
\frac 1 { 1 - \delta } \log \frac 1 \delta \right) \right) + 
\epsilon_- ( x , \alpha, \delta ) \,, \]
and for $ \alpha  \gg  1 $, \eqref{eq:tau12}:
\[ \tau( x, \alpha, \delta ) 
= ( 1 + \delta ) \sech \left( ( 1 + \delta ) \left( x - |\alpha | - 
\frac 1 { 1 + \delta } \log \frac 1 \delta \right) \right) + 
\epsilon_+ ( x , \alpha, \delta ) \,,\]
where 
\[ |\partial_x^k \epsilon_\pm | \leq C_k \exp ( - ( | x| + |\alpha|)/c) 
\,,  \ \ c> 0 \,, \]
uniformly in $ \delta $, $ 0 < \delta < 1 - \epsilon $. 
Inserting the resulting decomposition into \eqref{eq:q2Q} completes the proof.
\end{proof}

\begin{lemma}[fundamental identities for $q$]
\label{L:magic}
With  $q=q(\cdot,a,c)$, we have
\begin{equation}
\label{E:magic2}
 \partial_x I_3'(q) = 2 
\partial_x(- \partial_x^2q-2q^3) =  2 \sum_{j=1}^2 c_j^2 \partial_{a_j}q \,,
\end{equation}
\begin{equation}
\label{E:magic1}
\partial_x I_1'(q) = 2 \partial_x q = -  2 \sum_{j=1}^2\partial_{a_j}q  \,,
\end{equation}
\begin{equation}
\label{E:magic3}
q = \sum_{j=1}^2 (x-a_j)\partial_{a_j}q + \sum_{j=1}^2 c_j\partial_{c_j}q \,.
\end{equation}
\end{lemma}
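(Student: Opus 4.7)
The three identities are all short consequences of the structural properties of $q$ already established: the scaling symmetry \eqref{eq:scaleq}, the fact that double solitons solve mKdV, and the formula \eqref{eq:formal} for $I'_j(q)$.

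\textbf{Plan for \eqref{E:magic1}.} Direct computation from \eqref{eq:formal} gives $I_1'(q) = 2q$, so $\partial_x I_1'(q) = 2\partial_x q$. To rewrite $\partial_x q$ in terms of $\partial_{a_j} q$, I differentiate the first scaling identity in \eqref{eq:scaleq}, namely $q(x+s, a+(s,s), c) = q(x,a,c)$, with respect to $s$ at $s=0$, yielding $\partial_x q + \partial_{a_1} q + \partial_{a_2} q = 0$. Substituting gives \eqref{E:magic1}.

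\textbf{Plan for \eqref{E:magic2}.} Again from \eqref{eq:formal}, we compute $I_3'(q) = -2 q_{xx} - 4 q^3$, which produces the first equality in \eqref{E:magic2}. For the second, I appeal to the fact that the double soliton $u(x,t) \defeq q(x, a_1+c_1^2 t, a_2+c_2^2 t, c)$ is an exact solution of mKdV (with $b\equiv 0$), stated in \eqref{E:ds}. Computing $\partial_t u$ in two ways and evaluating at $t=0$ gives
\[
\sum_{j=1}^2 c_j^2 \partial_{a_j} q = \partial_t u\big|_{t=0} = -\partial_x(\partial_x^2 q + 2q^3),
\]
which is exactly the second equality in \eqref{E:magic2}.

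\textbf{Plan for \eqref{E:magic3}.} This is an Euler identity coming from the second scaling symmetry in \eqref{eq:scaleq}: $q(tx, ta, c/t) = q(x,a,c)/t$. I differentiate both sides with respect to $t$ at $t=1$. The left side yields
\[
x \, \partial_x q + \sum_{j=1}^2 a_j \partial_{a_j} q - \sum_{j=1}^2 c_j \partial_{c_j} q,
\]
while the right side gives $-q$. Rearranging,
\[
q = -x\,\partial_x q - \sum_{j=1}^2 a_j \partial_{a_j} q + \sum_{j=1}^2 c_j \partial_{c_j} q.
\]
Finally, I use the identity $\partial_x q = -\sum_j \partial_{a_j} q$ already established in the proof of \eqref{E:magic1} to convert $-x\,\partial_x q$ into $\sum_j x\,\partial_{a_j} q$, which combines with $-\sum_j a_j \partial_{a_j} q$ to yield \eqref{E:magic3}.

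There is no real obstacle here: each identity reduces to differentiating one of the two scaling relations in \eqref{eq:scaleq} or invoking the mKdV equation itself. The only small point of care is the sign conventions in $I_1'$, $I_3'$ (obtained from \eqref{eq:formal}) and in the translation identity; a careful bookkeeping check is the only real content of the proof.
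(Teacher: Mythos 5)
Your proof is correct and follows essentially the same route as the paper: identity \eqref{E:magic2} is obtained from the fact that \eqref{E:ds} solves mKdV, while \eqref{E:magic1} and \eqref{E:magic3} come from differentiating the two scaling relations in \eqref{eq:scaleq}. You have simply written out the details that the paper leaves as a one-line remark, including the needed substitution of the translation identity $\partial_x q = -\sum_j \partial_{a_j} q$ into the Euler identity to clean up the $x\,\partial_x q$ term.
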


These three identities are analogues of the following three identities for the single-soliton $\eta=\eta(\cdot,a,c)$, which are fairly easily verified by direct inspection.
$$\partial_x I_1'(\eta) = \partial_x\eta =- \partial_a \eta$$
$$\partial_x I_3'(\eta) = \partial_x(-\partial_x^2\eta-2\eta^3) = c^2\partial_a\eta$$
$$\eta = (x-a)\partial_a \eta + c\partial_c\eta$$

\begin{proof}
The first identity is just the statement that \eqref{E:ds} solves 
mKdV and we take it on faith from the inverse scattering
method (or verify it by a computation). 
To see \eqref{E:magic1} and \eqref{E:magic3} we differentiate
\eqref{eq:scaleq} with respect to $ t $.
\end{proof}

The value of $I_j(q)$ for all $j$ is recorded in the next lemma.

\begin{lemma}[values of $I_j(q)$]
\label{L:Iq}
\begin{equation}
\label{E:2}
I_0(q)= 2\pi
\end{equation}
For $j=1,3,5$, we have
\begin{equation}
\label{E:Iq}
I_j(q) = 2(-1)^{\frac{j-1}{2}} \frac{c_1^j+c_2^j}{j} \,.
\end{equation}
Also,
\begin{equation}
\label{E:com}
\int xq(x,a,c)^2 \, dx = 2a_1c_1+2a_2c_2 \,.
\end{equation}
Note that by \eqref{E:2},
$$\int_{-\infty}^{+\infty} \partial_{a_j}q =0, \quad \int_{-\infty}^{+\infty} \partial_{c_j}q =0, \quad j=1,2\,.$$
from which it follows that $\partial_x^{-1}( \partial_{a_j}q )$ and $\partial_x^{-1}( \partial_{c_j}q )$ are Schwartz class functions.
\end{lemma}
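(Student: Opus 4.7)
The plan is to reduce each identity to an elementary single-soliton computation by exploiting conservation of $I_j$ along the mKdV flow together with the asymptotic splitting of $q$ in Lemma \ref{L:q-asymp}. Since $u(x,t)=q(x,a+(c_1^2t,c_2^2t),c)$ solves mKdV by \eqref{E:ds} and each $I_j$ is a conserved integral, the value $I_j(q(\cdot,a,c))$ is independent of the translation parameter $a$ along this flow. Exploiting $c_1\ne c_2$, I would let $t\to+\infty$ so that $|a_1(t)-a_2(t)|\to\infty$, and apply Lemma \ref{L:q-asymp} to write
\[ q(\cdot,a(t),c) \;=\; \eta(\cdot,\hat a_1(t),c_1)+\eta(\cdot,\hat a_2(t),c_2)+r_t,\qquad r_t\in\Serr, \]
with pointwise bounds uniform in $t$.

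Substituting this decomposition into the polynomial densities $A_j$ and expanding, the diagonal terms $\int A_j(\eta(\cdot,\hat a_k(t),c_k))\,dx$ reduce, by translation invariance and the formula $\eta(x,0,c)=c\sech(cx)$, to elementary one-variable integrals of $\sech^{2m}$ and $\sech^{2m}\tanh^{2\ell}$ over $\RR$; carrying these out yields $I_0(\eta)=\pi$, $I_1(\eta)=2c$, $I_3(\eta)=-\tfrac{2}{3}c^3$, $I_5(\eta)=\tfrac{2}{5}c^5$, precisely the values in \eqref{E:2} and \eqref{E:Iq}. Every remaining term in the expansion either contains a product of an $\eta_1$-factor with an $\eta_2$-factor, or contains an $r_t$-factor; the bounds of Lemma \ref{L:q-asymp} make all such terms $O(e^{-c|a_1(t)-a_2(t)|})$, and passing to the limit $t\to+\infty$ completes the proof of \eqref{E:2} and \eqref{E:Iq}.

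For the first-moment identity \eqref{E:com} I would first check, by integrating by parts in the mKdV equation $\partial_t u=-u_{xxx}-6u^2u_x$, that $\tfrac{d}{dt}\int xu^2\,dx=-3I_3(u)$. Applied along the flow \eqref{E:ds} and using $I_3(q)=-\tfrac{2}{3}(c_1^3+c_2^3)$ established above, this shows that $\int xq(\cdot,a(t),c)^2\,dx$ is linear in $t$ with slope $2(c_1^3+c_2^3)$, matching exactly the slope of the candidate right-hand side $2a_1(t)c_1+2a_2(t)c_2$. Hence the difference of the two sides is constant in $t$ for each fixed $a$, so it suffices to verify that it tends to zero as $t\to+\infty$. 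In the asymptotic regime the decomposition yields $\int xq^2\,dx\to 2\hat a_1c_1+2\hat a_2c_2$ (cross and remainder contributions decay exponentially thanks to $\Serr$-bounds, even with the extra factor of $x$ absorbed via $|x|\le |x-a_j|+|a_j|$), and a direct evaluation of \eqref{E:6} gives $\alpha_1c_1+\alpha_2c_2=0$; therefore $\hat a_1c_1+\hat a_2c_2=a_1c_1+a_2c_2$, proving \eqref{E:com} on all of $\RR^2\times(\RR^2\setminus\mathcal{C})$.

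The closing observations $\int\partial_{a_j}q\,dx=0$ and $\int\partial_{c_j}q\,dx=0$ follow at once by differentiating the identity $I_0(q(\cdot,a,c))=2\pi$ in $a_j$ and $c_j$, which is legitimate since $q$ is smooth in the parameters; the Schwartz-class conclusion for the antiderivatives $\partial_x^{-1}\partial_{a_j}q$ and $\partial_x^{-1}\partial_{c_j}q$ then combines this zero-mean fact with $q\in\Ssol$ via Lemma \ref{L:S-prop}\eqref{I:inverse-S}. The main technical nuisance throughout is the uniform control of cross and remainder terms as $|a_1-a_2|\to\infty$, which requires the full exponential-decay bounds of $\Serr$ afforded by Lemma \ref{L:q-asymp}; once those are in hand, all remaining content is either integration by parts or closed-form integrals of $\sech$ and $\tanh$.
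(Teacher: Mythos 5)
Your proof is correct and follows essentially the same route as the paper: conservation of $I_j$ along the $2$-soliton flow \eqref{E:ds}, the asymptotic splitting from Lemma~\ref{L:q-asymp} as $t\to+\infty$, the identity $\partial_t\int xu^2 = -3I_3(u)$, and the cancellation $\alpha_1c_1+\alpha_2c_2=0$ from \eqref{E:6}. The one noticeable divergence is in evaluating $I_j(\eta)$: you propose grinding out the $\sech$--$\tanh$ moment integrals (feasible but increasingly tedious at $j=5$), whereas the paper instead differentiates the scaling relation $I_j(\eta_c)=c^jI_j(\eta)$ at $c=1$, giving $jI_j(\eta)=\la I_j'(\eta),(x\eta)_x\ra$, and then uses the hierarchy identity $I_j'(\eta)=2(-1)^{(j-1)/2}\eta$ (proved by induction from $\partial_x I_{j+2}'=\Lambda(\eta)\partial_x I_j'$) to collapse everything to a single normalization constant $\la\eta,(x\eta)_x\ra=1$. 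That recursion trick avoids any integral beyond $\int\eta^2=2$ and exhibits the sign pattern $(-1)^{(j-1)/2}$ structurally rather than by accident of the individual integrals, so you may wish to adopt it, but your elementary computation is a legitimate substitute. The remaining observations (zero mean of $\partial_{a_j}q,\partial_{c_j}q$ from $I_0(q)\equiv 2\pi$, and the Schwartz property via Lemma~\ref{L:S-prop}\eqref{I:inverse-S}) are handled exactly as in the paper.
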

\begin{proof}
We prove \eqref{E:Iq}, \eqref{E:2} by reduction to the $1$-soliton case.  Let 
$u(t) = q(\cdot, a_1+tc_1^2, a_2+tc_2^2, c_1,c_2)$.   Then by the asymptotics in Lemma \ref{L:q-asymp},
$$I_j(q) = I_j(u(0)) = I_j(u(t)) = \sum_{k=1}^2 I_j(\eta(\cdot, (a_k+c_k^2t)\hat{\;} ,c_k)) + \omega(t)$$
where
$$|\omega(t)| \lesssim \la c_2 (( a_1+tc_1^2) - (a_2+tc_2^2)) \ra^{-2}$$
But note that by scaling,
$$I_j ( \eta(\cdot, (a_k+c_k^2t)\hat{\;} ,c_k)) = c_k^j I_j(\eta)$$
By sending $t\to +\infty$, we find that
$$I_j(q) = (c_1^j+c_2^j)I_j(\eta)$$
To compute $I_j(\eta)$, we let $\eta_c(x)=c\eta(cx)$.  By scaling $I_j(\eta_c) = c^j I_j(\eta)$.  Hence 
\begin{align*}
j I_j(\eta) &= \partial_{c}\big|_{c=1} I_j(\eta_c) = \la I_j'(\eta), \partial_{c}\big|_{c=1} \eta_c \ra \\
&= \la I_j'(\eta), (x\eta)_x \ra = 2(-1)^{\frac{j-1}{2}} \la \eta, (x\eta)_x \ra = 2(-1)^{\frac{j-1}{2}} \,,
\end{align*}
where we have used the identity
\begin{equation}
\label{eq:Ijeta} 
I_j'(\eta) = 2(-1)^{\frac{j-1}{2}} \eta\,,
\end{equation}
which follows from the energy hierarchy. In fact, 
$I_1'(\eta) = 2\eta$ is just the definition of $ I'_1 $.
Assuming that $I_j'(\eta) =2(-1)^{\frac{j-1}{2}} \eta$, we compute
\begin{align*}
\partial_x I_{j+2}'(\eta) &= \Lambda(\eta) \partial_x I_j'(\eta) \\
&= 2(-1)(-1)^{\frac{j-1}{2}}(\partial_x^2 +4\eta^2 + 4\eta_x\partial_x^{-1}\eta) \eta_x \\
&= 2(-1)^{\frac{j+1}{2}}\partial_x(\eta_{xx}+2\eta^3) \\
&= 2(-1)^{\frac{j+1}{2}}\partial_x\eta
\end{align*}

We now prove \eqref{E:com}.  By direct computation, if $u(t)$ solves mKdV, then $\partial_t \int xu^2 = - 3I_3(u)$.  Again let $u(t) = q(\cdot, a_1+tc_1^2, a_2+tc_2^2, c_1,c_2)$.  By \eqref{E:Iq} with $j=3$, we have 
$$\int xq(x,a,c)^2 \, dx = \int xu(0,x)^2 \, dx = \int x u(t,x)^2 \,dx  - 2(c_1^3+c_2^3)t$$
By the asymptotics in Lemma \ref{L:q-asymp},
$$\int xu(t,x)^2 = \sum_{j=1}^2 \int x \eta(x,(a_j + tc_j^2)\hat{\;},c_j)^2 + \omega(t)$$
where
$$|\omega(t)| \leq (a_1+tc_1^2) \la c_2( (a_1+c_1^2t) - (a_2+tc_2^2)) \ra^{-2}$$
But
$$ \int x \eta(x,\hat a_j,c_j)^2 = 2c_j\hat a_j$$
Combining, and using that $c_1\hat a_1+c_2\hat a_2 = c_1a_1+c_2a_2$, we obtain
$$\int xq(x,a,c)^2 \, dx =  2(c_1a_1+c_2a_2) + \omega(t)$$
Send $t\to +\infty$ to obtain the result.
\end{proof}

We define the four-dimensional manifold of $2$-solitons $M$ as
$$M = \{ \, q(\cdot, a,c ) \, | \, a=(a_1,a_2)\in \mathbb{R}^2 \,, c=(c_1,c_2)\in (\mathbb{R})^2 \setminus {\mathcal C}  \, \}$$

\begin{lemma}
\label{E:sf}
The symplectic form \eqref{eq:sympl} restricted to the manifold of 
$2$-olitons is given by 
$$\omega|_M = \sum_{j=1}^2 da_j\wedge dc_j \,. $$
In particular, it is nondegenerate and $ M $ is a symplectic manifold.
\end{lemma}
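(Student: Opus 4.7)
\emph{Strategy and the $(a,a)$, $(a,c)$ blocks.} The plan is to work on the connected component $0<c_1<c_2$ of $\mathbb{R}^2\setminus\mathcal{C}$, parametrize $M$ by $(a_1,a_2,c_1,c_2)$, and compute the Gram-matrix of $\omega$ in the basis $\{\partial_{a_i}q,\partial_{c_i}q\}_{i=1,2}$ of $T_qM$ block-by-block. The identities \eqref{E:magic1}--\eqref{E:magic2} form a linear system in $\partial_{a_1}q,\partial_{a_2}q$ with non-degenerate coefficient matrix $\bigl(\begin{smallmatrix}1&1\\c_1^2&c_2^2\end{smallmatrix}\bigr)$; solving and integrating, with the integration constants fixed to zero by Corollary \ref{C:q-inverse} (since $\partial_x^{-1}\partial_{a_j}q\in\Ssol$), gives
\[
\partial_x^{-1}\partial_{a_1}q = \frac{-c_2^2 q - \tfrac12 I_3'(q)}{c_2^2-c_1^2}, \qquad \partial_x^{-1}\partial_{a_2}q = \frac{c_1^2 q + \tfrac12 I_3'(q)}{c_2^2-c_1^2}.
\]
Lemma \ref{L:Iq} shows $I_1(q), I_3(q)$ depend only on $c$, hence $\langle q,\partial_{a_i}q\rangle=\tfrac12\partial_{a_i}I_1(q)=0$ and $\langle I_3'(q),\partial_{a_i}q\rangle=\partial_{a_i}I_3(q)=0$, so pairing the above formulas against $\partial_{a_j}q$ immediately gives $\omega(\partial_{a_i}q,\partial_{a_j}q)=0$. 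Pairing against $\partial_{c_j}q$ uses $\langle q,\partial_{c_j}q\rangle=1$ and $\langle I_3'(q),\partial_{c_j}q\rangle=-2c_j^2$ (Lemma \ref{L:Iq} again), producing $\omega(\partial_{a_i}q,\partial_{c_j}q)=\delta_{ij}$ after a short algebraic simplification.

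\emph{Block $(c,c)$.} By antisymmetry, only $F(a,c):=\omega(\partial_{c_1}q,\partial_{c_2}q)$ remains. The plan is to show $F$ is independent of $a$ and then evaluate it in the limit $a_2-a_1\to+\infty$. Closedness of $\omega|_M$ follows from a short bilinear computation using the commuting mixed partials $\partial_{\xi_i}\partial_{\xi_j}q=\partial_{\xi_j}\partial_{\xi_i}q$ and the antisymmetry $\langle\cdot,\partial_x^{-1}\cdot\rangle=-\langle\partial_x^{-1}\cdot,\cdot\rangle$ on zero-mean arguments; combined with the just-computed $(a,c)$ block,
\[
\partial_{a_k}F = \partial_{c_1}\omega(\partial_{a_k}q,\partial_{c_2}q)-\partial_{c_2}\omega(\partial_{a_k}q,\partial_{c_1}q) = \partial_{c_1}\delta_{k2}-\partial_{c_2}\delta_{k1} = 0.
\]
In the limit $a_2-a_1\to+\infty$ we have $\theta=-1$, so $\hat a_k=a_k-\alpha_k$; Lemma \ref{L:q-asymp} gives
\[
\partial_{c_j}q = \partial_c\eta(\cdot,\hat a_j,c_j) - \sum_k \partial_{c_j}\alpha_k\cdot\partial_a\eta(\cdot,\hat a_k,c_k) + \Serr,
\]
and $\partial_x^{-1}\partial_{c_2}q$ is expanded analogously using $\partial_x^{-1}\partial_a\eta=-\eta$. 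Cross-integrals of $\eta_j,\eta_k$-type decay exponentially in $|\hat a_1-\hat a_2|$, so only the diagonal terms survive; the single-soliton identities $\omega(\partial_a\eta,\partial_c\eta)=1$, $\langle\eta,\partial_c\eta\rangle=1$, $\langle\eta,\partial_a\eta\rangle=0$ then combine to give
\[
\lim_{a_2-a_1\to+\infty} F = \partial_{c_2}\alpha_1 - \partial_{c_1}\alpha_2.
\]
Differentiating \eqref{E:6} directly yields $\partial_{c_2}\alpha_1 = \partial_{c_1}\alpha_2 = -2/[(c_1+c_2)(c_2-c_1)]$, so the limit, and hence $F$, vanishes.

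\emph{Conclusion and main obstacle.} Combining the three blocks gives $\omega|_M = da_1\wedge dc_1 + da_2\wedge dc_2$, manifestly non-degenerate, so $M$ is symplectic. The serious work is in the $(c,c)$ block: one must carefully track the $c$-derivatives of the shifts $\hat a_k$ through the asymptotic decomposition, isolate the surviving diagonal contributions, and observe the clean cancellation $\partial_{c_2}\alpha_1=\partial_{c_1}\alpha_2$, which is the explicit shadow of the existence of action-angle coordinates on $M$. The other blocks are essentially formal consequences of the magic identities in Lemma \ref{L:magic} together with the conservation-law values in Lemma \ref{L:Iq}.
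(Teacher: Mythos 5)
Your proposal is correct, and for the $(a,a)$ and $(a,c)$ entries it is essentially a re-phrasing of the paper's argument: the paper also combines the magic identities \eqref{E:magic1}, \eqref{E:magic2} with the values of $I_1(q)$, $I_3(q)$ from Lemma~\ref{L:Iq}, although it solves for the two unknown pairings $\la\partial_{a_j}q,\partial_x^{-1}\partial_{c_1}q\ra$ directly from \eqref{E:s1}--\eqref{E:s2} rather than inverting the $2\times 2$ system to write $\partial_x^{-1}\partial_{a_j}q$ explicitly in terms of $q$ and $I_3'(q)$. Where you genuinely diverge is the $(c,c)$ entry $F=\omega(\partial_{c_1}q,\partial_{c_2}q)$. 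The paper handles it in one line by substituting $c_1\partial_{c_1}q=q+xq_x+\sum a_j\partial_{a_j}q$ (from \eqref{E:magic3} and \eqref{E:magic1}), pairing with $\partial_x^{-1}\partial_{c_2}q$, and invoking the center-of-mass formula $\int xq^2=2(a_1c_1+a_2c_2)$ of \eqref{E:com}. You instead observe that $\omega|_M$ is automatically closed (a formal computation using commuting mixed $\partial_{\xi_i}\partial_{\xi_j}q$ and the antisymmetry of $\partial_x^{-1}$ on zero-mean arguments) so that the already-established $(a,c)$ block forces $\partial_{a_k}F=0$, and then you evaluate $F$ in the decoupling limit $a_2-a_1\to+\infty$ via Lemma~\ref{L:q-asymp}, where only the diagonal single-soliton pairings survive and the result collapses to $\partial_{c_2}\alpha_1-\partial_{c_1}\alpha_2=0$. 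Your route costs a careful bookkeeping of the $c$-dependence of the shifts $\hat a_k$, but it is conceptually illuminating: it isolates the symmetry $\partial_{c_2}\alpha_1=\partial_{c_1}\alpha_2$ of the phase shifts as the reason the off-diagonal scale pairing vanishes, and avoids the explicit integral \eqref{E:com}. (It does, however, presuppose the single-soliton identities $\omega(\partial_a\eta,\partial_c\eta)=1$, $\la\eta,\partial_c\eta\rangle=1$, $\la\eta,\partial_a\eta\rangle=0$, which are elementary but not recorded as such in the paper.) Both arguments are valid; the paper's is shorter given that \eqref{E:com} has already been proved, while yours extends more transparently to general $N$.
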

\begin{proof}
By \eqref{E:Iq}  with $j=1$ and \eqref{E:magic1},
\begin{align*}
0 &= \frac 12 \partial_{a_1} I_1(q) = \frac 12 \la I_1'(q), \partial_{a_1}q\ra = \la \partial_{a_1}q, \partial_x^{-1} \partial_{a_1} q \ra + \la \partial_{a_2}q, \partial_x^{-1} \partial_{a_1}q \ra \\
&= \la \partial_{a_2}q, \partial_x^{-1} \partial_{a_1}q\ra 
\end{align*}
Again by \eqref{E:Iq} with $j=1$ and \eqref{E:magic1},
\begin{equation}
\label{E:s1}
1 = \frac 12 \partial_{c_1} I_1(q) = \frac 12 
\la I_1'(q), \partial_{c_1}q\ra = \la \partial_{a_1}q, \partial_x^{-1} \partial_{c_1} q \ra + \la \partial_{a_2}q, \partial_x^{-1} \partial_{c_1}q \ra
\end{equation}
 By \eqref{E:Iq} with $j=3$ and \eqref{E:magic2},
\begin{equation}
\label{E:s2}
-c_1^2 = \frac 12 \partial_{c_1} I_3(q) = \frac 12 \la I_3'(q), \partial_{c_1}q\ra = -c_1^2\la \partial_{a_1}q, \partial_x^{-1} \partial_{c_1} q \ra - c_2^2\la \partial_{a_2}q, \partial_x^{-1} \partial_{c_1}q \ra 
\end{equation}
Solving \eqref{E:s1} and \eqref{E:s2}, we obtain  that $\la \partial_{a_1}q, \partial_x^{-1} \partial_{c_1} q \ra =1$ and $\la  \partial_{a_2}q, \partial_x^{-1} \partial_{c_1} q \ra =0$.  We similarly obtain that  $\la \partial_{a_2}q, \partial_x^{-1} \partial_{c_2} q \ra =1$ and $\la  \partial_{a_1}q, \partial_x^{-1} \partial_{c_2} q \ra =0$.  It remains to show that $\la \partial_{c_1}q, \partial_x^{-1}\partial_{c_2}q \ra =0$:
\begin{align*}
\la \partial_{c_1}q, \partial_x^{-1}\partial_{c_2}q \ra 
&= \frac{1}{c_1} \la \sum_{j=1}^2 c_j \partial_{c_j}q, \partial_x^{-1}\partial_{c_2}q \ra \\
&= \frac{1}{c_1} \la q-\sum_{j=1}^2 (x-a_j)\partial_{a_j}q, \partial_x^{-1}\partial_{c_2}q \ra && \text{by }\eqref{E:magic3} \\
&= \frac{1}{c_1} \la q+xq_x, \partial_x^{-1}\partial_{c_2}q \ra + \frac1{c_1} \sum_{j=1}^2 a_j \la \partial_{a_j}q, \partial_x^{-1}\partial_{c_2}q \ra&& \text{by }\eqref{E:magic1}\\
&= -\frac{1}{2c_1} \partial_{c_2} \int xq^2 + \frac{a_2}{c_1} \\
&= 0 && \text{by }\eqref{E:com}
\end{align*}

\end{proof}

\noindent
{\bf Remark.} If $ | a_1 - a_2 | \gg 2 $, and $ c_1 < c_2 $ then, in the 
notation of \eqref{eq:hata},
\[   \sum_{j=1,2} d a_j \wedge dc_j =   \sum_{j=1,2} d \hat a_j \wedge dc_j \,,\]
that is the map $ ( a , c ) \mapsto ( \hat a , c ) $ is symplectic.


The nondegeneracy of the symplectic form \eqref{eq:sympl} restricted
to the manifold of $2$-olitons, $ M $ 
shows that $H^2$ functions close to $ M $ can be uniquely decomposed
into an element $ q $, of $ M $ and a function symplectically orthogonal 
$ T_q M $. We recall this standard fact in the following

\begin{lemma}[Symplectic orthogonal decomposition]
\label{L:decomp}
Given $\tilde c$, there exist constants $\delta>0$, $C>0$ such that the following holds.  If $u = q(\cdot,\tilde a, \tilde c)+\tilde v$ with $\|\tilde v\|_{H^2} \leq \delta$, then there exist unique $a$, $c$ such that 
$$|a-\tilde a| \leq C\|\tilde v \|_{H^2} \,, \qquad |c-\tilde c| \leq C\| \tilde v\|_{H^2}$$ 
and $v \defeq u-q(\cdot,a,c)$ satisfies
\begin{equation}
\label{E:so}
\la v,\partial_x^{-1}\partial_{a_j}q \ra =0 \text{ and } \la v, \partial_x^{-1}\partial_{c_j}q\ra =0 \,, \; j=1,2 \,.
\end{equation}
\end{lemma}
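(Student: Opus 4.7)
The plan is to apply the implicit function theorem to the map
$F: \mathbb{R}^2 \times (\mathbb{R}^2\setminus \mathcal{C}) \times H^2(\mathbb{R};\mathbb{R}) \to \mathbb{R}^4$ defined, for $(a,c,u)$ with $(a,c)$ in a neighborhood of $(\tilde a,\tilde c)$, by
\[
F_j(a,c,u) \defeq \langle u-q(\cdot,a,c),\partial_x^{-1}\partial_{a_j}q(\cdot,a,c)\rangle, \ \
F_{j+2}(a,c,u) \defeq \langle u-q(\cdot,a,c),\partial_x^{-1}\partial_{c_j}q(\cdot,a,c)\rangle,
\]
for $j=1,2$. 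Here $\partial_x^{-1}\partial_{a_j}q$ and $\partial_x^{-1}\partial_{c_j}q$ are in $\mathcal{S}(\mathbb{R})$ by Corollary \ref{C:q-inverse} and the remark in Lemma \ref{L:Iq}, so $F$ is well defined and smooth in all its arguments.

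At the base point $(\tilde a,\tilde c, q(\cdot,\tilde a,\tilde c))$ we have $F=0$, and the Jacobian of $F$ with respect to $(a,c)$ is, after collecting terms whose only factor of $u-q$ is zero there,
\[
\frac{\partial F_j}{\partial a_k} = -\langle \partial_{a_k}q,\partial_x^{-1}\partial_{a_j}q\rangle, \qquad
\frac{\partial F_j}{\partial c_k} = -\langle \partial_{c_k}q,\partial_x^{-1}\partial_{a_j}q\rangle,
\]
and analogous formulas with $a_j$ replaced by $c_j$. In terms of the symplectic form \eqref{eq:sympl}, these are precisely the components of $\omega|_M$ expressed in the coordinate basis $(\partial_{a_1},\partial_{a_2},\partial_{c_1},\partial_{c_2})$ of $T_qM$. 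By Lemma \ref{E:sf}, $\omega|_M=\sum_{j} da_j\wedge dc_j$, so this matrix equals, up to sign, the standard symplectic matrix $\bigl(\begin{smallmatrix} 0 & -I \\ I & 0\end{smallmatrix}\bigr)$; in particular it is invertible.

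The implicit function theorem therefore produces $\delta>0$ and a $C^1$ map $u\mapsto (a(u),c(u))$, defined for $\|u-q(\cdot,\tilde a,\tilde c)\|_{H^2}\leq \delta$, solving $F(a(u),c(u),u)=0$ and satisfying the Lipschitz bound
\[
|a(u)-\tilde a|+|c(u)-\tilde c|\leq C\|u-q(\cdot,\tilde a,\tilde c)\|_{H^2}=C\|\tilde v\|_{H^2},
\]
with $C$ depending only on $\tilde c$ via the norm of the inverse of the Jacobian and on bounds for second derivatives of $q$ in $(a,c)$. Setting $v\defeq u-q(\cdot,a(u),c(u))$ then gives exactly \eqref{E:so}, and uniqueness of $(a,c)$ in the stated neighborhood is part of the implicit function theorem conclusion. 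The main conceptual point — and the only step where anything more than routine smoothness of $q$ is used — is the nondegeneracy of the Jacobian, which is precisely the content of Lemma \ref{E:sf}; once this is in hand, the proof is standard.
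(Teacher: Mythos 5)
Your proof is correct and follows essentially the same route as the paper: define the $\mathbb{R}^4$-valued map from the four symplectic-orthogonality pairings, observe that its $(a,c)$-Jacobian at $(q(\cdot,\tilde a,\tilde c),\tilde a,\tilde c)$ is (up to sign) the standard symplectic matrix because $\omega|_M=\sum_j da_j\wedge dc_j$ (Lemma \ref{E:sf}), and apply the implicit function theorem. The Lipschitz bound and uniqueness are then part of the IFT conclusion, exactly as the paper leaves implicit.
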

\begin{proof}
Let $\varphi: H^2\times \mathbb{R}^2 \times (\mathbb{R}_+)^2 \to \mathbb{R}^4$ be defined by 
$$\varphi(u,a,c) = 
\begin{bmatrix}
\la u-q(\cdot,a,c),\partial_x^{-1}\partial_{a_1}q \ra \\
\la u-q(\cdot,a,c),\partial_x^{-1}\partial_{a_2}q \ra \\
\la u-q(\cdot,a,c),\partial_x^{-1}\partial_{c_1}q \ra \\
\la u-q(\cdot,a,c),\partial_x^{-1}\partial_{c_2}q \ra \\
\end{bmatrix}
$$
Using that $\omega\big|_M = da_1\wedge dc_1 + da_2\wedge dc_2$, we compute the Jacobian matrix of $\varphi$ with respect to $(a,c)$ at $(q(\cdot,\tilde a,\tilde c),\tilde a,\tilde c)$ to be
$$D_{a,c}\varphi(q(\cdot,\tilde a,\tilde c),\tilde a,\tilde c) = 
\begin{bmatrix}
0 & 0 & 1 & 0 \\
0 & 0 & 0 & 1 \\
1 & 0 & 0 & 0 \\
0 & 1 & 0 & 0 
\end{bmatrix} \,.
$$
By the implicit function theorem, the equation $\varphi(u,a,c)=0$ can be solved for $(a,c)$ in terms of $u$ in a neighbourhood of $q(\cdot,\tilde a,\tilde c)$.
\end{proof}

We also record the following lemma which will be useful in the 
next section:
\begin{lemma}
\label{l:linorth}
Suppose $ v $ solves a linearized equation
\[ \partial_t v = \frac 12 \partial_x I_3''(q(t)) v = \partial_x ( 
- \partial_x^2 - 6q(t)^2) v \,, \ \ \ q ( x, t ) = q ( x , a_j + t c_j^2, c_j ) \,. \]
Then 
\[ \partial_t \langle v (t), \partial_x^{-1} ( \partial_{c_j} q ) ( t ) \ra = 
\partial_t \la v ( t) , \partial_x^{-1} ( \partial_{a_j} q ) ( t ) \ra = 0 \,, \]
where $ ( \partial_{c_j } q )( t ) = (\partial_{c_j} q ) ( x , a_j + tc_j^2 , c_j ) $
(and not $ \partial_{c_j} ( q ( x , a_j + t c_j^2 , c_j ) ) $).
In addition, for $  v(0) = \partial_{a_j} q $, 
$ v ( t) = ( \partial_{a_j} q) ( t)  $,  and for $ v (0 ) 
= \partial_{c_j} q  $,  
$$ v ( t) = (\partial_{c_j}  q )( t )  +
2 c_j t ( \partial_{a_j} q ) ( t ) \,. $$
\end{lemma}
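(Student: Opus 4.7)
The approach exploits two observations: differentiating the exact 2-soliton solution $Q(x,t;a,c):=q(x,a_1+tc_1^2,a_2+tc_2^2,c_1,c_2)$ of mKdV in the initial parameters produces solutions of the linearized equation, and the linearized flow preserves the symplectic pairing $\omega(v,w)=\langle v,\partial_x^{-1}w\rangle$.

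Since $\partial_t Q=\tfrac12\partial_x I_3'(Q)$, commuting $\partial_{a_j}$ with $\partial_t$ gives $\partial_t\partial_{a_j}Q=\tfrac12\partial_x I_3''(Q)\,\partial_{a_j}Q$, so $\partial_{a_j}Q$ solves the linearized equation. By the chain rule $\partial_{a_j}Q=(\partial_{a_j}q)(t)$, which equals $\partial_{a_j}q$ at $t=0$; uniqueness of the linear Cauchy problem then yields the identification $v(0)=\partial_{a_j}q\Rightarrow v(t)=(\partial_{a_j}q)(t)$. Similarly $\partial_{c_j}Q$ is a linearized solution, and the chain rule produces
\begin{equation*}
\partial_{c_j}Q=(\partial_{c_j}q)(t)+2c_jt\,(\partial_{a_j}q)(t),
\end{equation*}
which reduces to $\partial_{c_j}q$ at $t=0$, establishing the second identification.

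For the conservation statements, the linearized flow preserves $\omega$: writing the linearized equation as $\partial_t v=\partial_x L(t)v$ with $L(t):=-\partial_x^2-6q(t)^2$ self-adjoint, for any two solutions $v,w$ one has
\begin{equation*}
\partial_t\langle v,\partial_x^{-1}w\rangle=\langle\partial_x Lv,\partial_x^{-1}w\rangle+\langle v,\partial_x^{-1}\partial_x Lw\rangle=-\langle Lv,w\rangle+\langle v,Lw\rangle=0.
\end{equation*}
Taking $w=\partial_{a_j}Q=(\partial_{a_j}q)(t)$ gives $\partial_t\langle v,\partial_x^{-1}(\partial_{a_j}q)(t)\rangle=0$ directly. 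Taking instead $w=\partial_{c_j}Q$, expanding via the decomposition above and using the just-proved first conservation yields
\begin{equation*}
\partial_t\langle v(t),\partial_x^{-1}(\partial_{c_j}q)(t)\rangle=-2c_j\langle v(t),\partial_x^{-1}(\partial_{a_j}q)(t)\rangle,
\end{equation*}
which vanishes under the symplectic orthogonality $\omega(v,(\partial_{a_j}q))=0$ holding at $t=0$ in the applications of the lemma (cf.\ \eqref{E:so}).

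The main bookkeeping point is to keep the parameter derivative $(\partial_{c_j}q)(t)$ (differentiate, then translate) distinct from the composition derivative $\partial_{c_j}Q$ (translate, then differentiate); their difference $2c_jt\,(\partial_{a_j}q)(t)$, produced by the chain rule, is exactly the correction appearing in the second identification and the source of the coupling between the $a_j$- and $c_j$-conservation laws.
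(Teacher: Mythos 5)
The paper gives no proof of this lemma, so your argument must stand on its own, and it does: $\partial_{a_j}Q$ and $\partial_{c_j}Q$, the derivatives of the exactly evolving double soliton with respect to its initial parameters, solve the linearized equation; the chain-rule split $\partial_{c_j}Q=(\partial_{c_j}q)(t)+2c_jt\,(\partial_{a_j}q)(t)$ is correct; and self-adjointness of $L(t)=-\partial_x^2-6q(t)^2$ makes the symplectic pairing invariant under the linearized flow. This is the natural and essentially unavoidable route, and your bookkeeping of the ``differentiate, then substitute'' versus ``substitute, then differentiate'' distinction is exactly right.

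More importantly, your computation reveals that the lemma's $c_j$-conservation is \emph{not} unconditional, contrary to a literal reading of the statement. Since $(\partial_{c_j}q)(t)$ is not itself a solution of the linearized flow (only $\partial_{c_j}Q$ is), what is conserved is $\omega(v,\partial_{c_j}Q)$, and peeling off the secular term gives
\[
\partial_t\langle v(t),\partial_x^{-1}(\partial_{c_j}q)(t)\rangle
= -2c_j\,\langle v(t),\partial_x^{-1}(\partial_{a_j}q)(t)\rangle\,,
\]
which vanishes only when $\omega(v,\partial_{a_j}q)=0$. The lemma's own explicit case $v(0)=\partial_{c_j}q$ already violates the unqualified conservation: there $\langle v(t),\partial_x^{-1}(\partial_{c_j}q)(t)\rangle=2c_jt$, using $\omega|_M=\sum da_j\wedge dc_j$. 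In the paper's actual application (proof of Lemma \ref{l:E7}) the orthogonality $\la v(0),\partial_x^{-1}\partial_{a_j}q\ra=0$ is explicitly assumed before invoking this lemma, so nothing downstream breaks; but strictly the lemma statement should carry that hypothesis for its first displayed conclusion, and you were right to flag it.
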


\section{Lyapunov functional and coercivity}
\label{S:Lyapunov}

In this section we introduce the function 
$H_c$ adapted from the KdV theory of Maddocks-Sachs \cite{MS}. We will build our Lyapunov functional $\mathcal{E}$ from $ H_c $.

Thus let
$$H_c(u) \defeq I_5(u) + (c_1^2+c_2^2)I_3(u) + c_1^2c_2^2I_1(u) \,.$$
We give a direct proof that $q(\cdot,a,c)$ is a critical point of $H_c$:
\begin{lemma}[$q$ is a critical point of $H$]
\label{l:E7}
We have
\begin{equation}
\label{E:7}
H_c'(q(\cdot, a,c)) =0\,,
\end{equation}
that is 
\[  I_5'(q) + (c_1^2+c_2^2)I_3'(q) + c_1^2c_2^2I_1'(q ) = 0 \,. \]
\end{lemma}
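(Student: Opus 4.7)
The plan is to reduce the vector identity $H_c'(q) = 0$ to the scalar identity $\partial_x H_c'(q) = 0$, and then verify the latter by expanding using Lemma~\ref{L:magic} together with one extra identity at the next level of the hierarchy.

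First I would observe that since $q = q(\cdot, a, c)$ is Schwartz, so are the densities $A_j(q)$ and the gradients $I_j'(q)$ appearing in Section~\ref{S:Hamiltonian}. Hence $H_c'(q) = I_5'(q) + (c_1^2+c_2^2) I_3'(q) + c_1^2 c_2^2\, I_1'(q)$ decays at $\pm\infty$. Consequently, to prove \eqref{E:7} it suffices to prove
\[
\partial_x H_c'(q) = 0,
\]
since then $H_c'(q)$ is an $x$-constant that decays and must therefore vanish.

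The next step is to establish the identity
\[
\partial_x I_5'(q) = -2\bigl(c_1^4\, \partial_{a_1}q + c_2^4\, \partial_{a_2}q\bigr),
\]
which is precisely the next member in the family \eqref{E:magic1}, \eqref{E:magic2}. It expresses the fact that the next flow in the mKdV hierarchy, $\partial_t u = \tfrac12 \partial_x I_5'(u)$, preserves the 2-soliton submanifold $M$ and acts there by translating each $a_j$ with velocity $-c_j^4$ (the alternating sign $2(-1)^{k+1}$ matching $\beta_0 = -2$, $\beta_1 = 2$). As with \eqref{E:magic2}, this is most cleanly taken on faith from the inverse scattering treatment of the full mKdV hierarchy; alternatively one can derive it by iterating the recursion \eqref{E:hierarchy}, $\partial_x I_5'(q) = \Lambda(q) \partial_x I_3'(q) = 2\sum_j c_j^2\, \Lambda(q)\partial_{a_j}q$, and computing the action of $\Lambda(q)$ on the tangent directions $\partial_{a_j}q$.

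Combining the three identities from Lemma~\ref{L:magic} and the one above and grouping the coefficients of $\partial_{a_j}q$ in $\partial_x H_c'(q)$, one gets for each $j=1,2$
\[
-2c_j^4 + 2(c_1^2 + c_2^2)c_j^2 - 2c_1^2 c_2^2 = -2(c_j^2 - c_1^2)(c_j^2 - c_2^2) = 0,
\]
so $\partial_x H_c'(q) = 0$, and the first step concludes the proof. The main obstacle is the additional identity for $\partial_x I_5'(q)$: if one wishes to avoid the inverse-scattering input, one could instead invoke Lax's variational characterization of the 2-soliton as a critical point of $I_5$ under the constraints $I_1 = \mathrm{const}$, $I_3 = \mathrm{const}$, whereupon the Lagrange multipliers are exactly $c_1^2 c_2^2$ and $c_1^2+c_2^2$ (read off, e.g., from $\partial_{c_j}I_1 = 2c_j$, $\partial_{c_j}I_3 = -2c_j^3$), giving \eqref{E:7} directly. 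Either route relies on an algebraic input beyond the three identities already recorded in Lemma~\ref{L:magic}.
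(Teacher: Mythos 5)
Your reduction from $H_c'(q)=0$ to $\partial_x H_c'(q)=0$ via decay at infinity is fine, and the algebra $-2c_j^4 + 2(c_1^2+c_2^2)c_j^2 - 2c_1^2c_2^2 = 0$ is correct. But the extra hierarchy identity you introduce,
\[
\partial_x I_5'(q) = -2\bigl(c_1^4\,\partial_{a_1}q + c_2^4\,\partial_{a_2}q\bigr),
\]
is not a simpler lemma on the way to the result — it is \emph{equivalent} to the result. Indeed, plugging \eqref{E:magic1} and \eqref{E:magic2} into $\partial_x H_c'(q)$ gives $\partial_x H_c'(q) = \partial_x I_5'(q) + 2\sum_j c_j^4 \partial_{a_j}q$, so the identity above is precisely the statement $\partial_x H_c'(q)=0$ you set out to prove. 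None of your three routes for establishing it actually closes the gap. Route (a), taking it on faith from inverse scattering, would require invoking inverse scattering at the fifth level of the hierarchy; the paper deliberately limits its inverse scattering input to the single fact that \eqref{E:ds} solves mKdV, i.e.\ \eqref{E:magic2}. Route (b), iterating the recursion, gives only the aggregate relation $\Lambda(q)\sum_j \partial_{a_j}q = -\sum_j c_j^2\,\partial_{a_j}q$; to conclude $\partial_x I_5'(q) = 2\sum_j c_j^2 \Lambda(q)\partial_{a_j}q = -2\sum_j c_j^4\,\partial_{a_j}q$ you need the \emph{separate} eigenvector relations $\Lambda(q)\partial_{a_j}q = -c_j^2\,\partial_{a_j}q$, which are not consequences of the single aggregate equation (one constraint on a $2\times 2$ matrix is not four), and which themselves encode the deeper isospectral structure you are trying to establish. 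Route (c), Lax's variational characterization, is exactly the statement \eqref{E:7}; citing it is circular in a paper whose stated purpose here is "to provide a direct proof of this widely accepted fact."

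The paper's own proof is quite different and is genuinely doing work your approach bypasses. Using Lemma~\ref{L:q-asymp} and the single-soliton values $I_j'(\eta_c)=2(-1)^{(j-1)/2}\eta_c$, it shows that $r(t)\defeq H_c'(q(t))$ decays exponentially in $t$ along the free mKdV evolution; then, for $v(t)$ solving the linearized mKdV equation \eqref{eq:Lax-v}, it proves $\partial_t\langle r(t),v(t)\rangle = 0$ using \eqref{E:conserved}; and finally it establishes the subexponential growth bound \eqref{eq:vtse} on $\|v(t)\|_{L^2}$ via an energy argument with localizers $\psi_j$ and the coercivity of the single-soliton operators $\mathcal{L}_j$. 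That analytic machinery (localization to the decoupled solitons, coercivity under approximate symplectic orthogonality) is precisely what justifies, from a minimal inverse-scattering input, the algebraic fact your proposal takes as given. If you want to make your route rigorous, you must supply an independent proof that $\partial_{a_j}q$ is an eigenfunction of $\Lambda(q)$ with eigenvalue $-c_j^2$, and the natural way to do so is a limiting argument of the same kind the paper already carries out.
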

\begin{proof}
We follow Lax \cite[\S 2]{Lax}: we want to find $ A = A (q) $ and $ B 
= B ( q ) $ such 
that 
\[  H'(q) \defeq I_5'(q) + A I_3'(q) +  B I_1'(q ) = 0 \,, \]
for all $ q = q ( x , a, c ) \in M $. 
If we consider the mKdV evolution of $ q $ given by 
\eqref{E:ds}, then  Lemma \ref{L:q-asymp} shows that 
as $ t \rightarrow \pm \infty $ we can express $ H'(q) $ 
asymptotically using 
$ H'( \eta_{c_1}  ) $ and $  H'( \eta_{c_2}  ) $. 
From  \eqref{eq:Ijeta} we see that
\[ H'(\eta_c) =  I_5'(\eta_c ) + A I_3' ( \eta_c ) +  B I_1' ( \eta_c ) = 
2 ( c^4 - A c^2 + B )\eta_c \,. \]
Two parameters $ c_1 $ and $ c_2 $ are roots of this equation
if $ A = c_1^2 + c_2^2 $ and $  B = c_1^2 c^2_2 $ and 
this choice gives 
\begin{gather}
\label{eq:rexp}
\begin{gathered}
 H'( q( t ) ) = r ( t ) \,, \ \| r ( t ) \|_{L^2} \leq C \exp ( -|t|/C ) \,, \\
 q ( t ) \defeq q ( x, a_1 + c_1^2 t , a _2 + c_2^2 t , c_1, c_2 ) \,,
\end{gathered}
\end{gather}
where the exponential decay of $ r ( t ) $ comes from Lemma \ref{L:q-asymp} 
and the fact that $ c_1 \neq c_2 $.

To prove \eqref{E:7} we need to show that $ r ( 0 ) \equiv 0 $.
For the reader's convenience we provide a direct proof of this 
widely accepted fact. Since it suffices to prove that 
$ \langle r ( 0 ) , w \rangle=0 $, for all $ w \in {\mathcal S} $,
we consider the mKdV linearized equation
at $ q ( t ) $,
\begin{equation}
\label{eq:Lax-v}
  v_t = \frac 12 \partial_x I_3'' ( q ( t) ) v \,, \ \ v (0) =w \in 
{\mathcal S} \,,
\end{equation}
and will show that 
\begin{equation}
\label{eq:folf}  \partial_t 
\la r ( t) , v ( t) \ra = 
 \partial_t  \la H' ( q ( t ) ), v ( t ) \ra = 0 \,. 
\end{equation}
The conclusion $ \langle r(0), w \rangle = 0 $ will the follow
from showing that  
\begin{equation}
\label{eq:limr}
 \langle r ( t) , v ( t ) \rangle \rightarrow 0 \,, \ \ 
 t \rightarrow \infty \,. 
\end{equation}

We first claim that
\[  \partial_t  \la I_k' ( q ) , v \ra = 0 \,, \ \ \forall \, k \,.\]
In fact, from \eqref{E:conserved} we have $ \la I_k' ( \varphi) , \partial _x 
I_3' ( \varphi) \ra = 0 $ for all $ \varphi \in {\mathcal S} $. Differentiating
with respect to $ \varphi $ in the direction of $ v $, we obtain
\[ \la I_k'' ( \varphi) v , \partial _x I_3' ( \varphi) \ra = -
\la I_k' ( \varphi) , \partial _x I_3'' ( \varphi) v \ra \,. \]
Applying this with $ v = v ( t) $ and $ \varphi = q ( t ) $ we conclde that 
\[ \begin{split} \partial_t \langle I_k' ( q ) , v \rangle & = \langle
I''_k ( q ) \partial_t q , v \ra + \frac12\langle I'_k ( q ) , \partial_x I_3''(q) v \ra \\
& = \frac 12 \langle I''_k ( q ) \partial_x I_3'(q) , v \rangle + 
\frac 12 \langle I_k' ( q ) , \partial_x I_3''(q) v \ra \,, \\
&=0 \,. \end{split} \]
Since $ H $ is a linear combination of $ I_k$'s, $ k=1,3,5$, 
this gives \eqref{eq:folf}.

We now want to use the exponential decay of $ \| r( t ) \|_{L^2} $ in \eqref{eq:rexp},  
and 
\eqref{eq:folf} to show \eqref{eq:limr}. 
Clearly, all we need is a 
subexponential estimate on $  v ( t ) $, that is 
\begin{equation}
\label{eq:vtse}
\forall \, \epsilon > 0 \ \exists \, t_0\,,  \ \ 
\| v ( t ) \|_{L^2} \leq e^{ \epsilon t } \, , \ \ t > t_0 \,.
\end{equation}

Let $\psi$ be a smooth function such that $\psi(x) =1 $ for all $|x|\leq 1$ and $\psi(x) \sim e^{-2|x|}$ for $|x|\geq 1$.  With the notation of Lemma \ref{L:q-asymp}  define
$$\psi_j( x, t)  = \psi(\delta(x-(a_j+c_j^2t)\widehat{\;}\, )). $$
for $0 < \delta \ll 1$ to be selected below and $j=1,2$.
We now establish that
\begin{equation}
\label{eq:enmet}
\left| \partial_t \left( \|v\|_{L^2}^2 + \|v_x\|_{L^2}^2 + 6\int q^2v^2\right) \right| \lesssim \sum_{j=1}^2 \|\psi_j v\|_{L^2}^2 \,.
\end{equation}
To prove \eqref{eq:enmet}, apply $\partial_x^{-1}$ to \eqref{eq:Lax-v} and pair with $v_t$ to obtain
$$0 = \la \partial_x^{-1}v_t,v_t\ra + \la v_{xx},v_t\ra + \la 6q^2v,v_t\ra$$
which implies
\begin{equation}
\label{eq:stepA}
\partial_t \left( \frac12 \|v_x\|_{L^2}^2 + 3\int q^2 v^2 \right)
= 6 \int q q_t v^2 
\end{equation}
Next, pair \eqref{eq:Lax-v} with $v$ to obtain
$$ 0 = \la v_t,v \ra + \la v_{xxx},v \ra + 6\la \partial_x (q^2v),v \ra$$
which implies
\begin{equation}
\label{eq:stepB}
\partial_t \|v\|_{L^2}^2 = -12 \int q q_x v^2
\end{equation}
Summing \eqref{eq:stepA} and \eqref{eq:stepB} gives \eqref{eq:enmet}.

The inequality \eqref{eq:enmet} shows that we need to control
is $ \| \psi_j  v ( t) \|$, $ j =1, 2$. For $ t $ large $ \psi_j $ 
provides a localization to the region where $ q $ decomposes into 
an approximate sum of decoupled solitons (see Lemma \ref{L:q-asymp}).
Hence we define
\[ {\mathcal L}_j = c_j^2 - \partial_x^2 - 6 \eta^2 ( x , (a_j + t c_j^2)\widehat{\;}\, , c_j) \]
(see also \S \ref{S:correction} below for a use of similar operators). 
A calculation shows that
\begin{equation}
\label{eq:dostar}
t \geq T ( \delta ) \ \Longrightarrow \ \partial_t \langle {\mathcal L}_j \psi_j v , \psi_j v \rangle = {\mathcal O} ( \delta ) 
\| v \|_{H^1}^2 \,, \end{equation}
where $ T ( \delta ) $ is large enough 
to ensure that the supports of $ \psi_j$'s are separated.   
It suffices to assume that $v(0)=w$ satisfies
  $\la w, \partial_x^{-1}\partial_{a_j}q\ra =0$ and $\la w, \partial_x^{-1}\partial_{c_j}q\ra =0$, since Lemma \ref{l:linorth} already showed 
that the evolutions of $\partial_{a_j}q$ and $\partial_{c_j}q$ are linearly bounded in $t$.  Under this assumption, we have by Lemma \ref{l:linorth} that $\la v(t), \partial_x^{-1}\partial_{a_j}q(t)\ra =0$ and $\la v(t), \partial_x^{-1}\partial_{c_j}q(t)\ra =0$.

We now want to invoke the well known coercivity estimates for operators $ {\mathcal
L}_j $ -- see for instance \cite[\S 4]{HZ1} for a self contained presentation. 
For that we need to check that 
\[ |\la \psi_j v , \partial_x^{-1} \partial_a\eta ( \hat 
a_j + tc_j^2 , c_j ) \ra| \ll 1\,, \ \
 |\la \psi_j v , \partial_x^{-1} ( \partial_c \eta ( \hat a_j +t c_j^2 , c_j ) | \ra|
\ll 1 \,. \]
This follows from the fact that $ v $ is symplectically orthogonal to 
$ (\partial_{c_j} q ) ( t ) $ and $ \partial_{a_j} q ( t ) $ (Lemma \ref{l:linorth}
again), the fact that $ q $ decouples into two solitons for $ t $ large,
and from the remark after the proof of Lemma \ref{E:sf}.

Hence, 
\[ \la {\mathcal L}_j \psi_j v , \psi_j v \ra \gtrsim \| \psi_j v \|_{H^1}^2 \,.\]
We now sum \eqref{eq:enmet} and \eqref{eq:dostar} multiplied by $ \delta^{-\frac12} $
to obtain, for $ t $ suffieciently large (depending on $ \delta$),
\begin{gather*}  F' ( t ) \leq C \delta^{\frac 12} F ( t )  \,, \\
F ( t ) \defeq   \|v ( t) \|_{H^1}^2 + 6\int q^2 ( t ) v ( t ) ^2  + \delta^{-\frac12} 
\la {\mathcal L}_j( t )  \psi_j ( t )  v ( t)  , \psi_j ( t) v ( t ) \ra 
\end{gather*}
(where we added the additional $ \int q^2 v^2 $ term to the right hand side
at no cost). Consequently, $ F ( t ) \leq  \exp ( C' \delta^{\frac12} t ) $, for 
$ t > T_1 ( \delta ) $. 

We recall that this implies \eqref{eq:vtse} and going back to \eqref{eq:folf}
show that $ r ( 0 ) = 0 $, and hence $ H'( q ) = 0 $.
\end{proof}

We denote the Hessian of $ H_c $ at $ q ( \bullet, a, c ) $ by $ 
{\mathcal K}_{c,a} $:
$$\mathcal{K}_{c,a} = I_5''(q) + (c_1^2+c_2^2)I_3''(q) + c_1^2c_2^2I_1''(q)$$
It is a fourth order self-adjoint operator on $ L^2 ( \RR ) $ and 
a calculation shows that
\begin{equation}
\label{E:8}
\frac12  \mathcal{K}_{c,a} =
\begin{aligned}[t]
& (-\partial_x^2+c_1^2)(-\partial_x^2+c_2^2) \\
&  + 10 \partial_x \, q^2 \partial_x 
+10(-q_x^2 + (q^2)_{xx} +3q^4) -6(c_1^2+c_2^2) q^2 \\
\end{aligned}
\end{equation}

\begin{lemma}[mapping properties of $\mathcal{K}$]
\label{L:mpk}
The kernel of $ {\mathcal K}_{c,a} $ in $ L^2 ( \RR ) $ 
is spanned by $ \partial_{a_j} q $:
\begin{equation}
\label{E:Kqa}
\mathcal{K}_{c,a}\partial_{a_j}q = 0,
\end{equation}
and
\begin{equation}
\label{E:Kqc}
\begin{aligned}[t]
\mathcal{K}_{c,a}\partial_{c_j}q & = 
 4 ( -1)^jc_j(c_1^2-c_2^2)\partial_x^{-1}\partial_{a_j}q
\end{aligned}
\end{equation}
\end{lemma}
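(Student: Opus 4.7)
The strategy is straightforward: both identities follow from differentiating the critical point equation $H_c'(q(\cdot,a,c))=0$ of Lemma \ref{l:E7} with respect to the parameters, and then using the ``magic'' identities of Lemma \ref{L:magic} to recognize the derivatives $I_k'(q)$.

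For \eqref{E:Kqa} I would differentiate
\[
I_5'(q(\cdot,a,c)) + (c_1^2+c_2^2)I_3'(q(\cdot,a,c)) + c_1^2c_2^2 I_1'(q(\cdot,a,c)) = 0
\]
with respect to $a_j$. Since the coefficients are independent of $a$, the chain rule gives
\[
\bigl[I_5''(q) + (c_1^2+c_2^2)I_3''(q) + c_1^2c_2^2 I_1''(q)\bigr]\partial_{a_j}q = 0,
\]
which is exactly $\mathcal{K}_{c,a}\partial_{a_j}q=0$.

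For \eqref{E:Kqc} I would differentiate the same identity with respect to $c_j$. Now the coefficients do depend on $c_j$, so two extra terms appear, and I obtain
\[
\mathcal{K}_{c,a}\partial_{c_j}q = -2c_j I_3'(q) - 2c_j c_{3-j}^2 I_1'(q).
\]
Applying Lemma \ref{L:magic}, namely $\partial_x I_3'(q)=2\sum_k c_k^2\partial_{a_k}q$ and $\partial_x I_1'(q)=-2\sum_k \partial_{a_k}q$, gives $I_3'(q)=2\sum_k c_k^2\partial_x^{-1}\partial_{a_k}q$ and $I_1'(q)=-2\sum_k \partial_x^{-1}\partial_{a_k}q$ (the decay at infinity needed to define $\partial_x^{-1}$ on $\partial_{a_k}q$ is provided by Lemma \ref{L:Iq}). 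Substituting,
\[
\mathcal{K}_{c,a}\partial_{c_j}q = 4c_j\sum_{k=1}^2 (c_{3-j}^2 - c_k^2)\,\partial_x^{-1}\partial_{a_k}q,
\]
and the $k=3-j$ term vanishes while the $k=j$ term gives $4c_j(c_{3-j}^2-c_j^2)\partial_x^{-1}\partial_{a_j}q = 4(-1)^j c_j(c_1^2-c_2^2)\partial_x^{-1}\partial_{a_j}q$, as claimed.

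The only mild subtlety is the legitimacy of differentiating under $I_k'$ and of applying $\partial_x^{-1}$; the former is routine Fr\'echet differentiation on Schwartz class, while the latter is justified by the remark in Lemma \ref{L:Iq} that $\partial_{a_k}q$ and $\partial_{c_k}q$ have vanishing mean. The statement that $\partial_{a_j}q$ spans the $L^2$ kernel of $\mathcal{K}_{c,a}$ is not strictly needed for the applications in the following sections (only the inclusion is used, together with the coercivity that comes from the spectral analysis of Maddocks--Sachs), so I would record it as a consequence of the standard kernel count for the second variation of $H_c$ at the two-soliton without further detail here.
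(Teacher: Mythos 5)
Your derivation of the two displayed identities is correct and is essentially identical to the paper's: both differentiate the critical-point equation $H_c'(q)=0$ of Lemma~\ref{l:E7} in $a_j$ and $c_j$, and then use Lemma~\ref{L:magic} to rewrite $I_1'(q)$ and $I_3'(q)$ as $-2\sum_k \partial_x^{-1}\partial_{a_k}q$ and $2\sum_k c_k^2\partial_x^{-1}\partial_{a_k}q$, after which the $k=3-j$ terms cancel. Your algebra and the final sign bookkeeping check out.

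The one point on which you go astray is the dismissal of the ``spanned by'' part of the statement. You claim the exact identification of $\ker_{L^2}\mathcal{K}_{c,a}$ is not used later, but it is: in Proposition~\ref{L:nsk} the constancy of $\dim\ker\mathcal{K}_{c,a}=2$ over $(a,c)$ is exactly what is used to argue that the number of negative eigenvalues cannot jump; and in Lemma~\ref{l:ck} and the surrounding coercivity analysis (see the definition $E_{a,c}=\ker\mathcal{K}=\spn\{\partial_{a_1}q,\partial_{a_2}q\}$ in \eqref{eq:ale}) the precise kernel is used to set up the orthogonality conditions. Knowing only the inclusion would leave a hole. Fortunately the missing step is a one-liner, and it is the one the paper supplies: since $q$ and its derivatives decay, the leading part of $\mathcal{K}_{c,a}$ as $|x|\to\infty$ is $2(-\partial_x^2+c_1^2)(-\partial_x^2+c_2^2)$, whose characteristic roots are $\pm c_1, \pm c_2$, all nonzero for $c\notin\mathcal{C}$; a fourth-order ODE with such a nondegenerate limiting operator has at most a two-dimensional space of bounded solutions on $\RR$. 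Combined with the two linearly independent elements $\partial_{a_1}q,\partial_{a_2}q$ you already exhibited, this pins the $L^2$ kernel exactly. You should include this remark rather than defer it.
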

\begin{proof}
Equations \eqref{E:Kqa} follow from 
differentiation of \eqref{E:7} with respect to $a_j$.   As $ x \rightarrow 
\infty $, the leading part of $ {\mathcal K}_{c,a} $ is given by 
$ ( - \partial_x^2 + c_1^2 ) ( -\partial_x^2 + c_2^2 ) $ and hence 
the kernel in  $ L^2 $ is at most two dimensional.

To see \eqref{E:Kqc} 
recall that 
$$I_1'(q) = 2q = -2\partial_x^{-1}(\partial_{a_1}q+\partial_{a_2}q)$$
$$I_3'(q) = -2q''-4q^3 = 2\partial_x^{-1}
(c_1^2\partial_{a_1}q+c_2^2\partial_{a_2}q) \,, $$
where we used Lemma \ref{L:magic}. 
By differentiating 
$H'(q)=I_5'(q) + (c_1^2+c_2^2)I_3'(q) + c_1^2c_2^2I_1'(q)=0$ 
with respect to $c_j$, we obtain
\begin{equation}
\label{eq:dcK}
\mathcal{K}(\partial_{c_1}q) = -2c_1(I_3'(q)+c_2^2I_1'(q)) \,, \ \ 
\mathcal{K}(\partial_{c_2}q) = -2c_2(I_3'(q)+c_1^2I_1'(q))\,. 
\end{equation}
Inserting the above formul{\ae} for $I_1' ( q ) $ and $ I_2 ' ( q )$ 
gives \eqref{E:Kqc}.
\end{proof}

The main result of this section is the following coercivity result:
\begin{proposition}[coercivity of $\mathcal{K}$]
\label{p:ck}
There exists $\delta=\delta(c)>0$ such that for all $v\in H^2$ satisfying the symplectic orthogonality conditions 
$$\la v,\partial_x^{-1}\partial_{a_j}q \ra =0 \text{ and } \la v, \partial_x^{-1}\partial_{c_j}q\ra =0 \,, \; j=1,2 \,,$$ 
we have
\label{L:coercivity}
\begin{equation}
\label{E:10}
 \delta \|v\|_{H^2}^2 \leq \la \mathcal{K}_{c,a}v,v \ra \,.
\end{equation}
\end{proposition}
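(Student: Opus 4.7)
My plan is to reduce the $H^2$ estimate to an $L^2$-coercivity statement and then analyze the (finite-dimensional) nonpositive spectral subspace of $\mathcal{K}_{c,a}$. The principal symbol of $\tfrac12\mathcal{K}_{c,a}$ is $(\xi^2+c_1^2)(\xi^2+c_2^2)$, elliptic with positive lower bound $c_1^2c_2^2$; integration by parts together with Young's inequality applied to the $q$-dependent lower-order terms yields the G\aa rding-type bound
\[ \langle\mathcal{K}_{c,a}v,v\rangle \;\geq\; \tfrac{1}{C}\|v\|_{H^2}^2 \;-\; C\|v\|_{L^2}^2. \]
It therefore suffices to establish $\langle\mathcal{K}_{c,a}v,v\rangle \geq \delta'\|v\|_{L^2}^2$ on the constraint set; interpolation with the above inequality then gives the stated $H^2$ coercivity.

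\textbf{Spectrum of $\mathcal{K}_{c,a}$.} Since $q\in\mathcal{S}$, the coefficients of $\mathcal{K}_{c,a}$ tend to constants at infinity, so by Weyl's theorem the essential spectrum is $[c_1^2c_2^2,\infty)\subset(0,\infty)$ and only finitely many eigenvalues lie below. By \eqref{E:Kqa}, $\ker\mathcal{K}_{c,a}\supset\mathrm{span}\{\partial_{a_1}q,\partial_{a_2}q\}$, while \eqref{E:Kqc} together with $c_1\neq c_2$ and $c_j\neq 0$ forces $\partial_{c_j}q\notin\ker\mathcal{K}_{c,a}$. The key spectral claim is that $\mathcal{K}_{c,a}$ has \emph{exactly} two negative eigenvalues and $\ker\mathcal{K}_{c,a}=\mathrm{span}\{\partial_{a_1}q,\partial_{a_2}q\}$, so that the total nonpositive invariant subspace is $4$-dimensional and coincides with $T_qM=\mathrm{span}\{\partial_{a_j}q,\partial_{c_j}q\}$. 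I would prove this by continuous deformation: translation invariance in $a$ reduces to $a_1+a_2=0$, and then one lets $|a_1-a_2|\to\infty$. In this limit, Lemma~\ref{L:q-asymp} gives $q=\eta(\cdot,\hat a_1,c_1)+\eta(\cdot,\hat a_2,c_2)+\Serr$, and $\mathcal{K}_{c,a}$ organizes (up to exponentially small interaction) around the single-soliton second-order operators $\mathcal{L}_j=-\partial_x^2+c_j^2-6\eta(\cdot,\hat a_j,c_j)^2$. Each $\mathcal{L}_j$ has exactly one negative eigenvalue and a 1-dimensional kernel (spanned by $\partial_x\eta(\cdot,\hat a_j,c_j)$), which in the decoupled limit produces precisely two negative eigenvalues and a 2-dimensional kernel for $\mathcal{K}_{c,a}$. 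Analytic perturbation of isolated eigenvalues, together with the gap from the essential spectrum and Lemma~\ref{L:mpk} (which forbids the kernel from growing), propagates this count back to all admissible $(a,c)$.

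\textbf{Constraints and min-max conclusion.} The four constraint functionals $v\mapsto\langle v,\partial_x^{-1}\partial_{a_j}q\rangle$ and $v\mapsto\langle v,\partial_x^{-1}\partial_{c_j}q\rangle$ pair with the tangent space $T_qM$ through the symplectic form $\omega$; by Lemma~\ref{E:sf} this restriction is the standard nondegenerate symplectic matrix. Hence the four functionals restricted to the nonpositive spectral subspace $N=T_qM$ form an isomorphism $N\to\RR^4$, so the only $v_N\in N$ satisfying all four constraints is $v_N=0$. Any constrained $v\in H^2$ may therefore be written as $v=v_\perp+v_N$ with $v_N=0$ and $v_\perp$ lying in the strictly positive spectral subspace of $\mathcal{K}_{c,a}$; the spectral gap $\lambda_+>0$ between the kernel and the remaining spectrum then yields $\langle\mathcal{K}_{c,a}v,v\rangle\geq\lambda_+\|v\|_{L^2}^2$, which combined with the G\aa rding bound completes the proof.

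\textbf{Main obstacle.} The hardest step is the spectral count in the second paragraph: verifying that the decoupled limit really produces exactly two negative eigenvalues of the fourth-order operator, and that no eigenvalue crosses zero during the deformation back to finite $|a_1-a_2|$. This is the mKdV analog of the Maddocks--Sachs analysis for KdV 2-solitons, and its stability depends on the precise kernel identification in Lemma~\ref{L:mpk} together with the fact that the admissible $(a,c)$ is connected within the component $c_2>c_1>0$.
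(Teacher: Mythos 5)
Your reduction to an $L^2$-coercivity statement via a G\aa rding estimate is fine, and the general strategy (count the nonpositive spectral subspace and show the constraints exclude it) is the right shape. But the spectral count at the heart of the argument is wrong, and that error propagates fatally. The paper proves (Proposition on the spectrum of $\mathcal{K}$) that $\mathcal{K}_{c,a}$ has \emph{exactly one} negative eigenvalue, not two. With a $2$-dimensional kernel, the nonpositive spectral subspace $N$ is therefore $3$-dimensional, while $T_qM$ is $4$-dimensional; the identification $N=T_qM$ that you rely on cannot hold. Your decoupled-limit heuristic is also misapplied: the operators $\mathcal{L}_j = -\partial_x^2+c_j^2-6\eta(\cdot,\hat a_j,c_j)^2$ are second-order, whereas the operators that actually emerge when $\mathcal{K}_{c,a}$ localizes near $\hat a_1$, $\hat a_2$ are the \emph{fourth-order} Hessians $P(c_1/c_2)$ (and a scaled conjugate), defined in \eqref{eq:defP}. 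Their negative Morse indices do not simply add to $2$; the correct total is $1$, which the paper establishes by an explicit Wronskian count via the Maddocks--Sachs lemma at the convenient parameter values $a_1=a_2=0$, $c=(0.5,1.5)$, and then propagates by continuity using the rigidity of the kernel from Lemma~\ref{L:mpk}.

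Because $N\neq T_qM$, your final step --- that the four symplectic constraints restricted to $N$ give an isomorphism onto $\RR^4$, hence $v_N=0$ --- has no foundation: the single negative eigenfunction $h$ is not an element of $T_qM$, and there is no reason a priori that the constraints control its component of $v$. What the paper actually shows is more delicate. One first proves coercivity under the \emph{different} orthogonality set $v\perp(E+G)$, where $E=\ker\mathcal{K}$ and $G=\mathrm{span}\{\partial_x^{-1}\partial_{a_j}q\}$ (Lemma~\ref{l:ck}); the mechanism there is that $\langle\mathcal{K}_{c,a}\partial_{c_1}q,\partial_{c_1}q\rangle = 4c_1(c_1^2-c_2^2)<0$, which forces $\partial_{c_1}q$ to have a nonzero component along $h$ and lets one ``trade'' the negative direction for the manifest constraint $\langle v,\partial_x^{-1}\partial_{a_1}q\rangle=0$ via \eqref{E:Kqc}. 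One then needs a separate quantitative \emph{subspace-angle} argument (Lemma~\ref{l:subtle} and the lemma following \eqref{eq:Fka}) to pass from orthogonality to the kernel $E$ to the actual hypothesis, orthogonality to $F=\mathrm{span}\{\partial_x^{-1}\partial_{c_j}q\}$, with a uniform lower bound on $\sin\alpha(E,F^\perp)$ over compact parameter ranges. Your proposal skips both of these mechanisms entirely, and fixing it requires inserting essentially the whole of the paper's argument; correcting the eigenvalue count alone is not enough, since a $3$-dimensional $N$ paired against four non-$L^2$-orthogonal constraints still needs the angle estimate to give a uniform $\delta$.
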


The proposition is proved in a few steps. In Lemma \ref{L:mpk} 
we already described the kernel $ {\mathcal K}_{c,a} $ and now
we investigate the negative eigenvalues:

\begin{proposition}[Spectrum of $ {\mathcal K}$]
\label{L:nsk}
The operator $ {\mathcal K}_{c,a} $ has a single negative 
eigenvalue, $ h \in L^2 ( \RR )  $:
\begin{equation}  {\mathcal K}_{c,a} h= - \mu h \,, \ \ \mu > 0 \,. 
\label{eq:Kh}
\end{equation}
In addition, for 
$$ 0 < \delta < c_1 < c_2 - \delta < 1/\delta \,, $$ 
 there
exists a constant, $ \rho $, depending only on $ \delta $, 
such that
\begin{equation}
\label{eq:infs}
\min \{ \lambda > 0 \; : \; \lambda \in \sigma ( \mathcal{K}_{c,a}) \}
> \rho   \,, \ \ a \in \RR^2 \,, 
\end{equation}
\end{proposition}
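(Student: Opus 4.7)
The plan splits into the essential spectrum, the existence of $-\mu$, the uniqueness of $-\mu$, and the uniform positive gap \eqref{eq:infs}. For the essential spectrum, the explicit formula \eqref{E:8} writes $\tfrac{1}{2}\mathcal{K}_{c,a}$ as the constant-coefficient principal part $(-\partial_x^2+c_1^2)(-\partial_x^2+c_2^2)$ plus lower-order terms whose coefficients (built from $q$, $q_x$, $q^2$, $q^4$) lie in $\mathcal{S}(\RR)$. The perturbation is relatively compact with respect to the principal part, so Weyl's theorem yields $\sigma_{\mathrm{ess}}(\mathcal{K}_{c,a})=[2c_1^2c_2^2,\infty)$, bounded below by $2\delta^4$ in the admissible range of $c$.

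For the existence of $-\mu$, I combine \eqref{E:Kqc} with the symplectic pairing $\omega(\partial_{a_i}q,\partial_{c_j}q)=\delta_{ij}$ from Lemma \ref{E:sf}, noting that $\partial_{a_j}q$ and $\partial_{c_j}q$ have vanishing integrals (the remark at the end of Lemma \ref{L:Iq}), which justifies the integration by parts $\la\partial_x^{-1}f,g\ra=-\la f,\partial_x^{-1}g\ra$; this yields
\[
\la \mathcal{K}_{c,a}\partial_{c_j}q,\partial_{c_i}q\ra = 4(-1)^{j+1}c_j(c_1^2-c_2^2)\,\delta_{ij}.
\]
The $(1,1)$ entry $4c_1(c_1^2-c_2^2)$ is strictly negative, so $\partial_{c_1}q$ provides a test direction with negative form value, giving an $L^2$ eigenvalue $-\mu$ below $\sigma_{\mathrm{ess}}$.

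The uniqueness of $-\mu$ is the main obstacle, and I propose to handle it by a continuation argument. The fourth-order principal part has only two exponentially decaying modes at each infinity (with rates $c_1, c_2$), forcing $\dim\ker\mathcal{K}_{c,a}\leq 2$ in $L^2$; Lemma \ref{L:mpk} supplies the two independent kernel elements $\partial_{a_1}q,\partial_{a_2}q$, so the kernel is exactly $2$-dimensional throughout the admissible parameter region. Consequently, as $(c,a)$ vary continuously over the connected open set $\{0<c_1<c_2\}\times\RR^2$, no eigenvalue can cross zero and $n_-(\mathcal{K}_{c,a})$ is constant. It thus suffices to evaluate at one configuration. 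Taking $|a_2-a_1|\to\infty$, Lemma \ref{L:q-asymp} decouples $q$ into two widely separated single solitons $\eta_{c_j}$, so $\mathcal{K}_{c,a}$ asymptotically splits as a direct sum $\mathcal{K}_1\oplus\mathcal{K}_2$ of single-soliton fourth-order Hessians. Differentiating the identity $H_c'(\eta_{\tilde c})=2(\tilde c^2-c_1^2)(\tilde c^2-c_2^2)\eta_{\tilde c}$ in $\tilde c$ at $\tilde c=c_j$ (which follows from the scaling identity $I_k'(\eta_c)=2(-1)^{(k-1)/2}c^{k-1}\eta_c$ used in the proof of Lemma \ref{L:Iq}) and pairing with $\partial_c\eta_c|_{c=c_j}$ yields form value $4c_j(c_j^2-c_{3-j}^2)$, negative only for $j=1$. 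A direct analysis of each single-soliton operator $\mathcal{K}_j$ (reducible to classical single-soliton spectral theory by viewing $\mathcal{K}_j$ as a perturbation of the native Hessian $I_5''+c_j^2I_3''+c_j^4I_1''$) then confirms $n_-(\mathcal{K}_1)=1$ and $n_-(\mathcal{K}_2)=0$, totaling $n_-(\mathcal{K}_{c,a})=1$.

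For the uniform gap, with the discrete spectrum below the essential spectrum now pinned at $\{-\mu\}\cup\{0,0\}$ throughout, the smallest strictly positive spectral value is a continuous function of $(c,a)$. Translation invariance reduces the $a$-dependence to $a_1-a_2\in\RR$, and the admissible $c$-region is precompact and bounded away from $c_1=c_2$; the only non-compact direction is $|a_1-a_2|\to\infty$. By the decoupling of Lemma \ref{L:q-asymp}, in that limit the positive spectrum converges to the union of positive spectra of the single-soliton Hessians $\mathcal{K}_j$, each possessing a uniform positive gap in the admissible $c$-range, which gives \eqref{eq:infs}.
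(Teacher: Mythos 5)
Your approach is genuinely different from the paper's at the two critical steps, and both of those steps contain gaps.

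For the count $n_-(\mathcal{K}_{c,a})=1$, the paper does \emph{not} decouple to single solitons: it applies the Maddocks--Sachs Wronskian criterion (Lemma 2.2 of \cite{MS}) directly to $\mathcal{K}_{c,a}$ at the concrete parameter values $a_1=a_2=0$, $c_1=0.5$, $c_2=1.5$, using the explicit formulas \eqref{eq:12} for $\partial_x Q$ and $\partial_\alpha Q$ to count the degeneracies of the Wronskian and conclude there is exactly one negative eigenvalue. Your route instead sends $|a_1-a_2|\to\infty$ and invokes the single-soliton operators $\mathcal{K}_j = P(c_j)$ (up to rescaling, the paper's $P(c)$, $P(1/c)$). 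The gap is the claim $n_-(\mathcal{K}_1)=1$, $n_-(\mathcal{K}_2)=0$: the pairing $\la \mathcal{K}_j\partial_c\eta_c,\partial_c\eta_c\ra = 4c_j(c_j^2-c_{3-j}^2)$ you compute only shows $n_-(\mathcal{K}_1)\ge 1$, and the ``perturbation from the native Hessian'' argument does not close this, because $\mathcal{K}_j - (I_5''+2c_j^2 I_3''+c_j^4 I_1'') = (c_{3-j}^2-c_j^2)(I_3''+c_j^2 I_1'')$ is \emph{not} sign-definite (the second factor is $2\mathcal{L}_{c_j}$, which has one negative eigenvalue). Nothing in the paper, including Lemma \ref{l:sP}, computes $n_-(P(c))$; Lemma \ref{l:sP} only identifies $\ker_{L^2}P(c)=\CC\,\partial_x\eta$. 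Establishing $n_-(P(c))$ independently requires essentially the same kind of argument the paper already carries out for the double soliton, so no work is saved, and as written your argument has a circularity: $n_-(\mathcal{K}_1)+n_-(\mathcal{K}_2)=1$ is exactly what you are trying to prove.

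For the uniform gap \eqref{eq:infs}, the statement ``the positive spectrum converges to the union of positive spectra of the single-soliton Hessians'' is the content that needs proof, and it is not automatic: one must rule out positive eigenvalues of $K(c,\alpha)$ that tend to $0$ as $\alpha\to\infty$ through quasimodes concentrated between the solitons or split between them. The paper does exactly this in Lemma \ref{l:KP}: a vanishing nonzero eigenvalue would produce a quasimode, which after cutoff with $\chi_\pm$ gives an $L^2$ kernel element of $P(c)$ or $P(1/c)$ beyond $\partial_x\eta$, contradicting Lemma \ref{l:sP}. Your argument ultimately needs this same spectral continuity-under-decoupling fact and therefore relies on $\dim\ker_{L^2}P(c)=1$, but that dependence is hidden; as stated the reduction to single-soliton gaps is asserted rather than proved. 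To make your route rigorous you would need to (i) provide an independent computation of $n_-(P(c))$ for $c\neq 1$ (e.g.\ via a Wronskian count for the single-soliton fourth-order operator), and (ii) replace the ``converges to the union'' step with a quasimode argument of the type in Lemma \ref{l:KP}.
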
 
\begin{proof}
As always we assume $0<c_1<c_2$.
We know the continuous spectrum of $\mathcal{K}_{c,a}$,
\[ \sigma_{\rm{ac}} ( \mathcal{K}_{c,a} ) =  [2c_1^2c_2^2,+\infty) \]
and that for all $a,c$, 
there is a two-dimensional kernel 
given by 
 $ \spn\{\partial_{a_1}q, \partial_{a_2}q\}$.  
The eigenvalues depend continuously on $a$, $c$, and hence the constant
dimension of the kernel shows that the number of negative eigenvalues
is constant (since the creation or annihilation of a negative 
eigenvalue would increase the dimension of $ \ker {\mathcal K}_{c,a} $.)

Hence it suffices to determine the number of negative 
eigenvalues of $\mathcal{K}$ for any convenient values of $a$, $c$.
To do that we use the following fact:
\begin{lemma}[Maddocks-Sachs {\cite[Lemma 2.2]{MS}}]
Suppose that $\mathcal{K}$ is a self-adjoint, $4$th order operator of the 
form 
$$\mathcal{K}=  2 (-\partial_x^2 + c_1^2)(-\partial_x^2+c_2^2) 
+  p_0(x) - \partial_x p_1(x)\partial_x  \,,$$
where the coefficients $p_j(x)$ are smooth, real,
and rapidly decaying as $x\to \pm\infty$.  Let $r_1(x)$, $r_2(x)$ be 
two linearly independent solutions of $\mathcal{K}r_j=0$ such that $r_j \to 0 $ as $x\to -\infty$.  

Then the number of negative eigenvalues of $ {\mathcal K} $ is 
equal to 
\begin{equation}
\label{eq:roots}
\sum_{x\in \RR } \dim \ker \left[ \begin{array}{ll} r_1 ( x ) & r_1'( x) \\
r_2 ( x ) & r_2' ( x ) \end{array} \right] \,. \end{equation}
\end{lemma}
We apply this lemma with $ {\mathcal K} = {\mathcal K}_{c,a} $, 
in which case 
\begin{align*}
&p_1 = 20q^2 \,, 
\ \ \ 
p_0 = 40q_{xx}q + 20 q_x^2 + 60q^4 - 12(c_1^2+c_2^2)q^2\,, \ \ \
q = q ( \bullet, a , c ) \,.
\end{align*}
Convenient values of $ a $ and $ c $ are provided by 
$ a_1 = a_2 = 0 $ and $ c_1 = 0.5 $, $ c_2 = 1.5 $. 
In the notation of \eqref{eq:defQ} we then have 
$ q ( x, a , c  ) = Q ( x, 0 , 0.5) $, and since
\[ \partial_x Q = -\partial_{a_1} q - \partial_{a_2} q \,, \ \ 
\partial_\alpha Q = -\partial_{a_1} q + \partial_{a_2 } q \,, \]
we can take $ r_1 = \partial_x Q $ and $ r_2 = \partial_\alpha Q $.
A computation based on \eqref{eq:Qdec} and \eqref{eq:deftau} shows 
that
\begin{equation}
\label{eq:12}
\begin{split}
 Q ( x, 0.5, 0 ) &  =  \sech(x/2)  \,,  \ \ \
  \partial_x Q ( x , 0.5 , 0 )  = -  \frac{ \sinh ( x/2 ) }
{2 \cosh^2 ( x /2 ) } \,, \\
\partial_\alpha Q ( x , 0.5 , 0 ) & =   \frac{ \sinh ( x/2 ) }
{4 \cosh^4 ( x/ 2 ) } ( 9 - 2 \cosh^2( x/2 ) ) \\
& = \frac{ 9 \sinh ( x/2 ) } {4 \cosh^4 ( x / 2 ) } + \partial_x Q (x, 0.5, 0 ) 
\,. 
\end{split}
\end{equation}
Since $ x \mapsto y = \sinh ( x/ 2 ) $ is invertible, 
we only need to check the dimension of the kernel the Wronskian matrix of 
\[ \tilde r_1 ( y ) = \frac y { 1 + y^2 } \,, \ \ \tilde r_2 ( y ) 
 = \frac y { ( 1 + y^2)^2 } \,, \] 
and that is equal to $ 1 $ at $ y = 0 $ and $ 0 $ on $ \RR \setminus \{0\} $.
In view of \eqref{eq:roots}
this completes the proof of \eqref{eq:Kh}

To prove \eqref{eq:infs} we first note that by rescaling \eqref{eq:q2Q} 
we only need to 
prove the estimate for 
\[ K ( c , \alpha) \defeq  {\mathcal K}_{((c,1),(-\alpha,\alpha))} \,, 
\ \ c \in [ \delta , 1 - \delta ] \,, \ \ 0 < \delta < 1/2 \,.  \]
For that we 
introduce another operator 
\begin{equation} 
\label{eq:defP}
P( c) \defeq ( - \partial_x^2 + 1 ) ( - \partial_x^2 + c^2) 
+ 10 \partial_x \eta^2 \partial_x + 10 ( 3 \eta^2 - 2 \eta^4 ) 
- 6 ( 1 + c^2 ) \eta^2 \,, 
\end{equation}
where 
\[ \eta = \sech x \,, \ \ c \in \RR_+ \setminus \{ 1 \} \,. \]
The operator $ P ( c ) $ is the Hessian of $ H_{(c,1)} $ 
at $ \eta $, which is also a critical point for $ H_{(c,1)} $.
In particular,
\[  P ( c) \partial_x \eta = 0 \,.\]
Putting, 
$$ U_\alpha f ( x ) \defeq f ( x + \alpha + \log ( (1 + c)/ ( 1 - c))) )
\,, $$ 
and 
$$ P_+( c, \alpha ) \defeq U_\alpha^* P ( c ) U_\alpha \,, $$
we see that 
\[  K ( c , \alpha ) = 2 P_+ ( c, \alpha )  +
{\mathcal O} ( e^{ - ( \alpha +  |x| )/C } )  \partial_x^2  + 
{\mathcal O} ( e^{ - ( \alpha +  |x| )/C }) \,, \ \
x \geq 0  \,. \]
Similarly, if 
$$  T_c f ( x ) \defeq \sqrt c f ( cx ) \,, $$ 
and 
$$ P_- ( c , \alpha ) \defeq c^2 U_\alpha T_c  P ( 1/c ) T_c^* U_\alpha^* \,,$$
then 
\[  K ( c , \alpha ) = 2 P_- ( c, \alpha) + 
{\mathcal O} ( e^{ - ( \alpha +  |x| ) /C } )  \partial_x^2  + 
{\mathcal O} ( e^{ - ( \alpha +  |x| ) /C }) \,, \ \
x \leq 0  \,. \]

We reduce the estimate \eqref{eq:infs} to a spectral 
fact about the operators $ P ( c ) $ and $ P ( 1/c) $:
\begin{lemma}
\label{l:KP}
Suppose that there exists 
\[  \alpha \longmapsto \lambda( c, \alpha ) \in \RR \setminus \{ 0 \} \,
\]
such that 
\[  \lambda( c, \alpha  ) \in \sigma ( K ( c , \alpha)) \,, \ \
\lambda ( c , \alpha ) \longrightarrow 0 \,, \ \alpha \longrightarrow \infty
\,. \]
Then we have 
\begin{equation}
\label{eq:lKP}    \dim \ker_{L^2} P ( c ) + \dim \ker_{L^2} 
 P( 1/c ) > 2 \,, 
\end{equation}
where $ \ker_{L^2} $ means the kernel in $ L^2 $.
\end{lemma}
\begin{proof} 
The assumption that $ 0 \neq \lambda ( c , \alpha) 
\rightarrow 0 $ as $ \alpha \rightarrow \infty $ implies that
there exists a family of quasimodes $ f_\alpha $, $ \| f_\alpha \|_{L^2} = 1$,
\begin{equation}
\label{eq:Kca} \|  K ( c , \alpha ) f_{\alpha } \|_{ L^2} = o(1)\,, \ \  \alpha  \longrightarrow \infty \,, \ \ \ \ f_\alpha \perp \ker_{L^2} K(c , \alpha ) \,. 
\end{equation}
Since we know that the kernel of $ K ( c, \alpha ) $ is spanned 
by $ U_\alpha ^* \partial_x \eta + {\mathcal O} ( e^{ - ( | x | + \alpha ) /C } ) $
and $ U_\alpha T_c \partial_x \eta + {\mathcal O} ( e^{ - ( | x | + \alpha ) /C } ) 
$, we can modify $ f_\alpha $ and replace the orthogonality condition by 
\[ f_\alpha \perp {\rm{span}}\, ( U_\alpha^* \partial_x \eta , 
U_\alpha T_c \partial_x \eta ) \,. \]
The estimate in \eqref{eq:Kca}, and $ \|f_\alpha \|_{L^2} = {\mathcal O}(1) 
$,  imply that
\begin{equation}
\label{eq:H2}  \| f_\alpha \|_{ H^2 } = {\mathcal O} ( 1 ) \,, \ \
\alpha \longrightarrow \infty \,. 
\end{equation}

We first claim that 
\begin{equation}
\label{eq:m11}   \int_{-1}^1  |f_\alpha ( x )  |^2 dx = o ( 1 ) \,, \ \ 
\alpha  \longrightarrow \infty \,.  
\end{equation}
In fact, on $ [ - \alpha/2, \alpha/2 ] $, 
\[ K( c, \alpha ) = ( - \partial_x^2 + c^2 ) ( - \partial_x^2 + 1 ) 
+ {\mathcal O} ( e^{-\alpha/C } )  \partial_x^2 + 
{\mathcal O} ( e^{-\alpha/ c } ) \,, \]
and hence, using \eqref{eq:H2},
\[ ( - \partial_x^2 + c^2 ) ( - \partial_x^2 + 1) f_\alpha = r_\alpha \,,
\ \  \| r_\alpha \|_{ L^2 ( [ - \alpha/2 , \alpha/2 ] )} = o ( 1 ) \,. \]
Putting 
$$ e_\alpha \defeq
 [ ( - \partial_x^2 + c^2 ) ( - \partial_x^2 + 1) ]^{-1} \left( r_\alpha 
\bbbone_{ [-\alpha/2 , \alpha/2] } \right) \,, \ \ \ 
\| e_{\alpha } \|_{ H^2 } = o( 1) \,, $$
we see that $ f_\alpha = g_\alpha + e_\alpha $ where
\begin{equation}
\label{eq:eqg}   ( - \partial_x^2 + c^2 ) ( - \partial_x^2 + 1) g_\alpha ( x ) = 0 \,, \ \ 
|x| <  \alpha/2 \,. \end{equation}
Suppose now that \eqref{eq:m11} were not valid. Then the same would
be true for $ g_\alpha $, and there would exist a constant $ c_0 > 0 $, 
and a sequence $ \alpha_j 
\rightarrow \infty $,  for which $  \| g_{\alpha_j} \|_{ L^2 ([-1,1])} > c_0 $.
In view of \eqref{eq:eqg} this implies that
\[  g_{\alpha_j} ( x ) = \sum_{\pm} 
\left( a^\pm_{j} e^{\pm x} + b_{j}^\pm e^{\pm c x} \right) 
\,, \ \ |x| < \alpha/2 \,, \ \ |a_j^\pm |, |b_j^\pm | = {\mathcal O}(1) \,, \] 
and for at least one choice of sign, 
$$ |a_j^\pm |^2 + |b_j^\pm |^2 > c_1 > 0 \,.$$
We can choose a subsequence so that this is true for a fixed sign, 
say, $ + $, for all $ j$. In that case, a simple calculation shows that
for $ M_j \rightarrow \infty $, $ M_j \leq \alpha_j / 2 $, 
\[ 
\begin{split} \int_0^{M_j}  | g_{\alpha_j}  ( x ) |^2 dx & \geq 
\frac 12 |a_j^+|^2 e^{2M_j} + \frac 1 {2c} |b_j^+|^2
e^{ 2 c M_j }  - \frac 2 { c+1} 
|a_j^+ | | b_j^+| e^{ (c+1) M_j } \\
& \ \ \ \ \   - \frac 2 {1-c} 
|a_j^+ | | b_j^-| e^{ (1-c) M_j } - {\mathcal O} ( 1 ) \\
& \geq \frac12 \left( \frac{ 1 - c} { 1 + c } \right)^2 
\left(  |a_j^+|^2 e^{2M_j} + \frac 1 {c} |b_j^+|^2 e^{ 2 M_j c } \right) \\
& \ \ \ \ \ - \frac 4 { ( 1 - c )^2 } |a_j^+|^2 e^{ 2 ( 1 - c) M_j} 
- {\mathcal O} ( 1 ) \,, 
\end{split} 
\]
where we used the fact that $ 0 < \delta < c < 1 - \delta $.
Hence
\[ \begin{split} 
\| f_{\alpha_j } \|_{L^2} & \geq \int_0^{M_j}  | f_{\alpha_j}  ( x ) |^2 dx 
\geq \int_0^{M_j}  | g_{\alpha_j}  ( x ) |^2 dx - o ( 1 ) \\
& \geq  \frac12 \left( \frac{ 1 - c} { 1 + c } \right)^2 c_1 e^{ 2 M_j c }
- {\mathcal O} ( 1 ) \longrightarrow \infty \,, \ \ j \rightarrow \infty \,.
\end{split} \]
Since $ \| f_{\alpha } \|_{L^2} = 1 $ we obtain a contradiction 
proving \eqref{eq:m11}. 

Now let $ \chi_\pm C^\infty  ( \RR ) $ be supported in $ \pm [ -1 , \infty ) $,
and satisfy $ \chi_+^2 + \chi_-^2 = 1$. Then \eqref{eq:m11} (and the 
corresponding estimates for derivatives obtained from \eqref{eq:Kca})
shows that
\[  \|  P_\pm ( c, \alpha) 
(\chi_\pm  f_{\alpha } ) \|_{ L^2 }  = o(1)\,, \ \  \alpha  \longrightarrow \infty \,. \]
For at least one of the signs we must have $ \| \chi_\pm f_\alpha \|_{L^2}
> 1/3 $ (if $ \alpha $ is large enough), and hence we obtain 
a quasimode for $ P_\pm ( c , \alpha ) $, orthogonal to the 
known element of the kernel of $ P_\pm ( c, \alpha ) $. This 
means that $ P_\pm ( c, \alpha ) $, for at least one of the signs
has an additional eigenvalue approaching $ 0 $ as $ \alpha \rightarrow \infty $.
Since the spectrum of $ P_\pm ( c , \alpha) $ is independent of 
$ \alpha $ it follows that for at least one sign the kernel is two 
dimensional. This proves \eqref{eq:lKP}.
\end{proof}

The next lemma shows that \eqref{eq:lKP} is impossible:
\begin{lemma}
\label{l:sP}
For $ c \in \RR_+ \setminus \{ 1 \} $
\begin{equation}
\label{eq:lsP}
\ker_{L^2}  P ( c ) = \CC \cdot \partial_x \eta \,.
\end{equation}
\end{lemma}
\begin{proof} 
Let $ {\mathcal L} \defeq ( I_3'' ( \eta )  + I''_1 ( \eta )) /2  $:
\[ {\mathcal L} v = - v_{xx} - 6 \eta^2 v + v 
\,, \ \ \eta ( x ) = \sech(x ) \,.\]
We recall (see the comment after \eqref{eq:defP}) that
\[ P ( c ) = \frac 12 H''_{(c,1)} ( \eta ) = 
\frac 12 \left( I_5''(\eta) + ( 1 + c^2) I_3'' ( \eta ) + c^2 I_1''( \eta ) 
\right) \,.\]
We already noted that 
\[ {\mathcal L} ( \partial_x \eta ) = P ( c ) \partial_x \eta = 0 \,, \]
and proceeding as in \eqref{eq:dcK} we also have
\begin{equation}
\label{eq:xdx} {\mathcal L} ( \partial_x( x  \eta ) ) = - 2 \eta \,, \ \
P ( c )  ( \partial_x( x  \eta)  ) =  2 ( 1 -c^2 ) \eta \,. 
\end{equation}

We claim that 
\begin{equation}
\label{eq:comm}
P ( c ) \partial_x {\mathcal L } = {\mathcal L} \partial_x P ( c ) 
\end{equation}
Since $ I_j' ( \eta + t v ) = t I_j'' ( \eta ) v +{\mathcal O} (t^2) $, 
$ v \in {\mathcal S} $, 
the equation \eqref{E:conserved} implies that
\[ \langle I''_j ( \eta ) v , \partial_x I_k'' ( \eta ) v \rangle = 0 
\,, \ \ \forall \, j , k \,, \ \ v \in {\mathcal S} \,. \]
From this we see that 
\[  \langle P ( c ) v , \partial_x {\mathcal L} v \rangle =0 \,, \ \
\forall v \in {\mathcal S} \,, \]
and hence by polarization,
\[  \langle P ( c ) v , \partial_x {\mathcal L} w \rangle 
= - \langle P ( c ) w , \partial_x {\mathcal L} v  \rangle = 
\langle \partial_x P ( c ) w , {\mathcal L } v \rangle \,. \]
which implies \eqref{eq:comm}. 

Suppose now that $ \dim \ker_{L^2} P ( c ) = 2 $ for some $ c \neq 1 $,
and let $ \eta_x $ and $ \psi $ be the basis of this kernel. Since
$ P ( c ) $ is symmetric with respect to the reflection $ x \mapsto - x $,
$ \psi $ can be chosen to be either even or odd. 
Applying \eqref{eq:comm} to $ \psi$ we get $ P ( c) \partial_x 
{\mathcal L } \psi = 0 $ and hence
\[ \partial_x {\mathcal L} \psi = \alpha \eta_x + \beta \psi \,, \]
for some $ \alpha, \beta \in \RR $. 

If $ \psi $ is odd then $ \partial_x {\mathcal L} \psi $ is even,
and therefore $ \alpha = \beta = 0 $. But then $ \psi \in 
\ker_{L^2} {\mathcal L} = \CC \cdot \eta_x $,
giving a contradiction.

If $ \psi $ is even then $ \partial_x {\mathcal L} \psi $ is odd,
$ \beta =0 $ and $ {\mathcal L} \psi = \alpha \eta $. 
We have  $ \alpha \neq 0 $
since $ \psi $ is orthogonal to the kernel of $ {\mathcal L}$, spanned
by $ \partial_x \eta $. From \eqref{eq:xdx} we obtain
\[ \psi = - \frac \alpha 2 \partial_x ( x \eta ) \,. \]
Applying the second equation in \eqref{eq:xdx} we then obtain
\[ P ( c) \psi = - \alpha ( 1 - c^2 ) \eta  \,, \]
contradicting $ \psi \in \ker_{L^2 } P ( c ) $.
\end{proof}

With this lemma we complete the proof of Proposition \ref{L:nsk}.
\end{proof}

To obtain the coercivity statement in Proposition \ref{p:ck} we first
obtain coercivity under a different orthogonality condition:
\begin{lemma}
\label{l:ck}
There exists a constant $\rho>0$ depending only on $c_1$, $c_2$, such that the following holds:  If $\la u, \partial_x^{-1}\partial_{a_1}q\ra =0$, $\la u, \partial_x^{-1}\partial_{a_2}q\ra =0$, $\la u, \partial_{a_1}q\ra =0$, $\la u, \partial_{a_2}q\ra =0$, then $\la \mathcal{K}_{c,a} u,u \ra \geq \rho\|u\|_{L^2}^2$. 
\end{lemma}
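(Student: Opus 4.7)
The strategy is to use the spectral decomposition of $\mathcal{K}=\mathcal{K}_{c,a}$ from Proposition~\ref{L:nsk} together with the identity \eqref{E:Kqc}, via a Cauchy--Schwarz argument on the positive spectral subspace. By Proposition~\ref{L:nsk}, $L^2$ admits the $\mathcal{K}$-invariant $L^2$-orthogonal decomposition $L^2 = \RR h \oplus \ker\mathcal{K} \oplus E_+$, where $\mathcal{K} h = -\mu h$ with $\mu>0$, $\ker\mathcal{K} = \spn(\partial_{a_1}q, \partial_{a_2}q)$, and $\mathcal{K}|_{E_+} \geq \rho_0 > 0$ for some $\rho_0=\rho_0(c)$. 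For $u$ satisfying the four orthogonality conditions, decompose $u = \alpha h + u_\perp$ with $u_\perp \in E_+$; the conditions $\la u, \partial_{a_j} q \ra = 0$, $j=1,2$, force the kernel component to vanish, since the Gram matrix of the $\partial_{a_j}q$ is positive definite while $h$ and $E_+$ are already $L^2$-orthogonal to the kernel (eigenvectors for distinct eigenvalues).

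Next convert the conditions $\la u, \partial_x^{-1} \partial_{a_j} q \ra = 0$ using \eqref{E:Kqc}: since $\mathcal{K}\partial_{c_j}q = \gamma_j\, \partial_x^{-1}\partial_{a_j}q$ with $\gamma_j = 4(-1)^j c_j(c_1^2-c_2^2) \neq 0$ (as $c_1 \neq c_2$), they are equivalent to $\la \mathcal{K}u, \partial_{c_j}q \ra = 0$.  I focus on $j=1$ and decompose $\partial_{c_1}q = \beta_1 h + k_1 + w_1$ with $k_1 \in \ker\mathcal{K}$, $w_1 \in E_+$. Using \eqref{E:Kqc}, the identity $\la \partial_x^{-1}\partial_{a_j}q, \partial_{c_k}q \ra = -\delta_{jk}$ (which follows from Lemma~\ref{E:sf} and the skew-adjointness of $\partial_x^{-1}$), one computes
$$\la \mathcal{K}\partial_{c_1}q, \partial_{c_1}q \ra = \gamma_1 \la \partial_x^{-1}\partial_{a_1}q, \partial_{c_1}q\ra = -4c_1(c_2^2 - c_1^2) < 0.$$
Spectrally this quantity equals $-\mu \beta_1^2 \|h\|^2 + \la \mathcal{K}w_1, w_1 \ra$, and positivity of $\mathcal{K}|_{E_+}$ then forces $\beta_1 \neq 0$. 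Expanding $\mathcal{K}u = -\mu \alpha h + \mathcal{K} u_\perp$ and using $L^2$-orthogonality of the three spectral pieces, the constraint $\la \mathcal{K}u, \partial_{c_1}q\ra = 0$ reduces to the single scalar relation
$$\mu \alpha \beta_1 \|h\|^2 = \la \mathcal{K}u_\perp, w_1 \ra.$$

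Now apply Cauchy--Schwarz in the positive-definite inner product $\la \mathcal{K}\cdot, \cdot \ra$ on $E_+$ and substitute $\la \mathcal{K} w_1, w_1 \ra = \mu\beta_1^2 \|h\|^2 - 4c_1(c_2^2-c_1^2)$ to obtain, after dividing by $\mu\beta_1^2\|h\|^2$,
$$\mu \alpha^2 \|h\|^2 \leq \lambda\, \la \mathcal{K} u_\perp, u_\perp \ra, \qquad \lambda \defeq 1 - \frac{4c_1(c_2^2-c_1^2)}{\mu \beta_1^2 \|h\|^2} \in [0,1).$$
When $\lambda > 0$, a convex combination of this inequality with the spectral lower bound $\la \mathcal{K}u_\perp, u_\perp\ra \geq \rho_0 \|u_\perp\|^2$ yields
$$\la \mathcal{K}u, u \ra = -\mu\alpha^2\|h\|^2 + \la \mathcal{K} u_\perp, u_\perp \ra \geq \rho \|u\|^2$$
for an explicit $\rho > 0$ depending on $\lambda$, $\mu$, $\rho_0$, and $\|h\|$. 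The degenerate case $\lambda = 0$, which happens only when $w_1 = 0$ (equivalently, $\partial_x^{-1}\partial_{a_1}q$ is a scalar multiple of $h$), is even simpler: then $\mathcal{K}\partial_{c_1}q = -\mu\beta_1 h$ is proportional to $h$, so the condition $\la u, \partial_x^{-1}\partial_{a_1}q\ra = 0$ becomes $\la u, h\ra = 0$, forcing $\alpha = 0$ and $\la \mathcal{K}u, u\ra \geq \rho_0\|u\|^2$ directly. The main obstacle is showing that $\rho$ can be chosen to depend only on $c$ and not on $a$: by the translation invariance $q(x+t, a+(t,t), c) = q(x,a,c)$, the operator $\mathcal{K}_{c,a}$ depends on $a$ only through $a_2-a_1$, and the decoupling in Lemma~\ref{L:q-asymp} as $|a_2-a_1|\to\infty$ reduces the relevant spectral quantities to single-soliton analogues, yielding uniform lower bounds on $\mu$, $\|h\|$, and $|\beta_1|$, and hence a uniform $\rho = \rho(c) > 0$.
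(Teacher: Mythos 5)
Your proof is correct and its overall strategy matches the paper's — both hinge on the inequality $\la\mathcal{K}\partial_{c_1}q,\partial_{c_1}q\ra = 4c_1(c_1^2-c_2^2)<0$, the fact that this forces $\beta_1 = \la\partial_{c_1}q,h\ra/\|h\|^2 \neq 0$, the translation of the symplectic orthogonality $\la u,\partial_x^{-1}\partial_{a_1}q\ra=0$ into $\la \mathcal{K}u,\partial_{c_1}q\ra=0$ via \eqref{E:Kqc}, and the positive spectral gap $\rho_0$ — but the decomposition and the final step differ. You split $u$ along the spectral axes ($u = \alpha h + u_\perp$ with $u_\perp\in E_+$) and then use Cauchy--Schwarz in the $\mathcal{K}$-inner product on $E_+$ to bound $\alpha$ by $\lambda\la\mathcal{K}u_\perp,u_\perp\ra$ with $\lambda\in[0,1)$. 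The paper instead splits along the non-orthogonal direction $\widetilde{\partial_{c_1}q}$ (the projection of $\partial_{c_1}q$ off $\ker\mathcal{K}$), writing $u=\tilde u + \alpha\widetilde{\partial_{c_1}q}$ with $\tilde u\perp h$; the constraint $\la u,\mathcal{K}\partial_{c_1}q\ra=0$ then makes the cross-term in $\la\mathcal{K}u,u\ra$ fold back into $-\alpha^2\la\mathcal{K}\partial_{c_1}q,\partial_{c_1}q\ra$, so the quadratic form becomes manifestly a sum of two positive pieces, $\tilde\rho\|\tilde u\|^2 + 4c_1(c_2^2-c_1^2)\alpha^2$, with no Cauchy--Schwarz needed. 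The paper's route is slightly slicker and keeps the $a$-uniformity bookkeeping lighter (it only needs $\rho_0$ bounded below, via \eqref{eq:infs}, and $\|\partial_{c_1}q\|_{L^2}$ bounded above); your route additionally needs $\mu$ and $\mu\beta_1^2\|h\|^2$ under control, and your closing remark about ``uniform lower bounds on $\mu$, $\|h\|$, $|\beta_1|$'' is somewhat off the mark — what is actually needed is $\mu$ bounded below (so $\lambda/\mu$ doesn't blow up) together with $\mu\beta_1^2\|h\|^2$ bounded \emph{above} (so $1-\lambda=4c_1(c_2^2-c_1^2)/(\mu\beta_1^2\|h\|^2)$ stays bounded away from $0$). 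Both are available: the upper bound follows since $\mu\beta_1^2\|h\|^2\leq\mu\|\partial_{c_1}q\|^2$ with $\mu$ and $\|\partial_{c_1}q\|$ uniformly bounded, and the lower bound on $\mu$ is actually automatic from $\lambda\geq 0$, which gives $\mu\geq 4c_1(c_2^2-c_1^2)/\|\partial_{c_1}q\|^2$.
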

\begin{proof}
To simplify notation we put $ \mathcal{K}=\mathcal{K}_{c,a} $ in the proof. 
Using \eqref{E:Kqc} and the expression for the symplectic form, 
$\omega\big|_M = da_1\wedge dc_1 + da_2\wedge dc_2$, we have
$$\la \mathcal{K}\partial_{c_1}q, \partial_{c_1}q \ra = -4c_1(c_1^2-c_2^2)\la \partial_x^{-1}\partial_{a_1}q,\partial_{c_1}q\ra = 4c_1(c_1^2-c_2^2)$$
and similarly
\begin{equation}
\label{E:5}
\la \mathcal{K}\partial_{c_2}q, \partial_{c_2}q \ra = -4c_2(c_1^2-c_2^2) \,.
\end{equation}
Since we assumed that 
 $c_1<c_2$,  $\la \mathcal{K}\partial_{c_1}q, \partial_{c_1}q \ra<0$.  


Let $\widetilde{\partial_{c_1}q}$ be the orthogonal 
projection of $\partial_{c_1}q$ on $\left(\ker \mathcal{K} \right)^\perp$.  
We first claim that there exists a constant $\alpha$ such that $u=\tilde u + \alpha \widetilde{\partial_{c_1}q}$ with $\la \tilde u, h \ra =0$,
where $ \mu $ and $ h $ are defined in Proposition \ref{L:nsk}.

  To prove this, decompose $\partial_{c_1}q$ as $\partial_{c_1}q=\xi+\beta h$ with $\la \xi, h \ra=0$.  Then by \eqref{E:5}
\begin{align*}
0 &> \la \mathcal{K}\partial_{c_1}q,\partial_{c_1}q \ra \\
&= \la \mathcal{K}\xi,\xi\ra + 2\beta \la \mathcal{K}h, \xi \ra + \beta^2 \la \mathcal{K}h,h\ra \\
&= \la \mathcal{K}\xi,\xi\ra - \mu \beta^2
\end{align*}
Since $\la \mathcal{K}\xi,\xi\ra \geq 0$, we must have that $\beta \neq 0$.  Hence there exists $u'$ and $\alpha$ such that $u=u'+\alpha\partial_{c_1}q$ with $\la u', h\ra =0$.  Now take $\tilde u$ to be the projection of $u'$ away from the kernel of $\mathcal{K}$.  This completes the proof of the claim.

We have that
$$\la u, \mathcal{K}\partial_{c_1} q \ra = -4c_1(c_2^2-c_1^2) \la u, \partial_x^{-1}\partial_{a_1} q \ra =0$$
by \eqref{E:Kqc} and hypothesis.  Substituting $u=\tilde u +\alpha \widetilde{\partial_{c_1}q}$, we obtain
\begin{equation}
\label{E:7'}
\la \tilde u, \mathcal{K}\partial_{c_1}q \ra = -\alpha \la \widetilde{\partial_{c_1}q} , \mathcal{K}\partial_{c_1}q \ra = -\alpha \la \partial_{c_1} q , \mathcal{K}\partial_{c_1}q\ra
\end{equation}

Now let $\tilde \rho$ denote the bottom of the positive spectrum of $\mathcal{K}$.  We have
\begin{align*}
\la \mathcal{K}u, u \ra &= \la \mathcal{K}(\tilde u +\alpha \widetilde{\partial_{c_1} q}) , (\tilde u +\alpha \widetilde{\partial_{c_1} q} )\ra\\
&= \la \mathcal{K}\tilde u, \tilde u \ra + 2\alpha \la \mathcal{K}\tilde u, \partial_{c_1} q\ra + \alpha^2 \la \mathcal{K}\partial_{c_1}q, \partial_{c_1} q\ra \\
&= \la \mathcal{K}\tilde u, \tilde u \ra - \alpha^2 \la \mathcal{K}\partial_{c_1}q, \partial_{c_1}q\ra && \text{by }\eqref{E:7'}\\
&\geq \tilde \rho \|\tilde u \|_{L^2}^2 + 4c_1(c_2^2-c_1^2)\alpha^2\\
&\geq \tilde C(\|\tilde u\|_{L^2}^2 + \alpha^2)
\end{align*}
where $\tilde C$ depends on $c_1$, $c_2$ and $\tilde \rho$.
However, since $u=\tilde u + \alpha \widetilde{\partial_{c_1}q}$, we have
$$\|u\|_{L^2}^2 \leq C(\|\tilde u \|_{L^2}^2+\alpha^2)$$
where $C$ depends on $c_1$, $c_2$ which completes the proof.
\end{proof}

We now put 
\begin{equation}
\label{eq:ale}
\begin{split} E & = E_{a,c} =\ker \mathcal{K}=\spn\{\partial_{a_1}q,\partial_{a_2}q\}\,, \\
F & = F_{a,c} = \spn \{ \partial_x^{-1}\partial_{c_1}q, \partial_x^{-1}\partial_{c_2}q \}\,, \\
G & = G_{a,c} = \spn \{\partial_x^{-1}\partial_{a_1}q, \partial_x^{-1}\partial_{a_2}q \}
\,. 
\end{split} \end{equation}

In this notation Lemma \ref{l:ck} states
that 
\[ u \perp (E+G)   \ \Longrightarrow \ 
\la \mathcal{K}u,u\ra \geq \theta \|u\|_{L^2}^2 \,, \]
while to establish Proposition \ref{p:ck} we need
\[ u \perp (F+G) \Longrightarrow \la \mathcal{K}u, 
u\ra \geq \tilde \theta \|u\|_{L^2}^2 \,. \]
That is, 
we would like to replace orthogonality with the kernel $E$ by orthogonality with a ``nearby'' subspace $F$.  For this, we apply the following analysis with $D=F^\perp$.

\begin{definition}
Suppose that $D$ and $E$ are two closed subspaces in a Hilbert space.  Then $\alpha(D,E)$, the \emph{angle between $D$ and $E$}, is
$$\alpha(D,E) \defeq \cos^{-1} \sup_{\substack{\|d\|=1, \; d\in D \\ \|e\|=1, \; e\in E}} \la d,e\ra $$
\end{definition}

It is clear that 
 $0\leq \alpha(D,E)\leq {\pi}/{2}$, $\alpha(D,E)=\alpha(E,D)$, and
that 
$\alpha(E,D)={\pi}/{2}$ if and only if $E\perp D$. 
We will need slightly more subtle properties stated in the following
\begin{lemma}
\label{l:subtle}
Suppose that $D$ and $E$ are two closed subspaces in a Hilbert space. Then
\begin{equation}
\label{eq:lsu}  \alpha(D,E) = \cos^{-1} \sup_{\|d\|=1, d\in D} \|P_E d\| \,, \ \ 
\alpha(D,E) = \sin^{-1} \inf_{\|d\|=1, d\in D} \|P_{E^\perp} d\| \,.
\end{equation}
In addition if $ E $ is finite dimensional then 
\begin{equation}
\label{eq:lfi}
\alpha(D,E) =0 \  \Longleftrightarrow \ D\cap E \neq \{0\} \,. 
\end{equation}
\end{lemma}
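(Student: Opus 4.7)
The plan is to reduce the two identities in \eqref{eq:lsu} to elementary properties of orthogonal projections, and then to handle \eqref{eq:lfi} by a compactness argument that is the only place where finite dimensionality of $E$ enters.

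For the first identity, I would fix $d \in D$ with $\|d\|=1$ and observe, by Cauchy--Schwarz applied to the decomposition $e = P_E e$, that
\[
\sup_{\|e\|=1,\; e\in E} \la d,e\ra = \sup_{\|e\|=1,\; e\in E} \la P_E d,e\ra = \|P_E d\|,
\]
the supremum being attained when $P_E d\neq 0$ by $e = P_E d/\|P_E d\|$. Taking the supremum over unit $d\in D$ and applying $\cos^{-1}$ gives the first formula. For the second, I would invoke Pythagoras, $\|P_E d\|^2 + \|P_{E^\perp}d\|^2 = 1$ for unit $d$, to rewrite
\[
\sup_{\|d\|=1,\; d\in D}\|P_E d\|^2 = 1 - \inf_{\|d\|=1,\; d\in D}\|P_{E^\perp}d\|^2,
\]
which, setting $\theta = \alpha(D,E)\in[0,\pi/2]$, is precisely $\cos^2\theta = 1-\sin^2\theta$ with $\sin\theta\geq 0$; taking square roots yields the second formula.

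For \eqref{eq:lfi}, the implication $(\Leftarrow)$ is immediate: any unit $d \in D \cap E$ has $\|P_E d\|=1$, forcing the supremum in \eqref{eq:lsu} to equal $1$ and hence $\alpha(D,E)=0$. The content is in $(\Rightarrow)$. Assuming $\alpha(D,E)=0$, I extract a sequence $d_n \in D$, $\|d_n\|=1$, with $\|P_E d_n\|\to 1$. Here is where I use that $E$ is finite dimensional: the bounded sequence $\{P_E d_n\} \subset E$ has a subsequence converging in norm to some $e\in E$ with $\|e\|=1$. The estimate $\|d_n - P_E d_n\|^2 = 1 - \|P_E d_n\|^2 \to 0$ then shows $d_n \to e$ along that subsequence, and since $D$ is closed we obtain $e \in D\cap E$, which is therefore nonzero. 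The only delicate point is this compactness extraction; without finite dimensionality of $E$, weak limits of $P_E d_n$ need not be unit vectors and the conclusion can fail, but no other technical obstacle arises.
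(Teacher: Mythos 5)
Your proof is correct and the mathematical content matches the paper's, with a couple of technical variations worth noting. For the first identity in \eqref{eq:lsu}, you apply Cauchy--Schwarz directly through $\la d,e\ra = \la P_E d, e\ra$, while the paper minimizes the quadratic $\|d-\alpha e\|^2$ over $\alpha\in\RR$; these are equivalent one-liners. The Pythagoras step for the second identity is identical. For the $\Rightarrow$ direction of \eqref{eq:lfi}, the paper argues the contrapositive: assuming $D\cap E=\{0\}$ and $\dim E<\infty$, the continuous positive function $y\mapsto d(y,D)$ attains a strictly positive minimum on the compact unit sphere of $E$, and a short chain of inequalities then bounds $1-\cos\alpha(D,E)$ below by that minimum. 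You instead argue forward: from $\alpha(D,E)=0$ extract $d_n\in D$ with $\|P_E d_n\|\to 1$, use norm-compactness of bounded sets in finite-dimensional $E$ to pass to a limit $e$ with $\|e\|=1$, note $\|d_n-P_E d_n\|\to 0$ so $d_n\to e$, and invoke closedness of $D$. Both proofs rest on exactly the same compactness fact; your sequential version makes the role of each hypothesis (finite dimensionality of $E$ for compactness, closedness of $D$ to capture the limit) a little more explicit, while the paper's distance formulation is more compressed. Either is acceptable, and there is no gap.
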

\begin{proof}
To see \eqref{eq:lsu} 
let $d\in D$, with $\|d\|=1$.  By the definition of the projection operator,
\begin{align*}
1- \|P_Ed\|^2 &=\| d-P_E d\|^2 
= \inf_{e\in E} \| d-e\|^2 
= \inf_{\substack{e\in E \\ \|e\|=1}} \inf_{\alpha \in \mathbb{R}} \|d-\alpha e\|^2 
\\ & = \inf_{\substack{e\in E \\ \|e\|=1}} \inf_{\alpha \in \mathbb{R}} (1-2\alpha \la d, e\ra + \alpha^2) 
= \inf_{\substack{e\in E \\ \|e\|=1}} (1-\la  d,e\ra^2)
\\ & = 1-\sup_{\substack{e\in E \\ \|e\|=1}} \la  d,e\ra^2
\end{align*}
and consequently,
$$\|P_Ed\| = \sup_{\substack{e\in E \\ \|e\|=1}} \la  d,e\ra \,, $$
from which the first formula in \eqref{eq:lsu} follows.
The second one is a consequence of the first one as 
$1=\|P_Ed\|^2+\|P_{E^\perp}d\|^2$. 

The $ \Leftarrow $ implication 
in \eqref{eq:lfi} is clear. To see the other
implication, we observe that if $ D \cap E = \{ 0 \} $ and 
$ E $ is finite dimensional then 
\[  \inf_{\substack{{ y \in E } \\ { \| y \|= 1 }} } d ( y  , D ) > 0 
\,, \]
where $ d ( y , D ) = \inf_{ z \in D } \| y - z \| $ is the distance from 
$ y $ to $ D $.
This implies that 
\[  \begin{split} 0 & <  
\inf_{\substack{{ y \in E } \\ { \| y \|= 1 }} } 
\inf_{{ z \in D } } \| y - z \|^2 = 
\inf_{\substack{{ y \in E } \\ { \| y \|= 1 }} } 
\inf_{{ z \in D } } ( 1 - 2 \langle y ,z \rangle + \|z \|^2 ) \\
& \leq \inf_{\substack{{ y \in E } \\ { \| y \|= 1 }} } 
\inf_{\substack{{ z \in D } \\ { \| z \|= 1 }} } 
( 2 - 2 \langle y ,z \rangle )  = 2 ( 1 - 
 \sup_{\substack{{ y \in E } \\ { \| y \|= 1 }} } 
\sup_{\substack{{ z \in D } \\ { \| z \|= 1 }} } 
\langle y ,z \rangle ) \\
& =  2 ( 1 - \cos \alpha ( D , E ) ) \,. 
\end{split} \]
Thus if $ D \cap E = \{ 0 \} $ then $ \alpha ( D, E ) >  0 $.
But that is the $ \Rightarrow $ implication in \eqref{eq:lfi}.
\end{proof}

In the notation of \eqref{eq:ale}, the translation symmetry gives
$$\alpha(E_{a,c},F^\perp_{a,c}) = F ( c_1, c_2,  a_1-a_2 ) \,, $$
where $ F $ is a continuous fuction in $ {\mathcal C} \times \RR $.
We claim that 
\begin{equation}
\label{eq:Fka}  F ( c_1 , c_2 , \alpha ) \geq \kappa_\delta > 0  \ \ \text{for} \ \ 
\delta \leq c_1\leq c_1+\delta \leq c_2 \leq \delta^{-1}\,. 
\end{equation}
Consider now the case $|a_1-a_2| \leq A$ (where $A$ is chosen large below), and hence $c_1$, $c_2$, and $a_1-a_2$ vary within a compact set.  Thus it suffices to check that  $\alpha(E_{a,c},F^\perp_{a,c})$ is nowhere zero and 
this amounts to checking $E\cap F^\perp = \{0\}$. 

 Suppose the contrary, that is that there exists
$$u = z_1 \partial_{a_1}q + z_2 \partial_{a_2}q \in  F^\perp \,. $$
Since
$\omega\big|_M = da_1\wedge dc_1 + da_2\wedge dc_2$,
\[ z_j = \la u, \partial_x^{-1}\partial_{c_j}q\ra = 0 \,. \]
This proves \eqref{eq:Fka}. To complete the argument in the case $|a_1-a_2| \leq A$, we need:

\begin{lemma}  Let $E=\ker \mathcal{K}$, and suppose that $G$ is  a subspace such that $E\perp G$ and the following holds:  
\[u\perp (E+G) \ \Longrightarrow \ \la 
\mathcal{K}u,u \ra \geq \theta \|u\|_{L^2}^2 \,. \]
Then, for any other subspace $F$ we have
\[ u\perp (F+ G) \ \Longrightarrow \ 
\la Ku,u \ra \geq \theta \sin^2\alpha (E,F^\perp) \; \|u\|_{L^2}^2 \,. \]
\end{lemma}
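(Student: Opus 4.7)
The plan is to exploit the decomposition of $u$ along $E$ and $E^\perp$, use the hypothesis $E\perp G$ to transfer the orthogonality with $G$ to the component in $E^\perp$, and then convert the resulting $L^2$ loss into the angle factor via the formulas in Lemma \ref{l:subtle}.

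More precisely, given $u \perp F+G$, I would first write $u = u_1 + u_2$ with $u_1 = P_E u \in E$ and $u_2 = P_{E^\perp} u \in E^\perp$. Since $\mathcal{K} u_1 = 0$ (as $E = \ker \mathcal{K}$) and $\mathcal{K}$ is self-adjoint, the cross term $\la \mathcal{K} u_1, u_2\ra = \la u_1, \mathcal{K} u_2\ra$ vanishes when paired with $u_1 \in \ker \mathcal{K}$, so
\[
\la \mathcal{K} u, u\ra = \la \mathcal{K} u_2, u_2\ra.
\]
The first key step is then to verify $u_2 \perp E + G$. Orthogonality of $u_2$ to $E$ is by construction. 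For $g \in G$, the assumption $E \perp G$ gives $\la u_1, g\ra = 0$, and since $u \perp G$ by hypothesis, $\la u_2, g\ra = \la u, g\ra - \la u_1, g\ra = 0$. Hence the coercivity hypothesis applies and yields
\[
\la \mathcal{K} u, u\ra = \la \mathcal{K} u_2, u_2\ra \geq \theta \|u_2\|_{L^2}^2.
\]

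The second key step is to lower-bound $\|u_2\|_{L^2}$ in terms of $\|u\|_{L^2}$ using the angle. Note $u_2 = P_{E^\perp} u$ and that $u \in F^\perp$ (since $u \perp F+G$ implies $u \perp F$). By the second identity in Lemma \ref{l:subtle} together with the symmetry $\alpha(E,F^\perp) = \alpha(F^\perp,E)$,
\[
\sin \alpha(E,F^\perp) = \inf_{\substack{d \in F^\perp \\ \|d\|=1}} \|P_{E^\perp} d\|,
\]
so applied to $d = u/\|u\|$ (if $u \neq 0$; the zero case is trivial) this yields
\[
\|u_2\|_{L^2} = \|P_{E^\perp} u\|_{L^2} \geq \sin \alpha(E,F^\perp)\, \|u\|_{L^2}.
\]
Combining the two displays produces the claimed bound.

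There is no real obstacle here: once the splitting is performed, the hypothesis $E\perp G$ does all the work in preserving the orthogonality with $G$, the kernel property of $E$ eliminates the cross term in $\la \mathcal{K} u, u\ra$, and the angle-formula characterization in Lemma \ref{l:subtle} is exactly what is needed to quantify the loss. The only mild subtlety is remembering that $\mathcal{K}$ being self-adjoint is essential for killing the cross term (so the argument does not use any positivity of $\mathcal{K}$ beyond the hypothesized coercivity on $(E+G)^\perp$).
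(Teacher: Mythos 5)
Your proof is correct and follows the paper's argument essentially step for step: the same orthogonal decomposition $u = P_E u + P_{E^\perp}u$, the same use of $E\perp G$ to show $P_{E^\perp}u\perp(E+G)$, and the same application of the angle formula from Lemma~\ref{l:subtle} to convert $\|P_{E^\perp}u\|$ into $\sin\alpha(E,F^\perp)\|u\|$. The only difference is cosmetic — you spell out why the cross term in $\la\mathcal{K}u,u\ra$ vanishes, which the paper leaves implicit.
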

\begin{proof}
Suppose $u\perp (F+G)$ and consider its orthogonal decomposition, 
$u=P_E u + \tilde u$.  Since $E\perp G$ and $u\perp G$, 
we have $\tilde u \perp (E+G)$. Hence, by the hypothesis we have 
$$
\la \mathcal{K}u,u \ra = \la \mathcal{K}\tilde u, \tilde u\ra 
\geq \theta \|\tilde u\|_{L^2}^2 
= \theta \|P_{E^\perp} u\|_{L^2}^2 \,.
$$
An application of \eqref{eq:lsu},
$$\sin \alpha(E,F^\perp) = \inf_{\substack{\|d\|=1 \\ d\in F^\perp}} \|P_{E^\perp} d\|_{L^2} \leq \frac{ \|P_{E^\perp} u\|_{L^2}}{\|u\|_{L^2}}\,, $$
concludes the proof.
\end{proof}

\section{Set-up of the proof}
\label{S:setup}

Recall the definition of  $T_0$ (for given $\delta_0>0$ and ${\bar a}$, ${\bar c}$) stated in the introduction.
Recall
$$B(a,c,t) \defeq \int b(x,t)q^2(x,a,c) \, dx \,.$$
In the next several sections, we establish the key estimates required for the proof of the main theorem. 
 Let us assume that on some time interval $[0,T]$, there are $C^1$ parameters $a(t)\in \mathbb{R}^2$, $c(t)\in \mathbb{R}^2$ such that, if we set
\begin{equation}
\label{E:v-def}
v(\cdot,t) \defeq u(\cdot,t) - q(\cdot,a(t),c(t))
\end{equation}
then the symplectic orthogonality conditions \eqref{E:so} hold.  
Since $u$ solves \eqref{E:pmKdV}, $v(t)$ satisfies
\begin{equation}
\label{E:v}
\partial_t v = \partial_x (-\partial_x^2 v - 6q^2v -6qv^2 - 2v^3 + bv) - F_0
\end{equation}
where $F_0$ results from the perturbation and $\partial_t$ landing
on the parameters:
\begin{equation}
\label{E:19}
F_0 \defeq \sum_{j=1}^2 (\dot a_j - c_j^2)\partial_{a_j}q + \sum_{j=1}^2 \dot c_j \partial_{c_j}q - \partial_x(bq)
\end{equation}
Now decompose
$$F_0 = F_\| + F_\perp$$
where $F_\|$ is symplectically parallel to $M$ and $F_\perp$ is symplectically orthogonal to $M$.  Explicitly,
\begin{equation}
\label{E:F-par}
F_\| = \sum_{j=1}^2 (\dot a_j - c_j^2 + \tfrac12 \partial_{c_j}B)\partial_{a_j}q + \sum_{j=1}^2 (\dot c_j - \tfrac12 \partial_{a_j}B)\partial_{c_j}q
\end{equation}
\begin{equation}
\label{E:F-perp}
F_\perp = - \partial_x(bq) + \frac12 \sum_{j=1}^2 [-(\partial_{c_j}B)\partial_{a_j}q + (\partial_{a_j}B) \partial_{c_j}q]
\end{equation}
All implicit constants will depend upon $\delta_0>0$ and $L^\infty$ norms of $b_0(x,t)$ and its derivatives.  We further assume that 
\begin{equation}
\label{E:c-sep}
\delta_0\leq c_1(t)\leq c_2(t)-\delta_0\leq \delta_0^{-1}
\end{equation}
holds on all of $[0,T]$.

In \S \ref{S:F-perp} we will estimate $ F_\perp $ using the properties
of $ q $ recalled in \S \ref{S:q-properties}. We note that $ F_\| \equiv 0 $
would mean that the parameters solve the effective equations of 
motion \eqref{E:eom}. Hence the estimates on $ F_\| $ are related
to the quality of our effective dynamics and they are provided in 
\S \ref{S:parameters}. In \S \ref{S:correction}
we then construct a correction term which
removes the leading non-homogeneous terms from the equation for $ v$.
Finally energy estimates in \S\ref{S:energy} 
based on the coercivity of $ {\mathcal K} $
lead to the final bootstrap argument in \S\ref{s:bootstrap}.

\section{Estimates on $F_\perp$}
\label{S:F-perp}

Using the identities in Lemma \ref{L:magic}, we will prove that $F_\perp$ is ${\mathcal O}(h^2)$; in fact, we obtain more precise information. 
 For notational convenience, we will drop the $t$ dependence in $b(x,t)$, and will write $b'$, $b''$, $b'''$, to represent $x$-derivatives.

We will use the following consequences of Lemma \ref{L:q-asymp}:
\begin{equation}
\label{E:qa}
\partial_{a_j}q = -\partial_x \eta(\cdot, \hat a_j,c_j) + \Serr
\end{equation}
and
\begin{equation}
\label{E:qc}
c_j\partial_{c_j}q =
\begin{aligned}[t]
&\partial_x[ (x- a_j) \eta(x,\hat a_j,c_j)] 
+ \frac{ 2 c_{3-j} \theta (a_2-a_1) }{(c_1+c_2)(c_1-c_2)} \partial_x \eta(x,\hat a_j,c_j) \\
&- \frac{2 c_j \theta (a_2 - a_1) }{(c_1+c_2)(c_1-c_2)} \partial_x\eta(x,\hat a_{3-j},c_{3-j}) + \Serr \,, 
\end{aligned}
\end{equation}
where $ \theta $ is given by \eqref{eq:the}.

Importantly, as the last formula shows, $\partial_{c_j}q$ is not localized around $\hat a_j$ due to the $c_j$-dependence of $\hat a_{3-j}$.  Also note that it is $(x-a_j)$ and not $(x-\hat a_j)$ in the first term inside the brackets.

\begin{definition}
Let $\mathcal A$ denote the class of functions of $a,c$ that are of the form
$$h^2 \varphi( a_1-a_2, a , c ) + q(a, c )h^3\,, $$
$ a = (a_1, a_2 ) \in \RR^2 $, $ 0 < \delta < c_1 < c_2 - \delta < 1/ \delta 
$, where
\[  \left|  \partial_\alpha^\ell \partial_c^k \partial_a^{p} \varphi ( 
\alpha, a , c ) 
 \right|  \leq 
C \langle \alpha \rangle^{-N}   \,,  \ \ 
  \left| \partial_c^k \partial_a^{p} q ( a, c ) \right| \leq C \,, \]
where $ C $ depends on $ \delta $, $ N$, $ \ell$, $ k $, and $ p $ only.
\end{definition}
We note that if $f\in \Serr$, then $\int f ( x ) dx $ 
has the form $\varphi( a_1-a_2,a , c)$, $ \varphi \in {\mathcal A}$.  The most important feature of the class $\mathcal{A}$ is that for $f\in \mathcal{A}$,  
$$|\partial_{a_j}^k \partial_{c_j}^\ell f | \lesssim h^2 \la a_1-a_2 \ra^{-N} + h^3$$
with implicit constant depending on $c_1$, $c_2$.

\begin{lemma} 
We have
\begin{align}
\label{E:Ba} &\partial_{a_j}B(a,c,\cdot) = 2c_j b'(\hat a_j) + \mathcal{A} \\
\label{E:Bc} &\partial_{c_j}B(a,c,\cdot) = 
\begin{aligned}[t]
&2b(\hat a_j) + 2b'(\hat a_j)(a_j-\hat a_j) - \frac{\pi^2}{12}b''(\hat a_j) c_j^{-2}\\
&- \frac{ 2(-1)^j c_{3-j}(b'(\hat a_2)-b'(\hat a_1))\theta}{(c_1+c_2)(c_1-c_2)} + \mathcal{A}
\end{aligned}
\end{align}
\end{lemma}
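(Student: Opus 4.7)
The plan is to differentiate $B(a,c) = \int b(x) q(x,a,c)^2\,dx$ under the integral sign, obtaining $\partial_{a_j}B = 2\int b\,q\,\partial_{a_j}q\,dx$ and $\partial_{c_j}B = 2\int b\,q\,\partial_{c_j}q\,dx$, and then to substitute for $q$, $\partial_{a_j}q$ and $\partial_{c_j}q$ the asymptotic formulas from Lemma~\ref{L:q-asymp}, \eqref{E:qa} and \eqref{E:qc}. Each resulting integral decomposes into single-soliton diagonal pieces localized around $\hat a_1$ or $\hat a_2$, plus cross pieces involving products of $\eta(\cdot,\hat a_1,c_1)$ and $\eta(\cdot,\hat a_2,c_2)$ (or their $x$-derivatives) integrated against the bounded function $b$, plus contributions from the $\Serr$ remainders. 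The cross pieces and $\Serr$ contributions decay exponentially in $|a_1-a_2|$ in the regime where Lemma~\ref{L:q-asymp} applies, and are trivially bounded for $|a_1-a_2|$ in a compact set; in either case they fit into the $h^{2}\varphi(a_1-a_2,a,c)$ slot of $\mathcal{A}$ once an $h^{2}$ factor is extracted from $b'$ or $b''$.

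The diagonal pieces reduce to integrals of the form $\int b(x)\,P(x-\hat a_j)\,\eta(\cdot,\hat a_j,c_j)^{2}\,dx$ and $\int b(x)\,P(x-\hat a_j)\,\eta(\cdot,\hat a_j,c_j)\,\partial_x\eta(\cdot,\hat a_j,c_j)\,dx$ with $P$ of degree $\leq 1$. Since $\eta\,\partial_x\eta=\tfrac12\partial_x\eta^{2}$, integration by parts reduces the second type to the first type (with one higher power of $x-\hat a_j$). After the change of variables $y=c_j(x-\hat a_j)$ and Taylor-expansion of $b(y/c_j+\hat a_j)$ about $\hat a_j$ to second order, everything is evaluated via the elementary moments $\int\sech^{2}y\,dy=2$ and $\int y^{2}\sech^{2}y\,dy=\pi^{2}/6$ (odd moments vanish). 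Because $b(x)=b_0(hx,ht)$, each additional Taylor coefficient carries a factor of $h$, so the remainder is $O(h^{3})$ and falls into the $h^{3}q(a,c)$ slot of $\mathcal{A}$.

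For \eqref{E:Ba}, the only surviving diagonal piece is $\int b\eta\,\partial_x\eta\,dx=-\tfrac12\int b'\eta^{2}\,dx=-c_j b'(\hat a_j)+O(h^{3})$; the sign in $\partial_{a_j}q=-\partial_x\eta+\Serr$ converts this, after multiplying by the factor $2$, into the leading $2c_j b'(\hat a_j)$. For \eqref{E:Bc} one handles the three pieces in \eqref{E:qc} separately. The first, after writing $x-a_j=(x-\hat a_j)+(a_j-\hat a_j)$ and integrating by parts, produces $\tfrac{1}{c_j}\int b\eta^{2}-\tfrac{1}{c_j}\int b'(x-a_j)\eta^{2}$; evaluating both through order $h^{2}$ and observing that the $b''$ contributions combine to $-\tfrac{\pi^{2}}{12}b''(\hat a_j)c_j^{-2}$ yields the first three terms of \eqref{E:Bc}. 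The second and third pieces of \eqref{E:qc} each reduce to $\int b\eta\,\partial_x\eta$ at $\hat a_j$ or $\hat a_{3-j}$, and combining with their coefficients (after dividing out the $c_j$ from the left side of \eqref{E:qc}) produces the $\theta$-dependent cross-soliton correction; the sign $(-1)^{j}$ arises from rewriting $b'(\hat a_{3-j})-b'(\hat a_j)$ uniformly as $\mp(b'(\hat a_2)-b'(\hat a_1))$ depending on~$j$.

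The main obstacle is the Taylor-expansion bookkeeping for \eqref{E:Bc}: one must expand $b$ one order beyond naive dimensional counting and observe a cancellation between the $b''$ contributions from $\int b\eta^{2}$ and from $\int b'(x-\hat a_j)\eta^{2}$ to reach an $O(h^{3})$ remainder. Alongside this one must keep track of which error pieces are globally $O(h^{3})$ (entering the $h^{3}q(a,c)$ slot of $\mathcal{A}$) versus only $O(h^{2})$ but with decay in $|a_1-a_2|$ — either exponential, from cross-soliton products, or compactly supported, from the $\theta'(a_2-a_1)$ that appears when $\hat a_j=a_j+\alpha_j(c)\theta(a_2-a_1)$ is differentiated — which enter the $h^{2}\varphi(a_1-a_2,a,c)$ slot.
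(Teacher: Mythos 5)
Your high-level strategy (differentiate under the integral, substitute the soliton asymptotics from Lemma~\ref{L:q-asymp}, \eqref{E:qa}, \eqref{E:qc}, Taylor-expand $b$, evaluate $\sech$ moments) is the right one, and your computations of the diagonal pieces, including the $\pi^2/12$ moment and the sign bookkeeping, match the paper. But there is a real gap in how you dispose of the cross-soliton and $\Serr$ pieces, and it stems from the \emph{order of operations}: you substitute the soliton decomposition first and Taylor-expand $b$ afterward.

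Concretely, consider a typical cross piece from $\partial_{a_j}B=2\int b\,q\,\partial_{a_j}q\,dx$, namely $-2\int b\,\eta(\cdot,\hat a_{3-j},c_{3-j})\,\partial_x\eta(\cdot,\hat a_j,c_j)\,dx$, or more generally $\int b\,e_3$ with $e_3\in\Serr$. When you Taylor-expand $b$, the \emph{zeroth-order} coefficient $b(\hat a_j)$ carries no factor of $h$, and it multiplies a quantity of size $e^{-c|a_1-a_2|}$. That is \emph{not} in $\mathcal{A}$: the definition requires the decaying-in-$(a_1-a_2)$ part to carry an explicit $h^2$ prefactor. Your stated mechanism — ``they fit into the $h^2\varphi$ slot of $\mathcal{A}$ once an $h^2$ factor is extracted from $b'$ or $b''$'' — simply does not apply to the $b(\hat a_j)$ and $b'(\hat a_j)$ Taylor coefficients, which give $O(1)\cdot e^{-c|a_1-a_2|}$ and $O(h)\cdot e^{-c|a_1-a_2|}$ contributions a priori.

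What actually saves the day is that the zeroth and first moments of $e_3$ vanish, i.e. $\int e_3=0$ and $\int(x-\hat a_j)e_3=0$, and the paper obtains this automatically by reversing your order of operations: it Taylor-expands $b$ about $\hat a_j$ \emph{first}, then handles the $b(\hat a_j)$ and $b'(\hat a_j)$ terms exactly via the conserved-quantity identities from Lemma~\ref{L:Iq} — $\int q^2=2(c_1+c_2)$, $\int xq^2=2(a_1c_1+a_2c_2)$, and their $a_j$- and $c_j$-derivatives — so that no soliton decomposition (and hence no cross term) ever enters at those orders. For $\partial_{c_j}B$ the paper makes this explicit by introducing $\tilde b_j(x)=b(x)-b(\hat a_j)-b'(\hat a_j)(x-\hat a_j)$, which carries the built-in $h^2$, and only substitutes \eqref{E:qc} into $\int\tilde b_j\,\partial_{c_j}q^2$. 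To repair your argument you would need to insert precisely this step: either Taylor-expand $b$ before decomposing $q$, or else independently verify the vanishing of the low moments of the cross and $\Serr$ pieces — which amounts to the same exact identities in disguise. Without that, the claim that the cross and remainder terms land in $\mathcal{A}$ is unsupported.
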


\begin{proof}
First we compute $\partial_{a_j}B(a,c,t)$.  We have that $\partial_{a_j} q$ is exponentially localized around $\hat a_j$.  Substituting the Taylor expansion of $b$ around $\hat a_j$, we obtain
\begin{align*}
\partial_{a_j}B(a,c,t) &= 
\begin{aligned}[t]
&b(\hat a_j) \int \partial_{a_j} q^2 + b'(\hat a_j) \int (x-\hat a_j) \partial_{a_j} q^2 \\
&+ \frac12 b''(\hat a_j) \int (x-\hat a_j)^2\partial_{a_j} q^2 + {\mathcal O}(h^3)
\end{aligned}\\
&= \text{I} + \text{II} + \text{III} + {\mathcal O}(h^3)
\end{align*}
Terms I and II are straightforward.  Using \eqref{E:Iq} and \eqref{E:com},
\begin{align*}
&\text{I} = b(\hat a_j) \partial_{a_j} \int q^2 = 0 \\
&\text{II} = b'(\hat a_j)\left( \partial_{a_j} \int x q^2 - \hat a_j\partial_{a_j} \int q^2 \right)= 2c_j b'(\hat a_j)
\end{align*}
For III, we will substitute \eqref{E:qa} and hence pick up ${\mathcal O}(h^2)\la a_1-a_2\ra^{-N}$ errors.
$$\text{III} = -\frac12 b''(\hat a_j) \int (x-\hat a_j)^2 \partial_x \eta^2(x,\hat a_j,c_j) \, dx + \mathcal{A} = \mathcal{A}$$
Thus, we obtain \eqref{E:Ba}.  Next, we compute $\partial_{c_j}B(a,c,t)$.  Note that $\partial_{c_j} q$ is \emph{not} localized around $\hat a_j$.  Begin by rewriting $\partial_{c_j} B$ as
$$\partial_{c_j}B = \int b(\hat a_j) \partial_{c_j} q^2 + \int b'(\hat a_j)(x-\hat a_j) \partial_{c_j} q^2 + \int \tilde b_j \, \partial_{c_j} q^2
$$
where
$$\tilde b_j(x) \defeq b(x)-b(\hat a_j)-b'(\hat a_j)(x-\hat a_j) \,.$$
Now substitute \eqref{E:qc} into the last term and note that the $\Serr$ term in \eqref{E:qc} produces an $\mathcal{A}$ term here.
\begin{align*}
\partial_{c_j}B &= 
\begin{aligned}[t]
&\int b(\hat a_j) \partial_{c_j} q^2 + \int b'(\hat a_j)(x-\hat a_j) \partial_{c_j} q^2 \\
&+ \frac{2}{c_j}\int \tilde b_j(x) \, \partial_x [ (x-a_j)\eta(x,\hat a_j,c_j)] \eta(x,\hat a_j,c_j)\\
&+ \frac{c_{3-j}\theta}{c_j(c_1+c_2)(c_1-c_2)}\int \tilde b_j(x) \partial_x \eta^2 (x,\hat a_j, c_j) \\
&- \frac{\theta}{(c_1+c_2)(c_1-c_2)} \int \tilde b_j(x) \partial_x \eta^2(x,\hat a_{3-j}, c_{3-j})  +\mathcal{A}
\end{aligned} \\
&= \text{I}+\text{II}+\text{III}+\text{IV}+\text{V}  +\mathcal{A}
\end{align*}
where terms I-V are studied separately below.
\begin{align*}
\text{I} &= b(\hat a_j) \partial_{c_j} \int q^2 = 2b(\hat a_j) \\
\text{II} &= b'(\hat a_j) \left( \partial_{c_j} \int xq^2 - \hat a_j \partial_{c_j} \int q^2 \right) \\
&= 2b'(\hat a_j)(a_j-\hat a_j)
\end{align*}
Term III is localized around $\hat a_j$, and thus we integrate by parts in $x$ and Taylor expand $\tilde b_j$ around $\hat a_j$ to obtain
\begin{align*}
\text{III} &=  \frac{1}{c_j}\int \left( -\tilde b_j'(x)(x-a_j) + \tilde b_j(x)\right) \eta^2(x,\hat a_j,c_j) \\
&= -\frac12 \frac{b''(\hat a_j)}{c_j} \int (x-\hat a_j)^2 \eta^2(x,\hat a_j,c_j) \\
&\qquad - b''(\hat a_j)(\hat a_j-a_j) \int (x-\hat a_j) \eta^2(x,\hat a_j,c_j) +{\mathcal O}(h^3) \\
&= -\frac{\pi^2}{12}b''(\hat a_j) c_j^{-2} +{\mathcal O}(h^3)
\end{align*}
Term IV is localized around $\hat a_j$, and thus we integrate by parts in $x$ and Taylor expand $\tilde b_j$ around $\hat a_j$ to obtain
\begin{align*}
 \int \tilde b_j(x)\,\partial_x \eta^2(x,\hat a_j,c_j)
&=  -\int (b'(x)-b(\hat a_j))\eta^2(x,\hat a_j,c_j) \\
&=   -\frac12 b''(\hat a_j) \int (x-\hat a_j)\eta^2(x,\hat a_j,c_j) +{\mathcal O}(h^3)\\
&= {\mathcal O}(h^3)
\end{align*}
Term V is localized around $\hat a_{3-j}$, and thus we integrate by parts in $x$ and Taylor expand $\tilde b_j$ around $\hat a_{3-j}$.
\begin{align*}
\int \tilde b_j(x) \partial_x \eta^2(x,\hat a_{3-j},c_{3-j})
&= - \int (b'(x)-b(\hat a_j))\eta^2(x,\hat a_{3-j},c_{3-j})\\
&= -(b'(\hat a_{3-j}) - b'(\hat a_j)) \int \eta^2(x,\hat a_{3-j},c_{3-j}) \\
&\qquad - b''(\hat a_{3-j}) \int (x-\hat a_{3-j}) \eta^2(x,\hat a_{3-j},c_{3-j})  +{\mathcal O}(h^3)\\
&= -2c_{3-j} (b'(\hat a_{3-j}) - b'(\hat a_j))+{\mathcal O}(h^3)
\end{align*}
\end{proof}

\begin{lemma}[estimates on $F_\perp$]
\begin{equation}
\label{E:F_perp_est}
\partial_x^{-1}\partial_{a_j} F_\perp  = {\mathcal O}(h^2)\cdot \Ssol \,, \qquad \partial_x^{-1}\partial_{c_j} F_\perp  = {\mathcal O}(h^2) \cdot \Ssol  \,, \quad j=1,2 
\end{equation}
\begin{equation}
\label{E:F_perp_expand}
F_\perp = -\frac12\sum_{j=1}^2 \frac{b''(\hat a_j)}{c_j^2} \partial_x \tau(\cdot, \hat a_j, c_j)  + \mathcal{A}\cdot \Ssol 
\end{equation}
where
\begin{equation}
\label{E:tau}
\tau \defeq \left(\frac{\pi^2}{12}+x^2\right)\eta(x) \,, \qquad \tau(x,\hat a_j,c_j) \defeq c_j \tau(c_j(x-\hat a_j)) \,.
\end{equation}
\end{lemma}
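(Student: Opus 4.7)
The plan is to substitute \eqref{E:Ba} and \eqref{E:Bc} into the definition \eqref{E:F-perp} of $F_\perp$, rewrite $-\partial_x(bq)$ as $-b'q + \sum_j b\,\partial_{a_j}q$ via \eqref{E:magic1}, and replace $\partial_{a_j}q$, $\partial_{c_j}q$ by their asymptotic forms \eqref{E:qa}-\eqref{E:qc} together with $q = \eta(\cdot,\hat a_1,c_1)+\eta(\cdot,\hat a_2,c_2)+\Serr$ from Lemma \ref{L:q-asymp}. Then I Taylor expand $b$ and $b'$ around each $\hat a_j$ to second order; the third-order remainders, being $O(h^3)$ times localized functions, lie in $\mathcal{A}\cdot\Ssol$, and the $\Serr$ remainders from the decompositions of $q$ and its derivatives are absorbed at the end. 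Writing $\eta_j \defeq \eta(\cdot,\hat a_j,c_j)$, the calculation then reduces to a careful bookkeeping organized by the factor $b^{(k)}(\hat a_j)$.

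At order $O(1)$, the contribution $\sum_j b(\hat a_j)\partial_{a_j}q$ coming from $\sum_j b\,\partial_{a_j}q$ is cancelled exactly by $-\sum_j b(\hat a_j)\partial_{a_j}q$ produced by the $2b(\hat a_j)$ piece of $\partial_{c_j}B$ in \eqref{E:Bc}. At order $O(h)$, the elementary identity $\partial_x[(x-\hat a_j)\eta_j] = \eta_j + (x-\hat a_j)\partial_x\eta_j$, used together with \eqref{E:qc} and the relation $\hat a_j - a_j = \alpha_j\theta$, shows that the $b'(\hat a_j)$-contributions coming from $[b-b(\hat a_j)]\partial_{a_j}q$, from $-b'q$, from $c_jb'(\hat a_j)\partial_{c_j}q$ in $\tfrac12(\partial_{a_j}B)\partial_{c_j}q$, and from $-b'(\hat a_j)(a_j-\hat a_j)\partial_{a_j}q$ in $-\tfrac12(\partial_{c_j}B)\partial_{a_j}q$ all cancel, leaving only a cross-contribution proportional to $[b'(\hat a_j)-b'(\hat a_{3-j})]\theta\,\partial_x\eta_k/(c_1^2-c_2^2)$. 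This cross-contribution is matched, modulo $\mathcal{A}\cdot\Ssol$, by the last (coupling) term of \eqref{E:Bc}; the sign and factor-of-two bookkeeping here is the main computational obstacle, and is precisely what dictates the normalization chosen in the formula for $\partial_{c_j}B$.

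At order $O(h^2)$ two contributions survive: $-\tfrac12 b''(\hat a_j)\partial_x[(x-\hat a_j)^2\eta_j]$ from the second-order Taylor piece of $-\partial_x(b\eta_j)$, together with $\tfrac{\pi^2}{24 c_j^2}b''(\hat a_j)\partial_{a_j}q \equiv -\tfrac{\pi^2}{24 c_j^2}b''(\hat a_j)\partial_x\eta_j \pmod{\mathcal{A}\cdot\Ssol}$ from the $-\tfrac{\pi^2}{12}b''(\hat a_j)c_j^{-2}$ piece of \eqref{E:Bc} via \eqref{E:qa}. The identity
\[ \frac{1}{c_j^2}\tau(x,\hat a_j,c_j) = (x-\hat a_j)^2\eta_j + \frac{\pi^2}{12 c_j^2}\eta_j, \]
which follows immediately from \eqref{E:tau} and $\eta_j = c_j\operatorname{sech}(c_j(x-\hat a_j))$, combines the two surviving contributions into $-\tfrac{b''(\hat a_j)}{2c_j^2}\partial_x\tau(\cdot,\hat a_j,c_j)$, establishing \eqref{E:F_perp_expand}.

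Finally, \eqref{E:F_perp_est} follows structurally from the expansion: every building block appearing in $F_\perp$ is a total $x$-derivative of an $\Ssol$ function whose $x$-integral vanishes (trivially for $\partial_x\eta_j$; for $\partial_{a_j}q$ and $\partial_{c_j}q$ this is the remark after Lemma \ref{L:Iq}). Applying $\partial_{a_j}$ or $\partial_{c_j}$ preserves both the $\Ssol$ structure and the zero-integral property by Lemma \ref{L:S-prop}, and Lemma \ref{L:S-prop}\eqref{I:inverse-S} then shows that $\partial_x^{-1}\partial_{a_j}F_\perp$ and $\partial_x^{-1}\partial_{c_j}F_\perp$ lie in $\Ssol$, with overall $O(h^2)$ prefactor inherited from the leading term in \eqref{E:F_perp_expand}.
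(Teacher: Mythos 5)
Your argument is essentially the paper's: substitute \eqref{E:Ba}--\eqref{E:Bc} and the asymptotic decompositions of $q$, $\partial_{a_j}q$, $\partial_{c_j}q$ into \eqref{E:F-perp}, Taylor-expand $b$ around each $\hat a_j$, collect terms by order, exhibit the cancellations at orders $O(1)$ and $O(h)$, and combine the two surviving $O(h^2)$ pieces into $-\tfrac12 c_j^{-2}b''(\hat a_j)\partial_x\tau(\cdot,\hat a_j,c_j)$; and for \eqref{E:F_perp_est} you argue structurally, via $\partial_x^{-1}\partial_{a_j}q,\partial_x^{-1}\partial_{c_j}q\in\Ssol$, exactly as the paper does via Corollary~\ref{C:q-inverse}.

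The one genuine point of divergence is how you handle $-\partial_x(bq)$. The paper uses the exact scaling identity \eqref{E:magic3} to rewrite $q=\sum_j\bigl[(x-a_j)\partial_{a_j}q+c_j\partial_{c_j}q\bigr]$ \emph{before} invoking Lemma~\ref{L:q-asymp}; after that step the coefficients multiplying $\partial_{a_j}q$ and $\partial_{c_j}q$ are already $O(h^2)$ (up to polynomial weights), so every $\Serr$-remainder from \eqref{E:qa}--\eqref{E:qc} lands directly in $\mathcal{A}\cdot\Ssol$. You instead substitute $q=\eta_1+\eta_2+\Serr$ directly into $-b'q$ and $\partial_{a_j}q=-\partial_x\eta_j+\Serr$ into $b\sum_j\partial_{a_j}q$, which initially produces $\Serr$-errors multiplied by $O(1)$ and $O(h)$ coefficients. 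Your claim that these are ``absorbed at the end'' is correct but not automatic: one must check that the $O(1)$ and $O(h)$ cancellations you exhibit for the main terms propagate to the accompanying $\Serr$ pieces. Using \eqref{E:magic3} together with \eqref{E:qa}--\eqref{E:qc} gives the relation $r=\sum_j\bigl[(x-a_j)r_j+r_j'\bigr]$ between the various $\Serr$-remainders, from which the $b$- and $b'$-weighted remainder terms collapse to $\sum_j\bigl[b'(\hat a_j)-b'(x)\bigr]\bigl[(x-a_j)r_j+r_j'\bigr]=O(h^2)\cdot\Serr\subset\mathcal{A}\cdot\Ssol$. So the proposal is sound, but the paper's use of \eqref{E:magic3} buys a cleaner bookkeeping precisely because it performs the exact algebraic cancellations before the asymptotic ones; if you wish to keep your decomposition, you should make the cancellation of the $\Serr$-remainders explicit rather than appealing to ``absorption.''

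One small slip: for \eqref{E:F_perp_est} you cite Lemma~\ref{L:S-prop}\eqref{I:inverse-S}, which is a statement about $\Serr$, whereas the relevant building blocks are in $\Ssol$; the tool you actually want is Corollary~\ref{C:q-inverse} (whose proof, of course, uses Lemma~\ref{L:S-prop}).
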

In light of the above lemma, we introduce the notation $F_\perp = (F_\perp)_0 + \tilde F_\perp$, where
\begin{equation}
\label{E:F_perp_0}
(F_\perp)_0 =-\frac12 \sum_{j=1}^2 \frac{b''(\hat a_j)}{c_j^2} \partial_x \tau(\cdot, \hat a_j, c_j)
\end{equation}
and $\tilde F_\perp \in \mathcal{A}\cdot \Ssol$. 
We make use of \eqref{E:F_perp_est} in \S\ref{S:parameters} and \eqref{E:F_perp_expand} in \S\ref{S:correction}--\ref{S:energy}.

\begin{proof}
We begin by proving \eqref{E:F_perp_expand}.  By \eqref{E:magic1}, \eqref{E:magic3},
\begin{equation}
\label{E:bq}
\begin{aligned}
\partial_x(bq) &= (\partial_xb) q + b (\partial_xq) \\
& = (\partial_xb) \sum_{j=1}^2 ((x-a_j)\partial_{a_j}q + c_j\partial_{c_j}q) -b\sum_{j=1}^2\partial_{a_j}q \\
& = \sum_{j=1}^2 (-b+(\partial_xb)(x-a_j))\partial_{a_j}q + \sum_{j=1}^2(\partial_x b) c_j\partial_{c_j}q + {\mathcal O}(h^3)\cdot \Ssol
\end{aligned}
\end{equation}
The $\partial_{a_j}q$ term is well localized around $\hat a_j$, and thus we can Taylor expand the coefficients around $\hat a_j$. The $\partial_{c_j}q$ term we leave alone for the moment. 

We have 
$ \partial_x(bq) = \, $
\begin{gather*}
 \sum_{j=1}^2 \Big( -b(\hat a_j) + b'(\hat a_j)(\hat a_j-a_j) 
 + b''(\hat a_j)(\hat a_j-a_j)(x-\hat a_j) + \frac12 b''(\hat a_j) (x-\hat a_j)^2 \Big) \partial_{a_j}q\\
+ \, \sum_{j=1}^2 b'(x) c_j\partial_{c_j}q + \mathcal{A}\cdot \Ssol 
\end{gather*}
Substituting the above together with \eqref{E:Ba} and \eqref{E:Bc} into \eqref{E:F-perp}, we obtain
\begin{gather*} F_\perp = 
\frac12 \sum_{j=1}^2 b''(\hat a_j) \Big( \frac{\pi^2}{12}c_j^{-2}-2(\hat a_j-a_j)(x-\hat a_j) - (x-\hat a_j)^2 \Big) \partial_{a_j}q \\
+ \, \frac{(b'(\hat a_2)-b'(\hat a_1))\theta}{(c_1+c_2)(c_1-c_2)} \sum_{j=1}^2 (-1)^jc_{3-j}\partial_{a_j}q  
 - \sum_{j=1}^2 (b'(x) - b'(\hat a_j))c_j \partial_{c_j}q + \mathcal{A}\cdot \Ssol
\end{gather*}
We now substitute \eqref{E:qa} and  \eqref{E:qc}  recognizing that this will only generate errors of type $\mathcal{A}$ times a Schwartz class function.  We also Taylor expand around $\hat a_j$ or $\hat a_{3-j}$ depending upon the localization. 
$$
F_\perp 
= 
\begin{aligned}[t]
&\frac12 \sum_{j=1}^2 b''(\hat a_j) \Big( -\frac{\pi^2}{12}c_j^{-2}+2(\hat a_j-a_j)(x-\hat a_j) + (x-\hat a_j)^2 \Big) \partial_x\eta(x,\hat a_j,c_j) 
& \leftarrow \text{I} \\
&- \frac{(b'(\hat a_2)-b'(\hat a_1))\theta}{(c_1+c_2)(c_1-c_2)} \sum_{j=1}^2 (-1)^jc_{3-j}\partial_x \eta(x,\hat a_j,c_j) 
& \leftarrow \text{II} \\
& - \sum_{j=1}^2 b''(\hat a_j) (x-\hat a_j) \partial_x[ (x-a_j) \eta(x,\hat a_j, c_j)] 
& \leftarrow \text{III} \\
&-\sum_{j=1}^2 \frac{c_{3-j}\theta}{(c_1+c_2)(c_1-c_2)} b''(\hat a_j)(x-\hat a_j) \partial_x \eta(x,\hat a_j, c_j) 
& \leftarrow \text{IV} \\
&+ \sum_{j=1}^2  \frac{c_j\theta}{(c_1+c_2)(c_1-c_2)}b''(\hat a_{3-j})(x-\hat a_{3-j}) \partial_x \eta(x,\hat a_{3-j},c_{3-j})
& \leftarrow \text{V} \\
&+ \sum_{j=1}^2  \frac{c_j \theta}{(c_1+c_2)(c_1-c_2)}(b'(\hat a_{3-j})-b'(a_j))\partial_x \eta(x,\hat a_{3-j},c_{3-j})
& \leftarrow \text{VI} \\
&+ \mathcal{A}\cdot \Ssol
\end{aligned}
$$
We have that $\text{IV}+\text{V}=0$ and $\text{II}+\text{VI}=0$.  Hence
\begin{align*}
F_\perp &= \text{I}+\text{III} + \mathcal{A}\cdot \Ssol\\
&= -\frac12 \sum_{j=1}^2 b''(\hat a_j) \partial_x \left( \big(\frac{\pi^2}{12}c_j^{-2} + (x-\hat a_j)^2 \big) \eta(x,\hat a_j,c_j) \right)
\end{align*}
This completes the proof of \eqref{E:F_perp_expand}.  To obtain \eqref{E:F_perp_est}, we note that a consequence of \eqref{E:F_perp_expand} is $F_\perp = {\mathcal O}(h^2)f$, where $f\in \Ssol$.  By the definition \eqref{E:F-perp} of $F_\perp$ and Corollary \ref{C:q-inverse}, we have $\partial_x^{-1}F_\perp \in \Ssol$, and hence $f\in \Ssol$.
\end{proof}

\section{Estimates on the parameters}
\label{S:parameters}

\newcommand{\coef}{\operatorname{coef}}

The equations of motion are recovered (in approximate form) using the symplectic orthogonality properties \eqref{E:so} of $v$ and the equation \eqref{E:v} for $v$.  For a function $G$ of the form
$$G = g_1 \partial_{a_1}q + g_2 \partial_{a_2}q + g_3 \partial_{c_1}q + g_4 \partial_{c_2}q$$
with $g_j= g_j(a,c)$, define
$$\coef(G) = (g_1,g_2,g_3,g_4) \,.$$

\begin{lemma}
\label{L:par-est}
Suppose we are given $\delta_0>0$ and $b_0(x,t)$, and parameters $a(t)$, $c(t)$ such that $v$ defined by \eqref{E:v-def} satisfies the symplectic orthogonality conditions \eqref{E:so}.  Suppose, moreover, that the amplitude separation condition \eqref{E:c-sep} holds.  Then (with implicit constants depending upon $\delta_0>0$ and $L^\infty$ norms of $b_0$ and its derivatives), if $\|v\|_{H^2} \lesssim 1$, then we have
\begin{equation}
\label{E:F-par-est}
 | \coef(F_\|) | \lesssim  h^2 \|v\|_{H^1} + \|v\|_{H^1}^2 \,.
\end{equation}
\end{lemma}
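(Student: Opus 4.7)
The plan is to differentiate each of the four symplectic orthogonality conditions \eqref{E:so} in $t$, substitute the evolution equation \eqref{E:v} for $v$, and solve the resulting linear system for $\coef(F_\|)$. From $\partial_t\langle v,\partial_x^{-1}\partial_{a_j}q\rangle = 0$, integrating by parts via $\langle \partial_x f, \partial_x^{-1}g\rangle = -\langle f,g\rangle$, and using the symplectic pairings $\omega(\partial_{a_i}q,\partial_{c_k}q) = \delta_{ik}$ of Lemma \ref{E:sf} together with the defining property $\langle F_\perp, \partial_x^{-1}\partial_{a_j}q\rangle = 0$, one arrives at
\begin{align*}
\dot c_j - \tfrac{1}{2}\partial_{a_j}B = {} & \langle v, \mathcal{L}_{c,a}\partial_{a_j}q\rangle + \langle bv, \partial_{a_j}q\rangle + O(\|v\|_{H^1}^2) \\
& - \sum_k \dot a_k \langle v, \partial_x^{-1}\partial_{a_k}\partial_{a_j}q\rangle - \sum_k \dot c_k \langle v, \partial_x^{-1}\partial_{c_k}\partial_{a_j}q\rangle,
\end{align*}
with an analogous identity from the $\partial_{c_j}q$-orthogonality. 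The task reduces to bounding the right-hand side by $O(h^2\|v\|_{H^1} + \|v\|_{H^1}^2)$ after absorbing any self-referential $|\coef(F_\|)|\cdot\|v\|$ contributions into the left-hand side.

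The first cancellation exploits complete integrability: integrating \eqref{E:magic2} once gives $-\partial_x^2 q - 2q^3 = \sum_k c_k^2 \partial_x^{-1}\partial_{a_k}q$ (the integration constant vanishing since $\int \partial_{a_k}q = 0$ by Lemma \ref{L:Iq}), and differentiating in $a_j$ yields $\mathcal{L}_{c,a}\partial_{a_j}q = \sum_k c_k^2\partial_x^{-1}\partial_{a_k}\partial_{a_j}q$. Writing $\dot a_k = c_k^2 + (\dot a_k - c_k^2)$, the $c_k^2$-piece of $\sum_k \dot a_k\langle v,\partial_x^{-1}\partial_{a_k}\partial_{a_j}q\rangle$ is exactly $\langle v, \mathcal{L}_{c,a}\partial_{a_j}q\rangle$ and cancels the first term on the right. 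For the $\partial_{c_j}q$-orthogonality the analogous differentiation yields an additional summand $2c_j\partial_x^{-1}\partial_{a_j}q$ inside $\mathcal{L}_{c,a}\partial_{c_j}q$, which nevertheless pairs to zero with $v$ by the orthogonality hypothesis.

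The second, and decisive, cancellation deals with the leftover $\langle bv, \partial_{a_j}q\rangle$. Substituting the approximate equations of motion $\dot a_k - c_k^2 = -\tfrac{1}{2}\partial_{c_k}B + \coef(F_\|)^{(a_k)}$ and $\dot c_k = \tfrac{1}{2}\partial_{a_k}B + \coef(F_\|)^{(c_k)}$, the $\coef(F_\|)$ pieces contribute only $O(|\coef(F_\|)|\cdot \|v\|_{L^2})$ errors. For the $\partial B$-pieces, the orthogonality of $v$ to $\partial_x^{-1}\partial_{a_k}q$ and $\partial_x^{-1}\partial_{c_k}q$ allows one to commute the outer $\partial_{a_j}$ past the scalar factors $\partial_{c_k}B$, $\partial_{a_k}B$ at no cost, and recognizing the resulting combination via the explicit formula \eqref{E:F-perp} for $F_\perp$ gives
\[
\tfrac{1}{2}\sum_k \bigl(\partial_{c_k}B\,\langle v, \partial_x^{-1}\partial_{a_k}\partial_{a_j}q\rangle - \partial_{a_k}B\,\langle v, \partial_x^{-1}\partial_{c_k}\partial_{a_j}q\rangle\bigr) = \bigl\langle v, \partial_x^{-1}\partial_{a_j}[-F_\perp - \partial_x(bq)]\bigr\rangle.
\]
The $\partial_x(bq)$ contribution simplifies via $\partial_x^{-1}\partial_x = I$ and $\partial_{a_j}(bq) = b\,\partial_{a_j}q$ to $-\langle bv, \partial_{a_j}q\rangle$, cancelling exactly the $\langle bv, \partial_{a_j}q\rangle$ remaining from the first paragraph. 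What is left is $-\langle v, \partial_x^{-1}\partial_{a_j}F_\perp\rangle$, and the estimate \eqref{E:F_perp_est} places $\partial_x^{-1}\partial_{a_j}F_\perp$ in $O(h^2)\cdot\Ssol \subset O(h^2)\cdot L^2$, giving $|\langle v, \partial_x^{-1}\partial_{a_j}F_\perp\rangle| \lesssim h^2\|v\|_{L^2}$.

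Assembling all four orthogonality-derived equations yields a linear system $(I + O(\|v\|_{H^1}))\coef(F_\|) = O(h^2\|v\|_{H^1}) + O(\|v\|_{H^1}^2)$, which under the standing smallness hypothesis $\|v\|_{H^1} \lesssim 1$ inverts to give \eqref{E:F-par-est}. The main obstacle is the second cancellation above: the individual term $\langle bv, \partial_{a_j}q\rangle$ is only $O(\|v\|)$ by itself, and it is only through the precise $F_\perp$-decomposition \eqref{E:F-perp} together with the $h^2$-smallness statement \eqref{E:F_perp_est} from \S\ref{S:F-perp} that one extracts the crucial additional factor of $h^2$.
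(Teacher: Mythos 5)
Your argument is correct and follows the same approach as the paper: differentiate the symplectic orthogonality conditions, substitute the evolution equation for $v$, and use Lemma \ref{L:magic}, the formula \eqref{E:F-perp}, and the estimate \eqref{E:F_perp_est} to close the system. The only difference is presentational — the paper directly observes that the terms $\text{I}+\text{III}+\text{VI}$ (in its labelling) combine to $\langle v, \partial_x^{-1}\partial_{a_j}(F_\perp + F_\|)\rangle$, whereas you unfold that same identity into two explicit cancellations (the $c_k^2$-piece against $\mathcal{L}_{c,a}\partial_{a_j}q$, and the $\partial B$-pieces against $\langle bv,\partial_{a_j}q\rangle$ via the $F_\perp$ formula), reaching an equivalent conclusion.
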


\begin{proof}
Since $\la v, \partial_x^{-1}\partial_{a_j}q\ra =0$, we have upon substituting \eqref{E:v}
\begin{align*}
0 &= \partial_t \la v, \partial_x^{-1}\partial_{a_j}q \ra \\
&= \la \partial_t v, \partial_x^{-1}\partial_{a_j}q\ra + \la v, \partial_t \partial_x^{-1}\partial_{a_j}q \ra \\
&= 
\begin{aligned}[t]
&\la (\partial_x^2 v + 6q^2)v, \partial_{a_j}q\ra + \la (6qv^2+2v^3), \partial_{a_j}q \ra 
&& \leftarrow \text{I}+\text{II}\\
&- \la bv, \partial_{a_j}q \ra - \la F_\|, \partial_x^{-1}\partial_{a_j}q \ra -\la F_\perp, \partial_x^{-1} \partial_{a_j}q \ra
&& \leftarrow \text{III}+\text{IV}+\text{V}\\
& + \la v, \partial_x^{-1} \partial_{a_j}\left( \sum_{k=1}^2 \partial_{a_k} q \, \dot a_k +  \sum_{k=1}^2 \partial_{c_k}q\, \dot c_k \right) \ra 
&& \leftarrow \text{VI}\\
\end{aligned}
\end{align*}
We have, by \eqref{E:magic2},
\begin{align*}
\text{I} &= \la v, \partial_{a_j} (\partial_x^2q + 2q^3) \ra \\
&= -\frac12 \la v, \partial_x^{-1} \partial_{a_j}\partial_x I_3'(q) \ra \\
&=- \la v, \partial_x^{-1}\partial_{a_j} \sum_{k=1}^2 c_k^2 \partial_{a_k}q \ra
\end{align*}
Also, by \eqref{E:F-perp}
\begin{align*}
\text{III} &= -\la bv, \partial_{a_j}q \ra\\
&= -\la v, \partial_{a_j} (bq) \ra\\
&= -\la v, \partial_x^{-1} \partial_{a_j} \partial_x (bq) \ra\\
&= -\la v, \partial_x^{-1}\partial_{a_j} \big( -F_\perp - \tfrac12 \sum_{k=1}^2 (\partial_{c_k}B) \partial_{a_k}q + \tfrac12 \sum_{k=1}^2 (\partial_{a_k}B)\partial_{c_k}q \big) \ra 
\end{align*}
Thus
\begin{align*}
|\text{I}+\text{III}+\text{VI}| &=| \la v, \partial_x^{-1}\partial_{a_j} F_\perp \ra +\la v, \partial_x^{-1}\partial_{a_j} F_\| \ra| \\
&\leq \|v\|_{L^2}( \|\partial_x^{-1}\partial_{a_j}F_\perp\|_{L^2} + \|\partial_x^{-1}\partial_{a_j} F_\| \|) \\
&\leq \|v\|_{L^2}(h^2 + |\coef(F_\|)|)
\end{align*}
Next, we note that by Cauchy-Schwarz,
$$|\text{II}| \lesssim \|v\|_{H^1}^2 \,.$$
Next, observe from \eqref{E:F-par} and Lemma \ref{E:sf} that
$$\text{IV} = \la F_\|, \partial_x^{-1}\partial_{a_j}q \ra = -(\dot c_j-\frac12\partial_{a_j}B) \,.$$
Of course, we have $\text{V} =\la F_\perp, \partial_x^{-1} \partial_{a_j}q \ra= 0$.  Combining, we obtain
\begin{equation}
\label{E:F-par-1st}\left|\dot c_j - \frac12 \partial_{a_j}B \right| \lesssim \|v\|_{H^1}(h^2+|\coef(F_\|)|)+ \|v\|_{H^1}^2 \,.
\end{equation}
A similar calculation, applying $\partial_t$ to the identity $0=\la v, \partial_x^{-1}\partial_{c_j}q\ra$, yields
\begin{equation}
\label{E:F-par-2nd}
\left| \dot a_j - c_j^2 + \frac12\partial_{c_j}B \right| \lesssim \|v\|_{H^1}(h^2+|\coef(F_\|)|)+ \|v\|_{H^1}^2 \,.
\end{equation}
Combining \eqref{E:F-par-1st} and \eqref{E:F-par-2nd} gives \eqref{E:F-par-est}.
\end{proof}

\section{Correction term}
\label{S:correction}

Recall the definition \eqref{E:tau} of $\tau$.  Let $\rho$ be the unique function solving 
$$(1-\partial_x^2 - 6\eta^2) \rho = \tau \,,$$
see \cite[Proposition 4.2]{HZ2} for the properties of this equation.
The function $\rho$ is smooth, exponentially decaying at $\infty$, and satisfies the symplectic orthogonality conditions
\begin{equation}
\label{E:rho-so}
\la \rho, \eta\ra =0 \,, \qquad \la \rho, x\eta \ra =0
\end{equation}
Set
$$\rho(x,\hat a_j, c_j) \defeq c_j^{-1}\rho(c_j(x-\hat a_j))$$
and note that
$$(c_j^2 - \partial_x^2 - 6\eta^2(\cdot ,\hat a_j, c_j)) \rho(\cdot, \hat a_j, c_j) = \tau(\cdot , \hat a_j, c_j)$$
Define the symplectic projection operator
$$Pf \defeq \sum_{j=1}^2 \la f, \partial_x^{-1}\partial_{c_j}q\ra \partial_{a_j}q + \sum_{j=1}^2 \la f, \partial_x^{-1}\partial_{a_j}q \ra \partial_{c_j}q \,.$$
Define
\begin{equation}
\label{E:w-def}
w \defeq -\frac12 (I-P)\sum_{j=1}^2 \frac{b''(\hat a_j)}{c_j^2} \rho(\cdot, \hat a_j, c_j)
\end{equation}
Note that $w = {\mathcal O}(h^2)$ and clearly now $w$ satisfies 
\begin{equation}
\label{E:so-w}
\la w, \partial_x^{-1}\partial_{a_j}q \ra =0 \,, \qquad 
\la w, \partial_x^{-1}\partial_{c_j}q \ra =0 \,.
\end{equation} 
Recall the definition \eqref{E:F_perp_0} of $(F_\perp)_0$.
\begin{lemma}
If  $\dot c_j = {\mathcal O}(h)$, and $\dot a_j = c_j - b(\hat a_j) + {\mathcal O}(h)$, then
\begin{equation}
\label{E:w-flow}
\partial_t w + \partial_x(\partial_x^2 w + 6q^2 w - bw) = -(F_\perp)_0 - G+ \mathcal{A}\cdot \Ssol \,.
\end{equation}
where $G$ is an ${\mathcal O}(h^2)$ term that is symplectically parallel to $M$, i.e. 
$$G\in \spn \{ \partial_x^{-1}\partial_{a_1}q, \partial_x^{-1}\partial_{a_2}q, \partial_x^{-1}\partial_{c_1}q, \partial_x^{-1}\partial_{c_2}q \} \,.$$
\end{lemma}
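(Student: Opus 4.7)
The plan is to split $w=w_0+w_1$ with
\[
w_0 \defeq -\tfrac12\sum_{j=1}^2 \frac{b''(\hat a_j)}{c_j^2}\,\rho(\cdot,\hat a_j,c_j),\qquad w_1\defeq -P w_0,
\]
and verify \eqref{E:w-flow} for each piece separately. The unprojected $w_0$ is tuned to cancel $(F_\perp)_0$ exactly; the correction $w_1\in T_qM$ has coefficients of order $h^2$ and contributes the $G$ term.

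For $w_0$, I would combine the defining identity $(c_j^2-\partial_x^2-6\eta^2(\cdot,\hat a_j,c_j))\rho(\cdot,\hat a_j,c_j)=\tau(\cdot,\hat a_j,c_j)$ with the approximation $6q^2\rho(\cdot,\hat a_j,c_j)=6\eta^2(\cdot,\hat a_j,c_j)\rho(\cdot,\hat a_j,c_j)+\Serr$ from Lemma \ref{L:q-asymp} (the cross-term with $\eta(\cdot,\hat a_{3-j},c_{3-j})$ has the double-exponential decay required to sit in $\Serr$) and the Taylor expansion of $b(x)$ around $\hat a_j$; every remainder inherits a factor of $b'=O(h)$ and, once multiplied by $b''(\hat a_j)/c_j^2=O(h^2)$, lands in $\mathcal{A}\cdot\Ssol$. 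A direct manipulation yields
\[
\partial_x\bigl(\partial_x^2 w_0+6q^2 w_0-b w_0\bigr) = -\tfrac12\sum_{j=1}^2 \frac{b''(\hat a_j)(c_j^2-b(\hat a_j))}{c_j^2}\,\partial_x\rho(\cdot,\hat a_j,c_j) - (F_\perp)_0 + \mathcal{A}\cdot\Ssol.
\]
The time derivative of $w_0$ is computed via $\partial_{\hat a_j}\rho(\cdot,\hat a_j,c_j)=-\partial_x\rho(\cdot,\hat a_j,c_j)$ and the hypotheses $\dot c_j=O(h)$, $\dot a_j=c_j^2-b(\hat a_j)+O(h)$ (the correction $\hat a_j-a_j=\alpha_j\theta(a_2-a_1)$ contributes either $O(h)$ through $\dot c$ or a piece supported where $\theta'\neq 0$ that decays polynomially in $|a_1-a_2|$ and hence lies in $\mathcal{A}$), giving
\[
\partial_t w_0 = \tfrac12\sum_{j=1}^2 \frac{b''(\hat a_j)(c_j^2-b(\hat a_j))}{c_j^2}\,\partial_x\rho(\cdot,\hat a_j,c_j) + \mathcal{A}\cdot\Ssol.
\]
Adding the two lines, the $\partial_x\rho$ terms cancel exactly and only $-(F_\perp)_0+\mathcal{A}\cdot\Ssol$ survives.

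For the correction $w_1=-Pw_0=\sum_j A_j\partial_{a_j}q+\sum_j B_j\partial_{c_j}q$ with $A_j,B_j=O(h^2)$ scalar functions of $(a,c,t)$, I would differentiate the magic identity \eqref{E:magic2} in the parameters to obtain $L\partial_{a_j}q=-\sum_k c_k^2\partial_{a_k a_j}q$ and $L\partial_{c_j}q=-\sum_k c_k^2\partial_{a_k c_j}q-2c_j\partial_{a_j}q$, where $L\defeq\partial_x(\partial_x^2+6q^2)$. Combined with $\partial_t\partial_{a_j}q=\sum_k\dot a_k\partial_{a_k a_j}q+\sum_k\dot c_k\partial_{c_k a_j}q$ and the analogous formula for $\partial_t\partial_{c_j}q$, the ODE hypotheses reduce $(\partial_t+L-\partial_x b)w_1$ to an $O(h^2)$ expression which, by commuting $\partial_x^{-1}$ with parameter derivatives and invoking the zero-mean properties from Lemma \ref{L:Iq} to make the antiderivatives well-defined, can be reorganized into an element of $\spn\{\partial_x^{-1}\partial_{a_j}q,\partial_x^{-1}\partial_{c_j}q\}$ modulo $\mathcal{A}\cdot\Ssol$; calling this $-G$ completes the identification. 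The main obstacle is the error bookkeeping: each Taylor remainder of $b$, each $\Serr$ cross-term from $q^2-\eta^2(\cdot,\hat a_j,c_j)$, and each contribution from $\hat a_j\neq a_j$ must be tracked carefully to gain either an extra factor of $h$ or polynomial decay in $|a_1-a_2|$, and organizing the residual $O(h^2)$ output of $w_1$ into the stated span is the most delicate structural step.
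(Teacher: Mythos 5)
Your treatment of $w_0$ is essentially the paper's: the same defining identity $(c_j^2-\partial_x^2-6\eta^2(\cdot,\hat a_j,c_j))\rho(\cdot,\hat a_j,c_j)=\tau(\cdot,\hat a_j,c_j)$, the same Taylor expansions of $b$ and $q^2-\eta_j^2\in\Serr$, and the same use of $\dot a_j=c_j^2-b(\hat a_j)+{\mathcal O}(h)$ to cancel the $\partial_x\rho_j$ terms against $\partial_t w_0$ (you silently corrected the $c_j$ in the hypothesis to $c_j^2$, which is the intended reading). For $w_1=-Pw_0$ your route is genuinely different: the paper expands $\partial_t(Pw_j)$ term by term and invokes \eqref{E:rho-so} to put the projection coefficients $\la w_j,\partial_x^{-1}\partial_\bullet q\ra$ into $\mathcal{A}$, while you apply $(\partial_t+L-\partial_x(b\,\cdot))$ directly to $\partial_{a_j}q,\partial_{c_j}q$ via the differentiated magic identities. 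That has the advantage of explicitly addressing $\partial_x(\partial_x^2+6q^2-b)Pw_0$, which the paper's written proof leaves implicit, and the cancellations you anticipate do go through.

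The one real misstep is the final identification. What actually emerges from your computation --- the terms $(\partial_t A_j)\partial_{a_j}q$, $(\partial_t B_j)\partial_{c_j}q$, and the $-2c_jB_j\partial_{a_j}q$ produced by $L\partial_{c_j}q$ --- is an ${\mathcal O}(h^2)$ element of $T_qM=\spn\{\partial_{a_1}q,\partial_{a_2}q,\partial_{c_1}q,\partial_{c_2}q\}$, not of $\spn\{\partial_x^{-1}\partial_{a_j}q,\partial_x^{-1}\partial_{c_j}q\}$. The $\partial_x^{-1}$'s in the lemma statement are a slip (``symplectically parallel to $M$'' means $T_qM$, and this is what the paper's proof concludes); the downstream energy estimate in \S\ref{S:energy} needs $G\in T_qM$ precisely so that $\la\mathcal{K}_{c,a}\tilde v,G\ra$ vanishes via \eqref{E:Kqa}, \eqref{E:Kqc}, and the orthogonality \eqref{E:so-tilde} --- a pairing against $\partial_x^{-1}\partial_{a_j}q$ would \emph{not} vanish. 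So your proposal to ``commute $\partial_x^{-1}$ with parameter derivatives and invoke the zero-mean properties from Lemma \ref{L:Iq}'' to manufacture the stated span is an unnecessary detour aimed at the wrong target; if you record $G$ as the $T_qM$ combination that naturally appears, the argument closes.
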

\begin{proof} 
Let 
$$w_j = \frac{b''(\hat a_j)}{c_j^2} \rho(\cdot, \hat a_j, c_j)$$
Then
\begin{align*}
\partial_t w_j 
&= 
\begin{aligned}[t]
& b'''(\hat a_j) \dot{\hat{a}}_j c_j^{-2}  \rho(\cdot, \hat a_j, c_j) -2 b''(\hat a_j) c_j^{-3}\dot c_j  \rho(\cdot, \hat a_j, c_j) \\
&+ b''(\hat a_j) c_j^{-2} \dot{\hat a}_j \partial_{a_j} \rho (\cdot, \hat a_j, c_j) + b''(\hat a_j) c_j^{-2} \dot{\hat c}_j \partial_{c_j} \rho (\cdot, \hat a_j, c_j) + \partial_t b''(\hat a_j)c_j^{-2} \rho(\cdot, \hat a_j, c_j)
\end{aligned} \\
&= -\dot a_j \partial_x w_j  + \mathcal{A}\cdot \Ssol
\end{align*}
Also, we have
\begin{align*}
(\partial_x^2 + 6q^2)w_j 
&= (\partial_x^2 + 6\eta^2(\cdot, \hat a_j, c_j))w_j + \mathcal{A}\cdot \Ssol \\
&= c_j^2 w_j - b''(\hat a_j) c_j^{-2}\tau(\cdot, \hat a_j, c_j) + \mathcal{A}\cdot \Ssol
\end{align*}
Also,
$$b w_j = b(\hat a_j) w_j + \mathcal{A}\cdot \Ssol$$
Combining, we obtain
\begin{align*}
\indentalign \partial_t w_j + \partial_x (\partial_x^2 w_j + 6q^2 w_j - bw_j)  \\
&=  - b''(\hat a_j) c_j^{-2}\partial_x \tau(\cdot, \hat a_j, c_j) + (-\dot a_j +c_j^2 - b(\hat a_j))\partial_x w_j + \mathcal{A}\cdot \Ssol \\
&=  - b''(\hat a_j) c_j^{-2}\partial_x \tau(\cdot, \hat a_j, c_j) + \mathcal{A}\cdot \Ssol
\end{align*}

Now we discuss $\partial_t P w_j$. 
$$\partial_t Pw_j = 
\begin{aligned}[t]
&\la \partial_t w_j, \partial_x^{-1}\partial_{a_1}q \ra \partial_{c_1}q + \la w_j, \partial_t \partial_x^{-1}\partial_{a_1} q \ra \partial_{c_j}q + \text{similar} \\
&+ \la w_j, \partial_x^{-1} \partial_{a_1} q \ra \partial_t \partial_{c_1}q + \text{similar}
\end{aligned}
$$
The first line of terms is symplectically parallel to $M$.  For the second line,  note that by \eqref{E:rho-so}, we have $\la w_j, \partial_x^{-1}\partial_{a_1}q\ra =\mathcal{A}$.  Consequently,
$$\partial_tP w_j = T_qM + \mathcal{A}\cdot\Ssol$$
\end{proof}

Define $\tilde u$ and $\tilde v$ by
\begin{equation}
\label{E:tildeu-def}
u = \tilde u + w \,, \qquad v = \tilde v + w \,.
\end{equation}
Of course, it follows that $\tilde u = q+\tilde v$.  Note that by \eqref{E:so} and \eqref{E:so-w}, we have
\begin{equation}
\label{E:so-tilde}
\la \tilde v,\partial_x^{-1}\partial_{a_j}q \ra =0 \text{ and } \la \tilde v, \partial_x^{-1}\partial_{c_j}q\ra =0 \,, \; j=1,2 \,.
\end{equation}
Note that $\tilde u$ solves
\begin{equation}
\label{E:u-tilde-1}
\partial_t \tilde u = - \partial_x (\partial_x^2 \tilde u + 2\tilde u^3 - b\tilde u) - \partial_t w - \partial_x(\partial_x^2 w +6\tilde u^2 w - bw) + {\mathcal O}(h^4)
\end{equation}
where the ${\mathcal O}(h^4)$ terms arise from $w^2$ and $w^3$.  Moreover, if we make the mild assumption that $\tilde v = {\mathcal O}(h)$, then $\tilde u^2 w = q^2 w + {\mathcal O}(h^3)$.
By \eqref{E:u-tilde-1} and \eqref{E:w-flow}, we have
\begin{equation}
\label{E:u-tilde}
\partial_t \tilde u = - \partial_x (\partial_x^2 \tilde u + 2\tilde u^3 - b\tilde u)+ (F_\perp)_0+G + \mathcal{A}\cdot \Ssol
\end{equation}
Since $\tilde u = q+\tilde v$, we have (in analogy with \eqref{E:v})
\begin{equation}
\label{E:v-tilde}
\partial_t \tilde v = \partial_x (-\partial_x^2 \tilde v - 6q^2\tilde v  + b\tilde v) - F_\| - \tilde F_\perp +G+ \mathcal{A}\cdot \Ssol + O(h^3)H^1
\end{equation}
where we have made the assumption that $\tilde v = {\mathcal O}(h^{3/2})$ in order to discard the $\tilde v^2$ and $\tilde v^3$ terms.   We thus see that, in comparison to $v$, the equation for $\tilde v$ has a lower-order inhomogeneity, but still satisfies the symplectic orthogonality conditions \eqref{E:so-tilde} and $v=\tilde v +{\mathcal O}(h^2)$.  

\section{Energy estimate}
\label{S:energy}

Since $w = {\mathcal O}(h^2)$, to obtain the desired bound on $v$ it will suffice to obtain a bound for $\tilde v$.  This will be achieved by the ``energy method.''

\begin{lemma}
\label{L:energy}
Suppose we are given $\delta_0>0$ and $b_0(x,t)$, and parameters $a(t)$, $c(t)$ such that $v$ defined by \eqref{E:v-def} satisfies the symplectic orthogonality conditions \eqref{E:so} on $[0,T]$.  Suppose, moreover, that the amplitude separation condition \eqref{E:c-sep} holds on $[0,T]$.  Then (with implicit constants depending upon $\delta_0>0$ and $L^\infty$ norms of $b_0$ and its derivatives), if $\|v\|_{H^2} \lesssim 1$ and $T\ll  h^{-1}$, then
$$\|v\|_{L_{[0,T]}^\infty H^2}^2 \lesssim \|v(0)\|_{H^2}^2 + h^4\left(1+\int_0^T \la a_1-a_2\ra^{-N} \,dt\right)^2 \,.$$
\end{lemma}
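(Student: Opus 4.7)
The plan is to carry out an energy estimate with the Lyapunov functional
$$\mathcal{E}(t) \defeq \tfrac12\la \mathcal{K}_{c(t),a(t)}\tilde v(t),\tilde v(t)\ra,$$
where $\tilde v = v - w$ is the corrected quantity of \eqref{E:tildeu-def}. By Lemma \ref{l:E7}, $H_c'(q) = 0$, so this equals $H_{c(t)}(q + \tilde v) - H_{c(t)}(q)$ modulo $O(\|\tilde v\|_{H^2}^3)$. The corrected symplectic orthogonality \eqref{E:so-tilde} lets us invoke Proposition \ref{p:ck} for the coercive lower bound $\mathcal{E}(t) \gtrsim \|\tilde v(t)\|_{H^2}^2$, as long as $\|\tilde v\|_{H^2}$ remains small (which will be ensured by bootstrap, using $w = O(h^2)$ and the hypothesis $\|v\|_{H^2}\lesssim 1$).

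The main work is to show that $\dot{\mathcal{E}} \lesssim h\,\|\tilde v\|_{H^2}^2 + (h^3 + h^2\la a_1-a_2\ra^{-N})\|\tilde v\|_{H^2} + O(\|\tilde v\|_{H^2}^3)$. Writing $\dot{\mathcal{E}} = \tfrac12\la \partial_t\mathcal{K}\cdot\tilde v,\tilde v\ra + \la \mathcal{K}\tilde v,\partial_t \tilde v\ra$ and substituting \eqref{E:v-tilde} for $\partial_t \tilde v$, the key mKdV-linearized flow is handled by differentiating the hierarchy identity \eqref{E:conserved} twice at $u = q$ and using $H_c'(q) = 0$ to obtain
$$\la \mathcal{K} v,\partial_x I_3''(q) v\ra = - \sum_j c_j^2\,\partial_{a_j}\la \mathcal{K} v,v\ra.$$
Since $\dot a_j = c_j^2 + (\dot a_j - c_j^2)$ and $\dot c_j = O(h)$, the $\sum_j c_j^2 \partial_{a_j}$ piece of $\partial_t\mathcal{K}$ cancels $\la \mathcal{K}\tilde v,\partial_x I_3''(q)\tilde v/2\ra$ exactly, leaving $\tfrac12\sum_j (\dot a_j - c_j^2)\partial_{a_j}\la \mathcal{K}\tilde v,\tilde v\ra + O(h\|\tilde v\|_{H^2}^2)$. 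An analogous double differentiation of Lemma \ref{L:near-conserved} in $u$ yields
$$\la \mathcal{K}v,\partial_x(bv)\ra = \tfrac12\sum_j b(\hat a_j)\,\partial_{a_j}\la \mathcal{K}v,v\ra + O(h\|v\|_{H^2}^2),$$
where $b(\hat a_j)$ appears because $(bq)_x$ reduces to $-\sum_j b\,\partial_{a_j}q$ at leading order via \eqref{E:magic1} and $\la H_c'''(q)(v,v), \partial_{a_j}q\ra = \partial_{a_j}\la \mathcal{K}v,v\ra$. Summing,
$$\tfrac12\la\partial_t\mathcal{K}\tilde v,\tilde v\ra + \la \mathcal{K}\tilde v,\partial_x I_3''(q)\tilde v/2 + \partial_x(b\tilde v)\ra = \tfrac12\sum_j[(\dot a_j - c_j^2) + b(\hat a_j)]\partial_{a_j}\la \mathcal{K}\tilde v,\tilde v\ra + O(h\|\tilde v\|_{H^2}^2).$$
The modulation estimates \eqref{E:F-par-1st}--\eqref{E:F-par-2nd} combined with $\tfrac12\partial_{c_j}B = b(\hat a_j) + O(h^2)$ from \eqref{E:Bc} force $(\dot a_j - c_j^2) + b(\hat a_j) = O(h^2 + \|\tilde v\|_{H^2}^2)$, so the bracketed sum is absorbed in $O(h\|\tilde v\|^2 + \|\tilde v\|^4)$. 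The remaining inhomogeneities in \eqref{E:v-tilde} are handled by $\la \mathcal{K}\tilde v,F_\|\ra = 0$ and $\la \mathcal{K}\tilde v,G\ra = 0$ (both $F_\|, G \in T_qM$, and the combination of \eqref{E:Kqa}, \eqref{E:Kqc}, and \eqref{E:so-tilde} kills these pairings), together with $\la \mathcal{K}\tilde v, \tilde F_\perp + \mathcal{A}\cdot\Ssol\ra \lesssim (h^3 + h^2\la a_1-a_2\ra^{-N})\|\tilde v\|_{L^2}$, which is obtained by moving $\mathcal{K}$ onto the Schwartz-class forcing via self-adjointness.

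Setting $y(t) = \sqrt{\mathcal{E}(t)}\gtrsim \|\tilde v(t)\|_{H^2}$, the differential inequality becomes $\dot y \lesssim hy + h^3 + h^2\la a_1-a_2\ra^{-N}$. Gronwall on $[0,T]$ with $Th\ll 1$ gives $e^{CTh} = O(1)$, hence
$$y(t) \lesssim y(0) + h^3T + h^2\int_0^T \la a_1(s)-a_2(s)\ra^{-N}\,ds,$$
and squaring produces the claimed bound for $\|\tilde v(t)\|_{H^2}^2$. Since $v = \tilde v + w$ with $\|w\|_{H^2} \lesssim h^2$, the initial bound $\|\tilde v(0)\|_{H^2}^2 \lesssim \|v(0)\|_{H^2}^2 + h^4$ and the pointwise reconstruction $\|v(t)\|_{H^2}^2 \lesssim \|\tilde v(t)\|_{H^2}^2 + h^4$ transfer the estimate to $v$ in the form stated.

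The principal obstacle is the double-differentiated hierarchy identity together with the ``algebraic miracle'' that the $b(\hat a_j)$ arising from the $b$-convection in $\partial_t\tilde v$ matches, modulo admissible $O(h^2 + \|\tilde v\|^2)$ errors, the $-(\dot a_j - c_j^2)$ dictated by the effective modulation equations. Without this cancellation, a naive expansion of $\la \mathcal{K}\tilde v, \partial_x(b\tilde u)\ra$ produces an $O(h)\|\tilde v\|_{H^2}$ linear-in-$\tilde v$ contribution whose integration over $t\sim h^{-1}$ would destroy the $O(h^2)$ bound. The correction term $w$ plays the complementary role of reducing $F_\perp$ from $O(h^2)$ to $\tilde F_\perp\in \mathcal{A}\cdot\Ssol$, gaining an extra power of $h$ away from the interaction region and producing the $h^2\la a_1-a_2\ra^{-N}$ term that only matters on $O(1)$ intervals of time.
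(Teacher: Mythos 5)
Your proof is correct, but it takes a genuinely different route from the paper. The paper differentiates $\mathcal{E}(t)=H_{c(t)}(\tilde u)-H_{c(t)}(q)$ and keeps the full nonlinear gradient: the transport piece is killed by the \emph{undifferentiated} conservation law $\la H_c'(\tilde u),\partial_x I_3'(\tilde u)\ra=0$ applied directly at $\tilde u$, and the $b$-convection term is processed via Lemma~\ref{L:near-conserved} (again at $\tilde u$), with Taylor expansion in $\tilde v$ only carried out afterwards; the piece IB is then recognized as $\la\mathcal{K}\tilde v,(bq)_x\ra$ via Lemma~\ref{L:near-conserved2} and rewritten as $-\la\mathcal{K}\tilde v,F_\|+F_\perp\ra$ using the spectral identities and the orthogonality of $\tilde v$. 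You instead differentiate the quadratic form $\tfrac12\la\mathcal{K}\tilde v,\tilde v\ra$ directly, which forces you to establish twice-differentiated versions of the hierarchy and near-conservation identities, namely $\la\mathcal{K}v,\partial_x I_3''(q)v\ra=-\sum_j c_j^2\la(\partial_{a_j}\mathcal{K})v,v\ra$ (from second differentiation of $\la H_c'(u),\partial_xI_3'(u)\ra=0$ together with $H_c'(q)=0$ and \eqref{E:magic2}) and its $b$-analogue. Both identities are correct, and your cancellation $[\dot a_j-c_j^2+b(\hat a_j)]\langle(\partial_{a_j}\mathcal{K})\tilde v,\tilde v\rangle$ is exactly the same algebraic miracle the paper exploits via IA$=0$; the paper's route is slightly cleaner because it never needs the doubly-differentiated identities, only Lemmas~\ref{L:near-conserved} and~\ref{L:near-conserved2} as stated, and the $\dot c_j$ bookkeeping appears more simply as the terms III, IV (which vanish to linear order in $\tilde v$ by \eqref{E:magic2} and orthogonality).

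One small correction: you write that \eqref{E:Bc} gives $\tfrac12\partial_{c_j}B=b(\hat a_j)+\mathcal{O}(h^2)$, but the terms $b'(\hat a_j)(a_j-\hat a_j)$ and the $\theta$-correction are genuinely $\mathcal{O}(h)$, so in fact $\tfrac12\partial_{c_j}B=b(\hat a_j)+\mathcal{O}(h)$ and hence $\dot a_j-c_j^2+b(\hat a_j)=\mathcal{O}(h+\|\tilde v\|_{H^2}^2)$, not $\mathcal{O}(h^2+\|\tilde v\|_{H^2}^2)$. This is harmless: the bracketed term still contributes $\mathcal{O}(h)\|\tilde v\|_{H^2}^2$, which is exactly what you absorb, so your final differential inequality and the Gronwall step go through unchanged.
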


\begin{proof}
Recall that we have defined
$$H_c(u) = I_5(u) + (c_1^2+c_2^2)I_3(u) + c_1^2c_2^2I_1(u) \,.$$
With $w$ given by \eqref{E:w-def} and $\tilde u$ given by \eqref{E:tildeu-def},  let
$$\mathcal{E}(t) = H_c(\tilde u) - H_c(q) \,.$$
Then
\begin{align*}
\partial_t \mathcal{E} &= 
\begin{aligned}[t]
&\la H_c'(\tilde u), \partial_t \tilde u \ra - \la H_c'(q), \partial_t q \ra + 2(c_1\dot c_1+c_2\dot c_2)(I_3(\tilde u)-I_3(q)) \\
&+ 2c_1c_2(c_1\dot c_2+\dot c_1c_2)(I_1(\tilde u)-I_1(q))
\end{aligned} \\
&= \text{I}+\text{II}+\text{III}+\text{IV}
\end{align*}
Note that $\text{II}=0$ since Lemma \ref{l:E7} 
showed that $H_c'(q)=0$.  For III, we have by \eqref{E:magic2} and the orthogonality conditions \eqref{E:so-tilde},
\begin{align*}
\text{III} &= 2(c_1\dot c_1+c_2\dot c_2)( \la I_3'(q),\tilde v \ra + {\mathcal O}(\|\tilde v\|_{H^1}^2))\\
&=  4(c_1\dot c_1+c_2\dot c_2) \la \sum_{j=1}^2 c_j^2 \partial_x^{-1}\partial_{a_j}q, \tilde v \ra + {\mathcal O}((|\dot c_1|+|\dot c_2|) \|\tilde v\|_{H^1}^2) \\
&=  {\mathcal O}((|\dot c_1|+|\dot c_2|) \|\tilde v\|_{H^1}^2)
\end{align*}
Term IV is bounded similarly.  It remains to study Term I.  Writing \eqref{E:u-tilde} as $\partial_t \tilde u = \frac12 \partial_x I_3'(\tilde u) + \partial_x (b\tilde u)+(F_\perp)_0+G+\mathcal{A}\cdot \Ssol$ and appealing to \eqref{E:conserved}, we have by  Lemma \ref{L:near-conserved} (with $u$ replaced by $\tilde u$ in that lemma) that
\begin{align*}
\text{I} &=  \la H_c'(\tilde u), \partial_x(b\tilde u) \ra + \la  H_c'(\tilde u), (F_\perp)_0 +\mathcal{A}\cdot \Ssol\ra \\
&=  
\begin{aligned}[t]
&5 \la b_x, A_5(\tilde u) \ra - 5\la b_{xxx}, A_3(\tilde u) \ra 
+\la b_{xxxxx}, A_1(\tilde u) \ra \\
&+(c_1^2+c_2^2) ( 3\la b_x, A_3(\tilde u) \ra - \la b_{xxx}, A_1(\tilde u) \ra) + c_1^2c_2^2 \la b_x, A_1(\tilde u) \ra \\
&+ \la  H_c'(\tilde u), (F_\perp)_0 +\mathcal{A}\cdot \Ssol\ra
\end{aligned}
\end{align*}
Expand $A_j(\tilde u) =A_j(q+\tilde v)=A_j(q)+A_j'(q)(\tilde v) + {\mathcal O}(\tilde v^2)$
and $H_c'(\tilde u) = H_c'(q)+\mathcal{K}_{c,a}\tilde v + {\mathcal O}(\tilde v^2)=\mathcal{K}_{c,a}\tilde v + {\mathcal O}(\tilde v^2)$ to obtain $\text{I}=\text{IA}+\text{IB}+\text{IC}$, where
\begin{align*}
&\text{IA} = 
\begin{aligned}[t]
&5 \la b_x, A_5(q) \ra - 5\la b_{xxx}, A_3(q) \ra 
+\la b_{xxxxx}, A_1(q) \ra \\
&+(c_1^2+c_2^2) ( 3\la b_x, A_3(q) \ra - \la b_{xxx}, A_1(q) \ra) + c_1^2c_2^2 \la b_x, A_1(q) \ra 
\end{aligned} \\
&\text{IB} = 
\begin{aligned}[t]
&5 \la b_x, A_5'(q)(\tilde v) \ra - 5\la b_{xxx}, A_3'(q)(\tilde v) \ra 
+\la b_{xxxxx}, A_1'(q)(\tilde v) \ra \\
&+(c_1^2+c_2^2) ( 3\la b_x, A_3'(q)(\tilde v) \ra - \la b_{xxx}, A_1'(q)(\tilde v) \ra) + c_1^2c_2^2 \la b_x, A_1'(q)(\tilde v) \ra 
\end{aligned} \\
&\text{IC} = 
\begin{aligned}[t]
& \la  \mathcal{K}_{c,a}\tilde v, (F_\perp)_0 \ra + {\mathcal O}(h \|\tilde v\|_{H^2}^2) + {\mathcal O}(\mathcal{A}\cdot \|\tilde v\|_{H^2})
\end{aligned} 
\end{align*}
Then reapply Lemma \ref{L:near-conserved} (with $u$ replaced by $q$ in that lemma) to obtain that $\text{IA} = - \la H_c'(q), \partial_x(bq) \ra =0$.  Applying Lemma \ref{L:near-conserved2},
\begin{align*}
\text{IB} &= \la \mathcal{K}_{c,a}\tilde v, (bq)_x \ra - \la \partial_xH_c'(q),b\tilde v\ra\\
&=\la \mathcal{K}_{c,a}\tilde v, (bq)_x \ra
\end{align*}
In summary thus far, we have obtained that 
$$\partial_t \mathcal{E} =  \la \mathcal{K}_{c,a}\tilde v, (bq)_x +(F_\perp)_0\ra + {\mathcal O}( h \|\tilde v\|_{H^2}^2)+{\mathcal O}(\mathcal{A}\|\tilde v\|_{H^2})$$
By \eqref{E:Kqa}, \eqref{E:Kqc}, and \eqref{E:so-tilde} (recalling the definition \eqref{E:19} of $F_0$), we obtain
$$\la \mathcal{K}_{c,a} \tilde v, \partial_x(bq)\ra = - \la \mathcal{K}_{c,a}\tilde v, F_0\ra = -\la \mathcal{K}_{c,a} \tilde v, F_\| + F_\perp \ra $$
Hence
$$\partial_t \mathcal{E} =  -\la \mathcal{K}_{c,a}\tilde v, F_\|+\tilde F_\perp \ra + {\mathcal O}( h \|\tilde v\|_{H^2}^2)+{\mathcal O}(\mathcal{A}\|\tilde v\|_{H^2})$$
It follows from Lemma \ref{L:par-est} and $\tilde F_\perp\in \mathcal{A}\cdot \Ssol$ (see \eqref{E:F_perp_expand}, \eqref{E:F_perp_0}) that
$$|\partial_t \mathcal{E}| \lesssim (h^2\la a_1-a_2\ra^{-N}+h^3)\|\tilde v\|_{H^2}+ h\| \tilde v\|_{H^2}^2$$
If $T = \delta h^{-1}$,
$$\mathcal{E}(T)=\mathcal{E}(0) + h^2\left(1+\int_0^T \la a_1-a_2\ra^{-N}\right)\|\tilde v\|_{L_{[0,T]}^\infty H_x^2} + h \| \tilde v\|_{L_{[0,T]}^\infty H^2}^2 \,.$$
By Lemma \ref{l:E7}, the definition of $\mathcal{E}$ and $\mathcal{K}_{c,a}$, and the fact that $\tilde u = q+\tilde v$, we have
$$|\mathcal{E}-\la \mathcal{K}_{c,a}\tilde v, \tilde v \ra | \lesssim \|\tilde v\|_{H^2}^3 \,.$$
Applying this at time $0$ and $T$, together with the coercivity of $\mathcal{K}$ (Proposition \ref{p:ck}),
$$\|\tilde v(T)\|_{H^2}^2 \lesssim \|\tilde v(0)\|_{H^2}^2 + h^2\left(1+\int_0^T \la a_1-a_2\ra^{-N}\right)\|\tilde v\|_{L_{[0,T]}^\infty H_x^2} + h \| \tilde v\|_{L_{[0,T]}^\infty H^2}^2 \,.$$
Replacing $T$ by $T'$ such that $0\leq T'\leq T$, and taking the supremum in $T'$ over $0\leq T'\leq T$, we obtain
$$\|\tilde v\|_{L_{[0,T]}^\infty H^2}^2 \lesssim \|\tilde v(0)\|_{H^2}^2 + h^2\left(1+\int_0^T \la a_1-a_2\ra^{-N}\right)\|\tilde v\|_{L_{[0,T]}^\infty H_x^2} + h \| \tilde v\|_{L_{[0,T]}^\infty H^2}^2 \,.$$
By selecting $\delta$ small enough, we obtain
$$\|\tilde v\|_{L_{[0,T]}^\infty H^2}^2 \lesssim \|\tilde v(0)\|_{H^2}^2 + h^4\left(1+\int_0^T \la a_1-a_2\ra^{-N} \,dt\right)^2$$
Finally, using that $\|w\|_{H^2} \sim h^2$, and $v=\tilde v+w$, we obtained the claimed estimate.
\end{proof}

\section{Proof of the main theorem}
\label{s:bootstrap}

We start with the proposition which links the ODE analysis with 
the estimates on the error term $ v$:
\begin{proposition}
\label{P:onestep}
Suppose we are given $b_0\in C_b^\infty(\mathbb{R}^2)$ and $\delta_0>0$.  (Implicit constants below depend only on $b_0$ and $\delta_0$).   Suppose that we are further given ${\bar a}\in \mathbb{R}^2$, $\bar c\in \mathbb{R}^2\backslash \mathcal{C}$, $\kappa \geq 1$, $h>0$, and $v_0$ satisfying \eqref{E:so}, such that
$$0< h \lesssim \kappa^{-1} \,, \qquad \|v_0\|_{H_x^2} \leq \kappa h^2 \,.$$  Let $u(t)$ be the solution to \eqref{E:pmKdV} with $b(x,t)=b_0(hx,ht)$ and initial data $\eta(\cdot, {\bar a},{\bar c})+v_0$.   Then there exist a time $T'>0$ and trajectories $a(t)$ and $c(t)$ defined on $[0,T']$ such that $a(0)={\bar a}$, $c(0)={\bar c}$ and the following holds, with $v\defeq u-\eta(\cdot, a,c)$:
\begin{enumerate}
\item On $[0,T']$, the orthogonality conditions \eqref{E:so} hold.
\item \label{I:time-scale}
Either $c_1(T')=\delta_0$, $c_1(T')=c_2(T')-\delta_0$, $c_2(T')=\delta_0^{-1}$, or $T' = \omega h^{-1}$, where $\omega \ll 1$.
\item $|\dot a_j - c_j^2 +b(a_j,t)| \lesssim h$.
\item $|\dot c_j - c_jb'(a_j) | \lesssim h^2$.
\item \label{I:en-bd}
$\|v\|_{L_{[0,T']}^\infty H_x^2} \leq \alpha \kappa h^2 \,,$ where $\alpha \gg 1$.
\end{enumerate}
Here $\alpha$ and $\omega$ are constants depending only on $b_0$ and $\delta_0$ (independent of $\kappa$, etc)
\end{proposition}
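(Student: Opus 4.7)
The plan is to set up a continuity/bootstrap argument combining Lemma \ref{L:decomp}, Lemma \ref{L:par-est}, and Lemma \ref{L:energy}. Since $v_0$ already satisfies \eqref{E:so} at $t=0$ with parameters $(\bar a, \bar c)$, Lemma \ref{L:decomp} together with a standard implicit-function-theorem continuation in $t$ produces a maximal interval on which there are $C^1$ trajectories $(a(t), c(t))$ and a decomposition $v(t) = u(t) - q(\cdot, a(t), c(t))$ obeying the symplectic orthogonality conditions. Let $T^*$ be the supremum of times $T$ in this interval such that, in addition, the amplitude separation \eqref{E:c-sep} holds and $\|v\|_{L^\infty_{[0,T]} H^2} \leq \alpha \kappa h^2$, with $\alpha$ to be chosen below.

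On $[0,T^*)$ both Lemma \ref{L:par-est} and Lemma \ref{L:energy} apply. The parameter estimate yields
$$|\coef(F_\|)| \lesssim h^2 \|v\|_{H^1} + \|v\|_{H^1}^2 \lesssim \alpha \kappa h^4 + \alpha^2 \kappa^2 h^4 \lesssim h^4,$$
since $\kappa h \lesssim 1$. Plugging this into \eqref{E:F-par-1st} and \eqref{E:F-par-2nd} and combining with the expansions \eqref{E:Ba}, \eqref{E:Bc} of $\partial_{a_j}B$ and $\partial_{c_j}B$, item (4) follows immediately, because the $\mathcal{A}$ remainder is $O(h^2)$, and $|b'(\hat a_j) - b'(a_j)| \leq \|b''\|_\infty |\alpha_j| \lesssim h^2$, the factors of $h$ arising from the chain rule applied to $b(x,t) = b_0(hx,ht)$. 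Similarly, for (3) the dominant terms of $\tfrac12 \partial_{c_j} B$ are $b(\hat a_j) + b'(\hat a_j)(a_j - \hat a_j)$, and $|b(\hat a_j) - b(a_j)| \leq \|b'\|_\infty |\alpha_j| \lesssim h$.

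Next, observe that under \eqref{E:c-sep}, (3) gives $\tfrac{d}{dt}(a_1 - a_2) = c_1^2 - c_2^2 + O(h)$ with $|c_1^2 - c_2^2| \geq \delta_0(c_1+c_2) \geq \delta_0^2$. Hence $a_1 - a_2$ traverses any bounded interval in bounded time and
$$\int_0^T \langle a_1(t) - a_2(t)\rangle^{-N}\, dt \leq C(\delta_0, N), \qquad N \geq 2,$$
uniformly for $T \leq T^*$. Feeding this into Lemma \ref{L:energy} yields
$$\|v\|_{L^\infty_{[0,T]} H^2}^2 \leq C_1\bigl(\|v_0\|_{H^2}^2 + h^4\bigr) \leq 2 C_1 \kappa^2 h^4.$$
Choosing $\alpha := 2\sqrt{2 C_1}$, we conclude $\|v\|_{L^\infty_{[0,T]} H^2} \leq \tfrac{\alpha}{2}\kappa h^2$, strictly improving the bootstrap hypothesis. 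By continuity, $T^*$ cannot be terminated by violation of the $H^2$ bound. Consequently $T^*$ is limited only by the failure of \eqref{E:c-sep} (corresponding to the first three alternatives in (2)) or by reaching the horizon $\omega h^{-1}$ from Lemma \ref{L:energy}. Setting $T' := T^*$ gives all the claims.

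The main obstacle is closing the bootstrap with constants \emph{independent of $\kappa$}: this forces $\int \langle a_1 - a_2 \rangle^{-N}\,dt$ to be $O(1)$ rather than $O(T) = O(h^{-1})$, and such a uniform bound is only available because the amplitude separation \eqref{E:c-sep} enforces the velocity separation $|c_1^2 - c_2^2| \gtrsim \delta_0^2$, preventing the two solitons from lingering in a near-collision. The rest is bookkeeping: matching the expansions \eqref{E:Ba}, \eqref{E:Bc} of $\partial_{a,c} B$ against the $h$-scaling of $b$ to reach the clean, decoupled effective ODEs (3) and (4).
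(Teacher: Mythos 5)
The overall architecture of your proof -- implicit-function-theorem continuation, bootstrap hypothesis $\|v\|\leq \alpha\kappa h^2$, then $F_\|=\mathcal{O}(h^4)$ via Lemma \ref{L:par-est}, then Lemma \ref{L:energy} to close the bootstrap -- matches the paper's proof. Your treatment of items (3) and (4) via \eqref{E:F-par-1st}--\eqref{E:F-par-2nd}, \eqref{E:Ba}, \eqref{E:Bc} is fine.

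The gap is in the step that bounds $\int_0^T \la a_1-a_2\ra^{-N}\,dt$. You assert that item (3) gives
$$\frac{d}{dt}(a_1-a_2)=c_1^2-c_2^2+\mathcal{O}(h),$$
but (3) only gives $\dot a_j=c_j^2-b(a_j,t)+\mathcal{O}(h)$, so what you actually have is
$$\frac{d}{dt}(a_1-a_2)=c_1^2-c_2^2-\bigl(b(a_1,t)-b(a_2,t)\bigr)+\mathcal{O}(h).$$
The difference $b(a_1,t)-b(a_2,t)$ is $\mathcal{O}(h\,|a_1-a_2|)$ (since $b_x=\mathcal{O}(h)$), which is $\mathcal{O}(h)$ only while $|a_1-a_2|=\mathcal{O}(1)$; once the solitons separate on the natural $\mathcal{O}(h^{-1})$ spatial scale, this term is $\mathcal{O}(1)$ and can have either sign. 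Consequently $a_1-a_2$ is not manifestly monotone, and without monotonicity one cannot rule out the solitons re-entering the interaction region $|a_1-a_2|\lesssim 1$ several times, which would spoil the uniform bound on $\int\la a_1-a_2\ra^{-N}\,dt$. The paper resolves precisely this point with the integrating factor: set $\xi(t)=\bigl(b(a_1,t)-b(a_2,t)\bigr)/(a_1-a_2)$, $\Xi=\int\xi$; then $|\xi|\lesssim h$ by the mean value theorem, $e^{\Xi}\sim 1$ on $[0,\omega h^{-1}]$, and
$$\frac{d}{dt}\Bigl(e^{\Xi}(a_2-a_1)\Bigr)=e^{\Xi}(c_2^2-c_1^2)+\mathcal{O}(h)\geq \tfrac12\delta_0^2>0,$$
which gives the required global monotonicity of (a weight times) $a_2-a_1$, hence at most one crossing of zero and a linear lower bound $|a_2-a_1|\gtrsim \delta_0^2|t-t_1|$. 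This is the ingredient your proof omits. Everything downstream of the integral bound is correct, but the omission is not mere bookkeeping: as written, your argument does not establish the integral bound that feeds into Lemma \ref{L:energy}.
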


\begin{proof}
Recall our convention that implicit constants depend only on $b_0$ and $\delta$.
By Lemma \ref{L:decomp} and the continuity of the flow $u(t)$ in $H^2$, there exists some $T''>0$ on which $a(t)$, $c(t)$ can be defined so that \eqref{E:so} hold.  Now take $T''$ to be the maximal time on which $a(t)$, $c(t)$ can be defined so that \eqref{E:so} holds.  Let $T'$ be first time $0\leq T' \leq T''$ such that 
$c_1(T')=\delta_0$,  
$c_1(T')=c_2(T')-\delta_0$,
$c_2(T')=\delta_0^{-1}$,
$T'=T''$, or $\omega h^{-1}$
(whichever comes first).  Here, $0 < \omega \ll 1$ is a constant that we will chosen suitably small at the end of the proof (depending only upon implicit constants in the estimates, and hence only on $b_0$ and $\delta$).

\begin{remark}
\label{R:bs1}
We will show that on $[0,T']$, we have $\|v(t)\|_{H_x^2}\lesssim \kappa h^2$, and hence by Lemma \ref{L:decomp} and the continuity of the $u(t)$ flow, it must be the case that either $c_1(T')=\delta_0$,  
$c_1(T')=c_2(T')-\delta_0$,
$c_2(T')=\delta_0^{-1}$, 
or $\omega h^{-1}$
(i.e. the case $T'=T''$ does not arise).
\end{remark}

Let $T$, $0<T\leq T'$, be the maximal time such that
\begin{equation}
\label{E:bs1}
\|v\|_{L_{[0,T]}^\infty H_x^2} \leq \alpha \kappa h^2 \,,
\end{equation}
where $\alpha$ is suitably large constant related to the implicit constants in the estimates (and thus dependent only upon $b_0$ and $\delta_0>0$).

\begin{remark}
\label{R:bs2}
We will show, assuming that \eqref{E:bs1}  holds, that $\|v\|_{L_{[0,T]}^\infty H_x^1} \leq \frac12 \alpha \kappa h^{1/2}$ and thus by continuity we must have $T = T'$.
\end{remark}

In the remainder of the proof, we work on the time interval $[0,T]$, and we are able to assume that the orthogonality conditions \eqref{E:so} hold,  $\delta_0 \leq c_1(t) \leq c_2(t)-\delta_0 \leq \delta_0^{-1}$, and that \eqref{E:bs1} holds.  
By Lemma \ref{L:par-est} and Taylor expansion, we have (since $\kappa^2h^4 \lesssim h^2$)
\begin{equation}
\label{eq:oded}
\left\{
\begin{aligned}
&\dot a_j = c_j^2 - b(a_j,t) + {\mathcal O}(h)  \\
&\dot c_j = c_j\partial_xb(a_j,t)+{\mathcal O}(h^2)\,,
\end{aligned}
\right.
\end{equation}
with initial data $a_j(0)=\bar a_j$, $c_j(0)=\bar c_j$.  Let
$$\xi(t) \defeq \frac{b(a_1(t),t)-b(a_2(t),t)}{a_1(t)-a_2(t)}$$
and let $\Xi(t)$ denote an antiderivative.
By the mean-value theorem $|\xi| \lesssim h$, and since $T\leq \omega h^{-1}$, we have $e^\Xi \sim 1$.  We then have
$$\frac{d}{dt}\left(e^{\Xi}( a_2 - a_1 )\right) = e^\Xi(c_2^2-c_1^2) + {\mathcal O}(h)\,.$$
Since $\delta_0^2 \leq c_2^2-c_1^2$, we see that $e^{\Xi}( a_2 - a_1 )$ is strictly increasing.  Let $0\leq t_1\leq T$ denote the unique time at which $e^{\Xi}( a_2 - a_1 )=0$ (if the quantity is always positive, take $t_1=0$, and if the quantity is always negative, take $t_1=T$, and make straightforward modifications to the argument below).  If $t<t_1$, integrating from $t$ to $t_1$ we obtain
$$ \delta_0^2(t_1-t) \lesssim -e^{\Xi(t)}(a_2(t)-a_1(t))= e^{\Xi(t)}|a_2(t)-a_1(t)|$$
If $t>t_1$, integrating from $t_1$ to $t$ we obtain
$$\delta_0^2(t-t_1) \lesssim e^{\Xi(t)}(a_2(t)-a_1(t))\,.$$
Hence,
$$\int_0^T \la a_2(t)-a_1(t) \ra^{-2} \lesssim 1 \,.$$
By Lemma \ref{L:energy}, we conclude that
$$\|v\|_{L_T^\infty H_x^2} \leq \frac{\alpha}{4}(\|v(0)\|_{H^2} + h^2) \leq \frac{\alpha}{4}(\kappa h^2+h^2) \leq \frac12\alpha \kappa h^2 \,.$$
\end{proof}

We can now complete
\begin{proof}[Proof of the main Theorem]
Suppose that $\|v_0\|_{H^2} \leq h^2$. 
Iterate Prop. \ref{P:onestep}, as long as the condition 
\begin{equation}
\label{E:c-constraint}
\delta_0 \leq  c_1\leq c_2-\delta_0 \leq \delta_0^{-1}
\end{equation} 
remains true, as follows:  for the $k$-th iterate, put $\kappa = \alpha^k$ in Prop. \ref{P:onestep} and advance from time $t_k=k\omega h^{-1}$ to time $t_{k+1}=(k+1)\omega h^{-1}$.   At time $t_k$, we have $\|v(t_k)\|_{H^2} \leq \alpha^k h^2$, and we find from Prop. \ref{P:onestep} that $\|v\|_{L_{[t_k,t_{k+1}]}^\infty H_x^2} \leq \alpha^{k+1}h^2$.  Provided \eqref{E:c-constraint} holds on all of $[0,t_K]$, we can continue until $\kappa^{-1}\sim h$, i.e. $K\sim \log h^{-1}$.   

Recall \eqref{eq:T0h}, 
and $A_j(T)$, $C_j(T)$ defined by
\eqref{eq:ODEsh}.  Let $\hat a_j(t)=h^{-1}A_j(ht)$, $\hat c_j(t)=C_j(ht)$.  Then $\hat a_j$, $\hat c_j$ solve 
$$
\left\{
\begin{aligned}
&\dot{\hat a}_j = \hat{c}_j^2 - b(\hat a_j,t) \\
&\dot{\hat c}_j = \hat c_j \partial_x b(\hat a_j,t)
\end{aligned}
\right.
$$
with initial data $\hat a_j(0) = \bar a_j$, $\hat c_j(0) = \bar c_j$.  We know that \eqref{E:c-constraint} holds for $\hat c_j$ on $[0,h^{-1}T_0]$.
Let $\tilde a_j = a_j-\hat a_j$, $\tilde c_j= c_j - \hat c_j$ denote the differences.  Let
$$\gamma(t) \defeq \frac{b(a_j,t)-b(\hat a_j,t)}{a_j-\hat a_j}$$
$$\sigma(t) \defeq \frac{\partial_x b(a_j,t)-\partial_x b(\hat a_j,t)}{a_j-\hat a_j}\,.$$
By the mean-value theorem, $|\gamma(t)|\lesssim h$ and $|\sigma(t)|\lesssim h^2$.
We have
\begin{equation}
\label{eq:ODEt}
\left\{
\begin{aligned}
&\dot{\tilde a}_j = \tilde c_j^2 + 2\hat c_j \tilde c_j -\gamma \tilde a_j + {\mathcal O}(h) \\
&\dot{\tilde c}_j = \tilde c_j (\partial_x b)(a_j,t) + \hat c_j \sigma \tilde a_j +{\mathcal O}(h^2) \,.
\end{aligned}
\right.
\end{equation}
We conclude that $|\tilde a_j| \lesssim e^{Cht}$ and $|\tilde c_j| \lesssim he^{Cht}$.  This is proved by Gronwall's method and a bootstrap argument.  Since \eqref{E:c-constraint} holds for $\hat c_j$ on $[0,h^{-1}T_0]$, it holds for $c_j$ on the same time scale if $T_0<\infty$, and up to the maximum time allowable by the above iteration argument, $\epsilon h^{-1}\log h^{-1}$, if $T_0=+\infty$.
\end{proof}

\appendix

\section{Local and global well-posedness}
\label{A:gwp}

\renewcommand\thefootnote{\ddag}%

In this appendix, we will prove that \eqref{E:pmKdV} is globally well-posed in $H^k$, $k\geq 1$ provided 
\begin{equation}
\label{E:b-size}
M(T) \defeq \sum_{j=0}^{k+1} \|\partial_x^j b(x,t) \|_{L_{[0,T]}^\infty L_x^\infty} < \infty \,.
\end{equation}
for all $T>0$.
This is proved for $k=1$ under the additional assumption that  $\|b\|_{L_x^2L_T^\infty} < \infty$ in the appendix of Dejak-Sigal \cite{DS}. \footnote{It is further assumed in \cite{DS} that $\|b\|_{L_T^\infty L_x^\infty}$ is small, although this appears to be unnecessary in their argument.}  The removal of the assumption  $\|b\|_{L_x^2L_T^\infty} < \infty$ is convenient since it allows for us to consider potentials that asymptotically in $x$ converge to a nonzero number, rather than decay. Moreover, our argument is self-contained.

Well-posedness for KdV (nonlinearity $\partial_x u^2$) with $b\equiv 0$ was obtained by Bona-Smith \cite{BS} via the energy method, using the vanishing viscosity technique for construction and a regularization argument for uniqueness.  Although their argument adapts to include $b\neq 0$ and to mKdV \eqref{E:pmKdV}, it applies only for $k > \frac{3}{2}$ due to the derivative in the nonlinearity.  Kenig-Ponce-Vega \cite{KPV, KPV2} reduced the regularity requirements (for $b\equiv 0$) below $k=1$ by introducing new local smoothing and maximal function estimates and applying the contraction method.   These estimates were obtained by Fourier analysis (Plancherel's theorem, van der Corput lemma).  At the $H^1$ level of regularity (and above) for mKdV, the full strength of the maximal function estimate in \cite{KPV,KPV2} is not needed.  Here, we prove a local smoothing estimate and a (weak) maximal function estimate (see \eqref{E:locsmooth} and \eqref{E:maximal} in Lemma \ref{L:locsmooth} below)  instead by the integrating factor method, which easily accomodates the inclusion of a potential term since integration by parts can be applied.   The estimates proved by Kenig-Ponce-Vega were directly applied by Dejak-Sigal, treating the potential term as a perturbation, which required introducing the norm $\|b\|_{L_x^2 L_T^\infty}$.  Our argument does not apply directly to KdV since we are lacking the (strong) maximal function estimate used by \cite{KPV,KPV2}.

Let $Q_n = [n-\frac12,n+\frac12]$ so that $\mathbb{R} = \cup Q_n$.  Let $\tilde Q_n = [n-1,n+1]$.  An example of our notation is:
$$\|u\|_{\ell^\infty_n L_T^2L_{Q_n}^2} = \sup_n \| u \|_{L_{(0,T)}^2L_{Q_n}^2} \,.$$
We will use variants like $\ell_n^2L_T^\infty L_{Q_n}^2$ etc.  Note that due to the finite incidence of overlap, we have
$$\|u\|_{\ell_n^\infty L_T^2L_{Q_n}^2} \sim \|u\|_{\ell_n^\infty L_T^2L_{\tilde Q_n}^2}$$

\begin{theorem}[local well-posedness]
\label{T:local}
Take $k\in \mathbb{Z}$, $k\geq 1$.  Suppose that 
$$M \defeq \sum_{j=0}^{k+1} \|\partial_x^j b(x,t) \|_{L_{[0,1]}^\infty L_x^\infty} < \infty \,.$$
For any $R\geq 1$, take
$$T \lesssim \min(M^{-1}, R^{-4}) \,.$$
\begin{enumerate}
\item If $\|u_0\|_{H^k} \leq R$, there exists a solution $u(t)\in C([0,T]; H_x^k)$ to \eqref{E:pmKdV} on $[0,T]$ with initial data $u_0(x)$ satisfying
$$\|u\|_{L_T^\infty H_x^k} + \|\partial_x^{k+1}u\|_{\ell_n^\infty L_T^2L_{Q_n}^2} \lesssim R \,.$$ 
\item This solution $u(t)$ is unique among all solutions in $C([0,T]; H_x^1)$.
\item The data-to-solution map $u_0 \mapsto u(t)$ is continuous as a mapping $H^k \to C([0,T];H_x^k)$.
\end{enumerate}
\end{theorem}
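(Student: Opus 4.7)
My plan is to combine the energy method with a Kato-type local smoothing estimate, carried out on a parabolic regularization and passed to the limit. As a first step, I add a viscous term $-\varepsilon\partial_x^4 u$ to \eqref{E:pmKdV}; for each $\varepsilon>0$ the regularized equation has a global smooth solution $u^\varepsilon$ by standard heat-semigroup contraction, and all subsequent estimates are derived uniformly in $\varepsilon$.

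For the $H^k$ energy estimate, I apply $\partial_x^j$ for $0\leq j\leq k$ to the regularized equation, pair with $\partial_x^j u^\varepsilon$, and integrate in $x$. The dispersive piece integrates to zero and the viscous piece contributes a favorable sign that I discard. The potential term reduces, via Leibniz and one integration by parts, to a quantity bounded by $M(T)\|u^\varepsilon\|_{H^j}^2$. The nonlinear term from $\partial_x((u^\varepsilon)^3)$ reduces, after moving one derivative onto the other factor, to a top-order contribution controlled by $\|u^\varepsilon\|_{L^\infty_x}\|\partial_x u^\varepsilon\|_{L^\infty_x}\|u^\varepsilon\|_{H^j}^2$, with all lower-order Leibniz terms controlled the same way. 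Summing over $j$ and applying Grönwall,
\[
\|u^\varepsilon\|_{L^\infty_T H^k}^2 \;\lesssim\; \|u_0\|_{H^k}^2 \exp\!\Bigl(C\,TM(T) + C\!\int_0^T \|u^\varepsilon\|_{L^\infty_x}\|\partial_x u^\varepsilon\|_{L^\infty_x}\,dt\Bigr).
\]
For $k\geq 2$, Sobolev embedding bounds $\|\partial_x u^\varepsilon\|_{L^\infty_x}\lesssim \|u^\varepsilon\|_{H^k}$ and the estimate closes by a standard bootstrap on $[0,T]$ with $T\lesssim\min(M^{-1},R^{-4})$. For $k=1$, $\|\partial_x u^\varepsilon\|_{L^\infty_x}$ is not controlled by $H^1$, and a local smoothing bound must be brought in.

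The core step, and the main obstacle, is the local smoothing estimate for $\partial_x^{k+1}u^\varepsilon$ in $\ell^\infty_n L^2_T L^2_{Q_n}$. Pick a smooth weight $\phi_n$ with $\phi_n'\geq c>0$ on $\tilde Q_n$ and $\phi_n,\phi_n',\phi_n'''$ uniformly bounded in $n$. Apply $\partial_x^{k+1}$ to the regularized equation, pair with $\phi_n\partial_x^{k+1}u^\varepsilon$, and integrate in space-time. Integrating the dispersive term by parts gives the identity
\[
-\int_0^T\!\!\int \phi_n(\partial_x^{k+1}u^\varepsilon)(\partial_x^{k+3}u^\varepsilon)\,dx\,dt \;=\; \tfrac{3}{2}\!\int_0^T\!\!\int \phi_n'(\partial_x^{k+2}u^\varepsilon)^2\,dx\,dt \;-\; \tfrac{1}{2}\!\int_0^T\!\!\int \phi_n'''(\partial_x^{k+1}u^\varepsilon)^2\,dx\,dt,
\]
a positive space-time local gain of one derivative. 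The commutators from the cubic nonlinearity and from $bu$ are estimated by Cauchy--Schwarz together with the $H^k$ energy bound and the one-dimensional Gagliardo--Nirenberg embedding $\|f\|_{L^\infty_{Q_n}}\lesssim\|f\|_{L^2_{\tilde Q_n}}^{1/2}(\|f\|_{L^2_{\tilde Q_n}}+\|\partial_x f\|_{L^2_{\tilde Q_n}})^{1/2}$; smallness of $T\lesssim R^{-4}$ absorbs the dangerous terms into the left-hand side and yields, uniformly in $\varepsilon$,
\[
\|\partial_x^{k+1}u^\varepsilon\|_{\ell^\infty_n L^2_T L^2_{Q_n}} \;\lesssim\; R.
\]
Combined with Gagliardo--Nirenberg on $\tilde Q_n$, this provides the control of $\int_0^T \|\partial_x u^\varepsilon\|_{L^\infty_x}\,dt$ needed to close the exponent in the energy bound when $k=1$. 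The delicate point is that the cubic contribution to the smoothing inequality places $\partial_x^{k+2}u^\varepsilon$ on a single factor of $u^\varepsilon$---exactly one derivative above the controlled norm---so the weight and the time interval must be chosen carefully to produce an absorbable constant.

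With uniform-in-$\varepsilon$ estimates in hand, existence follows by extracting a weak-$*$ limit in $L^\infty_T H^k$ and a strong limit in $C([0,T];H^{k-1})$ via Aubin--Lions; the limit solves \eqref{E:pmKdV} in distributions, belongs to $L^\infty_T H^k$, and is continuous in $t$ with values in $H^k$ by a standard weak-plus-norm continuity argument. Uniqueness in $C([0,T];H^1)$ follows from the energy method applied to $w=u-\tilde u$: the cubic difference entails no net derivative loss, Sobolev embedding in $H^1$ controls the $L^\infty$-type coefficients, and Grönwall forces $w\equiv 0$. Continuous dependence on data is then obtained by the Bona--Smith procedure: mollify the initial datum at scale $\delta$, use the a priori estimates and uniqueness to compare solutions arising from different mollification scales and from nearby data, and let $\delta\to 0$.
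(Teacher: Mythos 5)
Your strategy differs from the paper's in two ways: you use vanishing viscosity plus compactness (Bona--Smith) where the paper uses a direct contraction mapping, and you prove local smoothing by pairing against a Kato weight $\phi_n$ where the paper conjugates by the integrating factor $e^{-\tan^{-1}(x-n)}$. Those differences by themselves would be fine --- and for $k\geq 2$ your outline does recover the result, essentially as \cite{BS} adapted to mKdV plus a potential --- but for $k=1$, which is the whole point of the theorem, there is a genuine gap at exactly the step you flag as delicate.

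You claim that the local smoothing bound $\|\partial_x^{k+1}u^\varepsilon\|_{\ell^\infty_n L^2_T L^2_{Q_n}}\lesssim R$, combined with Gagliardo--Nirenberg on $\tilde Q_n$, controls $\int_0^T\|\partial_x u^\varepsilon\|_{L^\infty_x}\,dt$. It does not. Gagliardo--Nirenberg on unit intervals reduces that quantity to one of the form $\int_0^T \sup_n\|\partial_x^2 u^\varepsilon(t)\|_{L^2_{\tilde Q_n}}\,dt$, i.e.\ an $L^1_T\ell^\infty_n$ (or after Cauchy--Schwarz an $L^2_T\ell^\infty_n$) norm, whereas local smoothing furnishes only $\ell^\infty_n L^2_T$. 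These are not comparable in the direction you need: for each fixed $t$ one has $\sup_n|f_n(t)|\geq|f_m(t)|$, so $\|f\|_{L^2_T\ell^\infty_n}\geq\|f\|_{\ell^\infty_n L^2_T}$ always, and the gap can be as large as $\sqrt N$ (e.g.\ $f_n(t)=\bbbone_{[n/N,(n+1)/N]}(t)$, $n=0,\dots,N-1$, $T=1$). So the Gr\"onwall exponent is not controlled. Moreover, the same difficulty sits inside the derivation of the local smoothing estimate itself: for $k=1$ the cubic commutator produces a term $\int_0^T\!\int\phi_n\,u(u_x)^3$, which again asks for $\int_0^T\|u_x\|_{L^\infty_x}\,dt$, so the estimate cannot be bootstrapped from itself.

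The missing ingredient is the paper's maximal function estimate \eqref{E:maximal}, namely control of $\|v\|_{\ell^2_n L^\infty_T L^2_{Q_n}}$. This norm dominates $\|v\|_{L^\infty_T L^2_x}$ (since $\sum_n\sup_t\geq\sup_t\sum_n$), so it is a genuinely new a priori bound, not a consequence of the energy estimate. With both the local smoothing and the maximal function norms in hand, the cubic term can be summed in $n$ by H\"older, placing the three factors of $u$ into $\ell^\infty_n$, $\ell^2_n$, $\ell^2_n$ respectively, each paired with a compatible $L^q_T$ --- this is exactly what makes the paper's contraction close at $k=1$. To make your vanishing-viscosity variant work at $k=1$ you would need to prove and feed in an analogous maximal estimate, uniformly in $\varepsilon$. (A smaller bookkeeping slip: to obtain local smoothing on $\partial_x^{k+1}u$ you should apply $\partial_x^k$ to the equation and pair with $\phi_n\partial_x^k u$; applying $\partial_x^{k+1}$ as written places the smoothing gain at order $k+2$, one derivative too high.)
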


The main tool in the proof of Theorem \ref{T:local} is the local smoothing estimate \eqref{E:locsmooth} below.

\begin{lemma}
\label{L:locsmooth}
Suppose that 
$$v_t + v_{xxx} -(bv)_x = f \,.$$
We have, for 
$$T\lesssim (1+ \|b_x\|_{L_T^\infty L_x^\infty} + \|b\|_{L_T^\infty L_x^\infty})^{-1} \,,$$
the energy and local smoothing estimates
\begin{equation}
\label{E:locsmooth}
\|v\|_{L_T^\infty L_x^2} + \|v_x\|_{\ell_n^\infty L_T^2 L_{Q_n}^2} \lesssim \|v_0\|_{L_x^2} +
\left\{
\begin{aligned}
&\| \partial_x^{-1}f \|_{\ell_n^1 L_T^2 L_{Q_n}^2} \\
&\| f \|_{L_T^1L_x^2}
\end{aligned}
\right.
\end{equation}
and the maximal function estimate
\begin{equation}
\label{E:maximal}
\|v\|_{\ell_n^2 L_T^\infty L_{Q_n}^2} \lesssim \|v_0\|_{L_x^2} + T^{1/2}\|v\|_{L_T^2 H_x^1} + T^{1/2}\|f\|_{L_T^2 L_x^2} \,.
\end{equation}
The implicit constants are independent of $b$.  
\end{lemma}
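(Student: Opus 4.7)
The strategy is the classical integrating-factor (Kato-type) method: multiply $v_t+v_{xxx}-(bv)_x=f$ by $\phi(x)v$ and integrate by parts. Three integrations by parts on the dispersive term and one on the potential term produce the local identity
\[
\frac12 \frac{d}{dt}\!\int\phi v^2\,dx + \frac32\int\phi_x v_x^2\,dx
= \frac12\int\phi_{xxx} v^2\,dx + \frac12\int\phi\,b_x v^2\,dx - \frac12\int\phi_x b\,v^2\,dx + \int \phi v f\,dx.
\]
The crucial good term is $\frac32\int\phi_x v_x^2\,dx$, present whenever $\phi_x\ge 0$; it will deliver the local-smoothing gain.

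For \eqref{E:locsmooth} I would take the monotone weight $\phi_n(x)=\arctan(x-n)$, so that $|\phi_n|\le \pi/2$, $\phi_n'(x)=(1+(x-n)^2)^{-1}\gtrsim\bbbone_{Q_n}$, and $\|\phi_n'''\|_{L^\infty}\lesssim 1$. Integrating the identity in $t$, using $TM\lesssim 1$ to absorb the $b$- and $\phi_n'''$-contributions into multiples of $\|v\|_{L^\infty_T L^2}^2$, and then taking $\sup_n$ on the left produces a bound on $X\defeq\|v_x\|_{\ell^\infty_n L^2_T L^2_{Q_n}}$. A parallel energy identity (the same computation with $\phi\equiv 1$) controls $Y\defeq\|v\|_{L^\infty_T L^2}$. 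The forcing in the $\|f\|_{L^1_T L^2}$ form is handled directly by $|\phi_n v f|\le \tfrac{\pi}{2}|v||f|$. For the $\partial_x^{-1}f$ form, write $f=\partial_x g$ with $g=\partial_x^{-1}f$ and integrate by parts:
\[
\int \phi_n v f\,dx = -\int\phi_n' v g\,dx - \int \phi_n v_x g\,dx.
\]
The first piece is harmless because $\phi_n'$ is localized in $\tilde Q_n$. The second piece demands the main technical trick: $\phi_n$ is monotone and hence \emph{not} localized, so I split $g$ via the partition $\{Q_m\}$ and apply Cauchy-Schwarz on each cube before pairing $\ell^\infty_m$ against $\ell^1_m$:
\[
\Big|\int_0^T\!\!\int\phi_n v_x g\,dx\,dt\Big| \leq \tfrac{\pi}{2}\sum_m\|v_x\|_{L^2_T L^2_{Q_m}}\|g\|_{L^2_T L^2_{Q_m}} \leq \tfrac{\pi}{2}\,X\cdot\|g\|_{\ell^1_m L^2_T L^2_{Q_m}},
\]
a bound \emph{independent of $n$}. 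The resulting coupled quadratic system for $X$ and $Y$ then closes by AM--GM to give $X+Y\lesssim \|v_0\|_{L^2}+\|\partial_x^{-1}f\|_{\ell^1_n L^2_T L^2_{Q_n}}$.

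For \eqref{E:maximal} I would replace $\phi_n$ by a compactly supported bump $\chi_n\in C_c^\infty$ with $\chi_n\equiv 1$ on $Q_n$ and $\operatorname{supp}\chi_n\subset\tilde Q_n$. Monotonicity is lost, but no smoothing gain is needed here: the (now non-signed) term $\int\chi_n'v_x^2\,dx$ is simply absorbed into $\|v_x\|_{L^2_{\tilde Q_n}}^2$. The resulting differential inequality
\[
\frac{d}{dt}\!\int\chi_n v^2\,dx \lesssim \|v\|_{H^1_{\tilde Q_n}}^2 + \|v\|_{L^2_{\tilde Q_n}}\|f\|_{L^2_{\tilde Q_n}},
\]
integrated on $[0,T']\subset[0,T]$, taken $\sup_{T'}$, squared and summed over $n$ using finite overlap of the $\tilde Q_n$, and finally combined with Cauchy-Schwarz in $t$ to convert the arising $L^1_T$ norms into $T^{1/2}L^2_T$ norms (via $\sum_n\|h\|_{L^1_T L^2_{\tilde Q_n}}^2\le T\|h\|_{L^2_T L^2}^2$), produces the stated bound with the two $T^{1/2}$ factors.

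The main obstacle is the non-locality of the weight $\phi_n$ in the local-smoothing estimate: since $\phi_n$ does not decay at infinity, the forcing term $\int\phi_n v_x g\,dx$ cannot be localized to $Q_n$, and naive Cauchy-Schwarz against $\|g\|_{L^2_x}$ would require $L^2_T L^2_x$ control on $g$, which is strictly weaker than the $\ell^1_n L^2_T L^2_{Q_n}$ hypothesis. Overcoming this with the partition-of-unity/$\ell^\infty$-$\ell^1$ duality splitting above—and then coupling the resulting bound back to the $L^\infty_T L^2$ energy estimate via AM--GM—is the heart of the proof; every other step is a routine bookkeeping exercise.
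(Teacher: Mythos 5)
Your proposal is correct and follows essentially the same path as the paper's: a Kato-type weighted energy identity with a bounded monotone weight centered at $n$, the crucial single integration by parts to transfer $\partial_x^{-1}$ onto the test function, the $\ell^1_m$--$\ell^\infty_m$ duality splitting $\sum_m\|g\|_{L^2_TL^2_{Q_m}}\|\langle\partial_x\rangle v\|_{L^2_TL^2_{Q_m}}\le\|g\|_{\ell^1_m}\|\langle\partial_x\rangle v\|_{\ell^\infty_m}$ for the nonlocal forcing term, and compactly supported bumps with finite-overlap summation for the maximal-function bound. The only real deviation is cosmetic: the paper uses the exponential integrating factor $w=e^{\varphi}v$ with $\varphi=-\arctan(x-n)$, so the weighted quantity $\|w\|_{L^2}^2\sim\|v\|_{L^2}^2$ is automatically a norm and the energy and local-smoothing bounds fall out of one identity, whereas your direct multiplication by the sign-changing weight $\arctan(x-n)$ produces an indefinite quantity $\int\phi_n v^2$ that must be dominated by $\frac{\pi}{2}\|v\|_{L^2}^2$ and therefore forces the separate $\phi\equiv 1$ energy identity and the coupled $X$--$Y$ bootstrap that you describe; both routes close for the same reason and at the same cost.
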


\begin{proof}
Let $\varphi(x) = -\tan^{-1}(x-n)$, and set $w(x,t) = e^{\varphi(x)}v(x,t)$.  
Note that $0<e^{-\frac{\pi}{2}}\leq e^{\varphi(x)} \leq e^\frac{\pi}{2} < \infty$, so the inclusion of this factor is harmless in the estimates, although has the benefit of generating the ``local smoothing'' term in \eqref{E:locsmooth}.
We have
$$\partial_t w + w_{xxx} -3\varphi' w_{xx} + 3(-\varphi''+(\varphi')^2)w_x + (-\varphi'''+3\varphi''\varphi' - (\varphi')^3)w - (bw)_x + \varphi' bw = e^\varphi f \,.$$
This equation and manipulations based on integration by parts show that 
$$\partial_t \|w\|_{L_x^2} = 
\begin{aligned}[t]
&6\la \varphi', w_x^2 \ra - 3\la (-\varphi'' + (\varphi')^2)', w^2 \ra + 2\la -\varphi'''+3\varphi''\varphi' -(\varphi')^3, w^2 \ra \\
&- \la b_x, w^2 \ra + 2\la b\varphi', w^2 \ra + 2\la w, e^\varphi f\ra \,.
\end{aligned}
$$
We integrate the above identity over $[0,T]$, 
move the smoothing term $6\int_0^T \la \varphi', w_x^2 \ra_x \, dt$ over to the left side, and estimate the remaining terms to obtain:
\begin{align*}
\indentalign \|w(T)\|_{L_x^2}^2 + 6 \| \la x-n\ra^{-1}w_x\|_{L_T^2 L_x^2}^2 \\
&\leq  \|w_0\|_{L_x^2}^2 + C T (1+ \|b_x\|_{L_T^\infty L_x^\infty} + \|b\|_{L_T^\infty L_X^\infty}) \|w\|_{L_T^\infty L_x^2}^2  + C \int_0^T \left| \int e^{\varphi}f w \, dx \right| \, dt \,.
\end{align*}
Replacing $T$ by $T'$, and taking the supremum over $T'\in [0,T]$, we obtain, for $T\lesssim (1+ \|b_x\|_{L_T^\infty L_x^\infty} + \|b\|_{L_{[0,T]}^\infty L_x^\infty})^{-1}$, the estimate
$$\|w\|_{L_T^\infty L_x^2}^2 + \| \la x-n\ra^{-1}w_x\|_{L_T^2 L_x^2}^2 \lesssim \|w_0\|_{L_x^2}^2 +  \int_0^T \left| \int e^{\varphi}f w \, dx \right| \, dt$$
Using that $0< e^{-\pi/2} \leq e^\varphi \leq e^{\pi/2} <\infty$, this estimate can be converted back to an estimate for $v$:
$$\|v\|_{L_T^\infty L_x^2}^2 + \|v_x\|_{L_T^2 L_{Q_n}^2}^2 \lesssim \|v_0\|_{L_x^2}^2 +  \int_0^T \left| \int e^{2\varphi}f v \, dx \right| \, dt \,.$$
Estimating as
$$\int_0^T \left| \int e^{2\varphi}f v \, dx \right| \, dt \lesssim \|f\|_{L_T^1L_x^2}\|v\|_{L_T^\infty L_x^2} \,, $$
and then taking the supremum in $n$ yields the second bound in \eqref{E:locsmooth}.  Estimating instead as:
\[
\begin{split}
\int_0^T \left| \int e^{2\varphi}f v \, dx \right| \, dt  & = \int_0^T \left| \int e^{2\varphi}(\partial_x\partial_x^{-1}f) v \, dx \right| dt \\
& \leq \int_0^T \left| \int (\partial_x^{-1}f) \, \partial_x(e^{2\varphi}v) \,dx \,\right| dt  \\
&\leq \sum_m \|\partial_x^{-1}f\|_{L_T^2L_{Q_m}^2} \|\la \partial_x \ra v\|_{L_T^2L_{Q_m}^2}
\\
& \leq \|\partial_x^{-1}f\|_{\ell_m^1 L_T^2L_{Q_m}^2}  \|\la \partial_x \ra v\|_{\ell_m^\infty L_T^2L_{Q_m}^2}
\end{split}
\]
and taking the supremum in $n$ yields the second bound in \eqref{E:locsmooth}.

For the  estimate \eqref{E:maximal}, we take $\psi(x)=1$ on $[n-\frac12,n+\frac12]$ and $0$ outside $[n-1,n+1]$, set $w = \psi v$, and compute, similarly to the above, 
$$\|v\|_{L_T^\infty L_{Q_n}^2}^2 \lesssim \|v_0\|_{L_{\tilde Q_n}^2}^2 + T\|v_x\|_{L_T^2 L_{\tilde Q_n}^2}^2 + T\|f\|_{L_T^2L_{\tilde Q_n}^2}^2$$
The proof is completed by summing in $n$.
\end{proof}

\begin{proof}[Proof of Theorem \ref{T:local}]
We prove the existence by contraction in the space $X$, where
$$X = \{ \, u\, | \, \|u\|_{C([0,T]; H_x^k)} + \|\partial_x^{k+1} u\|_{\ell_n^\infty L_T^2 L_{Q_n}^2} + \sup_{\alpha \leq k-1} \|\partial_x^\alpha u\|_{\ell_n^2 L_T^\infty L_{Q_n}^2}\leq CR \,\} \,.$$
Here $C$ is just chosen large enough to exceed the implicit constant in \eqref{E:locsmooth}. 
Given $u\in X $, let $\varphi(u)$ denote the solution to
\begin{equation}
\label{E:Phi-def}
\partial_t \varphi(u) + \partial_x^3 \varphi(u) - \partial_x (b\varphi(u)) = -2\partial_x(u^3) \,.
\end{equation}
with initial condition $\varphi(u)(0) = u_0$.
A fixed point $\varphi(u)=u$ in $X$ will solve \eqref{E:pmKdV}. We separately treat the case $k=1$ for clarity of exposition.

\smallskip

\noindent\emph{Case $k=1$}.  Applying $\partial_x$ to \eqref{E:Phi-def} gives, with $v=\varphi(u)_x$,
$$ v_t + v_{xxx} - (bv)_x = - 2(u^3)_{xx} + (b_x\varphi(u))_x \,.$$
Now, \eqref{E:locsmooth} gives 
\begin{equation}
\label{E:1stest-1}
\begin{split}
& \|\varphi(u)_x \|_{L_T^\infty L_x^2} + \| \varphi(u)_{xx} \|_{\ell_n^\infty L_T^2L_{Q_n}^2} \lesssim \\
& \ \ \ \ \ \ \ \|u_0\|_{H_x^1} + \| (u^3)_x \|_{\ell_n^1 L_T^2 L_{Q_n}^2} + \|(b_x \varphi(u))_x \|_{L_T^1 L_x^2} \,.
\end{split} 
\end{equation}
Using that $\|u\|_{L_Q^\infty}^2 \lesssim (\|u\|_{L_{\tilde Q}^2}+\|u_x\|_{L_{\tilde Q}^2}) \|u \|_{L_{\tilde Q}^2}$, we also have 
$$\| (u^3)_x \|_{L_Q^2} \lesssim \|u_x \|_{L_Q^2} \|u\|_{L_Q^\infty}^2 \lesssim \|u_x\|_{L_Q^2} \|u\|_{L_{\tilde Q}^2} ( \|u\|_{L_{\tilde Q}^2} + \|u_x\|_{L_{\tilde Q}^2}) \,.$$
Taking the $L_T^2$ norm and applying the H\"older inequality, we obtain
$$\| (u^3)_x  \|_{L_T^2L_Q^2} \lesssim \|u_x\|_{L_T^\infty L_Q^2} \|u\|_{L_T^\infty L_{\tilde Q}^2} ( \|u\|_{L_T^2L_{\tilde Q}^2} + \|u_x\|_{L_T^2L_{\tilde Q}^2}) \,.$$
Taking the  $\ell_n^1$ norm and applying the H\"older inequality again yields
$$\| (u^3)_x \|_{\ell^1 L_T^2 L_{Q_n}^2} \lesssim \|u_x\|_{\ell_n^\infty L_T^\infty L_{Q_n}^2} \|u\|_{\ell_n^2 L_T^\infty L_{\tilde Q_n}^2} ( \|u\|_{\ell_n^2L_T^2L_{\tilde Q_n}^2} + \|u_x\|_{\ell_n^2 L_T^2 L_{\tilde Q_n}^2}) \,.$$
Using the straightforward bounds $\|u_x\|_{\ell_n^\infty L_T^\infty L_{Q_n}^2} \lesssim \|u_x\|_{L_T^\infty L_x^2}$,  
$$\|u\|_{\ell_n^2 L_T^2 L_{\tilde Q_n}^2} \lesssim \|u\|_{L_T^2L_x^2} \lesssim T^{1/2} \|u\|_{L_T^\infty L_x^2}$$ and $$\|u_x\|_{\ell_n^2 L_T^2 L_{\tilde Q_n}^2} \lesssim \|u_x\|_{L_T^2L_x^2}\lesssim T^{1/2} \|u_x\|_{L_T^\infty L_x^2} \,,$$
we obtain
$$\| (u^3)_x \|_{\ell_n^1 L_T^2 L_{Q_n}^2} \lesssim T^{1/2} \|u\|_{L_T^\infty H_x^1}^2 \|u\|_{\ell_n^2 L_T^\infty L_{Q_n}^2} \,.$$
Inserting these bounds into \eqref{E:1stest-1},
\begin{equation}
\label{E:1stest}
\|\varphi(u)_x \|_{L_T^\infty L_x^2} + \| \varphi(u)_{xx} \|_{\ell_n^\infty L_T^2L_{Q_n}^2} \lesssim 
\begin{aligned}[t]
&\|u_0\|_{H_x^1} + T^{1/2}\|u\|_{L_T^\infty H_x^1}^2 \|u\|_{\ell_n^2 L_T^\infty L_{Q_n}^2} \\
&+ T(\|b_x\|_{L_x^\infty} + \|b_{xx}\|_{L_x^\infty}) \|\varphi(u)\|_{H_x^1} \,.
\end{aligned}
\end{equation}
The local smoothing estimate \eqref{E:locsmooth} applied  
to $v=\varphi(u)$ (not $v=\varphi(u)_x$ as above),
and the estimate 
\[ \|(u^3)_x \|_{L_T^1 L_x^2} \lesssim T\|u\|_{L_T^\infty H_x^1}^3 \,, \]
provides the estimate
\begin{equation}
\label{E:2ndest}
\|\varphi(u)\|_{L_T^\infty L_x^2} \lesssim T\|u\|_{L_T^\infty H_x^1}^3
\end{equation}
The maximal function estimate 
\eqref{E:maximal} applied to 
$v=\varphi(u)$ and the estimate 
$$\|(u^3)_x \|_{L_T^2 L_x^2} \lesssim T^{1/2}\|u\|_{L_T^\infty H_x^1}^3\,,
$$ 
give the estimate
\begin{equation}
\label{E:3rdest}
\|\varphi(u)\|_{\ell_n^2 L_T^\infty L_{Q_n}^2} \lesssim \|u_0\|_{L_x^2} + T\|\varphi(u)\|_{L_T^\infty H_x^1} + T\| u \|_{L_T^\infty H_x^1}^3 \,. 
\end{equation}
Summing \eqref{E:1stest}, \eqref{E:2ndest}, \eqref{E:3rdest}, we obtain that $\|\varphi(u)\|_X \leq CR$ if $\|u\|_X \leq CR$ provided $T$ is as stated above.  Thus $\varphi:X\to X$. A similar argument establishes that $\varphi$ is a contraction on $X$.

\smallskip

\noindent\emph{Case $k\geq 2$}.
Differentiating \eqref{E:Phi-def} $ k$ times with respect to $ x $ 
we obtain, with $v=\partial_x^k\varphi(u)$,
$$\partial_t v + \partial_x^3 v - \partial_x(bv) = -2\partial_x^{k+1}(u^3) - 2\partial_x \sum_{\substack{\alpha+\beta \leq k+1 \\ \beta \leq k-1}} \partial_x^\alpha b \; \partial_x^\beta \varphi(u) \,. $$
Using \eqref{E:locsmooth} gives 
\begin{gather*} \|\partial_x^k \varphi(u)\|_{L_T^\infty L_x^2} + \|\partial_x^{k+1}\varphi(u)\|_{\ell_n^\infty L_T^2L_{Q_n}^2} \lesssim 
\\ \| \partial_x^k u^3\|_{\ell_n^1 L_T^2L_{Q_n}^2}  
+ \sup_{\substack{\alpha +\beta \leq k+1 \\ \beta\leq k-1}} \| \partial_x ( \partial_x^\alpha b \; \partial_x^\beta \varphi(u)) \|_{L_T^1L_x^2}
\,. \end{gather*}
Expanding, and applying Leibniz rule gives
$$\partial_x^k u = \sum_{\substack{\alpha+\beta+\gamma=k \\ \alpha\leq \beta \leq \gamma}} c_{\alpha\beta\gamma} \partial_x^\alpha u \;  \partial_x^\beta u \;  \partial_x^\gamma u\,, $$
which is then estimated as follows 
$$\| \partial_x^k u \|_{\ell_n^1L_T^2L_{Q_n}^2} \lesssim \sum_{\substack{\alpha+\beta+\gamma=k \\ \alpha\leq \beta \leq \gamma}} \| \partial_x^\alpha u\|_{\ell_n^2 L_T^\infty L_{Q_n}^\infty} \| \partial_x^\beta u\|_{\ell_n^2 L_T^2 L_{Q_n}^\infty} \| \partial_x^\gamma u\|_{\ell_n^\infty L_T^\infty L_{Q_n}^2} \,. $$
By the Sobolev embedding theorem (as in the $k=1$ case) we obtain
$$\| \partial_x^k u^3 \|_{\ell_n^1 L_T^2L_{Q_n}^2} \lesssim \sum_{\substack{\alpha+\beta+\gamma=k \\ \alpha\leq \beta \leq \gamma}} \left( \sup_{\sigma \leq \alpha+1} \|\partial_x^\sigma u \|_{\ell_n^2 L_T^\infty L_{Q_n}^2} \right)\left( \sup_{\sigma \leq \beta+1} \|\partial_x^\sigma u \|_{\ell_n^2 L_T^2 L_{Q_n}^2} \right) \|\partial_x^\gamma u\|_{L_T^\infty L_x^2} $$
When $k\geq 2$, we have $\alpha \leq [[\frac13k]] \leq k-2$ and $\beta \leq [[\frac12k]] \leq k-1$, and therefore
$$\| \partial_x^k u^3 \|_{\ell_n^1 L_T^2L_{Q_n}^2} \lesssim T^{1/2}\left( \sup_{\alpha\leq k-1} \|\partial_x^\alpha u\|_{\ell_n^2L_T^\infty L_{Q_n}^2}\right) \|u\|_{L_T^\infty H_x^k}^2 \,.$$
Also, 
$$\| \partial_x ( \partial_x^\alpha b \; \partial_x^\beta \varphi(u)) \|_{L_T^1L_x^2} \leq T \left( \sup_{\alpha \leq k+1} \|\partial_x^\alpha b\|_{L_T^\infty L_x^\infty} \right) \|\varphi(u)\|_{L_T^\infty H_x^k}$$
Combining these estimates, we obtain
\begin{gather}
\label{E:1stest-gen}
\begin{gathered}
\|\partial_x^k \varphi(u)\|_{L_T^\infty L_x^2}  + \|\partial_x^{k+1}\varphi(u)\|_{\ell_n^\infty L_T^2L_{Q_n}^2} 
\lesssim \|u_0\|_{H_x^k}  \\
 + \, T^{1/2}\left( \sup_{\alpha\leq k-1} \|\partial_x^\alpha u\|_{\ell_n^2L_T^\infty L_{Q_n}^2}\right) \|u\|_{L_T^\infty H_x^k}^2
+ 
T \left( \sup_{\alpha \leq k+1} \|\partial_x^\alpha b\|_{L_T^\infty L_x^\infty} \right) \|\varphi(u)\|_{L_T^\infty H_x^k}
\end{gathered}
\end{gather}
The local smoothing 
$\|(u^3)_x \|_{L_T^1 L_x^2} \lesssim T\|u\|_{L_T^\infty H_x^1}^3$ to obtain 
\begin{equation}
\label{E:2ndest-gen}
\|\varphi(u)\|_{L_T^\infty L_x^2} \lesssim T\|u\|_{L_T^\infty H_x^1}^3
\end{equation}
We apply the maximal function estimate \eqref{E:maximal} 
to $v=\partial_x^\alpha \varphi(u)$ for $\alpha\leq k-1$  and use that $\|\partial_x^{\alpha+1} u^3 \|_{L_T^1L_x^2} \leq T \|u\|_{L_T^\infty H_x^k}^3$ and 
$$\| \partial_x^{\alpha+1} (b\varphi(u)) \|_{L_T^1L_x^2} \leq T \left( \sup_{\beta \leq k} \|\partial_x^\beta b\|_{L_T^\infty L_x^\infty}\right) \|\varphi(u)\|_{L_T^\infty H_x^k}$$ 
to obtain
\begin{equation}
\label{E:3rdest-gen}
\| \partial_x^\alpha \varphi(u)\|_{\ell_n^2 L_T^\infty L_{Q_n}^2} \lesssim
\begin{aligned}[t]
&\|u_0\|_{H_x^{k-1}} + T\|\varphi(u)\|_{L_T^\infty H_x^k} + T \|u\|_{L_T^\infty H_x^k}^3  \\
&+ T \left( \sup_{\beta \leq k} \|\partial_x^\beta b\|_{L_T^\infty L_x^\infty}\right) \|\varphi(u)\|_{L_T^\infty H_x^k}
\end{aligned}
\end{equation}
Summing \eqref{E:1stest-gen}, \eqref{E:2ndest-gen}, \eqref{E:3rdest-gen}, we obtain that $\varphi:X\to X$, and a similar argument shows that $\varphi$ is a contraction.  This concludes the case $k\geq 2$.

To establish uniqueness within the broader class of solutions belonging merely to $C([0,T]; H_x^1)$, we argue as follows.  Suppose $u,v \in C([0,T]; H_x^1)$ solve \eqref{E:pmKdV}. By \eqref{E:maximal},
$$\|v\|_{\ell_n^2L_T^\infty L_{Q_n}^2} \lesssim \|v_0\|_{L^2} + T\|v\|_{L_T^\infty H_x^1} + T\|v\|_{L_T^\infty H_x^1}^3 \,.$$
By taking $T$ small enough in terms of $\|v\|_{L_T^\infty H_x^1}$, we have that
\begin{equation}
\label{E:mxmlbds1}
\|v\|_{\ell_n^2L_T^\infty L_{Q_n}^2} \lesssim \|v\|_{L_T^\infty H_x^1} \,.
\end{equation}
Similarly,
\begin{equation}
\label{E:mxmlbds2}
\|u\|_{\ell_n^2 L_T^\infty L_{Q_n}^2} \lesssim \|u\|_{L_T^\infty H_x^1} \,.
\end{equation}

Set $w=u-v$.  Then, with $g= (u^3-v^3)/(u-v)=u^2+uv+v^2$, we have
$$w_t + w_{xxx} -(bw)_x \pm (gw)_x=0 \,.$$
Apply \eqref{E:locsmooth} to $v=w_x$ to obtain
\begin{equation}
\label{E:uniq1}
\|w_x\|_{L_T^\infty L_x^2} +  \|w_{xx} \|_{\ell_n^\infty L_T^2L_{Q_n}^2} \lesssim  \| (gw)_x\|_{\ell_n^1 L_T^2 L_{Q_n}^2} + \|(b_xw)_x\|_{L_T^1L_x^2}
\end{equation}
The terms of $\|(gw)_x\|_{\ell_n^1 L_T^2 L_{Q_n}^2}$ are bounded following the method used above:
\begin{align*}
\|u_x v w \|_{\ell_n^1L_T^2L_{Q_n}^2} 
&\lesssim  \| u_x \|_{\ell_n^\infty L_T^\infty L_{Q_n}^2} \|vw \|_{\ell_n^1 L_T^2L_{Q_n}^\infty} \\
&\lesssim \| u_x \|_{\ell_n^\infty L_T^\infty L_{Q_n}^2}( \|vw \|_{\ell_n^1 L_T^2L_{Q_n}^1} + \|(vw)_x \|_{\ell_n^1 L_T^2L_{Q_n}^1})
\end{align*}
The term in parentheses is bounded by
$$\|v\|_{\ell_n^2 L_T^2L_{Q_n}^2}\|w\|_{\ell_n^2L_T^\infty L_{Q_n}^2} + \|v_x\|_{\ell_n^2L_T^2L_{Q_n}^2}\|w\|_{\ell_n^2L_T^\infty L_{Q_n}^2}+\|v\|_{\ell_n^2L_T^\infty L_{Q_n}^2}\|w_x\|_{\ell_n^2L_T^2L_{Q_n}^2}$$
which leads to the bound
\begin{equation}
\label{E:uniq2}
\|u_x v w \|_{\ell_n^1L_T^2L_{Q_n}^2} \lesssim T^{1/2}\|u\|_{L_T^\infty H_x^1}(\|v\|_{L_T^\infty H_x^1} \|w\|_{\ell_n^2L_T^\infty L_{Q_n}^2} + \|v\|_{\ell_n^2L_T^\infty L_{Q_n}^2}\|w\|_{L_T^\infty H_x^1} )
\end{equation}
 We now allow implicit constants to depend upon $\|u\|_{L_T^\infty H_x^1}$ and $\|v\|_{L_T^\infty H_x^1}$.  Appealing to \eqref{E:uniq1}, \eqref{E:uniq2} (and analogous estimates for other terms in $gw$), \eqref{E:mxmlbds1}, \eqref{E:mxmlbds2} to obtain
$$\|w\|_{L_T^\infty H_x^1} \lesssim T^{1/2}(\|w\|_{\ell_n^2L_T^\infty L_{Q_n}^2} + \|w\|_{L_T^\infty H_x^1})$$
Combining this estimate with the maximal function estimate 
\eqref{E:maximal} applied to $w$ yields
$$\|w\|_{\ell_n^2L_T^\infty L_{Q_n}^2} \lesssim T^{1/2}\|w\|_{L_T^\infty H_x^1} + T\|g\|_{L_T^\infty H_x^1}\|w\|_{L_T^\infty H_x^1} \,.$$
This gives $w\equiv 0$ for $T$ sufficiently small.
The continuity of the data-to-solution map is proved using similar arguments.
\end{proof}

Next, we prove global well-posedness in $H^k$ by proving \emph{a priori} bounds.
Theorem \ref{T:local} shows that doing it suffices for global well-posedness

\begin{theorem}[global well-posedness]
\label{T:global}
Fix $k\geq 1$ and suppose $M(T)<\infty$ for all $T \geq 0$, where $M(T)$ is defined in \eqref{E:b-size}.  For $u_0\in H^k$, there is a unique global solution $u\in C_{\text{loc}}([0,+\infty); H_x^k)$ to \eqref{E:pmKdV} with $\|u\|_{L_T^\infty H_x^k}$ controlled by $\|u_0\|_{H^k}$, $T$, and $M(T)$.
\end{theorem}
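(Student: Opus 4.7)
The plan is to combine Theorem~\ref{T:local} with an a priori bound on $\|u(t)\|_{H^k}$ valid on each compact interval $[0,T]$. Since the local existence time in Theorem~\ref{T:local} depends only on $\|u_0\|_{H^k}$ and on $M$, such a bound yields a global solution by iteration, and uniqueness in $C_{\mathrm{loc}}([0,+\infty); H^k)$ is inherited from the uniqueness statement of Theorem~\ref{T:local} in the larger class $C([0,T];H^1)$. So the only task is to produce the a priori bound.

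For the $H^1$ and $H^2$ bounds I will exploit the near-conservation of $I_1, I_3, I_5$. From $u_t = \tfrac12 \partial_x I_3'(u) + \partial_x(bu)$ and \eqref{E:conserved}, $\partial_t I_j(u) = \la I_j'(u), (bu)_x\ra$, and Lemma~\ref{L:near-conserved} bounds the right-hand side in terms of $L^\infty$ norms of low-order $x$-derivatives of $b$ and of the mKdV Hamiltonian densities of $u$. The 1D Gagliardo--Nirenberg inequality $\|u\|_{L^4}^4 \leq C\|u\|_{L^2}^3 \|u_x\|_{L^2}$ combined with Young's inequality makes $I_3(u)$ coercive on $\|u_x\|_{L^2}^2$ modulo $L^2$-controlled quantities, and Gronwall yields the $H^1$ bound. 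The $H^2$ bound proceeds identically from $I_5$: its lower-order integrands $\int u_x^2u^2$ and $\int u^6$ are controlled by the $H^1$ norm, and the only $b$-term naively demanding $b\in C^5$, namely $\la b_{xxxxx}, u^2\ra$ in Lemma~\ref{L:near-conserved}, is recast by integrating by parts twice as $\la b_{xxx}, 2u_x^2 + 2uu_{xx}\ra$, which requires only $b\in C^3$, matching~\eqref{E:b-size} with $k=2$.

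For $k\geq 3$ no higher conservation law is invoked; instead, with the $H^2$ bound now available, I will run a direct energy estimate on $w_k \defeq \partial_x^k u$. The dispersive contribution vanishes by integration by parts, so
\[
\tfrac12 \tfrac{d}{dt}\|w_k\|_{L^2}^2 = \la \partial_x^{k+1}(bu), w_k\ra - 2\la \partial_x^{k+1}(u^3), w_k\ra.
\]
Expanding the cubic term by Leibniz, the top-order contribution $3u^2 \partial_x^{k+1}u$ produces, after one integration by parts, $6\la uu_x, w_k^2\ra$, bounded by $\|uu_x\|_{L^\infty}\|w_k\|_{L^2}^2 \lesssim \|u\|_{H^2}^2\|w_k\|_{L^2}^2$. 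Every remaining cubic term $\partial_x^\alpha u\, \partial_x^\beta u\,\partial_x^\gamma u$ has $\alpha+\beta+\gamma=k+1$ with $\alpha\leq \beta\leq \gamma\leq k$; Cauchy--Schwarz, Gagliardo--Nirenberg interpolation, and the $H^2$ bound give a bound by $C(\|u\|_{H^2})\|u\|_{H^k}^2$, with any residual sub-quadratic power of $\|u\|_{H^k}$ absorbed via Young's inequality. The linear term $\partial_x^{k+1}(bu)$ is handled analogously and requires derivatives of $b$ only up to order $k+1$, precisely what \eqref{E:b-size} provides. Gronwall then bounds $\|w_k\|_{L^2}^2$ in terms of $\|u_0\|_{H^k}^2$, $T$, and $M(T)$.

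The main obstacle I foresee is the bookkeeping in the $k\geq 3$ energy estimate when both $\alpha,\beta \geq 2$: neither $\|\partial_x^\alpha u\|_{L^\infty}$ nor $\|\partial_x^\beta u\|_{L^\infty}$ is directly controlled by the $H^2$ bound, so derivatives must be redistributed via integration by parts and Gagliardo--Nirenberg interpolation (for instance, bounding $\int u_x u_{xx}^3\,dx$ for $k=3$ by $\|u_x\|_{L^\infty}\|u_{xx}\|_{L^3}^3 \lesssim \|u\|_{H^2}\cdot\|u_{xx}\|_{L^2}^{5/2}\|u_{xxx}\|_{L^2}^{1/2}$), so that the resulting fractional powers of $\|u\|_{H^k}$ are absorbable. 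Once these case-by-case estimates are verified, iterating Theorem~\ref{T:local} on successive intervals of length determined by the a priori bound yields the global solution with the claimed control.
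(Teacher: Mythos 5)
Your proposal is correct and follows essentially the same strategy as the paper: near-conservation of $I_1, I_3, I_5$ for $k\leq 2$ (with the same double integration by parts to trade $b_{xxxxx}$ for $b_{xxx}$), a direct $L^2$ energy estimate on $\partial_x^k u$ with Leibniz expansion and Gagliardo--Nirenberg for $k\geq 3$, and Gronwall plus iteration of Theorem~\ref{T:local} to continue globally. The only cosmetic difference is bookkeeping: the paper bounds $\|\partial_x^\alpha u\,\partial_x^\beta u\,\partial_x^\gamma u\|_{L^2}\lesssim\|u\|_{H^2}^2\|u\|_{H^k}$ and applies Cauchy--Schwarz once, whereas you redistribute a derivative by an extra integration by parts before interpolating -- both are fine.
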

\begin{proof}
Before beginning, we note that by the Gagliaro-Nirenberg inequality,
 $\|u\|_{L^4}^4 \lesssim \|u\|_{L^2}^3\|u_x\|_{L^2}$, 
we have (in the focusing case)
$$\|u_x\|_{L^2}^2 - \|u_x\|\|u\|_{L^2}^3 \leq  I_3(u) \leq \|u_x\|_{L^2}^2 \,.$$
With $\alpha = \|u_x\|_{L^2}^2/ \|u\|_{L^2}^6$ and $\beta = I_3(u)/\|u\|_{L^2}^6$, this is $\alpha - \alpha^{1/2} \leq \beta \leq \alpha$, which implies that $\la \alpha \ra \sim \la \beta \ra$, i.e. 
$$\|u_x\|_{L^2}^2 + \|u\|_{L^2}^6  \sim I_3(u) + \|u\|_{L^2}^6$$
The same statement holds in the defocusing case.

Another fact we need is based on the 
\begin{align*}
\frac{d}{dt} I_j(u) &= \la I_j'(u), \partial_t u \ra \\
&= \la I_j'(u),  -u_{xxx}-2(u^3)_x +(bu)_x \ra  \\
&= \la I_j'(u), \partial_x I_3'(u) \ra + \la I_j'(u), (bu)_x \ra\\
&= \la I_j'(u), (bu)_x \ra
\end{align*}

For $u(t)\in L^2$, we compute near conservation of momentum and energy from Lemma \ref{L:near-conserved}:
$$\frac{d}{dt} I_1(u) = \la b_x, A_1(u)\ra$$
Estimate $|\la b_x, A_1(u)\ra| \leq \|b_x\|_{L^\infty} I_1(u)$, and apply Gronwall to obtain a bound on $\|u\|_{L_T^\infty L_x^2}$ in terms of $\|b_x\|_{L_T^\infty L^\infty}$ and $\|u_0\|_{L^2}$.
For $u(t)\in H^1$, we compute near conservation of energy from Lemma \ref{L:near-conserved}:
$$\frac{d}{dt} I_3(u) = 3\la b_{x}, A_3(u)\ra - \la b_{xxx},A_1(u)\ra \,.$$
We have
\begin{align*}
| \la b_{x},A_3(u)\ra| 
&\lesssim \|b_{x}\|_{L^\infty}(\|u_x\|_{L^2}^2 + \|u\|_{L^4}^4)  \\
&\lesssim  \|b_{x}\|_{L^\infty}(\|u_x\|_{L^2}^2 + \|u_x\|_{L^2}\|u\|_{L^2}^3) \\
&\lesssim \|b_{x}\|_{L^\infty} (\|u_x\|_{L^2}^2 + \|u\|_{L^2}^6) \\
&\lesssim \|b_{x}\|_{L^\infty} (I_3(u) + \|u\|_{L^2}^6)
\end{align*}
and
$$|\la b_{xxx},A_1(u)\ra | \lesssim \|b_{xxx}\|_{L^\infty} \|u\|_{L^2}^2 \,.$$
Combining these gives
$$\left|\frac{d}{dt} I_3(u) \right| \lesssim \|b_x\|_{L^\infty} I_3(u) + \|b_x\|_{L^\infty}\|u\|_{L^2}^6+ \|b_{xxx}\|_{L^\infty} \|u\|_{L^2}^2$$
Gronwall's inequality, combined with the previous bound on $\|u\|_{L^2}$, gives the bound on $I_3(u)$ and hence $\|u\|_{H^1}$.

For $u(t)\in H^2$, we apply Lemma \ref{L:near-conserved} to obtain
\begin{align*}
\frac{d}{dt} I_5(u) 
&=  \la I_5'(u), (bu)_x \ra \\
&= 5\la b_x, A_5(u)\ra - 5 \la b_{xxx}, A_3(u)\ra + \la b_{xxxxx},A_1(u)\ra 
\end{align*}
We have
\begin{align*}
|\la b_x, A_5(u) \ra | &\lesssim \|b_x\|_{L^\infty}( \|u_{xx}\|_{L^2}^2 + \|u\|_{H^1}^4 + \|u\|_{H^1}^6) \\
&\lesssim \|b_x\|_{L^\infty} I_5(u) + \|b_x\|_{L^\infty}(\|u\|_{H^1}^4 + \|u\|_{H^1}^6)
\end{align*}
Also,
$$|\la b_{xxx},A_3(u)\ra | \lesssim \|b_{xxx}\|_{L^\infty}(\|u\|_{H^1}^2+\|u\|_{H^1}^4)$$
and
$$| \la b_{xxxxx}, A_1(u) \ra | \lesssim \|b_{xxx}\|_{L^\infty} \|(u^2)_{xx}\|_{L^2} \lesssim \|b_{xxx}\|_{L^\infty} \|u\|_{H^2}\|u\|_{L^2} $$
Combining, applying Gronwall's inequality, and appealing to the bound on $\|u\|_{H^1}$ obtained previously, we obtain the claimed \emph{a priori} bound in the case $k=2$.

Bounds on $H^k$ for $k\geq 3$ can be obtained by the above method appealing to higher-order analogues of the identities in Lemma \ref{L:near-conserved}.  However, starting with $k=3$, we do not need such refined information.  By direct computation from \eqref{E:pmKdV},
$$\frac{d}{dt} \|\partial_x^k u\|_{L^2}^2 = -\int \partial_x^{k+1}(bu) \, \partial_x^k u + 2\int \partial_x^{k+1}u^3 \; \partial_x^k u$$
In the Leibniz expansion of $\partial_x^{k+1}u^3$, we isolate two cases:
$$\partial_x^{k+1}u^3 = 3u^2 \partial_x^{k+1}u + \sum_{\substack{\alpha+\beta+\gamma = k+1 \\ \alpha \leq \beta \leq \gamma\leq k}} c_{\alpha\beta\gamma} \partial_x^\alpha u \; \partial_x^\beta u \; \partial_x^\gamma u$$
For the first term,
$$ \left| \int u^2 \, \partial_x^{k+1}u \, \partial_x^k u \right| = \left|\int (u^2)_x (\partial_x^k u)^2 \right| \lesssim \|u\|_{H^2}^2 \|u\|_{H^k}^2$$ 
By the H\"older's inequality and interpolation, if $\alpha+\beta+\gamma=k+1$ and $\gamma\leq k$,
$$\| \partial_x^\alpha u \; \partial_x^\beta u \; \partial_x^\gamma u \|_{L^2} \lesssim \|u\|_{H^2}^2\|u\|_{H^k}$$
Thus we have
$$\left| \int \partial_x^{k+1}u^3 \; \partial_x^k u \right| \lesssim \|u\|_{H^2}^2 \|u\|_{H^k}^2$$
Similarly, we can bound
$$\left|\int \partial_x^{k+1}(bu) \, \partial_x^k u \right| \lesssim M(t)\|u\|_{H^k}^2$$
by separately considering the term $b\, \partial_x^{k+1}u \, \partial_x^ku$ and integrating by parts.  We obtain
$$\left| \frac{d}{dt} \|\partial_x^k u \|_{L^2}^2 \right| \lesssim (M+\|u\|_{H^2}^2)\|u\|_{H^k}^2$$
and can apply the Gronwall inequality to obtain the desired \emph{a priori} bound.
\end{proof}

\section{Comments about the effective ODEs}
\label{A:ode}
Here we make some comments about the differential equations
for the parameters $ a $ and $ c$.

\subsection{Conditions on $ T_0 $.}
\label{AS:noh}
First we give a reason for replacing $ T_0 ( h ) $ 
in the definition of $ T( h ) $ \eqref{eq:T0h} by $ T_0 $ 
defined by \eqref{eq:ODE}. In \eqref{eq:oded} we have seen
that the $ a $ and $ c$ solving the system \eqref{E:eom}
give the following equations for $ \widetilde A = h a $, 
$ \widetilde C = c $, $ T = h t $:
\[ \left\{
\begin{aligned}
& \partial_T \widetilde A_j = \widetilde C_j^2 - b_0( \widetilde A_j,T)  + 
{\mathcal O} ( h ) \\
& \partial_T \widetilde C_j = \widetilde C_j \partial_x b_0 (\widetilde A_j, T)
+ {\mathcal O} ( h ) 
\end{aligned}
\right. \,, 
\qquad \widetilde A(0) = \bar a h \,, \quad \widetilde  C(0) = \bar c \,, \ \ \ j=1,2\,.
\]
This can also be seen by analysing \eqref{eq:dynh} 
using Lemma \ref{L:q-asymp}.

As in \eqref{eq:ODEt} we can write the equations for 
$ \widetilde A_j - A_j $ and $ \widetilde C_j - C_j $:
\[ \left\{
\begin{aligned}
& \partial_T (\widetilde {A}_j - A_j) = (\widetilde C_j - C_j) ^2 +
 2 C_j (\widetilde C_j - C_j ) + \gamma_0 ( \widetilde A_j  - A_j ) + {\mathcal O}(h) \\
& \partial_T ( \widetilde C_j - C_j ) = (\widetilde C_j  - C_j ) 
(\partial_x b_0)(A_j,t) +  C_j \sigma_0 (\widetilde A_j - A_j ) +
{\mathcal O}(h) \,,
\\ 
& \widetilde A_j ( 0 ) - A_j ( 0 ) = 0 \,, \ \ \widetilde C_j ( 0 ) 
- C_j ( 0) = 0 \,, 
\end{aligned}
\right. \]
where $ \gamma_0 , \sigma_0 = {\mathcal O} ( 1 ) $. This implies
that 
\[ \left\{
\begin{aligned}
& \widetilde A_j ( T ) - A_j ( T ) = {\mathcal O} ( h ) e^{CT} \,, \\
& \widetilde C_j ( T ) - C_j ( T) = {\mathcal O} ( h ) e^{CT}  \,. 
\end{aligned}
\right. \]
This means that for $ T < \delta \log ( 1/h ) $, 
we have $ C_j ( T ) = \widetilde C_j ( T ) + {\mathcal O} (h^{ 1- \delta C} ) $.
Hence, if $ \delta $ is small enough, then for small $ h $ we have
that $ T_0 ( h ) $ defined in \eqref{eq:T0h} and $ T_0 $ in 
\eqref{eq:ODE} can be interchanged.

\subsection{Examples with $ C_j $  going to $ 0 $}
In the decoupled equations \eqref{eq:ODEsh} we can have 
\[ C_j ( T ) \rightarrow 0 \, , \ \  T \rightarrow \infty \,, \]
which implies that $ T_0 < \infty $ in the definition \eqref{eq:ODE}.
That prevents $ \log ( 1/h ) / h $ lifespan of the approximation 
\eqref{eq:Th}. 

Let us put 
\[
a = A_j\,, \ \  c = C_j \,, \] 
so that the 
system  \eqref{eq:ODEsh} becomes
\begin{equation}
\label{eq:taua}  a'_T = c^2 ( T ) - b_0 ( a , T ) \,, \ \ 
c_T' = c \, \partial_a b_0 ( a , T) 
\,. \end{equation}
For simplicity we consider the case of $ b_0 ( a, T ) = b_0 ( a ) $.
In that case the Hamiltonian 
\[ E (a , c ) = - \frac 1 3 c^3 + c b_0 ( a  )  \]
is conserved in the evolution and we have
\begin{equation}
\label{eq:obv}
  \exp (  T \min \partial_a b )
\leq  | c ( T ) | \leq 
  \exp (  T \max \partial_a b ) \,.
\end{equation}
In particular this means that $ c > \delta > 0 $ if $ T< T_1(\delta)  $. 

We cannot improve on \eqref{eq:obv}, and in general we may have 
\[ | c  (  T) | \leq e^{ -  \gamma  T } \,, \  \ T \rightarrow \infty \,, \]
but this behaviour is rare. First we note that the conservation of
$ E $ shows that if 
$  c ( T_j ) \rightarrow 0 $ for some sequence $ T_j \rightarrow \infty $,
then $ E = 0 $.
We can then solve for $ c $, and the equation reduces to  
$ { da}/{ dT}  = 2 {b_0}(a )$, $c^2 = 3 {b_0} ( a ) $,
that is to 
\begin{equation}
\label{eq:int}
\frac 12 \int_{a_0}^{a}  \frac{ d \tilde a } {{b_0} ( \tilde a) }
= T \,, \ \ b ( a ( 0 ) ) > 0 \,.  \end{equation}
If $ {b_0} ( a ) > 0 $ in this set of values $ a $ then 
\begin{equation}
\label{eq:alpha}  a ( T ) \rightarrow \infty \,, \ T \rightarrow
\infty \,, \end{equation}
and $ c ( T ) = ( 3 {b_0} ( a ( T ) ))^{\frac12}  $.

If $ {b_0} ( a ) = 0 $ for some $ a > a ( 0 ) $ ($ a'_T = 
2 b_0 > 0 $), then we denote
$ a_1 $, 
the smallest such $ a $ and assume that the order of vanishing
of $ {b_0} $ there is $ \ell_1 $. The analysis of \eqref{eq:int}
shows that
\[  a (T) = a_1 + {\mathcal O}(1)
\left\{ \begin{array}{ll} K e^{- \gamma T } & 
\ell_1 = 1\,, \\
\ & \ \\
K T^{- 1 /(\ell_1 - 1)  }  & \ell_1 > 1 \,, \end{array} \right. \]
which gives the rate of decay of $ c ( T ) $.

Hence we have shown the following statement which is almost
as long to state as to prove:

\begin{lemma}
\label{l:ode}
Suppose that in \eqref{eq:taua} $ {b_0} = {b_0} ( a ) $. Then 
\[ E \neq 0 \,, \ \ | c(0) | > \delta_0 > 0  \ 
\Longrightarrow \ \exists \, \delta > 0 \ \forall \, T > 0 \,, \ \ 
| c ( T ) | > \delta\,. \]

If $ E = 0 $, let 
\[  a_1 = \min \{ a \; : \;  a > a (0) \,,
\ {b_0}( a ) = 0 \} \,,\]
with $ a_1 $ not defined if the set is 
empty (note that $ c(0) \neq 0 $ and $ E = 0 $ imply 
that $ {b_0} ( a(0)) >  0 $). 
Now suppose that $ a_1 $ exists, and that 
\[   \partial^\ell {b_0} ( a_1 ) = 0 \,, \ \ell < \ell_1 \,,\ \ 
\partial^{\ell_1} {b_0} ( a_1 ) \neq 0 \,. \]
Then as $ T  \rightarrow \infty $,
\[ |c ( T )| \leq \left\{ \begin{array}{ll} K e^{- \gamma T } & 
\ell_1 = 1\,, \\
\ & \ \\
K T^{- \ell_1 /(\ell_1 - 1)  }  & \ell_1 > 1 \,, \end{array} \right.
\]
for some constants $ \gamma $ and $ K$, 
and $  a ( T ) \rightarrow a_1 $.

 If $ a_1 $ does not 
exist then $  c ( T ) = (3 {b_0} ( a ( T ) ) )^{\frac12} $, 
$ a ( T ) \rightarrow \infty $, $ T \rightarrow \infty$.
\end{lemma}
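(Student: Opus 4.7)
The plan is to exploit the conservation of the reduced Hamiltonian $E(a,c) = -\tfrac{1}{3}c^3 + c\,b_0(a)$, whose conservation is a direct check: differentiating along \eqref{eq:taua} gives
\[
\tfrac{d}{dT}E = (b_0(a)-c^2)\,c'_T + c\,\partial_a b_0(a)\cdot a'_T = (b_0(a)-c^2)(c\,\partial_a b_0(a)) + c\,\partial_a b_0(a)(c^2-b_0(a)) = 0.
\]
Global existence of $(a(T),c(T))$ on $[0,\infty)$ follows from the elementary bound $|c(T)|\le e^{T\|\partial_a b_0\|_\infty}|c(0)|$ coming straight from $c'_T = c\,\partial_a b_0$, which together with $|a'_T|\le c^2+\|b_0\|_\infty$ rules out finite-time blow-up.

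For the first assertion (the case $E\ne 0$), I would argue by contradiction. If $c(T_j)\to 0$ along some sequence $T_j$, then uniform boundedness of $b_0$ gives
\[
E = -\tfrac{1}{3}c(T_j)^3 + c(T_j)\,b_0(a(T_j)) \longrightarrow 0,
\]
contradicting $E\ne 0$. Thus $|c(T)|$ stays bounded below by some $\delta>0$, and in particular $c$ never changes sign.

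For the case $E=0$, the factorization $c\bigl(-\tfrac{1}{3}c^2 + b_0(a)\bigr)=0$ combined with $c(0)\ne 0$ and continuity forces $c^2 = 3\,b_0(a)$ for as long as $c\ne 0$; note this is consistent only because $c(0)^2 = 3b_0(a(0))>0$ is guaranteed by $E=0$ and $c(0)\ne 0$. Substituting into the $a$-equation collapses the system to the scalar autonomous ODE
\[
a'_T = c^2 - b_0(a) = 2\,b_0(a),\qquad a(0)=a_0,
\]
which separation of variables integrates to $T = \tfrac{1}{2}\int_{a(0)}^{a(T)} d\tilde a/b_0(\tilde a)$. Since $b_0(a(0))>0$, the trajectory $a(T)$ is strictly increasing as long as $b_0\circ a>0$.

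The final step is the asymptotic analysis of this integral. If $a_1$ does not exist, then $b_0>0$ throughout $[a(0),\infty)$ and the upper bound $b_0\le\|b_0\|_\infty$ gives $\int_{a(0)}^\infty d\tilde a/b_0(\tilde a)=\infty$, so $a(T)\nearrow\infty$ and $c(T)=\sqrt{3\,b_0(a(T))}$. If $a_1$ exists, the Taylor expansion $b_0(\tilde a) = K(a_1-\tilde a)^{\ell_1} + O((a_1-\tilde a)^{\ell_1+1})$ with $K>0$ (the sign determined so that $b_0>0$ just to the left of $a_1$) gives, via $u=a_1-\tilde a$,
\[
T \;\sim\; \frac{1}{2K}\int^{a_1-a(T)}\frac{du}{u^{\ell_1}},\qquad T\to\infty.
\]
For $\ell_1=1$ the integral diverges logarithmically, producing $a_1-a(T)\sim e^{-2KT}$ and hence exponential decay of $c(T)=\sqrt{3b_0(a(T))}$ with $\gamma=|\partial_a b_0(a_1)|$; for $\ell_1>1$ it diverges as a power, producing $a_1-a(T)\sim T^{-1/(\ell_1-1)}$ and thus a power decay of $c$. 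The only place requiring real care is this last integral asymptotic, where one must track how the vanishing order $\ell_1$ determines the rate; everything upstream is bookkeeping with the conserved quantity $E$.
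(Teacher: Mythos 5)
Your proof is correct and follows essentially the same route as the paper: conservation of $E$ via direct differentiation, a contradiction argument for $E \neq 0$ using boundedness of $b_0$, reduction to $a'_T = 2b_0(a)$ with $c^2 = 3b_0(a)$ when $E = 0$, separation of variables yielding $T = \tfrac12\int_{a(0)}^{a(T)} d\tilde a / b_0(\tilde a)$, and asymptotics of the integral near the zero $a_1$ of $b_0$. You supply somewhat more detail than the paper (global existence, the exponential Gronwall bound showing $c$ can never vanish in finite time, the sign of $b_0'(a_1)$), all of which is sound.

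The one step you leave at ``a power decay of $c$'' for $\ell_1>1$ is worth pushing through, because it surfaces an issue with the statement itself: from $a_1 - a(T) \sim T^{-1/(\ell_1-1)}$ one gets $c(T)^2 = 3b_0(a(T)) \sim (a_1 - a(T))^{\ell_1} \sim T^{-\ell_1/(\ell_1-1)}$, hence $|c(T)| \sim T^{-\ell_1/(2(\ell_1-1))}$. This is slower decay than the lemma's displayed bound $K\,T^{-\ell_1/(\ell_1-1)}$; the missing factor of $\tfrac12$ in the exponent appears to be a slip in the lemma, since the paper's own later computation $c\big|_{t=\delta\log(1/h)/h} \sim \log^{-\frac12 \ell_1/(\ell_1-1)}(1/h)$ carries the $\tfrac12$. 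So your derivation is the correct one, and you should state the exponent explicitly rather than leaving it as ``a power decay.''
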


We excluded the case of infinite order of vanishing since it is very
special from our point of view.

The lemma suggests that $ c \rightarrow 0 $ is highly nongeneric 
but it can occur for our system. Since for the original time $ t $ in 
\eqref{E:pmKdV} we would like to go up to time $ \delta \log ( 1/h ) / h $ 
we cannot do it in some cases as then 
\[ c (t)|_{t =\delta \log ( 1/h ) / h } \sim \left\{ \begin{array}{ll}  h^{\gamma \delta/2}  & 
\ell_1 = 1\,, \\
\ & \ \\
\log^{- \frac 12 \ell_1 / (\ell_1 - 1)  } (1/h)  & \ell_1 > 1 \,. \end{array} \right. 
\]

\subsection{Avoided crossing for the effective equations of motion.}
Here we make some comments about the puzzling avoided crossing
which needs further investigation.

For the decoupled equations it is easy to find examples
in which 
\begin{equation}
\label{eq:crossC}  c_1 ( T_0 ) = c_2 ( T_0 ) \,.
\end{equation} 
One is shown in 
Fig.\ref{f:A1}. We take $ b_0 $ independent of $ T $ and
equal to $ \cos^2 x $. If we choose the initial 
conditions so that $ c_j^2 = 3 \cos^2 A_j $, $ A_j = h a_j $ as
in \eqref{eq:ODEsh}, and $ -\pi/2 < A_1 < -A_2 < 0 $, then 
when $ A_1 ( T_0 ) = - A_2 ( T_0 )$ we have \eqref{eq:crossC}
 (this also provides an 
example of $ c_2 ( T ) \rightarrow 0$ as $ T \rightarrow \infty $).

\begin{figure}
\begin{center}
\includegraphics[width=6in]{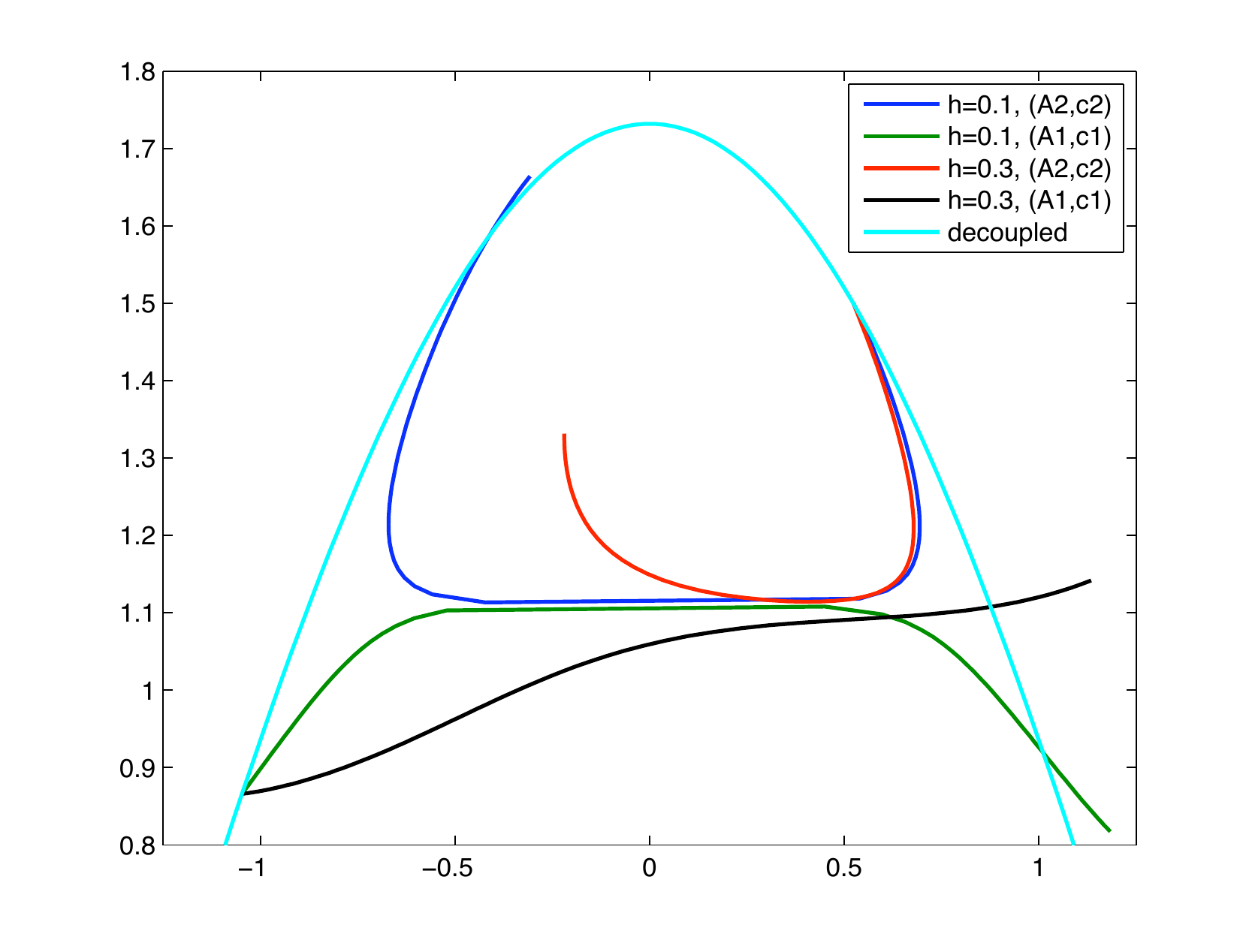}
\end{center}
\caption{The plots of $ (A_j , c_j ) $, $ j = 1, 2 $, solving
\eqref{eq:dynh} for
for $ b_0 ( x , t) = \cos^2 x $ and initial data 
$ A_1 (0) = - \pi/3 $, $ A_2 (0 ) = \pi/6 $, and 
$ c_1 (0) = \sqrt 3 \cos(\pi/3) $, $  c_2 (0) = \sqrt 3 \cos(\pi/6)$.
The ``decoupled'' curve corresponds to solving \eqref{eq:ODEsh}. Because
of the choice of initial conditions, $ (A_j , c_j ) $, $ j = 1,2 $ 
line on the same curve.}
\label{f:A1}
\end{figure}

The decoupled equations \eqref{eq:ODEsh} 
should be compared the rescaled version of \eqref{E:eom}:
\begin{gather}
\label{eq:dynh}
\begin{gathered}
\partial_T c_j = \partial_{x_j} B_0 ( c, A , h ) \,, \ \
\partial_T A_j = c_j^2 - \partial_{c_j} B_0 ( c , A , h ) \,, \\
B_0 ( c, A , h ) \defeq \frac 12 \int q_2 ( x/h , c, A/h ) b_0 (x) dx
\,. 
\end{gathered}
\end{gather}
For the example above the comparison between the 
solutions of the decoupled $h$-independent equations
and solutions to the equation \eqref{eq:dynh} are shown 
in Fig.\ref{f:A1} (the solutions \eqref{eq:ODEsh}
are shown as a single curve which both solutions with
these initial data follow).

The dramatic avoided crossings shown in Fig.\ref{f:A1} (and 
also, for a different, time dependent $ b_0 $ in 
Fig.\ref{f:cros}) are not seen  
in the behaviour of $ q_2 ( x, c, A/h ) $ which is the 
approximation of the solution to \eqref{E:pmKdV} -- see Fig.\ref{f:A2}. 
The masses of the right and left solitons are switched and that
corresponds to the switch of positions of $ A_1 $ and $ A_2$. 
It is possible that a different parametrization of double solitons
would resolve this problem. Another possibility is 
to study the decomposition 
\eqref{eq:Qdec} in the proof of Lemma \ref{L:q-asymp} 
uniformly $ \alpha \rightarrow 0 $ (corresponding to $ a_2 - a_1
\rightarrow 0 $).

\begin{figure}
\begin{center}
\includegraphics[width=2.6in]{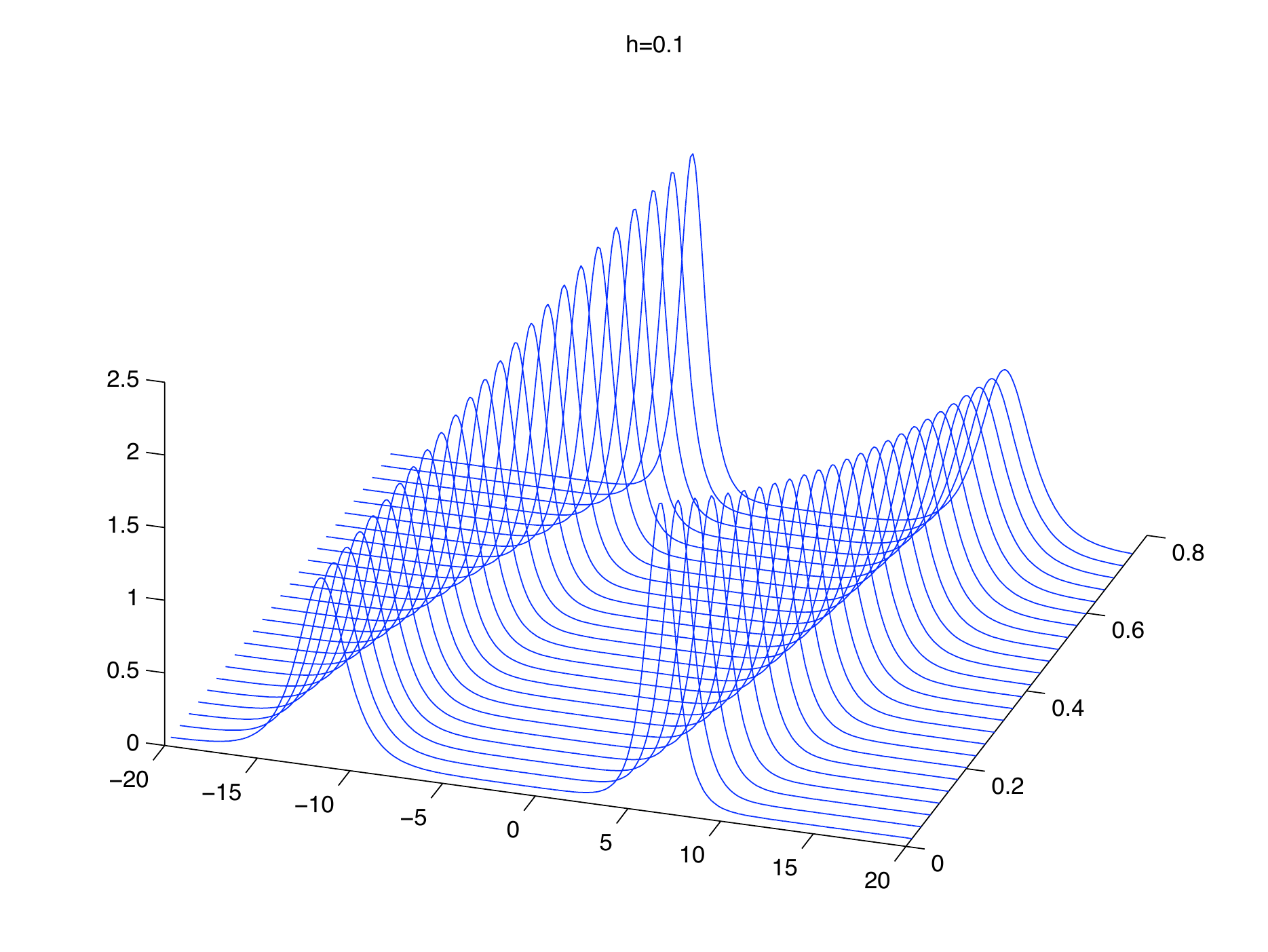} \includegraphics[width=2.6in]{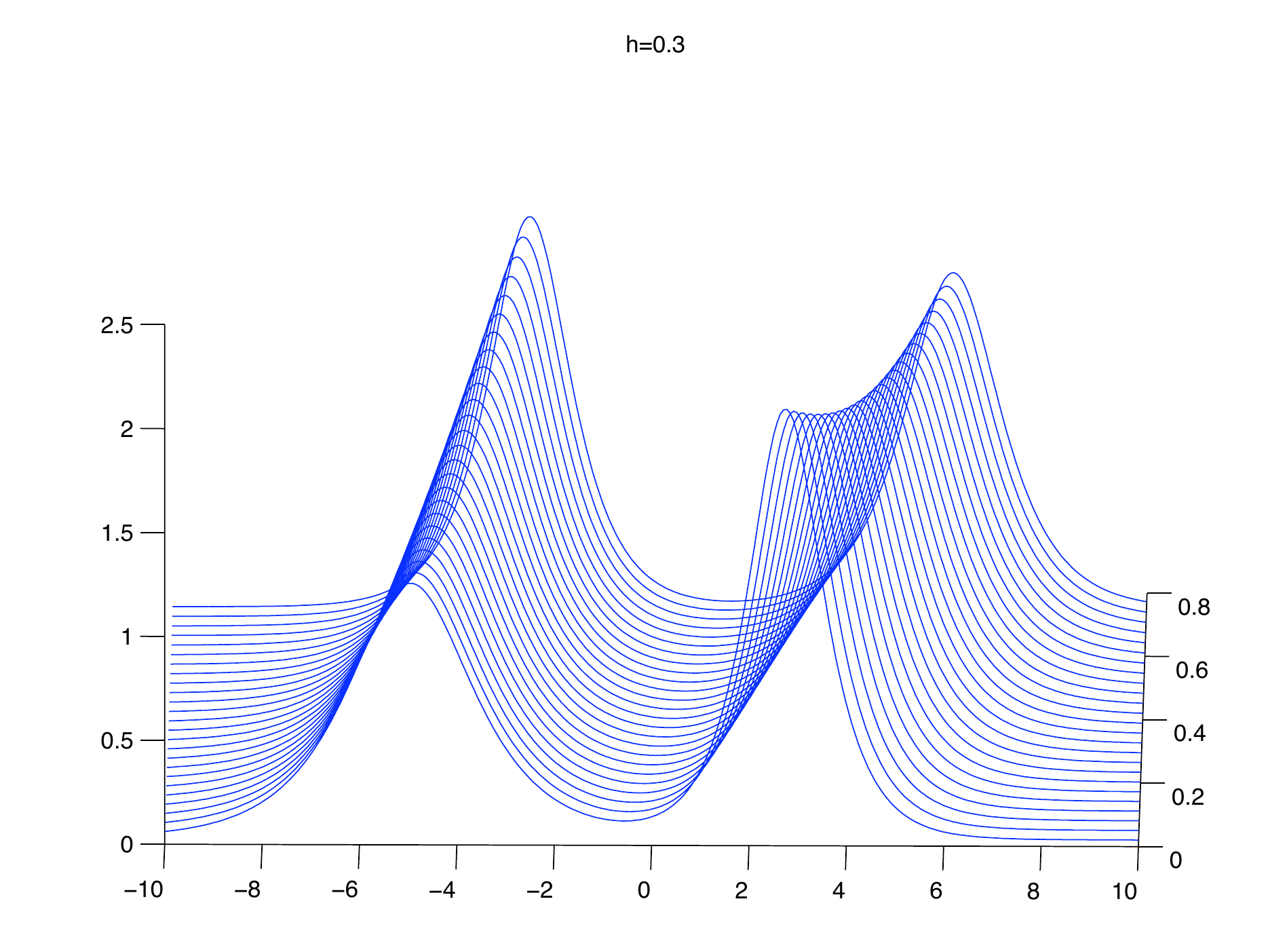}
\end{center}
\caption{The plots of $ q_2 ( x , c, A/h ) $ for 
$ (A_j , c_j ) $, $ j = 1, 2 $, solving
\eqref{eq:dynh} for
for $ b_0 ( x , t) = \cos^2 x $ and initial data 
$ A_1 (0) = - \pi/3 $, $ A_2 (0 ) = \pi/6 $, and 
$ c_1 (0) = \sqrt 3 \cos(\pi/3) $, $  c_2 (0) = \sqrt 3 \cos(\pi/6)$.
On the left $ h = 0.1 $ and on the right $ h = 0.3 $.}
\label{f:A2}
\end{figure}

We conclude with two heuristic observations. If the decoupled
equations lead to \eqref{eq:crossC} and 
$ | A_1 - A_2 | > \epsilon > 0 $ (which is the case when we 
approach the crossing in Fig.\ref{f:A1}) then equations
\eqref{eq:dynh} differ from \eqref{eq:ODEsh} by terms of 
size 
\[  h \log \left( \frac {c_2 -c_1 }{c_1 + c_2 } \right) \,, \]
see Lemma \ref{L:q-asymp}. 
For this to affect the motion of trajectories on finite
time scales in $ T $ we need 
\begin{equation}
\label{eq:hexp}
  c_2 - c_1 \simeq \exp \left(- \frac \gamma h \right)  \,. 
\end{equation}
This means that $ c_j $'s have to get exponentially close
to each other (but does not explain avoided crossing).

On the other hand if $ |a_1 - a_2 | > \epsilon > 0 $, where
$ a_j $'s are the original variables in \eqref{E:eom}, $A_j(0)=ha_j(0)$,
then we can use the decomposition in Lemma \ref{L:q-asymp} and
variables $ \hat a_j $ defined by \eqref{eq:hata}. The remark
after the proof of Lemma \ref{E:sf} shows that the equations
of motion take essentially the same form written in terms
of $ \widehat a_j $'s and $ c_j$'s and hence $ \hat a_j $
has to stay bounded. And that means that $ c_2 - c_1 $ is 
bounded away from $ 0 $. Hence, when $ c_2 - c_1 \rightarrow 0 $
we must also have $ a_2 - a_1 \rightarrow 0 $ as seen in 
Fig.\ref{f:cros} and Fig.\ref{f:A1}.

\section{Alternative proof of Lemma \ref{l:sP} (with Bernd Sturmfels)}

We note that the standard substition reduces the equation 
$ P ( c ) u = 0 $, where $ P ( c ) $ is defined in 
\eqref{eq:defP},  to an equation with rational coefficients:
\[ z = \tanh x \,, \ \ \partial_x = ( 1 - z^2 ) \partial_z \,,
\ \ \eta^2 = 1 - z^2 \,.  \]
This means that $ P ( c ) u = 0 $ is equivalent to $ Q ( c ) v = 0 $,
$ u (x ) = v ( \tanh x ) $, where
\[ Q ( c )  =  (L^2 + 1 ) 
 (L^2 + c^2 )  -
10 L  R ( z )  L  
+ 10 ( 3 R ( z )  - 2 R( z ) ^2 ) - 6 ( 1 + c^2) R( z) 
\,, \] 
and
\[ L = \frac 1 i ( 1 - z^2) \partial_z \,, \ \  \
R ( z ) = 1 - z^2 \,, \ \ - 1 < z < 1 \,. \]

Lemma \ref{l:sP} will follow from finding a basis of solutions
of $ Q ( c ) v = 0 $ and from seeing that the only bounded
solution is the one corresponding to $ \partial_x \eta $, that is,
to 
$$ v ( z ) = z ( 1 -z^2)^{\frac12} \,. $$
Remarkably, and no doubt 
because of some deeper underlying structure due to complete
integrability, this can be achieved using {\tt MAPLE} package
{\tt DEtools}.

First, the operator $ Q ( c) $ is brought to a convenient 
form 
\[ \begin{split} 
Q  = & \,  (z-1)^4 (z+1)^4 \frac{d^4} {dz^4} f ( z )  +12z(z-1)^3(z+1)^3
\frac{d^3}{dz^3} f ( z ) \\
& \, + (z-1)^2(z+1)^2(26z^2-c^2+1)\frac{d^2} {dz^2} f ( z ) 
\\
&  \,  -2z
(z-1)(z+1)(8z^2-11+c^2)\frac{d}{dz} f (z ) \\
& \,    + (4-20z^2 + 6 c^2z^2 - 5c^2 +16 z^2)f(z) 
\end{split}\]

Applying the {\tt MAPLE} command {\tt DFactorsols(Q,f(z))} gives the
following explicit basis of solutions to $ Q ( c) v = 0 $, $ c \neq 1 $:
\[ \begin{split}
& v_1 ( z ) = ( 1 - z^2)^{\frac 12} z \,, \\
& v_2 ( z ) = (1 + z)^{-\frac c2 } ( 1 - z)^{\frac c2 } ( ( c + z)^2 +
z^2 - 1 ) \,, \\
& v_3 ( z) = v_2 ( -z ) = ( 1 + z)^{\frac c2 } ( z -1)^{-\frac c2 } 
( ( c - z)^2 + z^2 - 1 )  \,, \\
& v_4 ( z ) = (1-z^2)^{-\frac12} 
 \left( -3z c^2 + 3z^3 c^2
- 7z^3 + 7 z \right) \log \frac{z+1}{z-1} \\
& \ \ \ \ \ \ \ \ \ \ \ + \, (1-z^2)^{-\frac12} \left(
 4c^2 - 6 c^2 z^2 + 14z^2 -12 \right) \,. 
\end{split} \]
For $ c \neq 1 $ these solutions are linearly independent and only $ v_1 $ 
vanishes at $ z = \pm 1 $ (or is bounded). Hence 
$ \ker_{L^2}  P( c)  $ is one dimensional proving Lemma \ref{l:sP}.


\begin{thebibliography}{00}

\bibitem{AKNS} M. Ablowitz, D. Kaup, A. Newell, and H. Segur, \emph{Nonlinear evolution equations of physical significance}, Phys. Rev. Lett. 31 (1973) pp. 125--127.

\bibitem{AW} W. Abou-Salem, 
{\em 
Solitary wave dynamics in time dependent potentials}, J.~Math.~Phys. {\bf 49},
032101 (2008).

\bibitem{AFS} W. Abou-Salem, J. Fr\"ohlich, and I.M. Sigal, 
{\em Colliding solitons for the nonlinear Schr\"odinger equation,} 
Comm. Math. Physics, {\bf 291}(2009), 151--176.

\bibitem{Kas}
N. Benes, A. Kasman, and K. Young, 
{\em On decompositions of the KdV 2-Soliton,}
J. of Nonlinear Science, {\bf 2}(2006),
179--200.

\bibitem{BS} J.L. Bona and R. Smith, \emph{The initial-value problem for the Korteweg-de Vries equation},  Philos. Trans. Roy. Soc. London Ser. A  278  (1975), no. 1287, pp. 555--601.

\bibitem{BSS} J.L. Bona, P.E. Souganidis, and W.A. Strauss, 
\emph{Stability and instability of solitary waves of Korteweg de Vries type},
Proc. Roy. Soc. London Ser. A 411 (1987), no. 1841, pp. 395--412. 

\bibitem{BR} A. Bouzouina and D. Robert,
{\em Uniform semiclassical estimates for the propagation 
of quantum observables,}  Duke Math. J. {\bf 111}(2002), 223--252.

\bibitem{BP} V. Buslaev and G. Perelman, \emph{On the stability of solitary waves for nonlinear Schr\"odinger equations}, Nonlinear evolution equations, editor N.N. Uraltseva, Transl. Ser. 2, 164, Amer. Math. Soc., pp. 75Ã¢98, Amer.
Math. Soc., Providence (1995).


\bibitem{DaVe} K.~Datchev and I.~Ventura, {\em Solitary waves
for the nonlinear Hartree equation with an external potential.}
{\tt arXiv:0904.0834}, to appear in Pacific.~J.~Math.

\bibitem{DJ} S.I. Dejak and B.L.G Jonsson, Long time dynamics of variable coefficient mKdV solitary waves, J. Math. Phys., 47, 2006. 

\bibitem{DS} S.I. Dejak and I.M. Sigal, \emph{Long-time dynamics of KdV solitary waves over a variable bottom},  Comm. Pure Appl. Math.  59  (2006), pp. 869--905.


\bibitem{FT} L.D. Faddeev and L.A. Takhtajan, \emph{Hamiltonian methods in the theory of solitons}, Springer-Verlag Berlin Heidelberg 2007, translated from the Russian by A.G. Reyman.

\bibitem{FrSi} J. Fr\"ohlich, S. Gustafson, B.L.G. Jonsson, and
I.M. Sigal, {\em Solitary wave dynamics in an external potential,}
Comm. Math. Physics, {\bf 250}(2004), 613--642.

\bibitem{ZS} Z. Gang and I.M. Sigal, \emph{On soliton dynamics in nonlinear Schr\"odinger equations}, Geom. Funct. Anal.  16  (2006),  no. 6, pp. 1377--1390. 

\bibitem{ZW} Z. Gang and M.I. Weinstein,
{\em Dynamics of nonlinear Schr\"odinger/Gross–Pitaevskii equations:  mass transfer in systems with solitons and degenerate neutral modes,}
Analysis \& PDE, {\bf 1}(3)(2008), 267--322.


\bibitem{HP} J. Holmer, \emph{Dynamics of KdV solitons in the presence of a slowly varying potential}, arxiv.org preprint \texttt{arXiv:1001.1583 [math.AP]}.

\bibitem{Codes} J. Holmer, G. Perelman, and M. Zworski, 
{\em 2-solitons in external fields}, on-line presentation with
{\tt MATLAB} codes,
{\tt http://math.berkeley.edu/$\sim$zworski/hpzweb.html}.

\bibitem{HZ1} J. Holmer and M. Zworski, \emph{Slow soliton interaction with delta impurities}, J. Modern Dynamics 1 (2007), pp. 689--718.

\bibitem{HZ2} J. Holmer and M. Zworski, \emph{Soliton interaction with slowly varying potentials}, IMRN Internat. Math. Res. Notices 2008 (2008), Art. ID runn026, 36 pp. 

\bibitem{KPV2} C.E. Kenig, G. Ponce, L. Vega, \emph{Well-posedness and scattering results for the generalized Korteweg-de Vries equation via the contraction principle},  Comm. Pure Appl. Math.  46  (1993), pp. 527--620.

\bibitem{KPV} C.E. Kenig, G. Ponce, and L. Vega, \emph{Well-posedness of the initial value problem for the Korteweg-de Vries equation},  J. Amer. Math. Soc.  4  (1991),  no. 2, pp. 323--347. 

\bibitem{KMR}  J. Krieger, Y. Martel, and P. Raphael, \emph{Two soliton 
solutions to the three dimensional gravitational Hartree equation},
Comm. Pure Appl. Math.
{\bf 62}(2009), 1501--1550.

\bibitem{Lax} P. Lax, \emph{Integrals of nonlinear equations of evolution and solitary waves}, Comm. Pure Appl. Math, 21 (1968) 467--490.

\bibitem{MS} J. Maddocks, R. Sachs, \emph{On the stability of KdV multi-solitons},  Communications on Pure and Applied Mathematics, 46 (1993) pp. 867--901.


\bibitem{MM}  Y. Martel and F. Merle, \emph{Description of two soliton collision for the quartic gKdV equation}, \\ arxiv.org preprint \texttt{arXiv:0709.2677}.

\bibitem{MMT}  Y. Martel, F. Merle, and T.-P. Tsai,  
\emph{Stability and asymptotic stability in the energy space of the sum of N solitons for subcritical gKdV equations},  Comm. Math. Phys.  231  (2002),  no. 2, pp. 347--373.

\bibitem{Miura} R. Miura, \emph{Korteweg-de Vries equation and generalizations. I. A remarkable explicit nonlinear transformation},  J. Math. Phys. 9 (1968) 1202.

\bibitem{Munoz} C. Mu$\tilde{\text n}$oz, \emph{On the soliton dynamics under a slowly varying medium for generalized KdV equations}, arxiv.org preprint \texttt{arXiv:0912.4725 [math.AP]},  to appear in Analysis \& PDE.


\bibitem{Olver} P. Olver, \emph{Applications of Lie groups to differential equations }.  



\bibitem{P} G. Perelman, \emph{Asymptotic stability of multi-soliton solutions for nonlinear Schr\"odinger equations},  Comm. Partial Differential Equations  29  (2004),  no. 7-8, 1051--1095.

\bibitem{Po} T. Potter, 
{\em Effective dynamics for $N$-solitons of the Gross-Pitaevskii equation,}
in preparation.


\bibitem{RSS}  I. Rodnianski, W. Schlag, A. Soffer, 
\emph{Asymptotic stability of N-soliton states of NLS},
arxiv.org preprint \texttt{arXiv:math/0309114}.

\bibitem{phys-paper} K.E.~Strecker et. al., \emph{Formation and propagation of matter wave soliton trains}, Nature {\bf 417}(2002), 150--154.




\bibitem{Tr} L.N.~Trefethen, {\em Spectral Methods in MATLAB,}
SIAM, Philadelphia, 2000

\bibitem{W} M. Wadati, \emph{The modified Korteweg-de Vries equation}, J. Phys. Soc. Jpn. 34 (1973) pp. 1289-1296  
\end{thebibliography}
\end{document}